\numberwithin{equation}{section}
\theoremstyle{plain}
\newtheorem{theorem}[equation]{Theorem}
\newtheorem{corollary}[equation]{Corollary}
\newtheorem{proposition}[equation]{Proposition}
\newtheorem{lemma}[equation]{Lemma}
\theoremstyle{definition}
\newtheorem{definition}[equation]{Definition}
\newtheorem{question}[equation]{Question}
\theoremstyle{remark}
\newtheorem{remark}[equation]{Remark}
\newcommand{\A}{\mathcal{A}}
\newcommand{\al}{\alpha}
\newcommand{\ann}{\operatorname{Ann}}
\newcommand{\be}{\beta}
\newcommand{\ben}{\begin{enumerate}}
\newcommand{\bit}{\begin{itemize}}
\newcommand{\C}{\mathbb{C}}
\newcommand{\com}{\operatorname{com}}
\newcommand{\de}{\delta}
\newcommand{\dist}{\operatorname{dist}}
\newcommand{\een}{\end{enumerate}}
\newcommand{\eit}{\end{itemize}}
\newcommand{\eps}{\varepsilon}
\newcommand{\fg}{\mathfrak{g}}
\newcommand{\fh}{\mathfrak{h}}
\newcommand{\ga}{\gamma}
\renewcommand{\H}{\mathbb{H}}
\newcommand{\id}{\operatorname{id}}
\newcommand{\La}{\Lambda}
\newcommand{\Lip}{\operatorname{Lip}}
\newcommand{\loc}{\operatorname{loc}}
\newcommand{\lra}{\longrightarrow}
\newcommand{\N}{\mathbb{N}}
\newcommand{\osc}{\operatorname{osc}}
\newcommand{\om}{\omega}
\newcommand{\Om}{\Omega}
\newcommand{\perm}{\operatorname{Perm}}
\newcommand{\R}{\mathbb{R}}
\newcommand{\ra}{\rightarrow}
\definecolor{gray}{gray}{0.7}
\newcommand{\si}{\sigma}
\newcommand{\Si}{\Sigma}
\newcommand{\Span}{\operatorname{span}}
\newcommand{\spt}{\operatorname{spt}}
\renewcommand{\th}{\theta}
\newcommand{\vol}{\operatorname{vol}}
\newcommand{\we}{\wedge}
\newcommand{\weight}{\operatorname{weight}}
\newcommand{\wt}{\operatorname{wt}}
\def\Xint#1{\mathchoice
{\XXint\displaystyle\textstyle{#1}}%
{\XXint\textstyle\scriptstyle{#1}}%
{\XXint\scriptstyle\scriptscriptstyle{#1}}%
{\XXint\scriptscriptstyle\scriptscriptstyle{#1}}%
\!\int}
\def\XXint#1#2#3{{\setbox0=\hbox{$#1{#2#3}{\int}$ }
\vcenter{\hbox{$#2#3$ }}\kern-.6\wd0}}
\def\av{\Xint-}
\newcommand{\modp}{\mathrm{mod}_p}
\begin{document}

\title[Pansu pullback and exterior differentiation]{Pansu pullback and exterior differentiation for Sobolev maps on Carnot groups}

\author{Bruce Kleiner}
\thanks{BK was supported by NSF grants DMS-1711556 and DMS-2005553, and a Simons Collaboration grant.}
\email{bkleiner@cims.nyu.edu}
\address{Courant Institute of Mathematical Science, New York University, 251 Mercer Street, New York, NY 10012}
\author{Stefan M\"uller}
\thanks{SM has been supported by the Deutsche Forschungsgemeinschaft (DFG, German Research Foundation) through
the Hausdorff Center for Mathematics (GZ EXC 59 and 2047/1, Projekt-ID 390685813) and the 
collaborative research centre  {\em The mathematics of emerging effects} (CRC 1060, Projekt-ID 211504053).  This work was initiated during a sabbatical of SM at the Courant Institute and SM would like to thank  R.V. Kohn and the Courant Institute
members and staff for 
their  hospitality and a very inspiring atmosphere.}
\email{stefan.mueller@hcm.uni-bonn.de}
\address{Hausdorff Center for Mathematics, Universit\"at Bonn, Endenicher Allee 60, 53115 Bonn}
\author{Xiangdong Xie}
\thanks{XX has been supported by Simons Foundation grant \#315130.}
\email{xiex@bgsu.edu}
\address{Dept. of Mathematics and Statistics, Bowling Green State University, Bowling Green, OH 43403}

\begin{abstract}
We show that in an $m$-step Carnot group, a probability measure with finite $m^{th}$ moment has a well-defined Buser-Karcher center-of-mass, which is a polynomial in the moments of the measure, with respect to exponential coordinates.  Using this, we improve the main technical result of \cite{KMX1} concerning  Sobolev mappings between Carnot groups.  As a consequence, a number of rigidity and structural results from \cite{KMX1,KMX2,kmx_rumin,kmx_iwasawa}
  hold under weaker assumptions on the Sobolev exponent.  We also give applications to quasiregular mappings following \cite{reshetnyak_space_mappings_bounded_distortion,heinonen_holopainen,vodopyanov_foundations},  extending earlier work in the $2$-step case to general Carnot groups.
\end{abstract}

\maketitle

\tableofcontents

\section{Introduction}

This is part of a series of papers  on geometric mapping theory in Carnot groups, in which we establish regularity, rigidity, and partial rigidity results for bilipschitz, quasiconformal, or more generally, Sobolev mappings, between Carnot groups \cite{KMX1,KMX2,kmx_rumin,kmx_iwasawa}. In \cite{KMX1} we showed that Reshetnyak's theorem on pullbacks of differential forms has a partial generalization to mappings between Carnot groups (see also \cite{dairbekov_morphism_property_bounded_distortion,vodopyanov_bounded_distortion,vodopyanov_foundations}). 
Our aim here is to strengthen the pullback theorem from \cite{KMX1} by relaxing the assumptions on the Sobolev exponent.  This yields new applications to quasiregular mappings, in addition to stronger versions of results from \cite{KMX1,KMX2,kmx_rumin,kmx_iwasawa}.
We expect further applications to geometric mapping theory in Carnot groups, in particular to understanding the threshold between flexibility and rigidity.  
 We refer the interested reader to \cite{KMX1} for more background.

Before stating our results, we briefly recall some facts and notation; see Section~\ref{sec_prelim} for more detail.  

Let $G$ be a Carnot group with Lie algebra $\fg$, grading $\fg=\oplus_{j\geq 1}^sV_j$, and  dilation group $\{\de_r:G\ra G\}_{r\in (0,\infty)}$.  The exponential map $\exp:\fg\ra G$ is a diffeomorphism, with inverse $\log:G\ra \fg$.   Without explicit mention, in what follows all Carnot groups will be equipped with Haar measure and a Carnot-Caratheodory metric denoted generically by $d_{CC}$.  If $f:G\supset U\ra G'$ is a Sobolev mapping between Carnot groups, where $U$ is open, then $f$ has a well-defined approximate Pansu differential $D_Pf(x):G\ra G'$ for a.e. $x\in U$, which is a graded group homomorphism (Theorem~\ref{th:Lp*_pansu_differentiability_new}); 
 By abuse of notation, we also denote the associated homomorphism of graded Lie algebras by $D_Pf(x):\fg\ra \fg'$; furthermore, for the sake of brevity we will often shorten ``approximate Pansu differential'' to ``Pansu differential''.
  If $\om$ is a differential form defined on the range of $f$, then the Pansu pullback $f_P^*\om$ is given by $f_P^*\om(x)=(D_Pf(x))^*\om(f(x))$ for a.e. $x\in U$. 

Let $G$ be a Carnot group with Lie algebra $\fg$.  For every $x\in G$ we let $\log_x:G\ra \fg$ be the logarithm map ``centered at $x$'', i.e. $\log_x(y):=\log(x^{-1}y)$.  We recall \cite{karcher_buser_almost_flat_manifolds,KMX1} that if $\nu$ is a compactly supported probability measure in  $G$, then $\nu$ has a well-defined Buser-Karcher center of mass $\com_\nu$, which is characterized as the unique point $x\in G$ such that $\nu$ is ``balanced'' with respect to logarithmic coordinates centered at $x$:  $\int_G\log_x\,d\nu=0$.  Our first result is a generalization of this center of mass to the case of measures with noncompact support.

\medskip
\begin{theorem}[Theorem~\ref{le:C_nu_diffeomorphism}]~
\label{thm_com_finite_moment_intro}
Suppose $G$ is an $m$-step Carnot group, and $\nu$ is a probability measure on $G$ with finite $m^{th}$-moment, i.e. for some $x\in G$ we have
$$
\int_Gd_{CC}^m(x,y)\,d\nu(y) <\infty\,.
$$
Then for every $x\in G$ the map $\log_x$ is integrable w.r.t. $\nu$, and there is a unique point $\com_\nu\in G$ such that $\int_G\log_{\com_\nu}\,d\nu=0$.  Moreover, $\log(\com_\nu)$  is a polynomial in the polynomial moments of the pushforward measure $(\log_{x_0})_*\nu$, for any $x_0\in G$.
\end{theorem}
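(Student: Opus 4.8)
The plan is to set up the problem in exponential coordinates centered at a fixed basepoint $x_0$, reduce to showing that a certain polynomial vector field has a unique zero, and then use a degree/structure argument together with the nilpotency of $\fg$ to produce that zero explicitly. First I would normalize: replacing $\nu$ by $(\log_{x_0})_*\nu$ and working in $\fg$ via $\exp$, the condition $\int_G \log_{x}\,d\nu=0$ becomes an equation $F(\xi)=0$, where for $\xi=\log(x)\in\fg$ the map $F$ is obtained by integrating $y\mapsto \log(x^{-1}y)$ against $\nu$; by the Baker--Campbell--Hausdorff formula, $\log(x^{-1}y)=\log((\exp(-\xi))(\exp\eta))$ is a \emph{polynomial} in $\xi$ and $\eta:=\log(x_0^{-1}y)$, of total degree at most $m$ (only finitely many BCH brackets survive since $\fg$ is $m$-step). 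Hence $F(\xi)=\int_\fg P(\xi,\eta)\,d(\log_{x_0})_*\nu(\eta)$ is, after expanding $P$ in powers of $\xi$ and integrating the $\eta$-dependent coefficients, a \emph{polynomial map} $F:\fg\to\fg$ whose coefficients are the polynomial moments of $(\log_{x_0})_*\nu$. The finite $m^{th}$-moment hypothesis, together with the comparability of $d_{CC}(x_0,y)$ with a homogeneous norm of $\log_{x_0}(y)$ and the quasi-triangle inequality, guarantees that all these moments (up to order $m$) are finite, so $F$ is well-defined; this also gives integrability of $\log_x$ for every $x$.

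The core of the argument is to show $F$ has a unique zero. I would do this by induction on the layers $V_1,\dots,V_s$ of the grading, exploiting the triangular structure of BCH. Writing $\xi=\xi_1+\cdots+\xi_s$ and $F=F_1+\cdots+F_s$ with $F_j$ valued in $V_j$, BCH gives $\log(x^{-1}y)$ in layer $V_j$ as $-\xi_j+\eta_j+(\text{brackets of strictly lower-layer terms of }\xi\text{ and }\eta)$. Thus the $V_1$-component equation reads $F_1(\xi)=-\xi_1+\int \eta_1\,d\nu=0$, which determines $\xi_1$ uniquely as the mean of the first layer. Given $\xi_1,\dots,\xi_{j-1}$, the $V_j$-equation has the form $-\xi_j+(\text{integral of a polynomial in }\eta\text{ and }\xi_1,\dots,\xi_{j-1})=0$, which again determines $\xi_j$ uniquely and expresses it as a polynomial in the already-constructed lower-layer components and the moments of $\nu$. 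Unwinding the recursion, each $\xi_j$ — hence $\log(\com_\nu)=\xi$ — is a polynomial in the polynomial moments of $(\log_{x_0})_*\nu$, which is the last assertion. Uniqueness of $\com_\nu$ is immediate from the uniqueness at each layer.

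The step I expect to be the main obstacle is making the triangular/recursive structure of BCH in the graded Lie algebra fully rigorous and checking that the ``brackets of lower-layer terms'' really only involve layers strictly below $V_j$ (so that the linear term $-\xi_j$ genuinely dominates and the equation is solvable); this requires a careful bookkeeping of weights, using that brackets add layers and that $\log_x(y)$ depends on $\xi$ only through the conjugation $\exp(-\xi)\exp(\eta)$. A secondary technical point is the moment bookkeeping: one must verify that every coefficient appearing when $P(\xi,\eta)$ is expanded in $\xi$ is a finite linear combination of monomials in the coordinates of $\eta$ of degree $\le m$, and then invoke the finite $m^{th}$-moment hypothesis (via the norm comparison and the fact that all homogeneous norms on $\fg$ are equivalent) to conclude integrability. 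Neither obstacle is deep, but both demand care with the grading conventions; everything else is a routine BCH computation.
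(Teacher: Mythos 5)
Your proposal is correct and follows essentially the same route as the paper's proof of Theorem~\ref{le:C_nu_diffeomorphism}: pass to exponential coordinates, observe via BCH that the balance condition becomes a polynomial equation in $\xi$ whose coefficients are polynomial moments of $(\log_{x_0})_*\nu$ (finite because every surviving BCH term has homogeneous weight at most $m$, cf.\ Proposition~\ref{pr:bounds_P}), and solve it layer by layer using that $\pi_i P(X,Y)$ depends on $X$ only through its components in $V_1,\dots,V_{i-1}$. The two technical points you flag — the triangular weight bookkeeping and the moment estimates via the homogeneous norm — are exactly what the paper verifies, so no gap remains.
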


\begin{remark}
With minor modifications, the same proof works for general simply connected nilpotent groups.
\end{remark}

Applying Theorem~\ref{thm_com_finite_moment_intro} in a standard way, one may define a mollification process for $L^m_{\loc}$-mappings into an $m$-step Carnot group, which yields a family of smooth approximations.

For a Sobolev mapping $f$ with mollification $f_\rho$, our main result relates the ordinary pullback $f_\rho^*\om$ of a differential form $\om$ with the Pansu pullback $f_P^*\om$, as defined above.  To state the result, we require the notion of the weight $\wt(\al)$ of a differential form $\al$; this is defined using the decomposition of $\La^*\fg$ with respect to the diagonalizable action of the Carnot scaling, see Subsection~\ref{subsec_differential_forms_on_carnot_groups}.

\begin{theorem}[Approximation theorem]  \label{thm_approximation_theorem_intro}~
Let $G$, $G'$ be Carnot groups, and $f: U \to G'$ be a map in $W^{1,p}_{loc}(U, G')$, where $U\subset G$ is open.  Suppose:
\bit
\item  $\eta\in \Om^k(G)$, $\om\in\Om^\ell(G')$ are differential forms, where $k+\ell=N:=\dim G$.
\item $\eta$ is left-invariant.
\item  $\om$ is continuous and bounded.
\item $\wt(\om)+\wt(\eta)\leq -\nu$, where $\nu$ is the homogeneous dimension of $G$.
\item $p\geq -\wt(\om)$.
\item $\frac1p\leq \frac1m+\frac{1}{\nu}$, where $G'$ has step $m$.
\eit
Then
\begin{equation*}  
 f_\rho^*\omega \wedge \eta \to f_P^* \omega \wedge \eta   \quad   \text{in $L^s_{\loc}(U)$ with $s = \frac{p}{-\wt(\omega)}$,}
 \end{equation*}
where $f_\rho$ is the mollification of $f$ at scale $\rho$,  see Section~\ref{subsec_mollifying_maps_between_carnot_groups}.    In particular, when $\om\in\Om^N(G')$ and $\wt(\omega)\leq -\nu$, then
 \begin{equation*}  
 f_\rho^*\omega  \to f_P^* \omega   \quad  
  \text{in $L^{\frac{p}{\nu}}_{\loc}(U)$.}
 \end{equation*}

\end{theorem}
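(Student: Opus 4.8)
The plan is to deduce the theorem from a convergence statement for the mollifications $f_\rho$ obtained from the Buser--Karcher center of mass.

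\textbf{Step 1: the mollification and its properties.} Using Theorem~\ref{thm_com_finite_moment_intro} one defines $f_\rho(x)$ to be the center of mass in $G'$ of the pushforward under $f$ of a smooth mollifier kernel at scale $\rho$ concentrated near $x$; by Theorem~\ref{thm_com_finite_moment_intro} this requires only $f\in L^m_{\loc}(U,G')$, $m$ being the step of $G'$, and this is exactly where the hypothesis $\frac1p\le\frac1m+\frac1\nu$ enters: as $f\in W^{1,p}_{\loc}$ and $\nu$ is the homogeneous dimension of $G$, the Sobolev--Poincar\'e inequality gives $f\in L^{p^*}_{\loc}$ with $\frac1{p^*}=\frac1p-\frac1\nu\le\frac1m$, hence $f\in L^m_{\loc}$. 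One then records the properties used below: $f_\rho$ is smooth; $f_\rho\to f$ in $L^m_{\loc}$, hence (along a subsequence) a.e., so that $\om\circ f_\rho\to\om\circ f$ in every $L^q_{\loc}$ since $\om$ is continuous and bounded; the horizontal derivatives of $f_\rho$ converge to those of $f$ in $L^p_{\loc}$; and, more precisely, the grading-compatible pieces of $df_\rho$ — those mapping the $i$-th layer of $\fg$ into the $i$-th layer of $\fg'$ — converge to the corresponding pieces of the approximate Pansu differential $D_Pf$ (a graded homomorphism, by Theorem~\ref{th:Lp*_pansu_differentiability_new}) in $L^{p/i}_{\loc}$, while the remaining, grading-incompatible, pieces of $df_\rho$ become negligible in these norms as $\rho\to0$.

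\textbf{Step 2: decomposition and weight bookkeeping.} Since $f_\rho$ is smooth, $f_\rho^*\om\wedge\eta$ is a classical top-degree form; decomposing $\om$ into scaling-homogeneous parts we may assume $\om$ homogeneous. Expanding $\om$ and the left-invariant $\eta$ in left-invariant coframes, the volume-form coefficient of $f_\rho^*\om\wedge\eta$ is a finite sum of terms, each the product of a coefficient of $\om\circ f_\rho$ (bounded, $L^q_{\loc}$-convergent), a constant coefficient of $\eta$, and a minor of $df_\rho$ whose rows lie in the layers of $\fg'$ carried by $\om$ (total layer index $-\wt(\om)$) and whose columns are the layers of $\fg$ complementary to those carried by $\eta$ (total layer index $\nu+\wt(\eta)$). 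The condition $\wt(\om)+\wt(\eta)\le-\nu$ is exactly the statement that the total column layer index of any such minor is at most $-\wt(\om)$; combined with the layerwise bounds on $df_\rho$ from Step~1 and H\"older's inequality, this puts each such minor in $L^{p/(-\wt(\om))}_{\loc}=L^s_{\loc}$, with room to spare when the weight inequality is strict, while $p\ge-\wt(\om)$ guarantees $s\ge1$ so that these estimates and the subsequent limits of products are legitimate. Split the sum into the terms built only from grading-compatible pieces of $df_\rho$ and the remainder; a term lies in the first group precisely when its row and column layer indices coincide, which forces $\wt(\om)+\wt(\eta)=-\nu$, and otherwise the term carries a grading-incompatible factor of $df_\rho$.

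\textbf{Step 3: passing to the limit.} For the first group, Step~1 gives convergence of the relevant minors of $df_\rho$ to the corresponding minors of $D_Pf$; multiplying by the bounded convergent coefficients of $\om\circ f_\rho$ and using the H\"older bound of Step~2, this part converges in $L^s_{\loc}$ to $(D_Pf)^*\om\circ f\wedge\eta=f_P^*\om\wedge\eta$ — both sides being $0$ when $\wt(\om)+\wt(\eta)<-\nu$. For the remainder, each term carries a grading-incompatible factor of $df_\rho$, which by Step~1 tends to $0$ in the appropriate $L^{p/i}_{\loc}$, while the other factors are uniformly bounded in the complementary spaces $L^{p/a}_{\loc}$ whose exponents, again by the column layer bound, sum to at most $1/s$; so the remainder tends to $0$ in $L^s_{\loc}$. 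Hence $f_\rho^*\om\wedge\eta\to f_P^*\om\wedge\eta$ in $L^s_{\loc}$ along the subsequence, and since the limit is independent of the subsequence the full family converges. The ``in particular'' statement is the special case $k=0$, with $\eta$ a nonzero constant and hence $\wt(\eta)=0$.

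\textbf{Expected main obstacle.} The weight and H\"older bookkeeping in Steps~2--3 is essentially routine, \emph{except} that here it is run at the borderline exponents $s=p/(-\wt(\om))$ and $\frac1p=\frac1m+\frac1\nu$; making these sharp choices work is precisely the gain over \cite{KMX1}. The real difficulty is Step~1 at this low regularity: that the center-of-mass mollification — not a convolution, and only just well defined by Theorem~\ref{thm_com_finite_moment_intro} — is smooth with $f_\rho\to f$ and with horizontal derivatives converging in $L^p_{\loc}$, and, most delicately, that the grading-compatible pieces of $df_\rho$ converge to those of $D_Pf$ while the grading-incompatible pieces decay in $L^{p/i}_{\loc}$. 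This is where one must leverage the a.e.\ approximate Pansu differentiability of $f$ — that near a.e.\ point $f$ is asymptotically a left translate of a graded homomorphism — against the averaging that defines $f_\rho$.
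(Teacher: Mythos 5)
Your overall architecture (pointwise convergence plus an equi-integrability/H\"older bookkeeping argument, with the weight hypothesis entering through the wedge with $\eta$) is in the right spirit, but the load-bearing claims in your Step 1 are precisely what is \emph{not} available at this regularity, and one of them is doubtful as stated. You assert that the grading-compatible pieces of $df_\rho$ (mapping $V_i\to V_i'$) converge to the corresponding pieces of $D_Pf$ in $L^{p/i}_{\loc}$, and that the grading-incompatible pieces become negligible in these norms. Nothing of this entrywise kind is proved (or needed) in the paper, and for the incompatible entries mapping a layer $V_i$ of $\fg$ into a \emph{lower} layer $V_j'$ of $\fg'$ with $j<i$ it is not even clear pointwise: writing $f_\rho=\de_\rho\circ h_1\circ\de_{\rho^{-1}}$ with $h=\de_{\rho^{-1}}\circ f\circ\de_\rho$, that entry of $df_\rho(x)$ equals $\rho^{j-i}$ times the corresponding entry of $dh_1$, i.e.\ a factor blowing up like $\rho^{j-i}\to\infty$ multiplied by a quantity tending to zero at no specified rate. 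Individual entries are therefore not controlled; what \emph{is} controlled is the aggregate top-degree form $f_\rho^*\om\wedge\eta$, where all the dilation factors combine into the single power $\rho^{-(\nu+w_\om+w_\eta)}$, which is harmless exactly because of the hypothesis $w_\om+w_\eta\le-\nu$. Your ``Expected main obstacle'' paragraph acknowledges that Step 1 is the real difficulty, but the proposal supplies no mechanism for it, so the proof has a genuine gap rather than a routine omission.

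For comparison, the paper's proof never decomposes $df_\rho$ layerwise. It uses the scaling/translation identity of Lemma~\ref{lem_moll_calc_forms}\eqref{it:pullback_pre_pansu} to write $(f_\rho^*\om\wedge\eta)(x)$ as $\rho^{-(\nu+w_\om+w_\eta)}$ times the pullback under the \emph{mollified blow-up} $(\de_{\rho^{-1}}\circ f_x\circ\de_\rho)_1$ evaluated at $e$; then a.e.\ $L^{p^*}$ Pansu differentiability (Theorem~\ref{th:Lp*_pansu_differentiability_new}, where $\frac1p\le\frac1m+\frac1\nu$ guarantees $p^*\ge m$) together with Lemma~\ref{le:moll_prop_Lm_bis}(2) (mollification upgrades $L^m_{\loc}$ convergence of maps to $C^1$ convergence of the mollified maps) gives pointwise a.e.\ convergence to $f_P^*\om\wedge\eta$ (which vanishes, consistently, when the weight inequality is strict); and the passage to $L^s_{\loc}$ is by Vitali convergence, with the majorant coming from Lemma~\ref{lem_moll_calc_forms}\eqref{it:bound_pullback} and the Sobolev--Poincar\'e bound $\osc_m(f,B(x,\rho))\le C\rho^{1+\nu/m}\psi_\rho^{1/q}(x)$, $\psi=|D_hf|^q$, $\frac1q=\frac1m+\frac1\nu$. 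If you want to salvage your route, you would have to replace the entrywise claims of Step 1 by statements of exactly this blow-up type, at which point you have reproduced the paper's argument.
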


\bigskip
  We refer the reader to Section~\ref{sec_pansu_pullback_mollification} for more refined statements.  

Although the overall outline of the proof of Theorem~\ref{thm_approximation_theorem_intro} is the same as for \cite[Theorem 1.18]{KMX1}, the fact that  $p\leq \nu$ creates several complications: a Sobolev mapping $f\in W^{1,p}_{\loc}(U,G')$ as in the theorem need not be either  (classically) Pansu differentiable almost everywhere or continuous; in particular, the argument cannot be localized in the target.   

\bigskip
As immediate consequences of Theorem~\ref{thm_approximation_theorem_intro}, the rigidity and partial rigidity results from \cite{KMX1} hold under weaker assumptions on the exponent.  For instance:
\ben
\item    Let $\{G_i\}_{1 \le i \le n}$, $\{G'_j\}_{1 \le j \le n'}$ be collections on Carnot groups where each $G_i, G'_j$ is nonabelian and does not admit
a nontrivial decomposition as a product of Carnot groups. Let $G = \prod_i G_i$, $G' = \prod_j G'_j$. Set 
$ K_i : = \{ k \in \{1, \ldots, n\} : G_k \simeq G_i \}$
and assume that 
\begin{equation}  \label{eq:p_rigidity_intro}
 p \ge \max\{ \nu_i - 1:   |K_i| \ge 2 \}
 \end{equation}
 where $\nu_i$ denotes the homogeneous dimension of $G_i$. 
Assume that  $f: G \supset U \to G'$ is a $W^{1,p}_{\rm loc}$ mapping,
$U = \prod_i U_i$ is a product of connected open sets $U_i \subset G_i$,
and the (approximate) Pansu differential $D_P f(x)$   is an isomorphism for a.e.\ $x \in G$.
Then $f$ coincides almost everywhere with a product mapping, modulo a permutation of the factors, 
 see Theorem~\ref{th:product_rigidity}  below. In \cite[Theorem 1.1]{KMX1} the result was proved under the stronger hypothesis 
 $p > \sum_i \nu_i$.  
If each $G_i$ is either a higher Heisenberg group  $\H_{m_i}$  (with $m_i \ge 2$) or any complex Heisenberg group $\H^\C_{m_i}$ (with $m_i \ge 1$)
then the condition 
  \eqref{eq:p_rigidity_intro} can be  improved to $p \ge 2$,
 see Corollary~\ref{co:product_rigidity} below.

 \item If $\H^\C_m$ is the complexification of the $m^{th}$ Heisenberg group $\H_m$, $U \subset \H^\C_m$ is open and connected 
and $f: U\ra\H^\C_m$ is a $W^{1,2m+1}_{\loc}$-mapping such that  the (approximate) Pansu differential $D_Pf(x)$   is an isomorphism for a.e. $x$, then $f$ coincides almost everywhere with a holomorphic or antiholomorphic mapping, cf. \cite[Theorem 1.6]{KMX1} 
for the same result under the stronger condition $p > 4m +2$.
\een

\bigskip
Another application of Theorem~\ref{thm_approximation_theorem_intro} is to quasiregular mappings between Carnot groups, addressing questions originating in \cite{rickman_quasiregular_mappings,heinonen_holopainen}.    We recall that a fundamental step in Reshetnyak's approach to quasiregular mappings in $\R^n$ is showing that the composition of an $n$-harmonic function with a quasiregular mapping is a solution to a quasilinear elliptic PDE;  this ``morphism property'' depends crucially on  the fact that pullback commutes with exterior differentiation \cite{reshetnyak_space_mappings_bounded_distortion}.  Using Theorem~\ref{thm_approximation_theorem_intro}, we are able to extend earlier work of \cite{vodopyanov_foundations} (see also \cite{heinonen_holopainen}), so as to generalize a portion of Reshetnyak's theory to all Carnot groups.   In particular, if a Carnot group $G$ has homogeneous dimension $\nu$ and $f:G\supset U\ra G$ is a quasiregular mapping, then (see Section~\ref{sec_quasiregular_mappings} for more details):
\bit
\item  The ``morphism'' property, which was first shown by Reshetnyak in the $\R^n$ case,  holds for locally Lipschitz $\nu$-harmonic functions: if $u:G\ra \R$ is a locally Lipschitz $\nu$-harmonic function then the composition $u\circ f$ is $\A$-harmonic.
\item $f\in W^{1,\nu'}$ for some $\nu'>\nu$.
\item $f$ is H\"older continuous, Pansu differentiable almost everywhere, and maps null sets to null sets.
\eit

\bigskip\bigskip
We conclude with some open questions.  

\begin{question}
What is the exponent threshold for rigidity/flexibility in the results mentioned above?  
\end{question}
For instance,  suppose $f:\H\times\H\ra\H\times\H$ is a $W^{1,p}$-mapping whose Pansu differential is an isomorphism almost everywhere.  For which $p$ must $f$ agree with a product mapping almost everywhere?  Are there counterexamples when $p=1$?  Optimal Sobolev exponents were obtained for an analogous product rigidity question in the Euclidean setting in \cite{kmsx_infinitesimally_split_globally_split,kmsx_counterexample}.

\medskip

We recall that a Carnot group $G$ is rigid in the sense of Ottazzi-Warhurst if for any connected open subset $U\subset G$, the family of smooth contact embeddings $U\ra G$ is finite dimensional.  We conjectured \cite[Conjecture 1.10]{KMX1} that quasiconformal homeomorphisms of rigid Carnot groups are smooth.  One may ask if there is a rigidity/flexibility threshold for these groups.
\begin{question}
Let $f:G\supset U\ra G$ be a $W^{1,p}$-mapping, where $U$ is an open subset of an Ottazzi-Warhurst rigid Carnot group, and  $D_Pf(x)$ 
  is an isomorphism for a.e. $x\in U$ (recall that $D_Pf(x)$ denotes the approximate Pansu differential).  For which $p$ can we conclude that $f$ is smooth?  What if $f$ is a homeomorphism?\footnote{Ottazi-Warhurst 
  showed that $C^2$ quasiconformal homeomorphisms of rigid Carnot groups are smooth.
Recently  Jonas Lelmi improved this result,  replacing the $C^2$ regularity assumption with $C^1$ (or even Euclidean bilipschitz); 
the same result was shown 
by Alex Austin for  the $(2,3,5)$ distribution \cite{lelmi,austin_235}.}
\end{question}

Motivated by \cite{iwaniec_martin_quasiregular_even_dimensions,iwaniec_p_harmonic_tensor_quasiregular_mappings} one may ask about minimal regular requirements for quasiregular mappings.  
\begin{question}
Suppose $f:G\supset U\ra G$ is  a weakly quasiregular mapping, i.e. $f\in W^{1,p}$ and for some $C$ we have $|D_hf|^\nu\leq C\det D_Pf$ almost everywhere.  For which $p<\nu$ can we conclude that  $f\in W^{1,\nu}$?
\end{question}

\bigskip
\subsection*{Organization of the paper}~
We review some background material on Carnot groups and Sobolev mappings in Section~\ref{sec_prelim}.  Section~\ref{sec_center_of_mass_mollification} establishes  existence and estimates for the center of mass for measures which satisfy a moment condition, and establishes bounds for the associated mollification procedure.  The proof of the main approximation theorem and some applications to the exterior derivative are proven in Section~\ref{sec_pansu_pullback_mollification}.  Section~\ref{sec_quasiregular_mappings} gives applications to quasiregular mappings.  For the convenience of the reader, we have included proofs of some background results in the appendices.  In Appendix~\ref{se:W1p_differentiability}, we give a new direct proof of the $L^p$ Pansu differentiability of Sobolev mappings; see Subsection~\ref{subsec_sobolev} and Appendix~\ref{se:W1p_differentiability} for a comparison with the original proof by Vodopyanov.  In Appendix~\ref{se:app_compact} we prove the compact Sobolev embedding,  and in Appendix~\ref{se:sobolev} we discuss Sobolev spaces defined using weak upper gradients, collecting some results from the literature, and comparing with with distributional approach of Reshetnyak and Vodopyanov.

\section{Preliminaries}
\label{sec_prelim}
\subsection{Carnot groups}~
In this subsection  we recall  some  standard  facts about Lie groups, in particular nilpotent Lie groups and  Carnot groups, and prove a simple estimate for the 
nonlinear term in the Baker-Campbell-Hausdorff (BCH) formula.  This will be useful  to define  the center of mass for probability measures  which do not necessarily have compact support, but only satisfy bounds on certain moments.

Our main interest is in Carnot groups and the reader may focus on this case. Since the construction of the center of mass extends
to connected, simply connected  nilpotent groups without additional effort, we include a short discussion of nilpotent groups as well. Since the facts mentioned below will be standard for most geometers, our discussion is more calibrated for analysts.

\medskip

\medskip

Let $G$ be a Lie group  of dimension $N$. In this paper we will only consider connected, simply connected
Lie groups.
We give  the tangent space $T_e G$ at the identity the structure of a Lie algebra $\fg$  in the usual way: each  tangent vector  $X \in T_e G$
can be extended to a left-invariant vectorfield $\tilde X$ through push-forward by left translation $\ell_a (g) = a g$, i.e  $\tilde X(a) f = X (f \circ \ell_a)$. 
For two left-invariant vector fields $\tilde X$ and $\tilde Y$ one easily sees that the commutator $[\tilde X, \tilde Y] = \tilde X \tilde Y - \tilde Y \tilde X$ is
a left-invariant vectorfield. We define the Lie bracket on $T_e G$ by $[X,Y] = [\tilde X, \tilde Y](e)$. In the following we do not distinguish between
$X$ and $\tilde X$. Similarly we identify left-invariant differential $k$-forms on $G$ with $\Lambda^k \fg$.

The descending series of the Lie algebra is defined   by $\fg_1 = \fg$ and $\fg_{i+1} = [\fg, \fg_i]$ where the right hand side denotes the linear space generated by  of all brackets of the form $[X, Y]$
with $X \in \fg$, $Y \in \fg_i$. 
We say that $G$ is a nilpotent group of step $m$ if $[\fg]_m \ne \{0\}$ and $[\fg]_{m+1} = \{0\}$. 
We say that $G$ is a Carnot group  of step $m$ if, in addition,  $\fg$ is graded, i.e.  
if  we are given a direct sum decomposition $\fg = \oplus_{j=1}^m V_j$ (as a vectorspace) with $V_{j+1} = [V_1, V_j]$ for $1 \le j \le m-1$.  
In the  general nilpotent case it will be convenient to introduce   subspaces $W_1, \ldots, W_m$ such that $\fg_{i} = W_i \oplus  \fg_{i+1}$. 
There is no canonical choice of the spaces $W_i$ (except for $W_m$),  but for our analysis any choice will do (see also
Remark~\ref{re:norms_and_complement}  below).
In the Carnot and general cases, respectively, we have
\begin{equation}   \label{eq:bracket_grading}
[V_i, V_j]   \subset \fg_{i+j}  \quad \text{and}  \quad  [W_i, W_j] \subset  \oplus_{k=i+j}^{m} W_k.
\end{equation}

By uniqueness of  solutions of ordinary differential equations the integral curve $\gamma_X: \R \to G$  of a left invariant vectorfield $X$ with $\gamma_X(0) = e$     is a subgroup.
We define the exponential map $\exp: \fg \to G$ by $\exp X(e) = \gamma_X(1)$. Thus $\exp: \fg \to G$ is smooth. 
By the Baker-Campbell-Hausdorff (BCH) Theorem for sufficiently small
$X$ and $Y$ we have $\exp X \exp Y =  \exp (X +  Y + P(X,Y))$ where $P(X,Y)$ is a series in iterated Lie brackets of $X$ and $Y$, see, e.g.
\cite[eqn. (2), p.\ 12]{corwin_greenleaf_book} or
\cite[Thm. 4.29]{Michor} .

 For a nilpotent Lie group of step $m$ 
the Lie brackets of order $m+1$ and higher vanish. Then  the expression $P(X,Y)$ is a polynomial, 
the exponential map is a diffeomorphism  and the BCH formula holds for all $X$ and $Y$  \cite[Thm. 1.21]{corwin_greenleaf_book}.  We often write $\log = \exp^{-1}$,  
and denote the induced group action on $\fg$ by
$$
X \ast Y :=\log(\exp X\exp Y)= X + Y + P(X,Y)\,.
$$
 One can use $\log: G \to \fg$ as a global chart for $G$ with the group operation given by $\ast$, but we will usually not do this. 
We denote by  $\pi_i$ the projection  $\fg \to V_i$ (or $\fg \to W_i$ for nilpotent groups). It follows from \eqref{eq:bracket_grading}
that $\pi_i [X,Y]$ depends only on $\pi_1(X), \ldots, \pi_{i-1}(X)$
and $\pi_1(Y), \ldots, \pi_{i-1}(Y)$. Thus the differential of $P$ with respect to the first or second variable is block lower triangular with 
respect to the decompositions  $\fg = \oplus_{i=1}^m V_i$ or $\fg = \oplus_{i=1}^m  W_i$, with zero entries on the block diagonal. 
 It follows  that  the Lebesgue measure $\mathcal L^N$ on $\fg$ is invariant under the left and right  group operation $\ast$. 
Thus the push-forward measure $\exp_* \mathcal L^N$ is the biinvariant Haar measure on $G$ (up to a factor). 

The horizontal bundle $\mathcal H \subset TG$ is the span of the left-invariant vectorfields $X$ which satisfy $X(e) \in V_1$ (or $X(e) \in W_1$ in the nilpotent case). 
We fix a scalar product on $\fg$. This induces a   left-invariant metric on $G$ by left-translation.  The Carnot-Carath\'eodory distance on $G$ as the shortest length of horizontal curves, i.e.
\begin{equation}
 d_{CC}(x, y) =  \inf \{ \int_a^b |\gamma'(t)| \, dt :  \text{  $\gamma: [a,b] \to G$ rectifiable, $\gamma'(t) \in \mathcal H$}  \}.
 \end{equation}
Push-forward by left translation preserves the horizontal bundle. Thus the left translation of a horizontal curve is horizontal
and  the metric $d_{CC}$ is left-invariant. By Chow's theorem every two points in $G$ can be connected by a horizontal curve
of finite length so that $d_{CC}(x,y) < \infty$ for all $x, y \in G$. Moreover $d_{CC}$ induces the usual manifold topology on $G$
\cite[Thm 2.1.2 and Thm 2.1.3 ]{montgomery_book}.

\bigskip

We now focus on Carnot groups.
 We define a one parameter group of dilations $\delta_r: \fg \to \fg$ by $\delta_r X = r^j X$ for $X \in V_j$ and linear extension. Then $\delta_r [X, Y] = [\delta_r X, \delta_r Y]$
so that $\delta_r$ is a Lie algebra homomorphism. Since $P(X,Y)$ is a sum of iterated Lie brackets it follows that $\delta_r (X \ast Y) =( \delta_r X) \ast (\delta_r Y)$. 
Thus $\exp \circ \delta_r \circ \exp^{-1}: G \to G$ is a group homomorphism which we also denote by $\delta_r$.  
Then $\delta_r (\ell_a x) = \ell_{\delta_r a} \delta_r x$.  Since $\delta_r$ as a map on $\fg$ preserves $V_1$ and is scaling by $r$ on $V_1$ it follows that 
$\delta_r$ maps horizontal curves to horizontal curves and 
\begin{equation}
d_{CC}(\delta_rx, \delta_r y) = r d_{CC}(x,y).
\end{equation}
Since $d_{CC}$ is also left-invariant, the bi-invariant measure of a ball $B(x,r)$ in the $d_{CC}$   metric is given by
\begin{align}
& \, \mu(B(x,r)) = \mu(B(e,r)) =  \mu(\delta_r B(e,1))  \\
 =& \,  \mathcal L^N(\delta_r \log B(e,1)) = r^\nu \mu(B(e,1))
\end{align}
where 
\begin{equation}
\nu := \sum_{j=1}^m  j \dim \fg_j  \quad \text{is the homogeneous dimension of $G$.}
\end{equation}

We define a Euclidean norm  $| \cdot|_e$ on $V_j$ by restriction of  the scalar product on $\fg$ to $V_j$.
Recall that $\pi_j$ denotes the projection from $\fg$ to $V_j$.   
To reflect the action of $\delta_r$ on $\fg$ we introduce the `homogeneous norm'
\begin{equation}  \label{eq:homogeneous_norm}
|X| := \left(  \sum_{i=1}^m  | \pi_i  X|_e^{2m!/i}\right)^{1/ 2m!}.  \quad 
\end{equation}
Then
\begin{equation}  \label{eq:scaling_homogeneous_norm}
|\delta_r X| = r |X|.
\end{equation}
Note that $| X|$ is comparable to $\sum_{j=1}^m  |\pi_j X|_e^{1/j}$ and that $| \cdot|$ does not satisfy the triangle inequality
but only the weaker estimate $| X + Y| \le C |X| + C |Y|$.
It follows from the  ball-box theorem, see e.g.  \cite[Theorem 2.4.2]{montgomery_book},   that there exists constant $C_1$ and $C_2$ such that
\begin{equation} \label{eq:ball_box}
   C_1 d_{CC}(e, \exp X) \le     |X| \le C_2 d_{CC}(e, \exp X)  \quad \forall X \in \fg.
\end{equation}
In fact, in Carnot groups the ball-box theorem follows immediately from the seemingly weaker statement that
the Riemannian distance $d$ and $d_{CC}$ induce the same topology on $G$. Indeed, together with the fact
that $\exp$ is a homeomorphism from $\R^n$ to $G$ equipped with $d$ this implies that the set
$S := \{ X \in \fg : d_{CC}(e, \exp X) = 1\}$ is compact. Thus $|X|$ attains its minimum and maximum on $S$ and
the inequality  \eqref{eq:ball_box} follows since all terms scale by $r$ if we replace $X$ by $\delta_r X$. 

Note also that 
\begin{equation}  \label{eq:norm_euclidean}
   C^{-1} \sum_{j=1}^m |\pi_j X|_e^2       \le   |X|_e^2  \le C \sum_{j=1}^m |\pi_j X|_e^2 \quad \forall X \in \fg
\end{equation}
since all norms on a finite-dimensional vector space are equivalent. 
In fact we can take $C=1$ if we choose a scalar product on $\fg$ such that the subspaces $V_j$ 
are orthogonal.

One of our main goals  is to construct a center of mass for probability measures    $\nu$ on $G$
which is invariant under left-translation and group homomorphisms. Equivalently, we want to construct
a center of mass for probability measures on $\fg$ which is compatible with the group action $\ast$. 
Since we want to allow measures which do not have compact support but only satisfy suitable moment bounds
we need good control of  the nonlinear term $P(X,Y)$ in the BCH formula in terms of $|X|$ and $|Y|$. 
To write the estimate we use the following notation for a multiindex
 $J = {j_1, \ldots, j_k}$ with $k \ge 1$ and $j_i \in \N \setminus \{0\}$.  We set
$\# J= k$ and $| J| = \sum_{i=1}^k j_i$. 

\begin{proposition}   \label{pr:bounds_P}
 Let $E_1,   \ldots, E_N$ be a basis of $\fg$. There exist multilinear forms $M_I$ and $L_J$ such that
\begin{align}  \label{eq:decomposition_P}
 & \qquad  P(X,Y) = \\
&   \sum_{i=1}^N     \sum_ { \underset{ j \ge 1, k \ge 1}{j, k, j+k  \le m}  }
\sum_{  \underset{ \# I =j, \# J = k}{I,J} }
 M^i_I(\pi_{i_1} X,  \ldots, \pi_{i_j}  X)   \, L^i_J(\pi_{i_1} Y,   \ldots, \pi_{i_k} Y)  \, E_i
 \nonumber 
\end{align}
and
\begin{equation}  \label{eq:bound_P_multilinear}
 M^i_I(\pi_{i_1} X,  \ldots, \pi_{i_j} X)  \le C  |X|^{| I |},  \quad 
  L^i_J(\pi_{i_1} Y,   \ldots, \pi_{i_k} Y)  \le C  |Y|^{| J |}.
\end{equation}
In particular
\begin{equation}     \label{eq:bound_P}
|P(X,Y)|_e  \le C(R)   (1 + |Y|^{m-1})  \quad \text{for all $X$ with $|X| \le R$}
\end{equation}
and the derivatives of $P$ with respect to the first variable satisfy
\begin{align}    \label{eq:bound_derivatives_P}
& |D^k_1 P(X,Y) (\dot X, \ldots, \dot X)| \le C(R) (1 + |Y|^{m-1})    \\
 & \quad \text{for all $X, \dot X$ with $|X| \le R$ and $|\dot X| \le 1$.}  \nonumber 
\end{align}
for $1 \le k \le m-1$ and $D^m_1 P = 0$. 
Moreover
\begin{equation} \label{eq:bound_P_homogeneous_norm}
| [X, Y]| \le C (|X| + |Y|)  \quad \text{and} \quad  |P(X,Y)| \le C (|X| + |Y|).
\end{equation}
\end{proposition}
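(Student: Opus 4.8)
The plan is to prove Proposition~\ref{pr:bounds_P} in three stages: first establish the algebraic decomposition \eqref{eq:decomposition_P}, then extract the multilinear bounds \eqref{eq:bound_P_multilinear} from it, and finally deduce the remaining consequences \eqref{eq:bound_P}, \eqref{eq:bound_derivatives_P} and \eqref{eq:bound_P_homogeneous_norm} as corollaries.

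\textbf{Step 1: the decomposition.} By the BCH theorem, $P(X,Y)$ is a finite sum of iterated Lie brackets in $X$ and $Y$; since $\fg$ is $m$-step nilpotent, all brackets of order $\ge m+1$ vanish, so only brackets of total order between $2$ and $m$ survive. Each such bracket is multilinear in its $X$-entries and $Y$-entries separately. Fixing a basis $E_1,\dots,E_N$ of $\fg$ and expanding each $X$- or $Y$-slot as $X=\sum_s \pi_s X$, $Y=\sum_t\pi_t Y$, every iterated bracket becomes a sum of terms that are multilinear in a tuple $(\pi_{i_1}X,\dots,\pi_{i_j}X)$ and in a tuple $(\pi_{i_1}Y,\dots,\pi_{i_k}Y)$; collecting the $E_i$-components gives \eqref{eq:decomposition_P} with $M^i_I$, $L^i_J$ the corresponding coordinate multilinear forms. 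The constraints $j,k\ge 1$ reflect that $P$ is the \emph{nonlinear} remainder (it vanishes when $X=0$ or $Y=0$), and $j+k\le m$ reflects nilpotency.

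\textbf{Step 2: the multilinear bounds.} Each $M^i_I$ is a fixed multilinear form on $V_{i_1}\times\cdots\times V_{i_j}$, so $|M^i_I(\pi_{i_1}X,\dots,\pi_{i_j}X)|\le C\prod_r |\pi_{i_r}X|_e$. Now by the definition \eqref{eq:homogeneous_norm} of the homogeneous norm, $|\pi_s X|_e \le |X|^{s}$ for each $s$ (since $|\pi_s X|_e^{2m!/s}$ is one of the summands defining $|X|^{2m!}$). Hence $\prod_{r=1}^{j}|\pi_{i_r}X|_e \le |X|^{\sum_r i_r}=|X|^{|I|}$, which is \eqref{eq:bound_P_multilinear}; the estimate for $L^i_J$ is identical with $Y$ in place of $X$.

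\textbf{Step 3: the consequences.} For \eqref{eq:bound_P}, apply \eqref{eq:bound_P_multilinear} to each term of \eqref{eq:decomposition_P}: a term with index data $(I,J)$ is bounded by $C|X|^{|I|}|Y|^{|J|}\le C R^{|I|}|Y|^{|J|}$ when $|X|\le R$, and since $|J|\le m-1$ (because $|I|\ge \#I\ge 1$ and $|I|+|J|\le\#I+\#J\le \dots$ — more precisely $\#I\ge 1$ forces $|J|\le |I|+|J|-1$, and the order bound $j+k\le m$ together with $j\ge1$ gives $k\le m-1$, hence $|J|$ can be as large as needed but the relevant point is $|Y|^{|J|}\le 1+|Y|^{m-1}$ after absorbing lower powers), summing over the finitely many terms yields $|P(X,Y)|_e\le C(R)(1+|Y|^{m-1})$. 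For \eqref{eq:bound_derivatives_P}, differentiating \eqref{eq:decomposition_P} $k$ times in $X$ and evaluating on $(\dot X,\dots,\dot X)$ replaces $k$ of the $X$-slots in each $M^i_I$ by $\dot X$; using multilinearity and $|\dot X|\le 1$, the same estimate goes through with $R$-dependence only, and $D^m_1P=0$ since each $M^i_I$ has at most $j\le m-1$ arguments (as $k=\#J\ge 1$ forces $j\le m-1$ under $j+k\le m$). For \eqref{eq:bound_P_homogeneous_norm}: the bracket $[X,Y]=\sum_i B^i(X,Y)E_i$ is bilinear, and $\pi_s[X,Y]$ is a sum of $[\pi_a X,\pi_b Y]$ with $a+b\le s$ (by \eqref{eq:bracket_grading}), so $|\pi_s[X,Y]|_e\le C\sum_{a+b\le s}|\pi_aX|_e|\pi_bY|_e\le C\sum_{a+b\le s}|X|^a|Y|^b$; raising to the power $2m!/s$, summing, and taking the $2m!$-th root, each summand contributes $\lesssim (|X|+|Y|)^{?}$ — one checks $|X|^{a}|Y|^{b}$ with $a+b\le s$ gives a term of homogeneous degree $\le s$, so $|\pi_s[X,Y]|_e^{2m!/s}\le C(|X|+|Y|)^{2m!}$, giving $|[X,Y]|\le C(|X|+|Y|)$; the bound on $P$ follows since $P$ is a sum of iterated brackets, applying the bracket estimate inductively.

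\textbf{Main obstacle.} The one genuinely fiddly point is the bookkeeping in Step~3 showing that the homogeneous-degree accounting works out for \eqref{eq:bound_P_homogeneous_norm} — i.e. that an iterated bracket of total order $r$ in mixed $X,Y$ entries, when projected to $V_s$, only involves products $\prod|\pi_{a_\ell}X|^{}\prod|\pi_{b_\ell}Y|^{}$ with $\sum a_\ell+\sum b_\ell\le s$, so that after the $2m!/s$ scaling every summand is controlled by $(|X|+|Y|)^{2m!}$. This is a direct consequence of \eqref{eq:bracket_grading} but requires writing out the grading-degree additivity carefully; everything else is routine multilinear algebra combined with the elementary inequality $|\pi_sX|_e\le|X|^s$.
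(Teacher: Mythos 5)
There is a genuine gap at the crux of the estimate \eqref{eq:bound_P}. Your Step 1 only uses nilpotency to bound the \emph{number of slots}, i.e. $\#I+\#J\le m$, but the exponent $m-1$ in \eqref{eq:bound_P} and \eqref{eq:bound_derivatives_P} requires the \emph{weighted} bound $|J|=\sum_\ell i_\ell\le m-1$ for every surviving term. You in fact concede this in Step 3 ("$|J|$ can be as large as needed") and then patch it with the inequality $|Y|^{|J|}\le 1+|Y|^{m-1}$, which is false once $|J|\ge m$ and $|Y|$ is large; with only $\#J\le m-1$ a single slot $\pi_m Y$ already produces $|Y|^m$. The missing ingredient is exactly what the paper's proof supplies: since $P$ is a sum of iterated brackets and $[V_i,V_j]\subset\fg_{i+j}$ (the first inclusion in \eqref{eq:bracket_grading}), any term whose weighted degree satisfies $|I|+|J|>m$ lands in $\fg_{|I|+|J|}=\{0\}$ and hence vanishes; combined with $|I|\ge\#I\ge1$ this gives $|J|\le m-1$ for all nonzero terms, after which \eqref{eq:bound_P} follows from \eqref{eq:bound_P_multilinear} and $a^k\le 1+a^{m-1}$, and \eqref{eq:bound_derivatives_P} by differentiating the multilinear $M^i_I$. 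Your claim $D^m_1P=0$ is fine (it only needs $\#I\le m-1$), but \eqref{eq:bound_P} and \eqref{eq:bound_derivatives_P} as you argue them do not go through.

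A smaller slip occurs in your treatment of \eqref{eq:bound_P_homogeneous_norm}: by the grading, $\pi_s[X,Y]$ is a sum of $[\pi_aX,\pi_bY]$ with $a+b=s$, not $a+b\le s$; with the terms $a+b<s$ that you allow, the bound $(|X|^a|Y|^b)^{1/s}\le C(|X|+|Y|)$ fails when $|X|+|Y|$ is small, so your degree bookkeeping would not close as written (it is fixable by restricting to $a+b=s$). The paper avoids this bookkeeping entirely: by bilinearity it suffices to take $X\in V_j$, $Y\in V_k$, so that $[X,Y]\in V_{j+k}$ and $|[X,Y]|=|[X,Y]|_e^{1/(j+k)}\le C|X|^{j/(j+k)}|Y|^{k/(j+k)}$, and Young's inequality gives $C(|X|+|Y|)$; the estimate for $P$ then follows by iterating the bracket bound, as you also propose. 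Apart from these two points your outline matches the paper's argument (expansion of $P$ into products of multilinear forms in the $\pi_iX$, $\pi_iY$, the elementary inequality $|\pi_sX|_e\le|X|^s$ from \eqref{eq:homogeneous_norm}, and uniform boundedness of derivatives of multilinear forms on $\{|X|\le R\}$).
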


\begin{proof} Since $X = \sum_i  \pi_i X$ and $Y = \sum_i \pi_i Y$ and since $P(X,Y)$ is a multilinear expression 
in $X$ and $Y$ it is clear that $P$ can be expanded into sums of products of multilinear terms as in   \eqref{eq:decomposition_P}. To show 
\eqref{eq:decomposition_P}
 we only have to show that the terms corresponding to $ | I | + |J | > m$ vanish.
 This follows immediately  from the fact that $P(X,Y)$ is a sum of iterated Lie brackets and 
 the first inclusion in   \eqref{eq:bracket_grading}

  Since $M^i_I$ is a multilinear form it follows that
 $M^i_I(\pi_{i_1} X,  \ldots, \pi_{i_j} X)  \le C \prod_{k=1}^j  |\pi_{i_k} X|_e$.
 Now by the definition of the homogeneous norm we have $   |\pi_{i_k} X|_e  \le |X|^{i_k}$.
 This implies the estimate for $M^i_I$ and the same argument applies to $L^i_J$. 
  The estimate  \eqref{eq:bound_P} is an immediate consequence of 
  \eqref{eq:decomposition_P},  \eqref{eq:bound_P_multilinear}, the  fact that 
  only terms with $|J| \le m-1$ appear in \eqref{eq:decomposition_P} 
  and  the estimate $a^k \le 1 + a^{m-1}$ for $1 \le k \le m-1$. 
  Since  the terms $M_I^i$ are multilinear, their derivatives are uniformly bounded for  $|X| \le R$
  and thus   \eqref{eq:bound_derivatives_P} follows in the same way.
  
The second estimate in  \eqref{eq:bound_P_homogeneous_norm} follows from the first  since $P(X,Y)$ is a linear combination of
  iterated Lie brackets. To show the first inequality, assume first that $X \in V_j$, $Y \in V_k$. Then $[X, Y] \in V_{j+k}$.
  Thus
  $$|[X, Y]| = |[X,Y]|_e^{\frac1{k+j}} \le C |X|_e^{\frac1{k+j}} \,   |Y|_e^{\frac1{k+j}} \le  C |X|^{\frac{j}{k+j}} \,   |Y|^{\frac{k}{k+j}}$$
  and the estimate follows by Young's inequality. For general $X, Y$ the estimate follows by bilinearity of the Lie bracket.
\end{proof}

\begin{remark}   \label{re:BCH_bounds_nilpotent}
The estimates   \eqref{eq:bound_P_multilinear}, \eqref{eq:bound_P} and \eqref{eq:bound_derivatives_P}
also  hold for nilpotent groups if $\pi_i$ denotes the projection
to the spaces $W_i$ and the homogeneous norm is defined with this definition of $\pi_i$. 
Indeed, we can use the second inclusion in   \eqref{eq:bracket_grading} to see that also in the nilpotent
  case  the sum in  \eqref{eq:decomposition_P} only contains terms
with $|I| + |J| \le  m$. The rest of the argument is the same.  
 Instead of \eqref{eq:bound_P_homogeneous_norm}
we have the slightly weaker estimates
\begin{equation} \label{eq:bound_P_nilpotent_homogeneous_norm}
| [X, Y]| \le C (1 + |X| + |Y|)  \quad \text{and} \quad  |P(X,Y)| \le C ( 1+ |X| + |Y|).
\end{equation}
Again the second estimate follows from the first. For the first estimate we first consider $X \in W_j$, $Y \in W_k$.
Then $[X, Y] \in  \oplus_{i=j+k}^m W_i$    
 and thus by Young's inequality and the previous estimate for 
$|[X,Y]|_e^{\frac1{j+k}}$
$$ | [X, Y] | \le \sum_{i=j+k}^m  |\pi_i [X,Y]|_e^{\frac1i}  \le  C(1 + |[X,Y]|_e^{\frac1{j+k}}) \le C (1 + |X| + |Y|).$$
For general $X$, $Y$ the estimate follows by bilinearity. 
\end{remark}

\bigskip\bigskip
\begin{remark}  \label{re:norms_and_complement}
Note that in the nilpotent case the homogeneous norm does not just depend on the group and the
scalar product on $\fg$, but also on the choice of the complementing spaces $W_1, \ldots, W_m$. 
Different choices lead, however, to  norms which are essentially equivalent in the following sense. Let $\tilde W_i$ be different spaces with   
$\fg_i = \tilde W_i \oplus \fg_{i+1}$, let $\tilde \pi_i$ be the corresponding projections
and let $|\cdot |_{\sim}$ be the corresponding homogeneous norm.
Then there exists a constant $C$ such that
\begin{equation}  \label{eq:equivalent_norms_nilpotent}
|X|_{\sim} \le C(  |X|^{\frac1m} +   |X| )   \quad   \text{and} \quad |X| \le C(  |X|_{\sim}^{\frac1m} +   |X|_{\sim} ). 
\end{equation}
It suffices to prove the first inequality, the second follows by reversing  the roles of $W_i$ and $\tilde W_i$. 
We have $\tilde \pi_i  |_{\fg_{i+1}} = 0$.  Since $\pi_{i+1} X, \ldots, \pi_{m} X \in \fg_{i+1}$ there exist linear maps
$L_i : \oplus_{k=1}^i W_k  \to \tilde W_i$ 
   such that
$ \tilde \pi_i X = L_i(\pi_1 X,  \ldots, \pi_i X)$.
Thus 
$$ | \tilde \pi_i X|_e  \le  C \sum_{k=1}^i |\pi_k X|_e  \le  C  \sum_{k=1}^i |X|^k$$
and 
hence 
$$ |X|_\sim \le C \sum_{i=1}^m   |\tilde \pi_i X|_e^{\frac1i}  \le C \sum_{i=1}^m   |X|^{\frac1i}.$$
  From this the assertion easily follows by Young's inequality.
\end{remark}

\bigskip\bigskip
\subsection{Differential forms on Carnot groups}\label{subsec_differential_forms_on_carnot_groups}
Let $G$ be a Carnot group with graded Lie algebra $\fg=\oplus_i V_i$.  The grading defines a simultaneous eigenspace decomposition for the dilations $\{\de_r\}_{r\in (0,\infty)}$.  Therefore the action of $\{\de_r\}_{r\in (0,\infty)}$ on $\La^k\fg$ also has a simultaneous eigenspace decomposition 
\begin{equation}
\label{eqn_weight_eigenspace_decomposition}
\La^k\fg=\oplus_w \La^{k,w}\fg
\end{equation}
 where $\de_r$ acts on $\La^{k,w}\fg$ by scalar multiplication by $r^w$.  In particular, for any  $\al\in\La^k\fg$, we have a canonical decomposition
\begin{equation}
\label{eqn_weight_decomposition}
\al=\sum_w\al_w
\end{equation}
where $(\de_r)_*\al_w=r^w\al_w$ for every $w$.  Concretely, if $X_1,\ldots,X_N$ is a graded basis of $\fg$,  and  $\th_1,\ldots,\th_N$ is the dual basis, then the actions of $\de_r$ on $\fg$ and $\fg^*$ are diagonal with respect to these bases, and the action on $\La^k\fg$ is diagonal with respect to the basis given by exterior powers of the $\th_i$s.

\begin{definition}
An element  $\al\in \La^k(\fg)$  is {\bf homogeneous with weight $w$}
if $\al\in\La^{k,w}\fg$; it has {\bf weight $\leq w$} if $\al\in \La^{k,\leq w}$ where
\begin{equation}
\label{eqn_def_wt_leq_w}
\La^{k,\leq w}:=\oplus_{\bar w\leq w}\La^{k,\bar w}\,.
\end{equation}
If $U\subset G$ is open, then a $k$-form $\al\in \Om^k(U)$  is {\bf homogeneous of weight $w$} or has {\bf weight $\leq w$} if $\om(x)\in \La^{k,w}$  or $\om(x)\in \La^{k,\leq w}$, for every $x\in U$, respectively.  We let $\Om^{k,w}(U)$ and $\Om^{k,\leq w}$ denote the homogeneous forms of weight $w$ and the forms of weight $\leq w$, respectively, so  $\Om^k(U)=\oplus_w\Om^{k,w}(U)$.  Note that $0\in\La^k(\fg)$ has weight $w$ for every $w\in \R$.  
\end{definition}

\begin{lemma}
\label{lem_weight_facts}

\mbox{}
\ben
\item If $\al_i\in\Om^{k_i,w_i}$ for $1\leq i\leq 2$, then $\al_1\we \al_2\in\Om^{k_1+k_2,w_1+w_2}$.  
\item $\th_{i_1}\wedge\ldots\wedge\th_{i_k}$  is homogeneous of weight $\sum_j\weight(\th_{i_j})$.   In particular, such wedge products give a basis for the left invariant 
$k$-forms. 
\item  \label{it:weight_facts_pullback} 
If $\be\in \Om^{k,w}(G')$ and
$\Phi:G\ra G'$ is a graded group homomorphism, then $\Phi^*\be=\Phi_P^*\be$ belongs to $\Om^{k,w}(G)$.
\een
\end{lemma}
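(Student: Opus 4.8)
\textbf{Proof plan for Lemma~\ref{lem_weight_facts}.}

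The plan is to verify each of the three items using the diagonal structure of the dilation action with respect to a graded basis. For item (1), I would observe that if $\al_1 \in \La^{k_1,w_1}\fg$ and $\al_2 \in \La^{k_2,w_2}\fg$ then, since $(\de_r)_*$ is a ring homomorphism on $\La^*\fg$ (being the exterior power of a linear map), we have $(\de_r)_*(\al_1\we\al_2) = (\de_r)_*\al_1 \we (\de_r)_*\al_2 = r^{w_1}r^{w_2}(\al_1\we\al_2) = r^{w_1+w_2}(\al_1\we\al_2)$, so $\al_1\we\al_2 \in \La^{k_1+k_2,w_1+w_2}\fg$. The pointwise version for forms on $U$ follows by applying this at each $x\in U$. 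I would note this argument is essentially immediate from the definitions in \eqref{eqn_weight_eigenspace_decomposition}.

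For item (2), I would use the fact, recalled just before the Definition, that if $X_1,\ldots,X_N$ is a graded basis with dual basis $\th_1,\ldots,\th_N$, then $\de_r$ acts diagonally: if $X_{i}\in V_{j}$ then $(\de_r)_* X_{i} = r^{j}X_{i}$, hence $(\de_r)_*\th_{i} = r^{-j}\th_{i}$ — wait, one must be careful with the convention: the paper defines weight so that $(\de_r)_*\al_w = r^w\al_w$, and it writes ``$\weight(\th_i)$'', so I would simply set $\weight(\th_i)$ to be the eigenvalue exponent of $\th_i$ under $(\de_r)_*$ and then apply item (1) inductively to the wedge product $\th_{i_1}\we\cdots\we\th_{i_k}$ to conclude it has weight $\sum_j \weight(\th_{i_j})$. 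The ``in particular'' clause — that these wedge products form a basis of the left-invariant $k$-forms — is just the standard identification of left-invariant $k$-forms with $\La^k\fg^*\cong\La^k\fg$ together with the fact that the $\th_i$ form a basis of $\fg^*$, so the $\binom{N}{k}$ wedge products $\th_{i_1}\we\cdots\we\th_{i_k}$ with $i_1<\cdots<i_k$ form a basis of $\La^k\fg^*$.

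For item (3), the first equality $\Phi^*\be = \Phi_P^*\be$ holds because $\Phi$ is a graded group homomorphism, hence smooth with Pansu differential $D_P\Phi(x) = \Phi$ (the homomorphism itself) at every point, so the ordinary pullback and the Pansu pullback coincide. For the weight statement, I would write $\be = \sum_w \be_w$ with $\be_w\in\Om^{k,w}(G')$ and reduce to showing $\Phi^*\be_w \in \Om^{k,w}(G)$ for homogeneous $\be_w$. The key point is the equivariance of $\Phi$ under dilations: since $\Phi$ is a \emph{graded} homomorphism, $\Phi\circ\de_r = \de_r\circ\Phi$ (dilations on $G$ on the right, on $G'$ on the left). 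Therefore $\de_r^*\Phi^*\be_w = \Phi^*\de_r^*\be_w$; since $\be_w$ has weight $w$ one has $\de_r^*\be_w = r^w\be_w$ (unwinding pushforward versus pullback conventions consistently with the paper's normalization), and hence $\de_r^*(\Phi^*\be_w) = r^w\,\Phi^*\be_w$, i.e. $\Phi^*\be_w \in \Om^{k,w}(G)$. Summing over $w$ gives the claim for general $\be$.

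The only genuine subtlety — and the step I would flag as needing care rather than being the ``main obstacle'' (there is no serious obstacle here) — is bookkeeping the dilation conventions: whether one works with $(\de_r)_*$ or $\de_r^*$, and the resulting sign of the exponent, so that the final statements match the normalization ``$(\de_r)_*\al_w = r^w\al_w$'' fixed in \eqref{eqn_weight_decomposition}. Once that convention is pinned down and used consistently, all three parts are short formal computations: (1) from multiplicativity of exterior powers, (2) from (1) applied to basis covectors, and (3) from dilation-equivariance of graded homomorphisms.
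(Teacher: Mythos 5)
Your proposal is correct and follows essentially the same route as the paper: (1) and (2) are the immediate multiplicativity-of-wedge observations the paper dispatches with ``immediate from the definitions,'' and (3) is the same dilation-equivariance computation, using $\Phi\circ\de_r=\de_r\circ\Phi$ together with the coincidence of the Pansu and ordinary differentials for a graded homomorphism. The only slip is the cosmetic one you yourself flagged: under the paper's normalization $(\de_r)_*\al_w=r^w\al_w$ one has $\de_r^*\be_w=r^{-w}\be_w$ rather than $r^w\be_w$, but since the same exponent appears on both sides of $\de_r^*\Phi^*\be_w=\Phi^*\de_r^*\be_w$, the conclusion that $\Phi^*\be_w$ has weight $w$ is unaffected.
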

\begin{proof}
(1) is immediate from the definitions, and (2) follows from (1).

(3). Since $\Phi$ is a graded group homomorphism, the Pansu derivative
and the ordinary derivative coincide.  Therefore
$$
(\de_r)_*\Phi_P^*\be= (\de_{r^{-1}})^*\Phi^*\be
=\Phi^*(\de_{r^{-1}})^*\be=\Phi^*(\de_r)_*\be
=r^{w_\be}\Phi_P^*\be\,.
$$ 

\end{proof}

\bigskip\bigskip
\subsection{Sobolev spaces on Carnot groups}  \label{subsec_sobolev}
In this subsection we discuss $L^p$ and Sobolev spaces for maps between Carnot
groups; in addition to the definitions, we cover two key properties needed for the proof of the approximation theorem -- the Poincar\'e-Sobolev
inequality and  almost everywhere  Pansu differentiability (in an $L^p$ sense).  
In the literature there are  different approaches to Sobolev mappings between Carnot groups -- some are based on distribution derivatives, and  others on (weak) upper gradients (see \cite{vodopyanov_monotone_1996,vodopyanov_bounded_distortion,HKST}).    In this subsection we use the distributional definition of Sobolev mappings, and cover the upper gradient version in Appendix~\ref{se:sobolev}. 
In fact, the two definitions are equivalent in our setting (see Appendix~\ref{se:sobolev}),  so one could work equally well work with either.

We treat the case where the domain is an open set in a Carnot group; however most statements and proofs apply without modification to equiregular subriemannian manifolds satisfying a suitable Poincar\'e inequality.
 
In this subsection we let $G$ denote a Carnot group with graded Lie algebra $\fg = \oplus_{i=1}^m V_i$. Let $X_1, \ldots, X_K$ be an orthonormal  basis of the  first layer  $V_1$.
As usual we identify the vectors   $X_i \in V_1$ with  left-invariant vectorfields on $G$. Then $X_1(p), \ldots, X_K(p)$ is a 
basis of the horizontal subspace at $p$.

\begin{definition}   \label{de:sobolev_scalar_new} Let $U \subset G$ be open.  We say that  $u: U \to \R$  is in  the Sobolev space $W^{1,p}(U)$
if $u \in L^p(U)$ and  
if the distributional derivatives  $X_1 u, \ldots X_K u$ are in $L^p(U)$, i.e., if there exist $g_i \in L^p(U)$ such that
$$ \int_U  u   \,  X_i \varphi \, d\mu =  - \int_U g_i  \,  \varphi \, d\mu \quad     \text{for all $\varphi \in C_c^\infty(U)$.}
 $$
We say that $u \in W^{1,p}_{loc}(U)$ if $u \in W^{1,p}(V)$ for every open set $V$ whose closure is compact and contained in $U$. 
\end{definition}

\bigskip
We write $X_i u$ for the weak derivatives $g_i$ and  we define
\begin{equation}
D_h u = (X_1 u, \ldots, X_K u), \quad  |D_h u| := \left( \sum_{i=1}^K |X_i f|^2\right)^{1/2}.
\end{equation}

We recall some basic properties of Sobolev functions.
\begin{proposition}  \label{pr:sobolev_basic} Let $U \subset G$ be open. Then the following assertions hold.
\ben
\item  \label{it:basic_density}  $C^\infty(U)$ is dense in $W^{1,p}(U)$;
\item  \label{it:basic_chain}    if $u\in W^{1,p}(U)$ and $\psi: \R \to \R$ is $C^1$ with bounded derivative then $\psi \circ u -  \psi(0) \in  W^{1,p}(U)$ and 
the weak derivatives satisfy the chain rule;
\item  \label{it:basic_absolute}   if $u \in W^{1,p}(U)$ then $|u| \in W^{1,p}(U)$ and the weak derivatives satisfy $X_i |u| = \pm X_i u$ a.e. in the set $\{ \pm u > 0\}$
while $X_i |u| = 0$ a.e.\ in the set $\{u = 0\}$;
\item  \label{it:basic_min}  if $u,v \in W^{1,p}(U)$ then $\min(u,v) \in W^{1,p}(U)$ and $|D_h \min(u,v)| \le \max(|D_h u|, |D_h v|)$ a.e.;
\item    \label{it:basic_inf}    if $u_k \in W^{1,p}(U)$ for $k \in \N$ and there exist functions $g, h \in L^p(U)$ such that $ |D_h u_k| \le g$ a.e. and 
$u_k \ge h$ a.e., for all $k \in \N$ then $\underline u := \inf_k u_k \in W^{1,p}(U)$ and $|D_h  \underline u| \le g$ a.e.
\een
\end{proposition}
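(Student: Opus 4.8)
The plan is to prove (1) first, by the Meyers--Serrin ($H=W$) argument adapted to the group, and then to deduce (2)--(5) from (1) together with the elementary chain rule for smooth maps and routine limiting arguments. Throughout we take $1\le p<\infty$ (as the density statement (1) requires), $X_1,\dots,X_K$ is the fixed orthonormal basis of $V_1$ viewed as left-invariant vector fields, and $\ell_a$ denotes left translation; the main point will be to run everything through the \emph{left} group convolution so that left-invariance of the horizontal frame can be used.

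\emph{Part (1).} Fix $\ze\in C_c^\infty(G)$ with $\int_G\ze\,d\mu=1$ and set $\ze_\eps(x):=\eps^{-\nu}\ze(\de_{1/\eps}x)$, so that $\{\ze_\eps\}$ is an approximate identity concentrating at $e$. I would mollify via $(\ze_\eps*u)(x):=\int_G\ze_\eps(y)\,u(y^{-1}x)\,d\mu(y)=\int_G\ze_\eps(xz^{-1})\,u(z)\,d\mu(z)$. The second expression shows $\ze_\eps*u\in C^\infty$, since all $x$-derivatives fall on the smooth, locally-uniformly compactly supported factor $\ze_\eps(xz^{-1})$; and, because the $X_i$ are left-invariant, a Fubini computation using the defining property of the weak derivatives with the test functions $\varphi\circ\ell_y$ yields $X_i(\ze_\eps*u)=\ze_\eps*(X_iu)$. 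Hence $\ze_\eps*u\to u$ and $X_i(\ze_\eps*u)\to X_iu$ in $L^p$ on compact subsets as $\eps\to0$. To globalize, take a locally finite cover $U=\bigcup_jU_j$ with each $\overline{U_j}$ compact and contained in $U$, and a subordinate smooth partition of unity $\{\chi_j\}$; each $\chi_ju$ lies in $W^{1,p}(U)$ with compact support in $U_j$ (Leibniz rule). Given $\delta>0$, mollify $\chi_ju$ at a scale small enough that $\ze_{\eps_j}*(\chi_ju)$ is still supported in $U_j$ and $\|\ze_{\eps_j}*(\chi_ju)-\chi_ju\|_{W^{1,p}(U)}<2^{-j}\delta$. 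The locally finite sum $\sum_j\ze_{\eps_j}*(\chi_ju)$ is then a $C^\infty(U)$ function within $\delta$ of $u$ in $W^{1,p}(U)$.

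\emph{Parts (2) and (3).} For smooth $u$ the identity $X_i(\psi\circ u)=\psi'(u)\,X_iu$ is immediate from differentiating along integral curves of $X_i$. For general $u\in W^{1,p}(U)$, choose $u_k\in C^\infty(U)$ with $u_k\to u$ in $W^{1,p}$ and, after passing to a subsequence, a.e.; then $\psi\circ u_k-\psi(0)\to\psi\circ u-\psi(0)$ in $L^p$ (as $\psi$ is globally Lipschitz), and $\psi'(u_k)\,X_iu_k\to\psi'(u)\,X_iu$ in $L^p$, by splitting off $\psi'(u_k)(X_iu_k-X_iu)$ (bounded by $\|\psi'\|_\infty\|X_iu_k-X_iu\|_p$) and $(\psi'(u_k)-\psi'(u))X_iu$ (dominated convergence with dominant $2\|\psi'\|_\infty|X_iu|$, using continuity of $\psi'$); passing to the limit in the weak formulation gives (2). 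For (3) I would apply (2) to the $C^\infty$ approximants $t\mapsto\sqrt{t^2+\eps^2}-\eps$ of $|t|$, which satisfy $0\le\sqrt{t^2+\eps^2}-\eps\le|t|$ and have derivative $t(t^2+\eps^2)^{-1/2}$ of absolute value $\le1$; this gives $\sqrt{u^2+\eps^2}-\eps\in W^{1,p}(U)$ with $X_i$-derivatives $u(u^2+\eps^2)^{-1/2}X_iu$, and letting $\eps\to0^+$ (dominated convergence) yields $|u|\in W^{1,p}(U)$ with $X_i|u|=(\operatorname{sgn}u)\,X_iu$, where $\operatorname{sgn}(0):=0$ --- which is exactly the stated formula.

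\emph{Parts (4) and (5).} For (4), write $\min(u,v)=\tfrac12(u+v)-\tfrac12|u-v|$ and apply (3) to $u-v\in W^{1,p}(U)$, so $X_i\min(u,v)=\tfrac12(X_iu+X_iv)-\tfrac12\operatorname{sgn}(u-v)(X_iu-X_iv)$; this equals $X_iv$ on $\{u>v\}$, $X_iu$ on $\{u<v\}$, and $\tfrac12(X_iu+X_iv)$ on $\{u=v\}$, and in each case $|D_h\min(u,v)|\le\max(|D_hu|,|D_hv|)$ a.e.\ (on the last set via $|\tfrac12(a+b)|\le\tfrac12(|a|+|b|)\le\max(|a|,|b|)$). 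For (5), put $v_n:=\min(u_1,\dots,u_n)$; by (4) and induction $v_n\in W^{1,p}(U)$ with $|D_hv_n|\le g$ a.e. Since $v_n\downarrow\underline u$ pointwise and $h\le v_n\le u_1$, one has $|v_n|\le\max(|h|,|u_1|)\in L^p(U)$, so $v_n\to\underline u$ in $L^p(U)$ by dominated convergence; in particular $\underline u\in L^p(U)$. For each $i$, $(X_iv_n)_n$ is bounded in $L^p$ and pointwise dominated by the fixed function $g$, so along a common subsequence $X_iv_n\rightharpoonup g_i$ weakly in $L^p$ --- by reflexivity if $p>1$, and by the Dunford--Pettis theorem if $p=1$, where the domination by $g\in L^1(U)$ supplies the required uniform integrability. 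Passing to the limit in $\int_U v_n\,X_i\varphi\,d\mu=-\int_U (X_iv_n)\,\varphi\,d\mu$ shows $\underline u\in W^{1,p}(U)$ with $X_i\underline u=g_i$; and since $\{w\in L^p(U;\R^K):|w|\le g\text{ a.e.}\}$ is convex and strongly closed, hence weakly closed, and contains each $D_hv_n$, it contains the weak limit $D_h\underline u$, so $|D_h\underline u|\le g$ a.e. The one step that is not a routine transcription of the Euclidean theory is the commutation $X_i(\ze_\eps*u)=\ze_\eps*(X_iu)$ in part (1), which is precisely why the left convolution must be used (the naive right convolution would instead land the derivative on $\ze_\eps$); after that, the only slightly delicate bookkeeping is the $p=1$ case of the weak-compactness step in (5).
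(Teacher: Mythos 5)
Your proposal is correct, and for parts (2)--(5) it follows essentially the same route as the paper: the chain rule by smooth approximation, $|u|$ via $\psi_\eps(t)=\sqrt{t^2+\eps^2}-\eps$, the minimum via $\min(u,v)=\tfrac12(u+v)-\tfrac12|u-v|$, and the countable infimum via $v_n=\min(u_1,\dots,u_n)$ with weak $L^p$ compactness (Dunford--Pettis when $p=1$). The only genuine difference is part (1): the paper simply cites Friedrichs and Garofalo--Nhieu for density of $C^\infty(U)$, whereas you give a self-contained Meyers--Serrin argument based on the noncommutative group convolution $(\ze_\eps*u)(x)=\int_G\ze_\eps(y)\,u(y^{-1}x)\,d\mu(y)$, using that this \emph{left} convolution commutes exactly with the left-invariant fields $X_i$ (so no Friedrichs-type commutator estimate is needed, which is the issue the cited references address when mollifying in coordinates), combined with the standard cut-off/partition-of-unity localization, where extending the compactly supported pieces $\chi_j u$ by zero makes the convolution and the test functions $\varphi\circ\ell_y$ legitimate for small $\eps$. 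This buys a proof with no external input beyond unimodularity of $G$ (bi-invariance and dilation-homogeneity of Haar measure, continuity of translation in $L^p$), at the cost of a little bookkeeping near $\partial U$ that the citation hides; your warm-up claim that $\ze_\eps*u\to u$ in $W^{1,p}$ on compacta should be read with the usual caveat that $\eps$ must be small relative to the compact set, but your actual globalization step avoids this issue entirely. Two further cosmetic deviations in (5): you use dominated convergence (via $h\le v_n\le u_1$) where the paper uses monotone convergence, and you deduce $|D_h\underline u|\le g$ from weak closedness of the convex set $\{|w|\le g\}$ rather than from weak lower semicontinuity of the norm; these are interchangeable.
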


\begin{proof} For   assertion~\eqref{it:basic_density}  see
 Friedrichs \cite{friedrichs_1944} or  Thm.\  1.13 and Thm.\  A.2 in 
   \cite{garofalo_nhieu_1996}.  Assertions~\eqref{it:basic_chain} and   \eqref{it:basic_absolute}  follow from   \eqref{it:basic_density} 
   in the same way as in the Euclidean case (see, for example   \cite[Sec. 7.4]{gilbarg_trudinger} for the Euclidean setting). Indeed, for    \eqref{it:basic_absolute}
   one applies   \eqref{it:basic_chain} with $\psi_\eps(t)  = \sqrt{t^2 + \eps^2} - \eps$ and takes the limit $\eps \to 0$. 
   Assertion~\eqref{it:basic_min} follows from  \eqref{it:basic_absolute} since $\min(u,v) = \frac12( u+v) - \frac12 |u-v|$. To prove
   assertion~\eqref{it:basic_inf} set $w_k = \inf_{j \le k} u_j$. It follows from \eqref{it:basic_min} that $w_k \in W^{1,p}(U)$ and $|D_h w_k| \le g$. 
   Moreover $k \mapsto w_k$ is non-increasing. Since  $w_k \ge h$ and $h \in L^p(U)$, the monotone convergence theorem implies that 
$w_k \to \underline u$ in $L^p(U)$. Moreover a subsequence of the weak derivatives $X_i w_k$ converges weakly in $L^p(U)$ to a limit
$h_i$ (for $p=1$ use the Dunford-Pettis theorem). Thus $\underline  u$ is weakly differentiable with weak derivatives $h_i$. By weak lower semicontinuity
of the norm we deduce that $(\sum_i  h_i^2)^{1/2} \le |g|$.
\end{proof}

\bigskip
We now consider spaces of $L^p$ functions and Sobolev functions with values in a metric space. We will later only consider  a Carnot group $G'$
with the Carnot-Caratheodory  metric as the target space, but we  state the results for general targets to emphasize that they do 
not use the structure of a Carnot group.
The following definition is due to Reshetnyak \cite{reshetnyak_1997} for open subsets of $\R^n$ or a Riemannian manifold
and has been extended
by Vodopyanov  \cite[Proposition 3, p.\ 674] {vodopyanov_bounded_distortion} to the setting of Carnot groups. 

\begin{definition}   \label{de:sobolev_carnot_new} 
Let $(X', d')$ be a complete separable metric space and let $U \subset G$ be open.
\ben
\item We say that a map $f: U \to X'$ is in $L^p(U,X')$ if $f$ is measurable and if there exist an $a \in X'$ such that 
 the map $x \mapsto d(f(x), a)$ is in $L^p(U)$.
\item We say that $f \in L^p(U,X')$ is in 
the Sobolev space $W^{1,p}(U;X')$
if for all $z \in X'$ the functions  $u_z(\cdot) := d'(f(\cdot), z) - d'(a,z)$ are in $W^{1,p}(U)$
and if there exists a function $g \in L^p(U)$ such that
\begin{equation}   \label{eq:bound_D_h_metric} |D_h u_z| \le g  
\end{equation}
almost everywhere.
\een
The spaces $L^p_{loc}(U;X')$ and $W^{1,p}_{loc}(X')$ are defined as usual. 
\end{definition}

Note that by the triangle inequality the map  $x \mapsto d'(f(x), z)$ is in $L^p_{loc}(U)$ for all $z \in X'$
if it is in $L^p_{loc}(U)$ for one $z \in X'$; if $\mu(U) < \infty$ then the same assertion holds for $L^p(U)$.   Note however that the  assertion fails for $L^p(U)$ when $\mu(U)=\infty$.

Definition~\ref{de:sobolev_carnot_new} imposes estimates on the weak derivatives of $f$ composed with  the distance functions $d(z, \cdot)$.  These imply similar estimates on the composition with general Lipschitz functions from $G'$ to a finite-dimensional linear space:

\begin{proposition}  \label{pr:composition_by_lip}
 Let $(X', d')$ be a complete separable metric space and let $U \subset G$ be open.
Let $f \in W^{1,p}(U; G')$ and let $a \in G'$ be such that $x \mapsto d'(a, f(x))$ is in $L^p(U)$. 
Let $Y$ be a finite-dimensional  $\R$-vector space and $v : G' \to Y$ be Lipschitz. Then $v \circ f - v(a)
\in W^{1,p}(U;Y)$. 
\end{proposition}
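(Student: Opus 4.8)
The plan is to reduce the vector-valued statement to the scalar Sobolev theory already established in Proposition~\ref{pr:sobolev_basic}, using Definition~\ref{de:sobolev_carnot_new} as the bridge. First I would fix a basis $e_1,\ldots,e_d$ of $Y$ with dual coordinates $y^1,\ldots,y^d$, so that $v\circ f - v(a) = \sum_\alpha (v^\alpha\circ f - v^\alpha(a))\,e_\alpha$, and it suffices to treat each scalar component $u^\alpha := v^\alpha\circ f - v^\alpha(a)$, where $v^\alpha = y^\alpha\circ v : G'\to\R$ is Lipschitz (with Lipschitz constant at most $L$, say, where $L$ bounds $v$). Thus the claim becomes: if $w:G'\to\R$ is Lipschitz then $w\circ f - w(a)\in W^{1,p}(U)$, with $|D_h(w\circ f)|\le L\,g$ a.e., where $g\in L^p(U)$ is the upper gradient furnished by Definition~\ref{de:sobolev_carnot_new}.

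The key step is to approximate an arbitrary Lipschitz function $w$ on $G'$ by finite combinations of distance functions $d'(\cdot,z)$, for which membership in $W^{1,p}(U)$ is exactly the hypothesis on $f$. Concretely, I would use the standard McShane/infimal-convolution formula: for a countable dense set $\{z_j\}\subset G'$, the functions $w_k(\cdot) := \inf_{j\le k}\bigl(w(z_j) + L\,d'(\cdot,z_j)\bigr)$ decrease pointwise to $w$ (when $w$ is $L$-Lipschitz, since $w(\cdot)\le w(z_j)+L\,d'(\cdot,z_j)$ for all $j$, and density gives the reverse inequality in the limit). Hence $w_k\circ f$ decreases to $w\circ f$ pointwise on $U$. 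Each summand $w(z_j) + L\,d'(f(\cdot),z_j)$ differs from $L\cdot u_{z_j} + \text{const}$ by an additive constant, where $u_{z_j}\in W^{1,p}(U)$ with $|D_h u_{z_j}|\le g$ by Definition~\ref{de:sobolev_carnot_new}; so $w_k\circ f$, being a finite infimum of such functions, lies in $W^{1,p}(U)$ with $|D_h(w_k\circ f)|\le L\,g$ a.e., by Proposition~\ref{pr:sobolev_basic}\eqref{it:basic_min} (applied inductively). Moreover $w_k\circ f \ge w\circ f \ge w(a) - L\,d'(a,f(\cdot)) =: h$, and $h\in L^p(U)$ by the hypothesis that $x\mapsto d'(a,f(x))\in L^p(U)$; also $w\circ f - w(a)$ is bounded above in absolute value by $L\,d'(a,f(\cdot))\in L^p(U)$, so $w\circ f - w(a)\in L^p(U)$. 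Now Proposition~\ref{pr:sobolev_basic}\eqref{it:basic_inf}, applied to the sequence $u_k := w_k\circ f - w(a)$ (which satisfies $|D_h u_k|\le L\,g$ a.e.\ and $u_k\ge h - w(a)$ a.e.), yields $w\circ f - w(a) = \inf_k u_k \in W^{1,p}(U)$ with $|D_h(w\circ f)|\le L\,g$ a.e. Summing over the components $\alpha$ gives $v\circ f - v(a)\in W^{1,p}(U;Y)$.

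The only genuine subtlety — the part I would be most careful about — is the passage to the countable infimum: Proposition~\ref{pr:sobolev_basic}\eqref{it:basic_inf} requires a single $L^p$ lower bound $h$ and a single $L^p$ gradient bound $g$ valid for \emph{all} $k$ simultaneously, and one must check that the McShane approximants supply exactly this (the uniform gradient bound $L\,g$ is immediate; the uniform lower bound comes from $w_k\circ f\ge w\circ f$, which holds for every $k$ since infima only decrease). A secondary point is that $w_k$ is a genuine finite infimum so each $w_k\circ f\in W^{1,p}$ by the \emph{finite} stability statements, not requiring the countable version prematurely; and one should note $\mu(U)$ need not be finite, but this causes no trouble since we have the explicit $L^p$ majorant $L\,d'(a,f(\cdot))$ for $w\circ f - w(a)$. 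Everything else is bookkeeping: choosing the dense set, handling the finitely many coordinate components, and tracking Lipschitz constants.
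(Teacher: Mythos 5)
Your proof is correct and follows essentially the same route as the paper: reduce to the scalar case, write the Lipschitz function as a McShane-type infimum $\inf_{z'\in D}\bigl(v(z')+L\,d'(\cdot,z')\bigr)$ over a countable dense set, and combine Definition~\ref{de:sobolev_carnot_new} with the infimum-stability statement Proposition~\ref{pr:sobolev_basic}~\eqref{it:basic_inf} (your extra pass through finite infima via \eqref{it:basic_min} is just a more explicit version of the same step). The only cosmetic difference is that you track the Lipschitz constant $L$ and the $L^p$ bounds more carefully than the paper's two-line argument does.
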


\begin{proof} 
It suffices to show the assertion for $Y= \R$. So let $u :X' \to \R$ be $L$-Lipschitz and let $D \subset X'$ be a countable dense set. 
Then $v \circ f$ in $L^p(U)$ and 
$$ v(z) = \inf_{z' \in D}  v(z') + d'(z', z).$$
Thus 
the assertion follows from the definition of $W^{1,p}(U;X')$ and  Proposition~\ref{pr:sobolev_basic}~\eqref{it:basic_inf}.
\end{proof}

\bigskip
We will use the following version of the Lebesgue point theorem. Here and elsewhere in the paper we use the standard notation $\av$ to denote an average.

\begin{lemma}  \label{le:lebesgue_point_new} 
  Let $f \in L^p_{loc}(U, X')$. Then for a.e. $x \in U$ we have
\begin{equation}  \label{eq:p_lebesgue_point_new}
\lim_{r \to 0}  
\av_{B(x,r)} [ d'(f(y), f(x))]^p \, d\mu(y) = 0.
\end{equation}
\end{lemma}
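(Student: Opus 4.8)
The plan is to reduce the metric-space-valued Lebesgue point statement to the classical scalar Lebesgue differentiation theorem on the doubling metric measure space $(G,d_{CC},\mu)$. Since $G$ equipped with $d_{CC}$ and Haar measure is a doubling (Ahlfors $\nu$-regular) metric measure space, the scalar Lebesgue differentiation theorem holds: for any $h\in L^1_{\loc}(U)$, for a.e. $x\in U$ we have $\av_{B(x,r)}|h(y)-h(x)|\,d\mu(y)\to 0$ as $r\to 0$. I would first record this, citing a standard reference (e.g. \cite{HKST} or Federer), noting that the only ingredient is the Vitali covering property, which follows from doubling.

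Next I would fix a countable dense set $D=\{z_1,z_2,\ldots\}\subset X'$, which exists since $X'$ is separable. For each $j$, the function $h_j(\cdot):=[d'(f(\cdot),z_j)]^p$ is in $L^1_{\loc}(U)$: indeed, picking the reference point $a\in X'$ from the definition of $L^p_{\loc}$, one has $d'(f(y),z_j)\le d'(f(y),a)+d'(a,z_j)$, so $h_j$ is dominated by $2^{p-1}([d'(f(y),a)]^p+d'(a,z_j)^p)\in L^1_{\loc}(U)$. Applying the scalar theorem to each $h_j$ and taking the countable intersection of the full-measure sets, we obtain a single set $U_0\subset U$ of full measure such that for every $x\in U_0$ and every $j$,
\begin{equation*}
\lim_{r\to 0}\av_{B(x,r)}\bigl|[d'(f(y),z_j)]^p-[d'(f(x),z_j)]^p\bigr|\,d\mu(y)=0.
\end{equation*}

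It remains to pass from the countable family $D$ to the value $f(x)$ itself. Here is where the only real subtlety lies, and I would handle it by an approximation argument. Fix $x\in U_0$ and $\eps>0$. Choose $z_j\in D$ with $d'(f(x),z_j)<\eps$. Using the elementary inequality $|s^p-t^p|\le p(\max(s,t))^{p-1}|s-t|$ together with $|d'(f(y),f(x))-d'(f(y),z_j)|\le d'(f(x),z_j)<\eps$ and the triangle inequality, I would bound $[d'(f(y),f(x))]^p$ pointwise in $y$ by $[d'(f(y),z_j)]^p$ plus an error that is controlled by $\eps$ times a power of $d'(f(y),a)$ plus lower-order terms; more cleanly, one can write
\begin{equation*}
[d'(f(y),f(x))]^p\le 2^{p-1}[d'(f(y),z_j)]^p+2^{p-1}[d'(f(x),z_j)]^p
\end{equation*}
and the last term is $<2^{p-1}\eps^p$, while the average of the first term converges to $2^{p-1}[d'(f(x),z_j)]^p<2^{p-1}\eps^p$. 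Hence $\limsup_{r\to 0}\av_{B(x,r)}[d'(f(y),f(x))]^p\,d\mu(y)\le 2^p\eps^p$, and since $\eps>0$ is arbitrary the limit is $0$. (A slightly more careful use of the mean-value bound for $s^p$ gives the same conclusion without losing the constant, but the crude bound already suffices since we only need the limit to vanish.) The main obstacle — and it is a mild one — is making the passage from $D$ to $f(x)$ quantitative, i.e. checking that the $\eps$-error terms are genuinely dominated by a fixed $L^1_{\loc}$ function so that one can take $\limsup$ inside; this is exactly what the bound $d'(f(y),z_j)\le d'(f(y),a)+d'(a,z_j)$ provides, so no compactness or further structure on $X'$ is needed.
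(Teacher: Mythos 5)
Your proof is correct and follows essentially the same route as the paper, which simply applies the scalar Lebesgue point theorem to the functions $y\mapsto d'(f(y),z)$ for $z$ in a countable dense subset of $X'$ and then approximates $f(x)$ by such a $z$. Your only variation is applying the $L^1$ differentiation theorem to the $p$-th powers $[d'(f(\cdot),z_j)]^p$ and using the crude $2^{p-1}$ bound instead of working with the distances themselves, which is an immaterial difference.
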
 

\begin{proof}This follows easily by applying the usual  Lebesgue point theorem to the scalar functions
$u_z(y) = d'(f(y), z)$ where $z$ runs through a countable dense subset of $X'$.\end{proof}

 To recall the Poincar\'e-Sobolev inequality for metric-space-valued maps we define the $L^p$-oscillation as follows.
 
\begin{definition}  Let $X'$ be a metric space. Let $A \subset G$ be a measurable set and let $f \in L^p(A,X')$. 
The $L^p$ oscillation on $A$ is defined by 
\begin{equation}
 \osc_p(f,  A)  := \inf_{a \in X'}  \left(   \int_{A} {d'}^p(f(x), a)  d\mu(x)   \right)^{1/p}.
 \end{equation}
\end{definition}

\bigskip
There is a general strategy for deducing the  Poincar\'e-Sobolev inequality for metric-space-valued maps
from the  Poincar\'e-Sobolev inequality for scalar valued functions. It is based on the derivation of a pointwise estimate for a.e.\ pair of points
and a chaining argument, see Theorem 9.1.15 in  \cite{HKST}  for an implementation of this approach in the context of the upper gradient definition.
Since we are only interested in Carnot groups as targets we use a more pedestrian approach.
Recall that a metric space is doubling
if every ball of radius $r >0$  can be covered by a fixed  number $M$
of balls of radius $\frac{r}{2}$. Carnot groups are doubling since  by compactness $B(e,1)$ can be covered  by $M$   balls
of radius $\frac12$. By translation and scaling every ball of radius $r$ can be covered by $M$ balls of radius $\frac{r}{2}$.

\begin{theorem} Let $G$ be a Carnot group of homogeneous dimension $\nu$ and  let $X'$ be a complete, separable metric space which is  doubling. Let $1 \le p < \nu$ and define $p^*$ by
$$\frac{1}{p*}  = \frac1p - \frac1\nu.$$ There exists a constant $C = C(G,p)$ with the following property.
Let  $B(x,r)$ be a ball in $G$,  let 
$f \in W^{1,p}(B(x,r),X')$ and let $g \in L^p(B(x,r))$  be    the function in Definition~\ref{de:sobolev_carnot_new}. Then
\begin{equation}  \label{eq:sobolev_poincare_new}
 \osc_{p^*}(f,   B(x,r)) \leq C \|g\|_{L^p(B(x,r))}
\end{equation}
and
\begin{equation} \label{eq:p_poincare_new}
 \osc_{p}(  f, B(x,r)) \le C r  \|g\|_{L^p(B(x,r))}.
\end{equation}
\end{theorem}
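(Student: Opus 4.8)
The strategy is to bootstrap from the scalar Poincaré–Sobolev inequality on Carnot groups, which is classical, via the distance functions $u_z = d'(f(\cdot),z)$ that appear in Definition~\ref{de:sobolev_carnot_new}. First I would fix the ball $B = B(x,r)$ and recall that for each $z \in X'$ the function $u_z - d'(a,z)$ lies in $W^{1,p}(B)$ with $|D_h u_z| \le g$ a.e., by hypothesis. The scalar Poincaré–Sobolev inequality (in the form $\|u - \av_B u\|_{L^{p^*}(B)} \le C\|D_h u\|_{L^p(B)}$ and $\|u - \av_B u\|_{L^p(B)} \le C r \|D_h u\|_{L^p(B)}$, both valid on Carnot groups since they are $p$-Poincaré doubling spaces) then gives, for every $z$,
\begin{equation*}
\Big(\int_B |u_z(y) - (u_z)_B|^{p^*}\, d\mu(y)\Big)^{1/p^*} \le C \|g\|_{L^p(B)},
\end{equation*}
where $(u_z)_B := \av_B u_z$, and similarly with $p$ and an extra factor $r$ on the right.

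The remaining issue is purely metric: to pass from ``$f$ is close in $L^{p^*}$ to a real number (depending on $z$) after composing with each $d'(\cdot,z)$'' to ``$f$ is close in $L^{p^*}$ to a single point $a \in X'$.'' This is where the doubling assumption on $X'$ enters. I would argue as in the standard proof (cf.\ the chaining/covering argument behind Theorem~9.1.15 in \cite{HKST}, but here only the final metric step is needed): one shows that the ``approximate median'' map $y \mapsto f(y)$ has small $L^{p^*}$-oscillation by covering the essential range of $f|_B$ by balls of comparable radius in $X'$, using that on a large-measure subset of $B$ all the values $u_z(y)$ are within $C\|g\|_{L^p(B)} \mu(B)^{-1/p^*}$ of their averages simultaneously for $z$ in a countable dense set $D \subset X'$, and then invoking doubling to control the number of such balls and select a center $a$. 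Concretely, one fixes $z_0 \in D$, lets $\lambda = C\|g\|_{L^p(B)}$ be the scalar bound, and observes that the set where $|u_{z_0}(y) - (u_{z_0})_B| > \lambda \mu(B)^{-1/p^*}$ has small measure by Chebyshev; iterating over a finite subfamily of $D$ chosen so that the corresponding balls $B'(z_j, \text{const}\cdot\lambda\mu(B)^{-1/p^*})$ cover the relevant part of the range — possible in bounded number by doubling — pins $f(y)$ into one such ball for $y$ in a set of measure $\ge \frac12 \mu(B)$, and its center serves as $a$. The contribution from the complementary small-measure set is absorbed using the $L^p$ (not $L^{p^*}$) information together with Hölder's inequality, since that set's measure is controllable.

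The main obstacle is precisely this last metric step: getting the single anchor point $a$ with the correct scaling, rather than a family of $z$-dependent constants. Everything scales correctly under $\delta_r$ and left translation (the scalar estimates do, and doubling of $X'$ is scale-free), so once the qualitative ``single point'' statement is in hand the quantitative constants follow by the usual rescaling to $B(e,1)$. I expect the cleanest write-up to quote or lightly adapt the covering argument rather than redo the full chaining machinery of \cite{HKST}, since the hard analytic input — the scalar Poincaré–Sobolev inequality and the uniform gradient bound $|D_h u_z| \le g$ — is already available, and only a soft doubling/covering argument remains. The second inequality \eqref{eq:p_poincare_new} is then immediate from the same argument applied with the scalar $p$-Poincaré inequality $\|u - \av_B u\|_{L^p(B)} \le Cr\|D_h u\|_{L^p(B)}$ in place of the Sobolev one, with no extra work.
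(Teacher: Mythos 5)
Your reduction to the scalar Poincar\'e--Sobolev inequality, and your instinct that only a soft doubling/covering step remains, match the paper; but the covering step as you describe it has a genuine gap: it is circular. You propose to cover the relevant part of the essential range of $f|_B$ by balls of radius $\mathrm{const}\cdot\lambda\,\mu(B)^{-1/p^*}$ with $\lambda = C\|g\|_{L^p(B)}$, asserting this is ``possible in bounded number by doubling.'' Doubling only bounds the number of balls needed when their radius is comparable to the diameter of the set being covered; the number of $\lambda$-scale balls needed to cover the essential range is bounded only if that range already has diameter $\lesssim \lambda$, which is essentially the statement being proved. A priori, even on a set of half measure, the range is only known to lie in a ball whose radius is comparable to the (possibly much larger) oscillation $L := \osc_{p^*}(f,B)$, so the covering number at scale $\lambda$ is uncontrolled. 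Two secondary problems: Chebyshev cannot be applied ``simultaneously for $z$ in a countable dense set'' with a uniformly small exceptional set, and the exceptional set cannot be ``absorbed using the $L^p$ information together with H\"older,'' since H\"older bounds $L^p$ by $L^{p^*}$ on small sets, not the reverse direction that an $L^{p^*}$ estimate requires.

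The paper's proof repairs exactly this point by running Chebyshev and the doubling covering at scale $L$ itself and then bootstrapping. After rescaling to $B=B(e,1)$ with $\mu(B)=1$: the scalar inequality shows each $u_z=d'(z,f(\cdot))$ lies in $L^{p^*}(B)$, so $L=\inf_{z}\|u_z\|_{p^*,B}$ is finite; with $\bar a$ (nearly) attaining it, Chebyshev gives $\mu\{x: f(x)\in B(\bar a,2L)\}\ge 1-2^{-p^*}\ge \tfrac12$; doubling covers $B(\bar a,2L)$ by $M^2$ balls of radius $L/2$ (a bounded number precisely because the covering radius is comparable to $2L$), so some $E=f^{-1}(B(z,L/2))$ has $\mu(E)\ge \tfrac12 M^{-2}$. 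Then the two-point (double-integral) form of the scalar Sobolev--Poincar\'e inequality, with the outer integral restricted to $E$ and applied to the truncation $v=\max(u_z-L/2,0)$, yields $\|v\|_{p^*,B}\le C(M,p)\|g\|_{p,B}$, while by minimality of $L$ one has $\|v\|_{p^*,B}\ge \|u_z\|_{p^*,B}-L/2\ge L/2$; hence $L\le C\|g\|_{p,B}$, and the second inequality follows from the first by H\"older. If you replace your $\lambda$-scale covering by this $L$-scale bootstrap, the rest of your outline (scaling, translation, use of $|D_hu_z|\le g$) goes through as you intended.
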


\begin{proof} If suffices to  prove the first estimate,  since the second follows from the first by H\"older's inequality.
For the Poincar\'e-Sobolev inequality for scalar functions see  \cite[Corollary 1.6.]{garofalo_nhieu_1996} or  \cite[Theorem 2.1]{lu_poincare_1994}.
By scaling and translation in $G$ it suffices to show the Poincar\'e inequality for  $X'$-valued maps
 for the set $B = B(e,1)$ and we may assume that the Haar measure $\mu$ is normalized so that
$\mu(B) = 1$. 

Let $f \in W^{1,p}(B, X')$ and for $z \in X'$ define $u_z(x) = d'(z, f(x))$.
By Definition~\ref{de:sobolev_carnot_new}  and the Poincar\'e inequality for 
scalar functions we see  that $u_z \in L^{p*}(B)$ for all $z \in D$. Thus  $ L:= \osc_{p*}( f, B) = \inf_z \|u_z\|_{p^*,B} < \infty$. 
Let $\bar a \in X'$ be such that the infimum is achieved.
Then 
$$ \mu \{ x \in B:  f(x) \in B(\bar a, 2L) \} \ge (1- 2^{-p^*}) \mu(B) \ge \frac12.$$
Since $X'$ is doubling there exist $M^2$ balls of radius $\frac{L}{2}$ which cover $B(\bar  a, 2L)$. Thus there exist $z \in X'$ such  that  
$$ \mu(E)  \ge \frac12 M^{-2} \quad \text{where $E= f^{-1}(B(z, \frac{L}{2}))$}.$$
By the Sobolev-Poincar\'e inequality for scalar-valued
functions and the triangle inequality we have 
\begin{equation} \label{eq:poincare_double}  \int_B \int_B  |u_z(x) - u_z(y) |^{p^*}   \, d\mu(x) \, d\mu(y)  
  \le C   \|g \|_{p, B}^{p^*} \,.
  \end{equation}
 
   Let $$v(x) =   \max( u_z(x) - \frac{L}{2}, 0).$$  For $y \in E$ we have $u_z(y) \le \frac{L}{2}$ and hence   $v(x) \le |u_z(x) - u_z(y)|$.  
Restricting the outer  integral on the left hand side of  \eqref{eq:poincare_double} to the set $E$ we  get
$$   \int_B  v^{p^*}   \, d\mu     \le 2  C M^2      \|g \|_{p, B}^{p^*}.$$
   Thus $\| v \|_{p^*, B} \le  C(M, p) \| g\|_{p, B}$.
By the definition  of the oscillation and the definition of $v$   we have  
$$  \| v \|_{p^*,B} \ge  \| u_z\|_{p^*, B} - \frac{L}{2}  \ge \frac{L}{2}.$$
Hence $ \osc_{p^*}(f, B) = L \le 2 C(M,p)   \| g\|_{p, B}$. 
\end{proof}

\bigskip\bigskip
We finally discuss differentiability results. It is well known that locally Lipschitz maps from $\R^n$ to $\R^m$
are differentiable a.e. In fact  maps
in  $f \in W^{1,p}_{loc}(\R^n; \R^m)$  are differentiable a.e.\ if $p> n$.  For $1 \le p < n$ the map  $f$ is differentiable in an $W^{1,p}$ sense. 
For maps between Carnot groups, Pansu \cite{pansu} showed that Lipschitz maps (with respect to the Carnot-Caratheodory metrics) on open sets
are a.e. differentiable in the following sense, now known as Pansu differentiability. For a.e. $x  \in U$
there exist a  graded group homomorphism $\Phi: G \to G'$ such that 
the rescaled maps
\begin{equation}  \label{eq:pansu_rescaling_new}
 f_{x,r} := \delta_{r^{-1}} \circ \ell_{f(x)^{-1}} \circ f  \circ \ell_x \circ \delta_r.
\end{equation}
converge locally uniformly to as $r\to 0$.
The corresponding Pansu differentiability results for Sobolev maps between Carnot groups
have been obtained by Vodopyanov \cite[Theorems 1 and 2, Corollaries 1 and 2]{vodopyanov_differentiability_2003},  see also \cite{vodopyanov_carnot_manifolds} for extensions to 
maps between Carnot manifolds.
We will use the following result.

\begin{theorem}[$L^{p*}$ Pansu differentiability a.e.,  \cite{vodopyanov_differentiability_2003}, Corollary 2]
\label{th:Lp*_pansu_differentiability_new}
 Let $ U \subset G$ be open, let $1 \le p < \nu$ 
and define $p^*$ by $\frac1{p^*} = \frac1p - \frac1\nu$.
Let $f \in W^{1,p}(U;G')$. For $x \in U$ consider the rescaled maps
$$ f_{x,r} = \delta_{r^{-1}} \circ \ell_{f(x)^{-1}} \circ f \circ \ell_x \circ \delta_r.$$
Then, for a.e. $x \in U$,  there exists  a group homomorphism $\Phi: G \to G'$ such that
\begin{equation}  \label{eq:Lpstart_pansu_differentiability_new}
 f_{x,r} \to \Phi  \quad \text{in $L^{p^*}_{loc}(G;G')$ as $r \to 0$.} 
 \end{equation}
 \end{theorem}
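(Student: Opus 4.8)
\emph{Strategy and setup.} The intended proof is a blow-up argument resting on the Poincar\'e--Sobolev inequalities \eqref{eq:sobolev_poincare_new}--\eqref{eq:p_poincare_new} for metric-space-valued maps and the compact Sobolev embedding of Appendix~\ref{se:app_compact}. Since $1\le p<\nu$, the exponent $p^*\in(p,\infty)$ is finite and neither $f$ nor its rescalings $f_{x,r}$ of \eqref{eq:pansu_rescaling_new} need be continuous, so the whole argument is run in $L^{p^*}_{\loc}$. Fix a countable dense set $D\subset G'$; for $z\in D$ put $u_z:=d'(f(\cdot),z)$, so $u_z\in W^{1,p}_{\loc}(U)$ by Definition~\ref{de:sobolev_carnot_new}, and let $g\in L^p_{\loc}(U)$ be a common bound for the $|D_h u_z|$. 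Let $\mathcal G\subset U$ be the full-measure set of points $x$ that are simultaneously Lebesgue points of $g^p$, of $D_h u_z$ for all $z\in D$, of $u_z$ in the $L^{p^*}$ sense for all $z\in D$, and of $f$ in the sense of Lemma~\ref{le:lebesgue_point_new}. Fix $x\in\mathcal G$; composing with $\ell_x$ and $\ell_{f(x)^{-1}}$ we may assume $x=e$, $f(e)=e'$, and write $f_r:=f_{e,r}=\delta_{r^{-1}}\circ f\circ\delta_r$.

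\emph{Step 1: precompactness of the blow-ups.} Apply \eqref{eq:sobolev_poincare_new} to $f$ on a ball and rescale by $\delta_r$, using $d'(f_r(y),f_r(y'))=r^{-1}d'(f(\delta_r y),f(\delta_r y'))$, $\mu(\delta_\rho E)=\rho^\nu\mu(E)$, and the numerical identity $-1-\frac{\nu}{p^*}+\frac{\nu}{p}=0$: this gives, for every ball $B(y,\sigma)$,
\begin{equation*}
 \osc_{p^*}(f_r,B(y,\sigma))\le C\,\|g\|_{L^p(\delta_r B(y,\sigma))},
\end{equation*}
so by the Lebesgue point property of $g^p$ at $e$ the oscillations $\osc_{p^*}(f_r,B(e,R))$ are bounded by $C(R)(g(e)+o(1))$ uniformly for small $r$, while the oscillations on small balls $B(y,\sigma)$, $y$ in a fixed compact set, are equi-small as $\sigma\to0$ by absolute continuity of $\int g^p$. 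Together with \eqref{eq:p_poincare_new} and the $L^p$-Lebesgue property of $f$ at $e$ (which fixes a basepoint), and with the bound on $|D_h f_r|$ in $L^p_{\loc}(G)$, this makes $\{f_r\}_{0<r\le1}$ bounded in $W^{1,p}_{\loc}(G;G')$ with equi-small local $L^{p^*}$ oscillations; hence it is precompact in $L^{p^*}_{\loc}(G;G')$ — the compact embedding of Appendix~\ref{se:app_compact} gives $L^q_{\loc}$-compactness for $q<p^*$, and the uniform $\osc_{p^*}$ control upgrades this to $L^{p^*}_{\loc}$ by a Vitali equi-integrability argument. Any subsequential limit $\Phi$ lies in $W^{1,p}_{\loc}(G;G')$, with $|D_h\Phi|\le Cg(e)$ by lower semicontinuity, and satisfies $\Phi(e)=e'$.

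\emph{Step 2: the limit is a unique graded homomorphism.} Directly from \eqref{eq:pansu_rescaling_new} one has the two identities
\begin{equation*}
 f_{x,r}(y_0 y)=f_{x,r}(y_0)\cdot f_{x\delta_r y_0,\,r}(y),\qquad f_{x,sr}=\delta_{s^{-1}}\circ f_{x,r}\circ\delta_s\quad(s>0),
\end{equation*}
so the $r\to0$ failure of $f_{x,r}$ to be multiplicative, resp.\ dilation-equivariant, is governed by comparing the blow-up at $x$ and scale $r$ with the blow-up at the nearby centre $x\delta_r y_0$, resp.\ at the comparable scale $sr$. The technical core is the corresponding \emph{stability}: for a.e.\ $x$ and every $y_0$ (resp.\ $s$) the recentred (resp.\ rescaled) blow-ups converge along the same subsequence to the same limit $\Phi$; this is extracted from the scale-uniform Poincar\'e bounds of Step 1 together with a Vitali-covering/Lebesgue-differentiation argument applied to the measurable $L^{p^*}_{\loc}$-valued assignment of blow-ups. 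Granting it, every subsequential limit $\Phi$ is a measurable map with $\Phi(e)=e'$, $\Phi(y_0 y)=\Phi(y_0)\Phi(y)$, and $\Phi\circ\delta_s=\delta_s\circ\Phi$; a measurable homomorphism between (simply connected) Lie groups is continuous, and dilation-equivariance forces $d\Phi$ to preserve the grading, so $\Phi$ is a graded Lie group homomorphism. Finally, each $u_z$ ($z\in D$) is $L^{p^*}$-approximately Pansu differentiable at $e$ — obtained by applying \eqref{eq:sobolev_poincare_new} to $u_z$ minus the homogeneous degree-one function with horizontal gradient $D_h u_z(e)$, using the Lebesgue-point properties at $e$ and the BCH bounds of Proposition~\ref{pr:bounds_P}, then rescaling — and this subsequence-independent data pins down the restriction $d\Phi|_{V_1}$ of any subsequential limit; since a graded homomorphism is determined by $d\Phi|_{V_1}$, all subsequential limits coincide, so $f_r\to\Phi$ in $L^{p^*}_{\loc}(G;G')$ along the full parameter $r\to0$. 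Undoing the normalization yields \eqref{eq:Lpstart_pansu_differentiability_new}.

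\emph{The main obstacle} is the stability step in Step 2: because $p<\nu$ the convergence $f_{x,r}\to\Phi$ holds only in $L^{p^*}_{\loc}$, with no pointwise differentiability to fall back on, so one must produce genuinely scale-uniform, quantitative versions of the Poincar\'e estimates valid over the one-parameter families of centres $\{x\delta_r y_0\}$ and scales $\{sr\}$ as $r\to0$, and then feed them into a differentiation-theorem argument; this is exactly what makes the low-exponent case harder than the classical $p>\nu$ situation. A secondary, bookkeeping-level difficulty is that the a priori estimates only see $G'$ as a complete metric space, so basepoints must be carried throughout and the Carnot structure of $G'$ (via Proposition~\ref{pr:bounds_P}) is invoked only to identify the self-similar multiplicative limit with a graded homomorphism.
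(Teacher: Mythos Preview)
Your Step 1 is essentially the same as the paper's: precompactness of the blow-ups $f_{x,r}$ in $L^{p^*}_{\loc}$ via Poincar\'e--Sobolev and the compact embedding, with the Lebesgue property of $g^p$ at $x$ used to show that the rescaled upper gradients converge to a constant, so any subsequential limit $\hat f$ is in fact Lipschitz with $\hat f(e)=e'$.

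The genuine gap is in Step 2. You propose to prove that every subsequential limit $\Phi$ is a homomorphism via the recentering identity $f_{x,r}(y_0y)=f_{x,r}(y_0)\cdot f_{x\delta_r y_0,r}(y)$, which requires showing that the blow-ups based at the moving points $x\delta_{r_j}y_0$ converge (along the same subsequence) to the same limit as the blow-ups based at $x$. You flag this ``stability'' as the main obstacle, invoke an unspecified Vitali/Lebesgue-differentiation argument, and leave it there. This is not a minor detail: making such an argument rigorous in the $p<\nu$ regime is nontrivial (there is no pointwise control of $f$, the moving base points need not lie in any a priori good set, and the $L^{p^*}$ convergence gives no evaluation at the single point $y_0$ appearing in the product). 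As written, the homomorphism property of $\Phi$ is not established.

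The paper bypasses this issue entirely by passing to the abelianization $\pi_{G'}:G'\to G'/[G',G']\simeq V_1'$. Since $\pi_{G'}$ is Lipschitz, $u:=\pi_{G'}\circ f\in W^{1,p}_{\loc}(U;V_1')$ is a map into a \emph{linear} space, and the rescalings satisfy $\pi_{G'}\circ f_{x,r}=u_{x,r}$ with $X_i u_{x,r}=(X_i u)\circ\ell_x\circ\delta_r$. At a Lebesgue point of the weak horizontal derivatives $X_i u$, the \emph{full sequence} $u_{x,r}$ therefore converges to an affine map $\hat u$ with constant horizontal derivative $X_i\hat u\equiv (X_i u)(x)$. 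The key lemma (Lemma~\ref{le:lift_of_L_maps}) then says: if $F:G\to G'$ is Lipschitz with $F(e)=e'$ and $\pi_{G'}\circ F$ is affine, then $F$ is a graded homomorphism, and it is uniquely determined by $\pi_{G'}\circ F$. The proof of this lemma is an ODE argument along horizontal curves: the compositions $t\mapsto F(x_0\exp tX)$ and $t\mapsto F'(x_0\exp tX)$ are horizontal Lipschitz curves in $G'$ whose projections to $V_1'$ agree, hence they are integral curves of the same left-invariant horizontal vector field and must coincide. Applying this to any subsequential limit $\hat f$ shows at once that $\hat f$ is a graded homomorphism and is uniquely determined by $\hat u$; uniqueness of the limit then gives full-sequence convergence.

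In short, the paper replaces your delicate (and unproved) stability step with two much softer facts: (i) the linear-target blow-up $u_{x,r}$ converges along the full sequence by ordinary Lebesgue differentiation of its horizontal derivatives, and (ii) a Lipschitz $L$-map with $F(e)=e'$ is automatically a graded homomorphism. Your final remark about pinning down $d\Phi|_{V_1}$ via scalar differentiability of the $u_z$ is in the right spirit, but it only addresses uniqueness, not the homomorphism property itself; the abelianization $\pi_{G'}\circ f$ plays the same identifying role more cleanly, and Lemma~\ref{le:lift_of_L_maps} supplies the homomorphism property for free.
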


 \begin{remark}  \label{re:g_equal_norm_DPf}
  It follows easily from  Theorem~\ref{th:pansu_diff_Lpstar}  that for all $z \in G'$ 
 the functions $u_z:= d'(z, f(\cdot))$ satisfy 
 \begin{equation}  \label{eq:optimal_g}
 \text{$|D_h u_z|  \le |D_P f|$  a.e., where} 
  \end{equation}
 \begin{equation}  
 |D_P f(x)|  = \max \{ |D_P f(x) X|_{V'_1} : \, X \in V_1, \, |X|_{V_1} \le 1 \}.
 \end{equation}
Here  $| \cdot |_{V_1}$ and $| \cdot |_{V'_1}$ denote the norms induced by the scalar product on the first layer of $\fg$ and $\fg'$,
respectively. Thus the condition   \eqref{eq:bound_D_h_metric}  
in Definition~\ref{de:sobolev_carnot_new} holds with $g = |D_P f|$. 
The short proof  of   \eqref{eq:optimal_g}can be found in Appendix~\ref{se:W1p_differentiability}. 
 \end{remark}

\bigskip
 \begin{remark} \label{re:weak_derivative_abel}
   Let $\pi_{G'} : G' \to G'/[G',G']$ denote the abelianization map. Since $\pi_{G'}$ is globally Lipschitz,
 the map $\pi_{G'} \circ f$ is in $W^{1,p}(U, G'/[G',G'])$. It easily follows from 
 Theorem~\ref{th:Lp*_pansu_differentiability_new} that the weak (or distributional) derivative of 
 $\pi_{G'} \circ f$ in direction of a horizontal vectorfield $X$ satisfies
 \begin{equation} \label{eq:weak_derivative_abel}
 X (\pi_{G'} \circ f)(x) = D_P f(x) X \quad \text{for a.e. $x$,}
  \end{equation}
  see Remark~\ref{rem_step_2_of_proof} below.
 In  \eqref{eq:weak_derivative_abel} we have identified the abelian group $G'/[G',G']$ with the first layer $V'_1$ of $\fg'$.
 \end{remark}

\bigskip

 Vodopyanov's proof  in \cite{vodopyanov_differentiability_2003} combines work from a series of earlier papers \cite{vodopyanov_ukhlov_approximate_differentiable,vodopyanov_monotone_1996,vodopyanov_bounded_distortion,vodopyanov_P_differentiability}.  His argument is based on Lipschitz approximation on sets of almost full measure, 
 an extension of Pansu's result to Lipschitz maps defined on sets $E \subset G$
  which are not open and a careful estimate of the remainder terms at points in $E$ of  density one.
  
  In Appendix~\ref{se:W1p_differentiability} we give a direct alternative  proof of $L^p$ differentiability which 
   is based on blow-up, the Poincar\'e-Sobolev inequality,  the compact Sobolev embedding  (which is an immediate consequence of the Poincar\'e-Sobolev inequality)
  and the following observation:  if  $F : G \to G'$  is a Lipschitz map with $F(e) = e$ and the abelianization $\pi_{G'} \circ F$ is affine
  (i.e. has constant weak  horizontal derivatives) then $F$  is a  graded group homomorphism.

\section{Center of mass and  mollification}
\label{sec_center_of_mass_mollification}

In this section we first define a center of mass for measures on a Carnot group which satisfy  a moment condition.  We then use this to define a mollification procedure for maps with finite  $L^m$-oscillation taking values in an $m$-step Carnot group. 

We will be using the notation and results from Section~\ref{sec_prelim}, in particular the `homogeneous norm'  $|\cdot|$ on $\fg$ defined in (\ref{eq:homogeneous_norm}), and the Euclidean norm $|\cdot|_e$.  

\subsection{Center of mass in Carnot groups}
\label{subsec_center_of_mass}
Let $G$ be a  $m$-step Carnot group (for an extension to connected, simple connected nilpotent Lie groups see 
Remark~\ref{re:com_nilpotent}  and Remark~\ref{re:naturality_com_nilpotent} below).
Let $\nu$ be a Borel probability measure on $G$. We say that $\nu$ has finite $p$-th moment if  
\begin{equation} \label{eq:p_th_moment_nu}
\int_G  d^p_{CC}(e, y) \,  d\nu(y) < \infty.
\end{equation}

In view of   \eqref{eq:ball_box} this is  equivalent to

\begin{equation} 
\label{eq:p_th_moment_nu_bis}
\int_\fg  |Y|^p   \,  d(\log_*\nu)(Y) < \infty.
\end{equation}
In this subsection we define,  for probability measures with finite $m$-th moment,  a center of mass 
which is compatible with left translation and group homomorphisms.
For probability measures $\nu$ with compact support our notion of center of mass agrees with the one by Buser and Karcher \cite[Example 8.1.8] {karcher_buser_almost_flat_manifolds}. 
Their proof of the existence of the center of mass is different. 
They use the bi-invariant flat connection $D$ such that left-invariant vector fields are $D$-parallel, and base their proof on some estimates for the convexity radius of $D$ with respect to some auxiliary left invariant Riemannian metric.
Here we argue directly on the Lie algebra
 and also show that there is an explicit recursive formula for (the logarithm of) the center of mass
and that the logarithm of the center of mass is a polynomial in certain polynomial moments of 
$\log_* \nu$. 

The extension from compactly supported measures to measures with finite $m$-th moment
will  be crucial in the next subsection where we use the center of mass to define a group compatible
mollification for (Sobolev) functions which may be unbounded.

We define
\begin{equation} \label{eq:log_x}
\log_x = \log \circ \ell_{x^{-1}}
\end{equation}

\begin{theorem}  \label{le:C_nu_diffeomorphism}
 Let $G$ be an $m$-step Carnot group and let $\nu$ be a Borel probability measure on $G$
with finite $m$-th moment. Then $\log_x$ is $\nu$ integrable and the map $C_\nu: G \to \fg$ defined
by 
\begin{equation}
C_\nu(x) := \int_G  \log_x \, d\nu     
\end{equation}
is a diffeomorphism. Moreover $C_\nu \circ \exp: \fg \to \fg$ is a polynomial of degree not larger than $m-1$. 

For any $Z\in \fg$, the  equation $C_\nu(\exp X) = Z$ can be solved recursively 
and  $\log (C_\nu)^{-1}(Z)$ is a polynomial in $Z$ and certain  polynomial moments of $\log_* \nu$. 
In particular   there exist a  $\fg$-valued polynomial $Q$ 
 with $Q(0, \ldots, 0) = 0$  and $\fg$-valued
multilinear forms   $L_1, \ldots L_K: \fg \to \fg$ such that
\begin{equation}   \label{eq:formula_com1}
\log (C_\nu)^{-1}(0)   = Q(A_1,  \ldots A_K),
\end{equation}
where 
\begin{equation}   \label{eq:formula_com2}
A_i = \int_{\fg}  L_i(Y,  \ldots, Y) \, d(\log_*\nu)(Y). 
\end{equation}
Moreover
\begin{equation}  \label{eq:bound_Qi}
|L_i(Y,  \ldots, Y)|_e \le C_i (1 + |Y|^m)    \quad \text{for $1 \le i \le K$.}
\end{equation}
\end{theorem}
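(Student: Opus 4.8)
The plan is to prove the theorem by induction on the layers $V_1,\dots,V_m$, exploiting the block lower-triangular structure of the BCH nonlinearity $P$. First I would verify integrability of $\log_x$ with respect to $\nu$: writing $\log_x(y) = \log(x^{-1}y) = (-\log x)\ast \log y = -\log x + \log y + P(-\log x,\log y)$, the estimate \eqref{eq:bound_P} from Proposition~\ref{pr:bounds_P} gives $|P(-\log x,\log y)|_e \le C(|x|)(1+|\log y|^{m-1})$, so $|\log_x(y)|_e$ is bounded by a constant plus $|Y|^{m-1}$ plus $|Y|$ along $\log_*\nu$; since $\nu$ has finite $m$-th moment, hence finite $(m-1)$-th and first moments (by \eqref{eq:p_th_moment_nu_bis} and H\"older on finite measures, or just $|Y|^k \le 1+|Y|^m$ for $k\le m$), this is $\nu$-integrable. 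So $C_\nu$ is well-defined.

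Next I would establish that $C_\nu\circ\exp:\fg\to\fg$ is a polynomial of degree $\le m-1$. Fix $x_0$ (say $x_0 = e$) and work with $\mu := \log_{x_0}{}_*\nu = \log_*\nu$ on $\fg$. Using the BCH expansion \eqref{eq:decomposition_P}, for $X = \log x$ we have
\begin{equation*}
C_\nu(\exp X) = \int_\fg \big( (-X)\ast Y \big)\, d\mu(Y) = -X + \int_\fg Y\, d\mu(Y) + \int_\fg P(-X,Y)\, d\mu(Y),
\end{equation*}
and by \eqref{eq:decomposition_P} the last integral is a finite sum of terms $M^i_I(\pi_{i_1}(-X),\dots)\cdot\big(\int_\fg L^i_J(\pi_{j_1}Y,\dots)\,d\mu(Y)\big)\, E_i$ with $\#I = j\ge 1$, $\#J=k\ge 1$, $j+k\le m$; each $M^i_I$ factor is a polynomial of degree $j\le m-1$ in $X$, and each $Y$-integral converges by \eqref{eq:bound_P_multilinear} (giving $|L^i_J(\dots)|_e \le C|Y|^{|J|}$ with $|J|\le m-1$) together with the finite-moment hypothesis. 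This exhibits $C_\nu\circ\exp$ as a polynomial of degree $\le m-1$ and, defining $A_i$ as these $Y$-moments and $L_i$ as the associated multilinear forms (possibly after bundling $\pi_{j_s}Y$ into $Y$ via projections, which only increases constants), gives the bound \eqref{eq:bound_Qi}, noting $|L^i_J(\pi_{j_1}Y,\dots)|_e \le C|Y|^{|J|} \le C(1+|Y|^m)$.

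The core of the argument is solving $C_\nu(\exp X) = Z$ recursively and showing the solution is polynomial. The key structural fact, already recorded in Section~\ref{sec_prelim}, is that $\pi_i P(X,Y)$ depends only on $\pi_1 X,\dots,\pi_{i-1}X$ and $\pi_1 Y,\dots,\pi_{i-1}Y$. Hence projecting $C_\nu(\exp X) = Z$ onto $V_i$ yields
\begin{equation*}
-\pi_i X + \pi_i\!\!\int_\fg Y\, d\mu + \pi_i\!\!\int_\fg P(-X,Y)\, d\mu(Y) = \pi_i Z,
\end{equation*}
where the third term depends only on $\pi_1 X,\dots,\pi_{i-1}X$ (and on moments of $\mu$). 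Thus $\pi_1 X$ is determined linearly/affinely by $\pi_1 Z$ and $\pi_1\int Y\,d\mu$; inductively, once $\pi_1 X,\dots,\pi_{i-1}X$ are known as polynomials in $Z$ and in polynomial moments of $\mu$, the $i$-th equation expresses $\pi_i X$ as a polynomial in those same quantities. This simultaneously shows $C_\nu$ is a bijection $G\to\fg$ with a polynomial inverse in exponential coordinates; combined with smoothness of $C_\nu$ and the fact that its differential is block lower-triangular with invertible ($=-\mathrm{Id}$) diagonal blocks — again from the lower-triangular-with-zero-diagonal structure of $DP$ noted after \eqref{eq:bracket_grading} — we get that $C_\nu$ is a diffeomorphism. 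Setting $Z=0$ and collecting the recursion into a single polynomial $Q$ in the moments $A_i$ gives \eqref{eq:formula_com1}--\eqref{eq:formula_com2}. I expect the main obstacle to be purely bookkeeping: organizing the recursion so that the moments appearing are honestly of the form \eqref{eq:formula_com2} with multilinear $L_i$ satisfying \eqref{eq:bound_Qi} (rather than a messier family indexed by multiindices), and checking that substituting the lower-layer solutions back in keeps everything polynomial with controlled moments — convergence of every integral that appears is guaranteed throughout by the finite $m$-th moment hypothesis via Proposition~\ref{pr:bounds_P}.
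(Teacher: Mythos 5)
Your proposal follows essentially the same route as the paper's proof: express $C_\nu\circ\exp$ via the BCH formula, use the estimates of Proposition~\ref{pr:bounds_P} to justify integrability and polynomiality of degree $\le m-1$, solve $C_\nu(\exp X)=Z$ layer by layer using that $\pi_i P(X,Y)$ depends only on the lower-layer components of $X$, and read off the moment representation with the bound $|L_i(Y,\dots,Y)|_e\le C(1+|Y|^m)$ from the multilinearity of the BCH terms. The bookkeeping you flag (bundling projections into multilinear forms on $\fg$ and keeping the recursion polynomial in the moments) is exactly how the paper handles it, so the argument is correct as proposed.
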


 We call 
\begin{equation}  \label{eq:define_com}
\com_\nu := (C_\nu)^{-1}(0)
\end{equation}
the center of mass of $\nu$.

\bigskip\bigskip
\begin{remark}
The proof shows the condition  that $\nu$ has finite $m$-th moment can be slightly weakened. It suffices to assume that
\begin{equation}  \label{eq:weakened_moment_condition}
\int_{\fg}   (|Y|^{m-1} +  |Y|_e )  \, d\log_* \nu < \infty.
\end{equation}
The reason is that elements of $V_m$ do not appear in the BCH term $P(X,Y)$ and that $P(X,Y)$ 
is polynomial in $Y$ of degree
not exceeding $m-1$, see also   \eqref{eq:bound_P}.  Recall from  \eqref{eq:homogeneous_norm} that the homogeneous norm $|X|$ is equivalent to $\sum_{i=1}^m |\pi_j X|_e^{\frac1j}$ where $\pi_j : \fg \to V_j = \fg_j$
is the projection to the $j$-th layer of the algebra. Thus condition \eqref{eq:weakened_moment_condition} is equivalent to the condition that 
$|Y|^{m-1} + |\pi_m Y|_e$ is  $\log_* \nu$ integrable.
\end{remark}

\bigskip\bigskip
\begin{proof}[Proof of Theorem~\ref{le:C_nu_diffeomorphism}]  
It is easier to work on the algebra $\fg$ rather than the group $G$.
We thus define
$$ \tilde C_\nu(X) =  C_\nu(\exp X).$$
Then, using the BCH formula,  we get  
\begin{equation}
\label{eqn_c_nu_tilde_formula}
\begin{aligned}
 \tilde C_\nu(X) =  & \, \int_G  \log( \exp(-X) y)  \, d\nu(y)  \\
 = & \, \int_{\fg}   \log( \exp(-X) \exp Y) \,  d\log_*\nu(Y) \\
 = & \, - X +   \int_{\fg} ( Y + P(-X,Y))  \,  d\log_*\nu(Y).
 \end{aligned}
 \end{equation}
  The integrand is a polynomial  of degree at most  $m-1$ in $X$. 
 By the definition of the homogeneous norm we have $|Y|_e \le C( |Y| + |Y|^m)$.
 Moreover by   \eqref{eq:bound_P}  we have $|P(X,Y)| \le C(X) ( 1 + |Y|^{m-1})$. 
 Since $\nu$ is a probability measure with finite $m$-th moment the integral in (\ref{eqn_c_nu_tilde_formula}) exists.
 The bounds  \eqref{eq:bound_derivatives_P} on the derivatives imply that differentiation with respect
 to $X$ and integration commute. Thus $\tilde C_\nu$ is a polynomial of degree at most $m-1$.

We now show that for every $Z\in \fg$, the equation
 \begin{equation}  \label{eq:equation_inverse_Cnu} \tilde C_\nu(X) = Z
 \end{equation}
 has a unique solution which is a polynomial in $Z$, and moreover depends only on certain polynomial moments of $\log_* \nu$ of degree at most $m-1$.
 Recall that $\pi_i$  denotes  the projection from $\fg = \oplus_{i=j}^m V_j$ to $V_i$. 
 Define functions $P^i$ by $P^i(X,Y) = \pi_i  P(X,Y)$ and set $X^i = \pi_i X$, $Y^i = \pi_i Y$.
 Applying $\pi_i$ to   \eqref{eq:equation_inverse_Cnu} we get a system of $m$ equations, namely
 \begin{equation}  \label{eq:inverse_C_nu}
  - X^i +  \int_{\fg}  Y^i + P^i(-X, Y) d\log_*\nu(Y) = Z^i  \quad \text{for $i \in \{1, \ldots, m\}$.}
  \end{equation}
 Since $P$ consists of iterated commutators one easily sees that $P_i(X,Y)$ depends on $X$
 only through $(X^1, \ldots, X^{i-1})$. 
 Thus the system can be solved recursively starting with 
 $$ X^1 = -Z^1 + \int_{\fg} Y^1 \, d\log_*\nu(Y).$$
  Moreover the solution is polynomial in $Z$ and in particular  smooth.

 We finally discuss the  dependence of the solution on $\log_* \nu$. Since $[V_i, V_j] \subset V_{i+j}$ we see as
 in  Lemma~\ref{pr:bounds_P} that
  $P^i(X,Y)$ can be written as
\begin{align}  \label{eq:decomp_Pi}
 &   \, \quad  P^i(X,Y) \\
  =  & \sum_{p=1}^{d_i} \, \sum_{j=2}^{i}  \sum_{k=1}^{j-1}   \sum_{ \sum_{\ell = 1}^j i_\ell = i}
   M^p_{i_1 i_2 \ldots i_k}(X^{i_1},   \ldots    X^{i_k}) \,  \, 
   L^p_{i_{k+1} \ldots i_j}(Y^{i_{k+1}},    \ldots Y^{i_j})  E_p  \nonumber 
   \end{align}
   where $E_1, \ldots, E_{d_i}$ is a basis of $V_i$ and where 
   $M_{I}^p$ and $L_{I}^p$ are multilinear forms.

     Let $\bar X = \log \com_\nu$. Then it follows from  \eqref{eq:inverse_C_nu}
   that $\bar X$ can be recursively computed as
    \begin{equation}  \label{eq:formula_com}
  \bar X^i =   \int_{\fg} \Big( Y^i + P^i(-\bar X, Y)  \Big) \, d\log_*\nu(Y)   \quad \text{for $i \in \{1, \ldots, m\}$, }
  \end{equation}
 starting with $\bar X^1 = \int_{\fg} Y^1 \,  d\log_*\nu(Y)$.
  Hence $\bar X$ is a polynomial expression in    
  $$ \bar Y^i =    \int_{\fg}   Y^i   \, d\log_*\nu(Y)$$
  and 
  the polynomial  moments
  $$ A^i_{i_{k+1}, \ldots, i_j}   :=   \int_{\fg}   
   L^i_{i_{k+1} \ldots i_j}(Y^{i_{k+1}},    \ldots Y^{i_j})  \, d\log_*\nu(Y).
  $$
  Since  $s := \sum_{\ell = k+1}^j i_\ell \le i-k \le m-1$ we have by multilinearity
  \begin{align}   \label{eq:bound_Li_com}
 & \,  | L^i_{i_{k+1} \ldots i_j}(Y^{i_{k+1}},    \ldots Y^{i_j}) |_e 
 \le    C  \prod_{\ell = k+1}^j |Y^{i_\ell}|_e  \\
 \le  & \,   C  \prod_{\ell = k+1}^j |Y^{i_\ell}|^{i_\ell} 
 \le   C  |Y|^s  \le C (1 + |Y|^m).  \nonumber
 \end{align}
 Moreover $|Y^i|_e \le |Y|^i \le 1 + |Y|^m$.
 \end{proof}

\bigskip\bigskip
\begin{remark}  \label{re:com_nilpotent} The conclusion of Theorem~\ref{le:C_nu_diffeomorphism} continues to hold if we consider a nilpotent group instead of a Carnot group, and use (\ref{eq:p_th_moment_nu_bis}) rather than   (\ref{eq:p_th_moment_nu}) to define the $p$-th moment, where the homogenous norm is defined as
in 
Remark~\ref{re:BCH_bounds_nilpotent}  using a (noncanonical) decomposition $\fg = \oplus W_j$ with $ \fg_j= W_j \oplus \fg_{j+1}$ and denoting by $\pi_j$ the projection from $\fg$ to $W_j$.

Indeed, by Remark~\ref{re:BCH_bounds_nilpotent}  the bounds in Lemma~\ref{pr:bounds_P} also hold in the nilpotent case. Thus $C_\nu \circ \exp$ is well-defined
and a polynomial of degree at most $m-1$. In view of Remark \ref{re:norms_and_complement} the condition that the probability measure  $\log_* \nu$ has finite $m$-th moment
is independent of the choice of the auxiliary spaces $W_i$ since different choices lead to  homogeneous norms $|\cdot|_\sim$ and $|\cdot|$
which satisfy $|X|_\sim \le C( 1 + |X|^m)$ and $|X| \le C(1 + |X|_{\sim}^m)$.

 To see that the equation  \eqref{eq:equation_inverse_Cnu} can be solved recursively, consider the 
projections $\pi_j : \fg \to W_j$ and $\tilde \pi_j = \sum_{i=1}^j  \pi_i$.
Since $[W_j, W_k] \subset \oplus_{i=j+k}^m W_i$, the expression $\tilde \pi_j [X,Y]$ depends only on $\tilde \pi_{j-1}(X)$ and $ \tilde \pi_{j-1}(Y)$ 
and thus $\tilde \pi_j P(X,Y)$  depends only on $\tilde \pi_{j-1}(X)$ and $ \tilde \pi_{j-1}(Y)$. Hence 
  \eqref{eq:equation_inverse_Cnu} can be again solved recursively by successively  applying the projections $\tilde \pi_1, \ldots, \tilde \pi_m = \id$.
  The projection $\tilde \pi_i P(X,Y)$ can again be expressed as a sum of products of multilinear terms in $X$ and $Y$. 
  The only difference is that  the condition    $  \sum_{\ell=1}^j  = i$   is replaced by   $  \sum_{\ell=1}^j  \le i$.
  Nonetheless the bound  \eqref{eq:bound_Li_com} still holds and this implies
   \eqref{eq:bound_Qi}.
   
We note in passing that one can  show the recursive solvability of     \eqref{eq:equation_inverse_Cnu}    without introducing 
the spaces $W_i$,  by considering the abstract projections $\bar \pi_1, \ldots, \bar \pi_m = \id$ given by
$\bar \pi_i: \fg \to  \fg / \fg_{i+1}$. 
\end{remark}

\bigskip

\begin{remark}    \label{re:com_two_step}  For a step-$2$ group $C_\nu \circ \exp$ is an affine function and thus 
\begin{equation} \label{eq:mollification_2step}
 \log \com_\nu = \int_{\fg}  Y  \, d\log_* \nu(Y)   \quad \text{for step-$2$ groups.}
 \end{equation}

\end{remark}

\bigskip\bigskip
  We now show that the center of mass defined by $\com_\nu = C_\nu^{-1}(0)$ commutes with left translations,
  inversion and group homomorphisms. 
  
\begin{lemma} \label{le:center_behaves_natural} 
 Let $G$ and $G'$ be Carnot groups of step $m$ and $m'$, respectively.  Let $\Phi: G \to G'$ be a  group homomorphism (with derivative $D\Phi: \fg \to \fg'$), let  $I: G \to G$ be   the inversion map given by $I(x) = x^{-1}$, and $\nu$ be a Borel probability measure on $G$.

If $\nu$ has finite $m$-th moment then $(\ell_z)_* \nu$ and $I_* \nu$ are Borel  probability measures on $G$ with finite $m$-th moment and 
\begin{eqnarray}
\com_{(\ell_z)_* \nu} &=& \ell_z(\com_\nu),  \label{eq:com_translation}\\
\com_{I_* \nu} &=& I(\com_\nu).  \label{eq:com_inversion}
\end{eqnarray}
In particular, if $\nu$ is reflection symmetric, i.e., if $I_* \nu = \nu$,  then 
\begin{equation}  \label{eq:com_inversion_invariant}
\com_{(\ell_z)_*\nu} = z.
\end{equation}

If $\nu$ has finite $p$-th moment for $p\geq \max(m,m')$, then $\Phi_* \nu$ is a Borel probability measure on $G'$ with finite   $p$-th moment, and
\begin{equation}
\com_{\Phi_* \nu} = \Phi(\com_\nu)\,.   \label{eq:com_Phi}
\end{equation}

\end{lemma}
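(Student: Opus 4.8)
The unifying device is the characterization coming from Theorem~\ref{le:C_nu_diffeomorphism}: for a probability measure $\mu$ with finite $m$-th moment on an $m$-step Carnot group, $C_\mu$ is a diffeomorphism, so $\com_\mu$ is the \emph{unique} point with $\int_G\log_{\com_\mu}\,d\mu=0$. So for each map in the statement I would (a) check that the pushforward of $\nu$ still satisfies the relevant moment condition, so that its center of mass is defined, and (b) exhibit the claimed point as a solution of the balancing equation, and conclude by uniqueness. Part (a) is immediate for $(\ell_z)_*\nu$ and $I_*\nu$ from left-invariance of $d_{CC}$: it gives $d_{CC}(e,zy)\le d_{CC}(e,z)+d_{CC}(e,y)$ and $d_{CC}(e,y^{-1})=d_{CC}(e,y)$, so a finite $m$-th moment is preserved. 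For $\Phi_*\nu$ one uses that the Lie algebra homomorphism $D\Phi$ carries the descending central series $\fg_j$ into $\fg'_j=\oplus_{i\ge j}V'_i$; combined with the definition \eqref{eq:homogeneous_norm} of the homogeneous norm and the ball-box inequality \eqref{eq:ball_box} this yields $d'_{CC}(e',\Phi(y))\le C\big(d_{CC}(e,y)^{1/m'}+d_{CC}(e,y)\big)$, so raising to the $m'$-th power and integrating shows that a finite $p$-th moment with $p\ge m'$ gives $\Phi_*\nu$ a finite $m'$-th moment; since $p\ge m$ is what makes $\com_\nu$ itself well defined, this accounts for the hypothesis $p\ge\max(m,m')$.

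For left translation, substituting $g=z\,\com_\nu$ into the balancing equation gives $\log\big((z\com_\nu)^{-1}zy\big)=\log(\com_\nu^{-1}y)$, so $C_{(\ell_z)_*\nu}(z\,\com_\nu)=C_\nu(\com_\nu)=0$, and uniqueness gives \eqref{eq:com_translation}. For a homomorphism, $\Phi\circ\exp=\exp'\circ D\Phi$ gives $\log\big(\Phi(\com_\nu^{-1}y)\big)=D\Phi\big(\log(\com_\nu^{-1}y)\big)$, and since $D\Phi$ is a fixed linear map it commutes with the integral (legitimate because $\log_{\com_\nu}$ is $\nu$-integrable by Theorem~\ref{le:C_nu_diffeomorphism}), whence $C_{\Phi_*\nu}(\Phi(\com_\nu))=D\Phi\big(C_\nu(\com_\nu)\big)=0$ and \eqref{eq:com_Phi} follows. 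The reflection-symmetric statement \eqref{eq:com_inversion_invariant} is then formal: combining \eqref{eq:com_translation} and \eqref{eq:com_inversion}, the hypothesis $I_*\nu=\nu$ forces $\com_\nu=\com_\nu^{-1}$, and a simply connected nilpotent group is torsion-free, so $\com_\nu=e$ and $\com_{(\ell_z)_*\nu}=z\,\com_\nu=z$.

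The step I expect to be the real obstacle is inversion, because $I$ is an anti-homomorphism and in a nonabelian $G$ the ``left'' and ``right'' notions of center of mass genuinely differ, so the candidate $g=\com_\nu^{-1}$ cannot be checked by manipulating the BCH formula alone. My plan is: first record the right-translation analogue $\com_{(r_z)_*\nu}=\com_\nu\cdot z$, proved exactly like the left-translation case but using $\log(z^{-1}wz)=\Ad(z^{-1})\log w$ and linearity of $\Ad(z^{-1})$; then treat the symmetric case $\com_\nu=e$, where $C_\nu(e)=\int_\fg Y\,d(\log_*\nu)(Y)=0$ and $\log_*(I_*\nu)=(-\mathrm{id})_*\log_*\nu$, so that $C_{I_*\nu}(e)=-\int_\fg Y\,d(\log_*\nu)(Y)=0$ and hence $\com_{I_*\nu}=e$; finally reduce the general case to this by putting $\nu_0=(\ell_{\com_\nu^{-1}})_*\nu$, so that $\com_{\nu_0}=e$ by \eqref{eq:com_translation}, observing $I\circ\ell_{\com_\nu^{-1}}=r_{\com_\nu}\circ I$ so that $I_*\nu=(r_{\com_\nu^{-1}})_*(I_*\nu_0)$, and applying the right-translation formula to conclude $\com_{I_*\nu}=\com_{I_*\nu_0}\cdot\com_\nu^{-1}=\com_\nu^{-1}=I(\com_\nu)$.
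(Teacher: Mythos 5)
Your proposal is correct, and for most of the lemma it runs exactly as the paper does: use the uniqueness coming from Theorem~\ref{le:C_nu_diffeomorphism}, check the moment condition for the pushforward (your use of $d_{CC}(e,zy)\le d_{CC}(e,z)+d_{CC}(e,y)$ and $d_{CC}(e,y^{-1})=d_{CC}(e,y)$ is an equivalent, slightly more elementary substitute for the paper's BCH estimate \eqref{eq:bound_P_homogeneous_norm}, and your bound on $D\Phi$ is the paper's \eqref{eq:bound_DPhi_natural}), and then plug the candidate point into the balancing equation, exactly as in $C_{\Phi_*\nu}(\Phi(x))=D\Phi(C_\nu(x))$. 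The one place where you genuinely diverge is inversion, and your instinct that something nonabelian is happening there is sound: the paper disposes of it with the pointwise identity $\log[I(x)^{-1}I(y)]=-\log(x^{-1}y)$, which as written is false in a nonabelian group, since $I(x)^{-1}I(y)=xy^{-1}$ while $(x^{-1}y)^{-1}=y^{-1}x$; the correct identity is $\log(xy^{-1})=-\Ad(x)\log(x^{-1}y)$, and because $\Ad(x)$ is a fixed linear map it passes through the integral, giving $C_{I_*\nu}(I(x))=-\Ad(x)\,C_\nu(x)$, which vanishes at $x=\com_\nu$. So your claim that the candidate $\com_\nu^{-1}$ "cannot be checked directly" is overstated -- the direct check works in one line once the $\Ad$-twist is inserted -- but your three-step detour (right-translation formula via $\log(z^{-1}wz)=\Ad(z^{-1})\log w$, the centered case $\com_\nu=e$, and the reduction $I\circ\ell_{\com_\nu^{-1}}=r_{\com_\nu}\circ I$) is correct and rests on the very same observation, namely linearity of the adjoint commuting with the integral; it even repairs the gap in the paper's displayed identity. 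Two small points: you should also record that $(r_z)_*\nu$ has a finite $m$-th moment (same triangle-inequality argument as for $\ell_z$), and the lemma asserts that $\Phi_*\nu$ has finite $p$-th moment, not just finite $m'$-th moment -- but your inequality $d'_{CC}(e',\Phi(y))\le C\big(d_{CC}(e,y)^{1/m'}+d_{CC}(e,y)\big)$ yields this immediately upon raising to the $p$-th power, since $d^{p/m'}\le 1+d^p$. Your explicit appeal to torsion-freeness of a simply connected nilpotent group to pass from $\com_\nu=\com_\nu^{-1}$ to $\com_\nu=e$ is the step the paper leaves implicit.
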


\begin{remark}   \label{re:naturality_com_nilpotent} The assertion and the proof  immediately extend to nilpotent groups. To bound the $m$-th moment of the measures
$I_* \nu$ and $(\ell_x)_*\nu$ one  uses \eqref{eq:bound_P_nilpotent_homogeneous_norm}  instead of
\eqref{eq:bound_P_homogeneous_norm}.
\end{remark}

\begin{proof} 
Assume that $\nu$ has finite $p$-th moment. We have  $|\log \circ I(y)| = |- \log y| = |\log y|$. Thus $I_* \nu$ has finite $p$-th moment. 
Similarly $\log \circ \ell_{x}(y) = \log x  + \log y + P(\log x, \log y)$. By 
\eqref{eq:bound_P_homogeneous_norm} we have  $|P(\log x, \log y)| \le C(|\log x| + |\log y|)$. Thus $(\ell_x)_*\nu$ has finite $m$-th moment. 

To control the moment of $\Phi_* \nu$ we first note that $\Phi$ preserves the one-parameter subgroups and thus
$  \log_{G'} \circ \Phi \circ \exp_G = D \Phi$.  
    It follows that
$ (\log_{G'})_* \Phi_* \nu = (D\Phi)_* (\log_{G})_*\nu$. 
    It thus suffices to show that 
\begin{equation}  \label{eq:bound_DPhi_natural}
 |D \Phi (X)|  \le C (|X|^\frac1m + |X|).
\end{equation}
To show this,  observe that in a Carnot algebra the elements $\fg_j$ of the descending series are given by
$\fg_j = \oplus_{i=j}^m V_i$. Since $D\Phi$ is a Lie algebra homomorphism we have $D\Phi(\fg_j) \subset \fg'_j$.
Note also that $D\Phi$ is linear and hence bounded with respect to the Euclidean norms. Thus we have
for $X \in V_j$ 
$$ |D \Phi (X)| \le   C  \sum_{i=j}^m |\pi_i D\Phi (X)|_e^{\frac1i}
\le  C  \sum_{i=j}^m |X|_e^{\frac1i}  =  C \sum_{i=j}^m  |X|^\frac{j}{i}.$$
By Young's inequality we have $|X|^\frac{j}{i} \le C (|X|^\frac1m + |X|)$ whenever $i \ge j$. 
By linearity we get  \eqref{eq:bound_DPhi_natural} for all $X$.

To prove   \eqref{eq:com_translation}--\eqref{eq:com_Phi}
  it suffices to verify the corresponding
transformation rules for $C_\nu$. We have
$$ \log[ (\Phi(x))^{-1} \Phi(y) ]= \log[ \Phi(x^{-1} y)] = D\Phi(\log(x^{-1}y))$$
and thus
$$ C_{\Phi_* \nu}(\Phi(x)) = D\Phi (C_{\nu}(x)).$$
Setting $x = \com_\nu$ we get $C_{\Phi_* \nu}(\Phi(\com_\nu)) = 0$ and thus  \eqref{eq:com_Phi}.
Similarly the relation 
$$ \log [I(x^{-1} I(y)] = \log [ I(x^{-1}y) ] = - \log(x^{-1}y)$$
gives \eqref{eq:com_inversion} while the identity
$$ (\ell_z x)^{-1} (\ell_z y) = x^{-1} z^{-1} z y = x^{-1} y$$
gives  \eqref{eq:com_translation}.

Now assume that $\nu$ is reflection symmetric. Then \eqref{eq:com_inversion} implies that
$\com_{\nu} = e$. In combination with 
 \eqref{eq:com_translation} we obtain   \eqref{eq:com_inversion_invariant}.
\end{proof}

\bigskip\bigskip
\subsection{Mollifying maps between Carnot groups}
\label{subsec_mollifying_maps_between_carnot_groups}
In this subsection we define a mollification procedure for $L^p_{\loc}$-mappings into a Carnot group.  Traditional mollification of mappings into a linear target is based on averaging; since Carnot groups are not linear spaces, we replace averaging with the center of mass from Subsection~\ref{subsec_center_of_mass}.  

Let $\si_1$ be a smooth probability measure on a Carnot group $G$ with $\spt(\si_1)
\subset B(e,1)$. Thus $\sigma_1 = \alpha \mu$ where $\mu$ is the biinvariant measure on $G$
and $\alpha \in C_c^\infty(B(e,1))$.  We also assume that $\si_1$ is symmetric under inversion:
$I_*\si_1=\si_1$, where $I(x)=x^{-1}$.
For $x\in G$, $\rho\in (0,\infty)$, let $\si_\rho$, $\si_x$, and $\si_{x,\rho}$
be the pushforwards of $\si_1$ under the the corresponding Carnot scaling
and left translation:
\begin{equation}    \label{eq:transported_measures}
\si_\rho=(\de_\rho)_*\si_1\,,\quad \si_x=(\ell_x)_*\si_1\,,\quad
\si_{x,\rho}=(\ell_x\circ\de_\rho)_*\si_1=(\ell_x)_*(\si_\rho)\,.
\end{equation}

Let $G$ be any Carnot group and let $G'$ be an $m$-step Carnot group.
  Recall that  $f:G\ra G'$ is in $L^m_{loc}(G,G')$ if $f$ is   measurable 
and  
\begin{equation}  \label{eq:f_in_Lm_loc}
 \hbox{$ y \mapsto  d_{CC, G'} (f(y), e) $   
 belongs to $L^m_{loc}(G)$}.
\end{equation}
In particular every continuous map belongs to $L^m_{loc}(G, G')$. 
We will see shortly  that $f \in L^m_{loc}(G,G')$ implies that the push-forward measure
 $f_* \sigma_z$ has finite $m$-th moment.
We may then  define a mollified map   $f_1:G\ra G'$ by
\begin{equation}   \label{eq:mollification}
f_1(x)=\com(f_*(\si_{x}))\,,
\end{equation}
and maps $f_\rho:G\ra G'$ by 
\begin{equation}     \label{eq:mollification_scaled}
f_\rho=\de_\rho\circ(\de_{\rho^{-1}}\circ f\circ \de_\rho)_1\circ \de_{\rho^{-1}}\,.
\end{equation}

Recall that for $p \in [1, \infty)$ the $L^p$ oscillation on a set $A$ is defined by
\begin{equation} \label{eq:osc_Lp}
\osc_p(f,A) := \inf_{a \in G'} \left(    \int_A d_{CC, G'}^p(f(y), a) \, \mu(dy)  \right)^{1/p}
\end{equation}
where $\mu$ is the biinvariant measure on $G$. 

\begin{lemma}
\label{le:moll_prop_Lm} 
Let $G$  be a Carnot group, let  $G'$  be  an  $m$-step Carnot group, let  $ p \in [m, \infty)$ and $f \in L^p_{loc}(G,G')$.
As above  let $\sigma_1 = \alpha \mu$ with $\alpha \in C_c^\infty(B(e,1))$ and $I_* \sigma_1 = \sigma_1$. Define $\sigma_z$
by \eqref{eq:transported_measures} and $f_1$ and $f_\rho$ by  \eqref{eq:mollification} and
 \eqref{eq:mollification_scaled}. 
 Then: 
\ben
\item For all $z \in G$ the measures $f_* \sigma_z$ have finite $p$-th  moment. 
\item For all $\rho\in(0,\infty)$, 
$$
\de_{\rho^{-1}}\circ f_\rho\circ\de_\rho
=(\de_{\rho^{-1}}\circ f\circ \de_\rho)_1\,.
$$
\item For all $a \in G$ and $b \in G'$ 
\begin{equation*} \label{eq:smoothing_commutes_translation}
(\ell_b \circ f \circ \ell_a)_1 = \ell_b \circ f_1  \circ \ell_a.
\end{equation*}
\item   For all $\rho > 0$ 
$$ (\delta_\rho \circ f)_1 = \delta_\rho  \circ f_1.$$
\item \label{it:bound_oscillation_f1}  
Assume that for some $x_0 \in G$ and some  $a \in G'$
$$ \int_{B(x_0,1)}   d^m_{CC, G'}(f(x), a)  \, d\mu \le R^m.$$
Then 
$$ d_{CC, G'}(f_1(x_0), a) \le C R  \quad \text{where $C= C(G,G', \sigma_1)$.}   
$$   
\item \label{it:convergence_f_rho}
We have 
$$ f_\rho \to f  \quad \text{a.e.  and in $L^p_{loc}(G)$.}    $$
If $f$ is continuous, then  $f_\rho \to f$ locally uniformly. 
\item   \label{it:bound_derivatives_f1} 
 If $\osc_{m}(f, B(x_0,1)) \le R$
then the (Riemannian) norms of the  derivatives of 
$f_1$  are
controlled at  $x_0$, i.e.  $\|D^i( f_1)(x_0)\|<C=C(i,R, G, G', \sigma_1)$ and
$\| D^i\big((\delta_{R^{-1}} \circ f_1)(x_0)\big)\| \le C(i,G,G', \sigma_1)$. 
\een
\end{lemma}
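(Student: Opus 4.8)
\textbf{Proof plan for Lemma~\ref{le:moll_prop_Lm}.}

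The plan is to establish the seven assertions roughly in order, with the bulk of the work concentrated in (1), (5), and (7); assertions (2)--(4) are formal consequences of the equivariance properties of the center of mass (Lemma~\ref{le:center_behaves_natural}), and (6) follows from (5)--(7) by a standard Lebesgue-point argument. First I would prove (1): since $f\in L^p_{\loc}$, for any $z$ the ball $B(z,c)$ (for suitable $c$ depending only on $\spt\si_1$) has $\int_{B(z,c)}d_{CC,G'}^p(f(y),a)\,d\mu<\infty$ for some $a\in G'$; because $\si_z$ is absolutely continuous with bounded density supported in that ball, $\int d_{CC,G'}^p(f(y),a)\,d\si_z(y)<\infty$, which (after left-translating by $a^{-1}$ to center at $e$, and using $d_{CC}(e,a^{-1}f(y))=d_{CC}(a,f(y))$) says exactly that $(\ell_{a^{-1}})_*f_*\si_z$ has finite $p$-th moment; by Lemma~\ref{le:center_behaves_natural} the finite-moment property is preserved under left translation, so $f_*\si_z$ has finite $p$-th moment. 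Since $p\ge m$, this is in particular a finite $m$-th moment, so $\com(f_*\si_z)$ is well defined by Theorem~\ref{le:C_nu_diffeomorphism}, and the mollifications $f_1$, $f_\rho$ make sense.

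Next, (2)--(4): these are bookkeeping with the pushforward identities in \eqref{eq:transported_measures} together with the naturality relations $\com_{(\ell_z)_*\nu}=\ell_z(\com_\nu)$, $\com_{\Phi_*\nu}=\Phi(\com_\nu)$ from Lemma~\ref{le:center_behaves_natural} (applied with $\Phi=\de_\rho$, which is a group homomorphism, so $\de_\rho$ of step $m$ into step $m$ needs no extra moment hypothesis beyond finite $m$-th moment). For (3) one writes $(\ell_b\circ f\circ\ell_a)_*\si_{x}=(\ell_b)_*f_*(\ell_a)_*\si_x=(\ell_b)_*f_*\si_{ax}$ and applies translation-naturality on both sides; for (4) similarly with $\de_\rho$; for (2) one unwinds the definition \eqref{eq:mollification_scaled} directly. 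For (5): by the translation-equivariance just proved it suffices to treat $a=e$, and translating the target we may assume $a=e$ as well, so the hypothesis is $\int_{B(x_0,1)}d_{CC,G'}^m(f,e)\,d\mu\le R^m$. Then $f_*\si_{x_0}$ is a probability measure on $G'$ whose $m$-th moment about $e$ is $\le C(G,G',\si_1)R^m$ (the density of $\si_{x_0}$ is bounded by a constant times that of $\mu$ on $B(x_0,1)$). By the explicit formula \eqref{eq:formula_com1}--\eqref{eq:bound_Qi} for $\log\com$ in terms of polynomial moments, together with the bound $|L_i(Y,\dots,Y)|_e\le C_i(1+|Y|^m)$ and $Q(0,\dots,0)=0$, one gets $|\log\com(f_*\si_{x_0})|_e\le C(1+R^m)$; but to get the clean linear bound $d_{CC,G'}(f_1(x_0),e)\le CR$ I would first rescale: apply $\de_{R^{-1}}$ to the target (again using homomorphism-naturality of $\com$), reducing to the case $R=1$, where $|\log\com|_e\le C$ gives $d_{CC,G'}(f_1(x_0),e)\le C$, then scale back by $R$.

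Finally (6) and (7). For (7), I would again reduce by target-rescaling ($\de_{R^{-1}}$) to $\osc_m(f,B(x_0,1))\le 1$ and by translation to $x_0=e$; one may also translate the target so the infimizing point $a$ is $e$. Then the map $x\mapsto f_1(x)$ near $e$ is $x\mapsto \exp\big(\tilde C_{f_*\si_x}^{-1}(0)\big)$, and $\tilde C_\nu^{-1}(0)$ is a \emph{polynomial} in the moments $\int L_i(Y,\dots,Y)\,d(\log_*(f_*\si_x))(Y)$ by Theorem~\ref{le:C_nu_diffeomorphism}; these moments are of the form $\int_{G} \Lambda_i(\log(f(y)))\,\alpha(x^{-1}y)\,\de_{\rho}\text{-weighted}\,d\mu(y)$, i.e. convolution-type integrals of a fixed $L^1_{\loc}$ function (controlled by the $m$-th power of $d_{CC,G'}(f,e)$, hence $L^1$ on $B(e,2)$ by the oscillation bound) against the smooth kernel $\alpha(x^{-1}\cdot)$; differentiating in $x$ falls on the smooth compactly supported kernel, so every $x$-derivative of each moment is bounded on a neighborhood of $e$ by $C(i,G,G',\si_1)\cdot\|d_{CC,G'}(f,e)^m\|_{L^1(B(e,2))}\le C$; composing with the fixed polynomial $\tilde C^{-1}_\bullet(0)$ and with $\exp$ (smooth) gives the bound on $\|D^i f_1(e)\|$, and undoing the rescaling gives the $\de_{R^{-1}}\circ f_1$ statement. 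For (6): $f_\rho\to f$ a.e.\ follows from (5) applied at scale $\rho\to 0$ at Lebesgue points of $f$ (Lemma~\ref{le:lebesgue_point_new}) — at such a point $x$, the rescaled oscillation $\osc_m$ of $f$ on $B(x,\rho)$, suitably normalized, tends to $0$, forcing $\de_{\rho^{-1}}\ell_{f(x)^{-1}}f_\rho(x)\to e$; then $L^p_{\loc}$ convergence follows from a.e.\ convergence plus a uniform local bound on $d_{CC,G'}(f_\rho,a)$ in $L^p$ (again from (5), via a Vitali/maximal-function domination argument) by dominated convergence; continuity of $f$ upgrades this to local uniform convergence since then $\osc_m$ on small balls is uniformly small on compacts. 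The main obstacle I anticipate is (7): keeping track of the linear (rather than multilinear) structure needed to differentiate under the integral and obtain \emph{scale-invariant} bounds — the fact that all $x$-derivatives land on the mollifier kernel is the key simplification, and the polynomial dependence from Theorem~\ref{le:C_nu_diffeomorphism} is what makes the chain rule terminate.
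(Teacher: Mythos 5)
Your plan follows essentially the same route as the paper's proof: (1) by direct computation with the compactly supported $\si_z$, (2)--(4) from the naturality of the center of mass (Lemma~\ref{le:center_behaves_natural}), (5) by reducing to $x_0=e$, $a=e$, $R=1$ via translation/dilation equivariance and then bounding the polynomial moments through \eqref{eq:formula_com1}--\eqref{eq:bound_Qi}, (6) from (5) at Lebesgue points plus an equi-integrability argument, and (7) by differentiating under the integral so that all derivatives fall on the kernel $\al$, exactly as in the paper.

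Two details in your sketch need repair, though neither changes the approach. In (7) you invoke a bound on $\|d_{CC,G'}^m(f(\cdot),e)\|_{L^1(B(e,2))}$, but the hypothesis only controls the oscillation on $B(x_0,1)=B(e,1)$ and gives no information outside that ball; the correct statement is that only the $L^1(B(e,1))$-norm is needed, because $\spt\al$ is a compact subset of the open ball $B(e,1)$, so when you differentiate at $x_0=e$ the translated kernel $Y\mapsto(\al\circ\exp)((-X)\ast Y)$ stays supported in a fixed compact subset of $\log B(e,1)$ for all small $X$ (this is precisely how the paper justifies differentiation under the integral); also, since you normalize the infimizing point $a=e$ rather than $f_1(x_0)=e$, you should add the remark that the chart in the target is centered at $f_1(x_0)$, which by (5) lies at bounded distance from $e$, so the extra left translation only affects constants. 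In (6), maximal-function domination will not produce an integrable majorant (the maximal function of an $L^1_{\loc}$ function need not be locally integrable, and the relevant exponent is $p/m$, which can equal $1$); the argument that works is the one you also name, Vitali: bound $d^p(f_\rho(y),f(y))$ by $C\bigl((h\ast\varphi_\rho)(y)+h(y)\bigr)$ with $h=d^p_{CC,G'}(f(\cdot),e)$, use that $h\ast\varphi_\rho\to h$ in $L^1_{\loc}$ to get equi-integrability, and conclude with Proposition~\ref{pr:dominated_equiintegrable}.
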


In assertion   \eqref{it:bound_derivatives_f1} 
 the 'Riemannian' derivatives are computed with respect to the charts $\varphi=   \log_{G} \circ \ell_{x_0^{-1}}: G \to \fg$
and $\psi = \log_{G'} \circ \ell_{f_1(x_0)^{-1}}: G' \to \fg'$, i.e. we estimate the derivatives of the map
$$  \psi \circ f_1 \circ \varphi^{-1} = \log \circ \ell_{f_1(x_0)^{-1}} \circ f_1 \circ \ell_{x_0} \circ \exp  : \fg \to \fg'$$
at $0$.

\bigskip
\begin{remark}  One can consider more general domains. First,  if  $U \subset G$ is open,  then  it 
 follows from the proof that the results extend to maps in $L^m_{loc}(U;G)$, whenever the expressions
make sense. In particular we need that $\spt \sigma_z \subset U$. Taking $G = \R^N$ we in particular obtain a smoothing operation
for maps $U \subset \R^N \to G'$. If $U$ is an open subset in a metric measure space $X$ we can  abstractly define the mollification $f_1(z)$ 
using a general family of  compactly supported Borel probability  measures $\sigma_z$.
 In this case there is no notion of left translation to define $\sigma_z$, but the measures
$\sigma_z$ should be in a suitable sense concentrated near $z$. Then (1) still holds and it is  easy to prove counterpart of (5). Moreover the
proof of (6) shows that $z \mapsto f_1(z)$ is locally Lipschitz (with bounds on the local Lipschitz constant in terms of the $L^m$-oscillation of $f$),
provided that the measures are such that for each function $h \in L^1_{loc}(X)$ the map $z  \mapsto \int_X h \sigma_z$ is Lipschitz.
\end{remark}

\bigskip

\begin{proof}[Proof of Lemma~\ref{le:moll_prop_Lm}]  
We will sometimes denote the Carnot-Caratheodory distance $d_{CC,G'}$ generically by $d$ for brevity.
  
(1). We have
$$
\int_{G'}  d_{CC, G'}^p(y',e) \, df_* \sigma_z(y') \\
= \int_G d_{CC, G'}^p(f(y),e)\, d\sigma_z(y).
$$
Now $\spt \sigma_z = z \spt \sigma$ is compact. Thus the right hand side is finite
since by assumption  $y \mapsto d_{CC, G'}^p(f(y), e)$ is integrable over compact sets.
By   \eqref{eq:ball_box} this implies  that $f_*\si_z$  has finite $p$-th moment.

(2). This is immediate from the definition.

(3). We have  $(\ell_a)_* \sigma_z =(\ell_a)_* ((\ell_z)_* \sigma) = \sigma_{az}$ and thus 
$$(f \circ \ell_a)_1(z) = \com_{(f \circ \ell_a)_*\sigma_z} = \com_{f_*\sigma_{az}} = f_1 (az) = (f_1 \circ \ell_a)(z).
$$
The identity $(\ell_b \circ f)_1 = \ell_b \circ f_1$ follows from \eqref{eq:com_translation}.

(4). This follows from  \eqref{eq:com_Phi}.

 \eqref{it:bound_oscillation_f1}.
 Since mollification commutes with pre- and postcomposition by  left translation we may assume that $a=e$
 and $x_0 = e$. 
 Since mollification commutes with dilation we may also assume $R=1$. 
 By   \eqref{eq:formula_com1} and  \eqref{eq:formula_com2} 
 $$ \log f_1 = \log \com f_* \sigma$$
 is a polynomial of  the polynomial moments  
  $$ A_i= \int_{\fg'}   L_i(Y,  \ldots, Y) \, d\log_* f_*\sigma_{1}(Y).$$
 Here and in the following we just write $\log$ instead of $\log_{G'}$ for ease of notation.
  It just suffices to prove bounds on the $A_i$ (which only depend on $G$, $G'$ and $\sigma$). 
 Now $\sigma = \alpha \mu$ where $\mu$ is the biinvariant measure on $G$ and $\alpha \in C_c^\infty(B(e,1))$
 and thus 
 $$ A_i = \int_{G}  L_i( \log f(z),  \ldots, \log f(z)) \,  \alpha(z)  \,  \mu(dz).$$
 Now by 
 \eqref{eq:bound_Qi} we have
 $ |L_i(Y, \ldots, Y)| \le C (1 + |Y|^m)$.  
 By  \eqref{eq:ball_box} we have $|\log f(z)| \le C d_{CC, G'}(f(z), e)$. 
 
 Hence
 $$ |A_i|  \le C  \int_{B(e,1)}  (1 +  d_{CC, G'}^m(f(z), e)) \, \| \alpha\|_\infty  \, \mu(dz)  \le C,$$
 as desired.

\eqref{it:convergence_f_rho}. Taking $a= f(x_0)$ in assertion  \eqref{it:bound_oscillation_f1}
and unwinding definitions we see that

\begin{equation}
\label{eqn_unwinding_5}
d^m(f_\rho(x_0), f(x_0)) \le C   \av_{B(x_0, \rho)} d^m( f(x), f(x_0)) \, d\mu(x).
\end{equation}
Thus at every Lebesgue point $x_0$  we have $f_\rho(x_0) \to f(x_0)$,  and by Lemma~\ref{le:lebesgue_point_new}  we therefore have $f_\rho\ra f$  almost everywhere.
If $f$ is continuous, then it is uniformly continuous on compact sets and thus $f_\rho \to f$ locally uniformly.
To get convergence in $L^p_{\loc}(G)$  for $f \in L^p_{\loc}(G)$ with $p\ge m$, it suffices to  show that the restriction of $y \mapsto d^p_{CC,G'}(f_\rho(y), f(y))$
to any  ball $B(e,R)$ is equi-integrable for $0 < \rho \le 1$, see Proposition~\ref{pr:dominated_equiintegrable}   below.  
To that end, we first observe that (\ref{eqn_unwinding_5}) gives, by Jensen's inequality and the triangle inequality:
\begin{equation}
\label{eqn_d_m_jensen_triangle}
\begin{aligned}
d^p(f_\rho(y),f(y))&=\left(d^m(f_\rho(y),f(y))  \right)^{\frac{p}{m}}\\
&\leq  \left(C   \av_{B(y, \rho)} d^m( f(x), f(y)) \, d\mu(x)  \right)^{\frac{p}{m}}\\
&\leq C_p\av_{B(y,\rho)}d^p(f(x),f(y))\,d\mu(x)\\
&\leq C_p\av_{B(y,\rho)}\left(d^p(f(x),f(e))+d^p(f(y),e)\right)\,d\mu(x)\\
&=C_p\left(\av_{B(y,\rho)}h(x)\,d\mu(x)    + h(y)  \right)
\end{aligned}
\end{equation}
where $h(x) := d^p_{CC, G'}(f(x), e)$ and $C_p$ denotes a generic constant depending on $p$.  The right hand side of \eqref{eqn_d_m_jensen_triangle} can be written as $C_p(  h \ast \varphi_\rho + h)(y)$ with 
$ (f\ast g)(y) :=  \int_G   f(x) g(x^{-1} y) \, \mu(dx)$ and 
$\varphi_\rho(z)=   \rho^{-\nu} 1_{B(0,1)}(\delta_{\rho^{-1}} z)$. 
Since $h \in L^1_{\rm loc}(G)$,  the mollifications  $h \ast \varphi_\rho$ converge to $h$ in $L^1_{loc}(G)$
as $\rho \to 0$. Thus the right hand side of \eqref{eqn_d_m_jensen_triangle} is equi-integrable  on each ball $B(e,R)$ for $0 < \rho \le 1$.

  \eqref{it:bound_derivatives_f1}.  It suffices to prove the estimate for the derivatives of  $\delta_{R^{-1}} \circ f_1$
  at $x_0$. Then the other estimate follows from the chain rule since $\delta_R$ is smooth. In view of assertion 
 (4) we may  in addition assume $R=1$.
    In view of assertion (3) we may assume  without loss of generality that $x_0 = e$ and $f_{1}(x_0) = e$.
Thus we have to estimate the derivatives of the map
$$g =  \log_{G'} \circ f_1  \circ \exp_G.$$

 We begin with the following observation. Let $h \in L^1_{loc}(G)$ and define
    $$ \bar h(x) :=    \int_G  h \, \sigma_x.$$
  Then $\bar h$ is smooth and the derivatives are uniformly controlled.  In particular
  \begin{equation}   \label{eq:smoothness_in_sigma_z}
  \sup_{1 \le j \le k} |D^k (\bar h  \circ \exp)(0)|  \le C(G, k, \| \alpha\|_{C_k}, \| g\|_{L^1(B(0,1))}).
  \end{equation}
  Indeed, using the definition of $\sigma_x$   and the fact that $\log_*\mu = \mathcal L^N$ we get 
  \begin{align*}  & \, 
   (\bar h \circ \exp)(X) = \int_G h(y) \alpha((\exp X)^{-1}  y) \, d \mu(y) \\
    =   & \, \int_{\log B(e,1)} (h \circ \exp)(Y) (\alpha \circ \exp)( (-X)  \ast Y) \, d\mathcal L^N(Y).
    \end{align*}
  Here $(-X) \ast Y = -X + Y + P(-X,Y)$ is the induced  group operation on the Lie algebra.
  Since $\alpha$ has compact support in $B(e,1)$  and the group operation $\ast$ is continuous,  it follows
  that  $Y \mapsto (\alpha \circ \exp) ((-X) \ast Y)$ is supported in a  fixed compact subset of  $\log B(e,1)$  for all sufficiently small $X$. 
  Hence differentiation and integration commute and the assertion follows since $\| h \circ \exp\|_{L^1(\log(B(e,1)))} = \| h \|_{L^1(B(e,1))}$.

By   \eqref{eq:formula_com1} and  \eqref{eq:formula_com2} the quantity
$ \log f_1 (x) = \log  \com_{f_*(\sigma_x)}$
 is a polynomial in the polynomial
moments  
\begin{align*}  & \,  A_i(x) = \int_{g'}   L_i(Y,  \ldots, Y) \, d\log_* f_*(\sigma_{x})(Y)
=  \,  \int_G L_i (\log f,  \ldots, \log f)  \, d\sigma_x, 
\end{align*}
and by \eqref{eq:bound_Qi} we have
 $$ |L_i(Y, \ldots, Y)| \le C (1 + |Y|^m).$$
 By  \eqref{eq:ball_box} we have   $|Y| \le C d_{CC, G'}(\exp Y)$. 
 Thus the function $h$ defined by  $h(y) := L_i (\log f(y),  \ldots, \log f(y))$ is in $L^1_{loc}(G)$.
 Hence   by    \eqref{eq:smoothness_in_sigma_z}, the map  $x \mapsto A_i(x)$ is smooth with uniform bounds in terms of 
 $G$, $\alpha$ and $\| d_{CC, G'}^m(f( \cdot), e)\|_{L^1}$. Since $\log f_1$ is a polynomial (depending on $G'$) in the $A_i$ it is
 also smooth and the derivatives are controlled in terms of the same quantities and $G'$. 
 It only remains to show that  $\| d_{CC, G'}^m(f( \cdot), e)\|_{L^1}$ is controlled by a constant, taking into account the normalisations
 $R=1$ and $f(e) =e$.

 By assumption there exists an $a \in G'$ such that 
$$ \int_{B(e,1)} d_{CC, G'}^m(f(x), a)  \, \mu(dx)   \le 1.$$
Thus assertion  \eqref{it:bound_oscillation_f1} yields
$$ d_{CC, G'}(f_1(e), a) \le C(G, G', \sigma_{1}).$$
Since $f_1(e) = e$  it follows from the triangle inequality that
$$ \int_{B(e,1)} d_{CC, G'}^m(f(x), e)  \,  \mu(dx)   \le (1 +  C(G, G', \sigma))^m.$$
This concludes the proof of  assertion
 \eqref{it:bound_derivatives_f1}.
 \end{proof}

For the proof of assertion \eqref{it:convergence_f_rho} we used the following standard extension of the dominated convergence
theorem. Let $(A,  \mathcal A,  \mu)$ be a measure space with $\mu(A) < \infty$. We say that a  family 
of integrable functions $f_\alpha: E \to \R$ is equi-integrable if for every $\eps >0$ there exists a $\delta > 0$
such that $\mu(E) < \delta$ implies $\int_E |f_\alpha| \, d\mu < \eps$ for all $k$. Clearly every finite family of integrable
functions is equi-integrable and thus every $L^1$-convergent sequence of functions is equi-integrable.

\begin{proposition}    \label{pr:dominated_equiintegrable} Let $(A,  \mathcal A,  \mu)$ be a measure space with $\mu(A) < \infty$.
Assume that $f_k \to f$ a.e. in $A$ and  that for some $s \in [1, \infty)$   the family $\{|f_k|^s\}$ is equi-integrable. Then
$f_k \to f$ in $L^s(A)$.
\end{proposition}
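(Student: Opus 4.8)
The plan is to prove Proposition~\ref{pr:dominated_equiintegrable} by a truncation argument, reducing the equi-integrable case to the standard dominated convergence theorem applied to truncated functions, and controlling the tails uniformly using equi-integrability together with Egorov's theorem (which is available because $\mu(A) < \infty$).

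First I would fix $\eps > 0$ and use equi-integrability of $\{|f_k|^s\}$ to produce $\de > 0$ such that $\mu(E) < \de$ implies $\int_E |f_k|^s \, d\mu < \eps$ for all $k$; by Fatou's lemma the same bound (with $\eps$ replaced by, say, $2\eps$, or just $\eps$ via a limiting argument) holds for $|f|^s$, so $f \in L^s(A)$. Next, since $f_k \to f$ a.e.\ on a finite measure space, Egorov's theorem gives a set $B \subset A$ with $\mu(A \setminus B) < \de$ on which $f_k \to f$ uniformly. Then I split
\begin{equation*}
\int_A |f_k - f|^s \, d\mu = \int_B |f_k - f|^s \, d\mu + \int_{A \setminus B} |f_k - f|^s \, d\mu.
\end{equation*}
The integral over $B$ tends to $0$ as $k \to \infty$ by uniform convergence and $\mu(B) \le \mu(A) < \infty$. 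For the integral over $A \setminus B$, I use the elementary inequality $|f_k - f|^s \le 2^{s-1}(|f_k|^s + |f|^s)$ together with the bounds from the previous step to conclude $\int_{A \setminus B} |f_k - f|^s \, d\mu \le 2^{s-1} \cdot 2\eps$. Combining, $\limsup_k \int_A |f_k - f|^s \, d\mu \le 2^s \eps$, and since $\eps > 0$ is arbitrary the proposition follows.

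The only genuinely delicate point is making sure the tail bound transfers from the $f_k$ to the limit $f$: one applies Fatou's lemma to $\liminf_k \mathbf{1}_E |f_k|^s$ on any set $E$ with $\mu(E) < \de$, giving $\int_E |f|^s \, d\mu \le \liminf_k \int_E |f_k|^s \, d\mu \le \eps$. Everything else is the routine Egorov-plus-splitting scheme; there is no real obstacle, and the argument is identical to the classical Vitali convergence theorem, so I would simply cite it as such if a reference is preferred, but the self-contained proof above is short enough to include.
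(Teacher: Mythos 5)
Your argument is correct, but it takes a different route from the paper's. You prove the statement by the classical Egorov-based proof of Vitali's convergence theorem: fix $\eps$, extract $\de$ from equi-integrability, apply Egorov on the finite measure space to get a set $B$ with $\mu(A\setminus B)<\de$ and uniform convergence on $B$, and handle the complement with the pointwise bound $|f_k-f|^s\le 2^{s-1}(|f_k|^s+|f|^s)$ together with the tail bounds (transferred to $|f|^s$ via Fatou). The paper instead avoids Egorov entirely: it first reduces to the case $s=1$, $f=0$, $f_k\ge 0$ (since $|f_k-f|^s$ is again equi-integrable, being dominated by $2^s(|f_k|^s+|f|^s)$ with $|f|^s$ a single integrable function), then truncates at a level $\rho$, observing that $\mu(\{f_k>\rho\})\to 0$ because a.e.\ convergence on a finite measure space implies convergence in measure, so equi-integrability kills the integral over $\{f_k>\rho\}$ and yields $\limsup_k\int_A f_k\le \rho\,\mu(A)$, with $\rho$ arbitrary. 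The two schemes are of comparable length; the paper's buys independence from Egorov's theorem (only convergence in measure of level sets is used), while yours is perhaps more transparent and, as you note, is literally the standard proof of the Vitali convergence theorem, which you could also just cite. One small point of looseness on your side: the Fatou argument gives $\int_E|f|^s\le\eps$ only for sets $E$ with $\mu(E)<\de$, which does not by itself yield $f\in L^s(A)$ (one cannot in general partition $A$ into finitely many sets of measure $<\de$ if there are atoms); but this is harmless, since global integrability of $f$ is not needed for your splitting --- it follows a posteriori from $f=(f-f_k)+f_k$, or from the eventual bound $|f|\le|f_k|+1$ on $B$ combined with the estimate off $B$.
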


\begin{proof} An equi-integrable sequence is in particular bounded in $L^1(A)$. Thus by Fatou's lemma $f \in L^s(A)$. 
Since $|f_k - f|^s \le 2^s (|f_k|^s + |f|^s)$ and since the right hand side is integrable it suffices to consider the case $s=1$, 
$f=0$, $f_k \ge 0$.  Pick $\rho>0$. Let $F_{k, \rho} : \{ x \in A : f_k >  \rho \}$.  Then $\mu(F_{k, \rho}) \to 0$ as $k \to \infty$, so by equi-integrability of $\{f_k\}$ we have 
$$ \limsup_{k \to \infty} \int_A f_k \le \rho \mu(A) + \limsup_{k \to \infty}  \int_{F_{k, \rho}} f_k \le  \rho \mu(A)\,.$$
Since $\rho >0$ was arbitrary, we get $\| f_k \|_{L^1(A)} \to 0$. 
\end{proof}

\bigskip\bigskip
\begin{lemma}  \label{le:moll_prop_Lm_bis}
Let $G$ be a Carnot group and let $G'$ be an $m$-step Carnot group.
\ben
\item If $f:G\ra G'$ is a group homomorphism, then $f_1=f$.
\item 
If $\{f_k:G\ra G'\}$ is a sequence of continuous maps,
and $f_k\ra f_\infty$ in $L^m_{loc}(G,G')$, i.e. 
$d_{CC,G'}(f_k, f_\infty) \to 0$
in $L^m_{loc}(G)$, 
then the sequence  of 
mollified maps $\{(f_k)_1\}$ converges in $C^j_{loc}$  (with respect to the Riemannian structure)
to $(f_\infty)_1$, for all $j$.
\een
\end{lemma}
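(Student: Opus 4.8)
The plan is to handle the two assertions separately: assertion~(1) will follow quickly from the naturality of the center of mass, and assertion~(2) from the explicit polynomial description of the mollification furnished by Theorem~\ref{le:C_nu_diffeomorphism}.

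For assertion~(1): a group homomorphism $f\colon G\to G'$ is smooth, so $f_*\sigma_1$ has compact support, hence finite $m$-th moment. From $f\circ\ell_x=\ell_{f(x)}\circ f$ one gets $f_*\sigma_x=(\ell_{f(x)})_*(f_*\sigma_1)$, and from $f\circ I=I\circ f$ together with $I_*\sigma_1=\sigma_1$ one gets $I_*(f_*\sigma_1)=f_*\sigma_1$; then \eqref{eq:com_inversion_invariant} in Lemma~\ref{le:center_behaves_natural} gives $\com(f_*\sigma_x)=f(x)$, i.e.\ $f_1=f$.

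For assertion~(2): applying Theorem~\ref{le:C_nu_diffeomorphism} to $\nu=f_*\sigma_x$ (which has finite $m$-th moment by Lemma~\ref{le:moll_prop_Lm}(1)), there are a fixed polynomial $Q$ and fixed $\fg'$-valued multilinear forms $L_1,\dots,L_K$, depending only on $G'$, such that
\[
\log_{G'}\bigl(f_1(x)\bigr)=Q\bigl(A_1(x),\dots,A_K(x)\bigr),\qquad
A_i(x):=\int_G L_i\bigl(\log_{G'}f(y),\dots,\log_{G'}f(y)\bigr)\,d\sigma_x(y).
\]
Since $Q$ is polynomial and $\exp_{G'}$ smooth, it suffices to show that $f_k\to f_\infty$ in $L^m_{\loc}(G,G')$ implies $A_i(f_k)\to A_i(f_\infty)$ in $C^j_{\loc}(G,\fg')$ for all $i,j$; the claim then follows by composing with $Q$ and $\exp_{G'}$. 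I would prove this in two steps. \emph{Step 1:} the map $f\mapsto h_{i,f}:=L_i(\log_{G'}f,\dots,\log_{G'}f)$ is continuous from $L^m_{\loc}(G,G')$ to $L^1_{\loc}(G,\fg')$. Along a subsequence $d_{CC,G'}(f_k,f_\infty)\to 0$ a.e., hence $f_k\to f_\infty$ a.e.\ (the CC-metric induces the manifold topology), hence $h_{i,f_k}\to h_{i,f_\infty}$ a.e.; moreover \eqref{eq:bound_Qi} with \eqref{eq:ball_box} give $|h_{i,f_k}|\le C(1+d^m_{CC,G'}(f_k,e))\le C(1+d^m_{CC,G'}(f_k,f_\infty)+d^m_{CC,G'}(f_\infty,e))$, whose right side is equi-integrable on every ball (a fixed $L^1_{\loc}$ function plus an $L^1$-convergent sequence), so Proposition~\ref{pr:dominated_equiintegrable} yields $L^1_{\loc}$-convergence, and a standard subsequence argument promotes it to the full sequence. \emph{Step 2:} the averaging operator $h\mapsto\bigl(x\mapsto\int_G h\,d\sigma_x\bigr)$ is bounded from $L^1_{\loc}(G)$ to $C^j_{\loc}(G)$ with bounds uniform on compact sets; this is precisely the computation \eqref{eq:smoothness_in_sigma_z} from the proof of Lemma~\ref{le:moll_prop_Lm}, assertion~\eqref{it:bound_derivatives_f1}: in the chart $X\mapsto x_0\exp X$, after a change of variables $A_i(x_0\exp X)$ becomes, for small $X$, an integral over a fixed compact subset of $\log B(e,1)$ in which one differentiates under the integral sign, giving $\|D^j_X A_i(x_0\exp\,\cdot\,)(0)\|\le C(j,G,\alpha)\,\|h_{i,f}\|_{L^1(x_0 B(e,1))}$, and likewise for differences $h_{i,f_k}-h_{i,f_\infty}$. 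Combining Steps~1 and~2 gives $A_i(f_k)\to A_i(f_\infty)$ in $C^j_{\loc}$, which finishes the proof.

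The main obstacle is Step~1: one must transport the $L^m_{\loc}$-convergence of the maps through the two nonlinearities $\log_{G'}$ and $L_i$ into $L^1_{\loc}$-convergence, and because the natural pointwise estimate on $h_{i,f_k}$ provides no fixed $L^1$ dominating function, this requires the equi-integrability form of dominated convergence (Proposition~\ref{pr:dominated_equiintegrable}) rather than its classical form.
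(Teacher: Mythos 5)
Your proof is correct, and it rests on the same two pillars as the paper's: for (1) the naturality of the center of mass from Lemma~\ref{le:center_behaves_natural}, and for (2) the polynomial moment formula \eqref{eq:formula_com1}--\eqref{eq:formula_com2} together with the equi-integrability form of dominated convergence, Proposition~\ref{pr:dominated_equiintegrable}. The differences are in the packaging. In (1) the paper invokes \eqref{eq:com_Phi} (to get $\com(f_*\sigma_1)=f(\com\,\sigma_1)=e$) and then \eqref{eq:com_translation}/\eqref{eq:com_inversion_invariant}; you instead observe that $f_*\sigma_1$ is itself reflection symmetric because a homomorphism commutes with inversion, so \eqref{eq:com_inversion_invariant} alone finishes it -- an equally valid shortcut. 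In (2) the paper only proves \emph{pointwise} convergence $(f_k)_1(x_0)\to(f_\infty)_1(x_0)$ (via the moments $A_i^k$ and the dominated-convergence fact \eqref{eq:dominated_convergence_continuous}) and then upgrades to $C^j_{\loc}$ by combining this with the locally uniform derivative bounds of Lemma~\ref{le:moll_prop_Lm}~\eqref{it:bound_derivatives_f1}, i.e.\ a precompactness/uniqueness-of-limit argument; you instead exploit that the moments depend \emph{linearly} on the integrand $h_{i,f}=L_i(\log f,\ldots,\log f)$, prove $L^1_{\loc}$-convergence of $h_{i,f_k}$ (your Step~1, which is the same equi-integrability argument, using \eqref{eq:bound_Qi} and \eqref{eq:ball_box}), and push it through the smoothing estimate \eqref{eq:smoothness_in_sigma_z} applied to the differences $h_{i,f_k}-h_{i,f_\infty}$, obtaining $C^j_{\loc}$-convergence of the $A_i$ directly and then composing with the fixed polynomial $Q$ and $\exp_{G'}$. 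Your route is slightly more quantitative and avoids the compactness step, at the cost of re-running the convolution estimate for differences; the paper's route reuses the already-proved uniform bounds and only needs pointwise convergence. Both are complete; the only cosmetic slip in yours is the missing constant in the elementary inequality $d^m(f_k,e)\le C\bigl(d^m(f_k,f_\infty)+d^m(f_\infty,e)\bigr)$, which does not affect the equi-integrability conclusion.
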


\begin{proof} (1). This follows directly from \eqref{eq:com_Phi} and 
 \eqref{eq:com_inversion_invariant}. 

(2). The main point is to show that 
\begin{equation} \label{eq:pointwise_convergence_fk1}
(f_k)_1(x_0) \to (f_\infty)_1(x_0) \quad \forall x_0 \in G. 
\end{equation}
Then $C^i_{loc}$ convergence will follow  from  the uniform bounds in 
Lemma \ref{le:moll_prop_Lm}.

 Since mollification commutes with pre- and postcomposition by left-translation we may assume that
 $x_0 = e$ and $(f_\infty)(e) = e$. 
 To prove the pointwise convergence $(f_k)_1(e) \to e$
 we use the following fact. 
 Suppose that 
 $$  \text{$\varphi: G' \to \R$ 
 is continuous and $\varphi(y) \le C d^m_{CC, G'}(y, e)$.}$$
 Then 
 \begin{equation}  \label{eq:dominated_convergence_continuous}
  \varphi \circ f_k \to \varphi \circ f_\infty \quad \text{in $L^1_{loc}(G)$.}
  \end{equation}
  This follows easily  from Proposition~\ref{pr:dominated_equiintegrable} by first passing to an a.e.
  converging sequence and then using uniqueness of the limit. 
 
 Now  recall that  $\log (f_k)_{1}(e)$ 
  is a polynomial $P$  in the polynomial moments
 $$ A_i^k:=    \int_{\fg'} L_i(Y, \ldots, Y) (\log \circ f_k)_*\sigma_1(Y) = \int_{G} Q_i(\log f_k,   \ldots, \log f_k) \, \sigma_1.$$
  It thus suffices to show that $\lim_{k \to \infty} A^k_i = A^\infty_i$. 
In view of (\ref{eq:bound_Qi}),   this follows from 
  \eqref{eq:dominated_convergence_continuous}
  applied to the function $\varphi(y) = L_i( \log y, \ldots, \log y)$ since $\sigma_1 = \alpha \mu$ and
  $\alpha$ is bounded and compactly supported.
  \end{proof}

\section{Pansu pullback and mollification}
\label{sec_pansu_pullback_mollification}

We now consider the behavior of pulling back using
a mollified map between Carnot groups $G$ and $G'$. 
For an open set $U \subset G$ define
$$ U_\rho := \{ x \in U : \dist(x, G \setminus U) > \rho\}.$$
Assertion   \eqref{it:pullback_pre_pansu} of the following lemma
provides a key connection between convergence of 
the mollified pullback $f_\rho^*\alpha$ and Pansu differentiability.

\begin{lemma}  \label{lem_moll_calc_forms}
 Let $U \subset G$ be open and let $f \in L^m_{loc}(U, G')$.
 Suppose that $\alpha \in \Omega^{k,w_\al}(G')$ and $\gamma \in \Omega^{N-k,w_\ga}(G)$ are left-invariant forms.
 In particular, if $k=N$ then $\gamma$ is a constant zero-form, i.e. a constant function and $w_\gamma=0$.
Then 
\ben
\item  \label{it:pullback_alpha_rho}    For every $x\in U_\rho$, 
$$
(f_\rho^*\al  \wedge \gamma)(x)=\rho^{-(\nu + w_\al+w_\gamma)}(h_1^*\al \wedge \gamma)(\de_{\rho^{-1}}(x))\,,
$$
where $h=\de_{\rho^{-1}}\circ f\circ \de_\rho$.
\item    \label{it:pullback_pre_pansu}  For every $x \in U_\rho$,
$$
(f_\rho^*\al \wedge \gamma)(x)=\rho^{-(\nu + w_\al+w_\gamma)}
\big((\de_{\rho^{-1}}\circ f_x\circ \de_\rho)_1^* \al   \wedge \gamma\big)(e)\,,
$$
where $f_x=\ell_{f(x)^{-1}}\circ f\circ \ell_x$.
\item   \label{it:bound_pullback} If $x \in U_\rho$ and  $\osc _m (f,B(x,\rho)) \le C  \rho^{1 + \frac{\nu}{m}}$, then 
\begin{equation*}
\|(f_\rho^*\al)  \wedge \gamma)(x) \| \leq 
C'\,   C^{-w_\al}\rho^{-( \nu + w_\al+w_\gamma)}
\| \alpha\| \, \| \gamma\|.
\end{equation*}
\een
\end{lemma}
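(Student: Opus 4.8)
The plan is to establish the three assertions in order, since each builds on the previous one. For assertion~\eqref{it:pullback_alpha_rho}, I would start from the definition $f_\rho = \de_\rho \circ (\de_{\rho^{-1}} \circ f \circ \de_\rho)_1 \circ \de_{\rho^{-1}} = \de_\rho \circ h_1 \circ \de_{\rho^{-1}}$, where $h = \de_{\rho^{-1}} \circ f \circ \de_\rho$. Taking ordinary pullbacks, $f_\rho^*\al = \de_{\rho^{-1}}^* h_1^* \de_\rho^* \al$. Now $\de_\rho^*\al = (\de_\rho)_*\al$ since $\al$ is left-invariant, hence a constant-coefficient form in exponential coordinates, and this equals $\rho^{w_\al}\al$ because $\al \in \Om^{k,w_\al}(G')$ and the weight is precisely the scaling eigenvalue (Subsection~\ref{subsec_differential_forms_on_carnot_groups}). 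Similarly $\de_\rho^*\ga = \rho^{w_\ga}\ga$ for the left-invariant form $\ga$ on $G$, and the remaining $\de_{\rho^{-1}}^*$ acting on the top-degree piece $h_1^*\al \we \ga$ (which is an $N$-form on $G$) contributes a Jacobian factor $\rho^{-\nu}$ since $\de_{\rho^{-1}}$ scales the Haar/Lebesgue measure by $\rho^{-\nu}$. Collecting exponents and evaluating at $\de_{\rho^{-1}}(x)$ gives the stated formula. The main thing to be careful about here is the bookkeeping of which $\de_\rho^*$ versus $(\de_r)_*$ convention is in play and getting the signs of the exponents right; this is routine but error-prone.

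For assertion~\eqref{it:pullback_pre_pansu}, I would combine \eqref{it:pullback_alpha_rho} with the equivariance of mollification under left translations (Lemma~\ref{le:moll_prop_Lm}(3)): writing $h = \de_{\rho^{-1}} \circ \ell_{f(x)} \circ f_x \circ \ell_{x^{-1}} \circ \de_\rho$ where $f_x = \ell_{f(x)^{-1}} \circ f \circ \ell_x$, and noting that conjugating the inner map by the translations and then mollifying commutes appropriately — more precisely $h_1 = \de_{\rho^{-1}}\circ\ell_{f(x)}\circ (\de_\rho \circ \de_{\rho^{-1}}\circ f_x \circ \de_\rho \circ \de_{\rho^{-1}})_1 \circ \ell_{x^{-1}} \circ \de_\rho$ after inserting $\de_\rho\de_{\rho^{-1}} = \id$ and using Lemma~\ref{le:moll_prop_Lm}(3),(4) to pull the translations and dilations through the mollification. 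Evaluating $h_1^*\al \we \ga$ at $\de_{\rho^{-1}}(x)$: since $\al$ is left-invariant on $G'$, postcomposition by $\ell_{f(x)}$ does not affect the pullback of $\al$; since $\ga$ is left-invariant on $G$, precomposition by a translation likewise leaves $\ga$ unaffected and shifts the evaluation point to $e$. This should reduce $h_1^*\al\we\ga$ at $\de_{\rho^{-1}}(x)$ to $(\de_{\rho^{-1}} \circ f_x \circ \de_\rho)_1^*\al \we \ga$ at $e$, as claimed. The care needed here is purely in tracking how the base point moves under the translations.

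For assertion~\eqref{it:bound_pullback}, I would use \eqref{it:pullback_pre_pansu} to reduce to bounding $\|(g_1^*\al \we \ga)(e)\|$ where $g = \de_{\rho^{-1}}\circ f_x \circ \de_\rho$. The hypothesis $\osc_m(f, B(x,\rho)) \le C\rho^{1+\nu/m}$ translates, under the rescaling $g = \de_{\rho^{-1}}\circ f_x \circ \de_\rho$ and using $d_{CC}(\de_r a, \de_r b) = r\, d_{CC}(a,b)$ together with the $\rho^\nu$ Jacobian, into a bound $\osc_m(g, B(e,1)) \le C'\cdot C$ — so the rescaled map has controlled $L^m$-oscillation on the unit ball with $g_1(e)=e$ after the $\ell_{f(x)^{-1}}$ normalization built into $f_x$. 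Then Lemma~\ref{le:moll_prop_Lm}\eqref{it:bound_derivatives_f1} gives uniform control on the derivatives of $\de_{(C'C)^{-1}}\circ g_1$ at $e$; in particular $\|D g_1(e)\|$ in the Riemannian charts is bounded by a constant times a power of $C$. The key point is to extract the correct power of $C$: the Pansu differential $D_P(\de_{(C'C)^{-1}} \circ g_1)(e)$ being bounded means $D_P g_1(e)$ scales like $C'C$ on layer $V_j$ by $(C'C)^j$, hence the pullback of a weight-$w_\al$ form scales like $(C'C)^{-w_\al}$ (recall weights are negative, and Lemma~\ref{lem_weight_facts}\eqref{it:weight_facts_pullback} relates pullbacks and weights for homomorphisms; an approximate version of that computation is what enters here). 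Multiplying back in the $\rho^{-(\nu+w_\al+w_\ga)}$ prefactor from \eqref{it:pullback_pre_pansu} and absorbing $\|\al\|$, $\|\ga\|$ yields the claim.

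I expect the main obstacle to be assertion~\eqref{it:bound_pullback}, specifically the step of converting the $C^1$ (Riemannian derivative) bound from Lemma~\ref{le:moll_prop_Lm}\eqref{it:bound_derivatives_f1} into a bound on $\|g_1^*\al(e)\|$ with the sharp dependence $C^{-w_\al}$ on the oscillation constant. The subtlety is that $g_1$ is merely smooth, not a graded homomorphism, so the clean weight/scaling identity of Lemma~\ref{lem_weight_facts}\eqref{it:weight_facts_pullback} does not apply verbatim; one has to check that the Riemannian derivative bound on $\de_{(C'C)^{-1}}\circ g_1$ at a single point still forces $\|(\de_{(C'C)^{-1}}\circ g_1)^*\al(e)\| \le C'''\|\al\|$ for a constant independent of $C$, and then unwind the dilation to recover the $C^{-w_\al}$ factor. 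Assertions~\eqref{it:pullback_alpha_rho} and~\eqref{it:pullback_pre_pansu} are essentially bookkeeping with the equivariance properties already proved in Lemma~\ref{le:moll_prop_Lm}.
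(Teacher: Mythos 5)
Your overall route is the same as the paper's: unwind the definition $f_\rho=\de_\rho\circ h_1\circ\de_{\rho^{-1}}$ together with the homogeneity of the left-invariant forms for \eqref{it:pullback_alpha_rho}, use equivariance of the mollification for \eqref{it:pullback_pre_pansu}, and combine the rescaled oscillation bound with Lemma~\ref{le:moll_prop_Lm}~\eqref{it:bound_derivatives_f1} plus the factorization $h_1=\de_{C}\circ(\de_{C^{-1}}\circ h_1)$ to extract $C^{-w_\al}$ for \eqref{it:bound_pullback}. The worry you raise about \eqref{it:bound_pullback} is not a real obstacle: the pullback of the $k$-form $\al$ at a single point depends only on the first derivative of the map at that point, so the chart derivative bound from Lemma~\ref{le:moll_prop_Lm}~\eqref{it:bound_derivatives_f1} gives $\|(\de_{C^{-1}}\circ h_1)^*\al\|\le C'\|\al\|$ directly, and homogeneity of $\al$ under $\de_C$ supplies the factor $C^{-w_\al}$; this is exactly what the paper does.

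There are, however, two concrete flaws in the execution. In \eqref{it:pullback_pre_pansu} you assert $h_1=\de_{\rho^{-1}}\circ\ell_{f(x)}\circ(\de_\rho\circ\de_{\rho^{-1}}\circ f_x\circ\de_\rho\circ\de_{\rho^{-1}})_1\circ\ell_{x^{-1}}\circ\de_\rho$ and propose to pull the translations \emph{and dilations} through the mollification via parts (3) and (4) of Lemma~\ref{le:moll_prop_Lm}. The inner composition is just $f_x$, so the asserted identity reads $h_1=\de_{\rho^{-1}}\circ\ell_{f(x)}\circ(f_x)_1\circ\ell_{x^{-1}}\circ\de_\rho$, which is false in general: by part (2) of Lemma~\ref{le:moll_prop_Lm} it would force $(f_x)_1=(f_x)_\rho$. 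Mollification commutes with post-composition by dilations and with pre/post-composition by left translations, but \emph{not} with pre-composition by dilations -- that changes the mollification scale, which is precisely why $f_\rho$ is defined by conjugation. The correct (and simpler) route, which is the paper's: since $\de_{\rho^{-1}}$ is a group automorphism, $\de_{\rho^{-1}}\circ\ell_a\circ\de_\rho=\ell_{\de_{\rho^{-1}}a}$, hence $h=\ell_{\de_{\rho^{-1}}f(x)}\circ(\de_{\rho^{-1}}\circ f_x\circ\de_\rho)\circ\ell_{\de_{\rho^{-1}}x^{-1}}$, and then only the translation-equivariance of part (3) is needed to obtain $h_1=\ell_{\de_{\rho^{-1}}f(x)}\circ(\de_{\rho^{-1}}\circ f_x\circ\de_\rho)_1\circ\ell_{\de_{\rho^{-1}}x^{-1}}$; left-invariance of $\al$ and $\ga$ then removes the translations and moves the evaluation point to $e$, giving the stated formula. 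Separately, in \eqref{it:pullback_alpha_rho} the identity $\de_\rho^*\al=(\de_\rho)_*\al$ is not correct: with the paper's convention $(\de_r)_*\al_w=r^w\al_w$ one has $\de_\rho^*\al=\rho^{-w_\al}\al$ (and $\de_\rho^*\ga=\rho^{-w_\ga}\ga$), whereas your value $\rho^{w_\al}\al$ would produce the wrong power of $\rho$ in the final formula, so the sign bookkeeping you flag does genuinely need this correction.
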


\begin{proof} The proof of the first two assertions is exactly the same
as the proof of the corresponding assertions in  Lemma 6.4   in  \cite{KMX1}.
We include the short calculation for the convenience of the reader.

\eqref{it:pullback_alpha_rho}.
Note that  $\{ z : B(z, 1) \subset \delta_{\rho^{-1}} U\} = \delta_{\rho^{-1}} U_\rho$
and thus
$$h: \delta_{\rho^{-1}} U \to G', \quad h_1:  \delta_{\rho^{-1}} U_\delta \to G'.   $$ 
For $x \in U_\rho$ we have
\begin{align*}
(f_\rho^*(\al) \wedge \gamma)(x) 
=& \big(   (\de_\rho \circ h_1 \circ \de_{\rho^{-1}})^* \alpha \wedge \gamma\big)(x)\\
=& (\de_{\rho^{-1}}^* h_1^* \de_\rho^* \alpha \, \wedge\,  \de_{\rho^{-1}}^* \de_\rho^* \gamma)(x)\\
=& \rho^{-(w_\al + w_\gamma)}  (\de_{\rho^{-1}}^* h_1^* \alpha \wedge \de_{\rho^{-1}}^* \gamma)(x)\\
=&  \rho^{-(w_\al + w_\gamma)}  \big(\de_{\rho^{-1}}^* (h_1^* \alpha \wedge  \gamma)  \big)(x)\\
=&  \rho^{-(\nu + w_\al + w_\gamma)} (h_1^* \alpha \wedge  \gamma)(\de_{\rho^{-1}} x).
\end{align*}
In the last step we used that $h_1^* \alpha \wedge  \gamma$ is a multiple of the volume form, which has weight $-\nu$. 

\bigskip

  \eqref{it:pullback_pre_pansu}.
 With $h$ as in \eqref{it:pullback_alpha_rho} we get
\begin{align*}
h=&\de_{\rho^{-1}}\circ f \circ \de_\rho\\
=&(\de_{\rho^{-1}}\circ \ell_{f(x)}\circ \de_\rho)\circ 
\de_{\rho^{-1}}\circ \ell_{f(x)^{-1}}\circ f\circ \ell_x\circ\de_\rho
\circ(\de_{\rho^{-1}}\circ \ell_{x^{-1}}\circ \de_\rho)\\
=&\ell_{\de_{\rho^{-1}}f(x)}
\circ \de_{\rho^{-1}}\circ f_x\circ \de_\rho
\circ \ell_{\de_{\rho^{-1}}x^{-1}}
\end{align*}
and so
$$
h_1=\ell_{\de_{\rho^{-1}}f(x)}
\circ (\de_{\rho^{-1}}\circ f_x\circ \de_\rho)_1
\circ \ell_{\de_{\rho^{-1}}x^{-1}}\,.
$$
Since $\al$ and $\gamma$ are left invariant we have for $x \in U_\rho$
\begin{align}   \label{eqn_h1_al}
&(    h_1^*\al    \wedge \gamma)
(\de_{\rho^{-1}}(x) ) \notag \\  
=&\ell_{\de_{\rho^{-1}}x^{-1}}^*[(\de_{\rho^{-1}}\circ f_x\circ \de_\rho)_1^*\al
\wedge \gamma] (\de_{\rho^{-1}}(x))    \\
=& [ (\de_{\rho^{-1}}\circ f_x\circ \de_\rho)_1^*\al
\wedge \gamma](e).  \notag
\end{align}
Combining \eqref{it:pullback_alpha_rho}
 with (\ref{eqn_h1_al}) gives (2).

\eqref{it:bound_pullback}. Note that our assumptions imply 
that $\osc_m (h,B(\delta_{\rho^{-1}} x,1)) \le C$.
Thus Lemma~\ref{le:moll_prop_Lm}~\eqref{it:bound_derivatives_f1} 
 implies that
$$ \| D( \delta_{C^{-1}} \circ h_1)(\delta_{\rho^{-1}} x) \| \le C'.$$
Using   assertion~\eqref{it:pullback_alpha_rho}  we get
\begin{align*}
 &   \| (f_\rho^* \alpha \wedge \gamma)(x) \| \\
 = & \| \rho^{-(\nu + w_\alpha + w_\gamma)}
 \big(         \big(  \delta_C \circ (\delta_{C^{-1}} \circ h_1) \big)^*\alpha
  \wedge \gamma \big)(\delta_{\rho^{-1}}(x)) \|    \\
  = & C^{-w_\alpha}   \| \rho^{-(\nu + w_\alpha + w_\gamma)}
 \big(          (\delta_{C^{-1}} \circ h_1)^*\alpha
  \wedge \gamma \big)(\delta_{\rho^{-1}}(x)) \|    \\
  \le & C'   \,  C^{-w_\alpha}  \rho^{-(\nu + w_\alpha + w_\gamma)}  \| \alpha\| \, \| \gamma\|.
\end{align*}
\end{proof}

\begin{theorem}[Approximation theorem]  \label{th:weight_controlled_pullback}
Let $G$ be a Carnot group of topological dimension $N$ and homogeneous 
dimension $\nu$ and let $G'$ be an $m$-step Carnot group.   Let $U \subset G$ and $U' \subset G'$ be open.  Suppose that $\omega \in \Omega^{k,w_\om}(U')$ has continuous and bounded  coefficients
and  $\gamma \in \Omega^{N-k,w_\ga}(G)$ is a left-invariant form
such that
\begin{equation}   \label{eq:condition_minimal_weight} w_\om+w_\ga \le - \nu.
\end{equation}
Assume that $p \ge - w_\om$  and $\frac{1}{p} \le \frac1m + \frac{1}{\nu}$.
Let $f: U \to U'$ be a map in $W^{1,p}_{loc}(U, G')$. 
Let $\overline \omega$ denote the extension of $\omega$ to $G' \setminus U'$ by zero. 
Then 
\begin{equation}  \label{eq:L1_convergence_pullback}
 f_\rho^*\overline{\omega} \wedge \gamma \to f_P^* \omega \wedge \gamma   \quad   \text{in $L^s_{\loc}(U)$ with $s = \frac{p}{-w_\om}$.}
 \end{equation}
 Equivalently, we have convergence of weight $w$ components
$$
(f_\rho^*\overline\om)_w\ra (f_P^*\om)_w  
$$
for $w\geq w_\om$, see Remark~\ref{rem_w_component_convergence} below.

    In particular  we have
 \begin{equation}  \label{eq:L1_convergence_pullback_top_degree}
 f_\rho^* \overline{\omega}  \to f_P^* \omega   \quad  
  \text{in $L^{\frac{p}{\nu}}_{\loc}(U)$  \quad
   if $\omega \in \Omega^N(U')$}    
 \end{equation}
\end{theorem}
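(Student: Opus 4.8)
Here is a plan for proving Theorem~\ref{th:weight_controlled_pullback}.

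The plan is to deduce the $L^s_{\loc}$‑convergence from two ingredients: (i) pointwise a.e.\ convergence $f_\rho^*\overline\om\wedge\gamma\to f_P^*\om\wedge\gamma$ on $U$, and (ii) for each compact $K'\Subset U$, equi‑integrability on $K'$ of the family $\{\|f_\rho^*\overline\om\wedge\gamma\|^{s}\}_{0<\rho\le\rho_0}$, where $\rho_0:=\tfrac12\dist(K',G\setminus U)$. Granting these, Proposition~\ref{pr:dominated_equiintegrable} gives convergence in $L^s(K')$, hence in $L^s_{\loc}(U)$, and the bound used for (ii) together with Fatou gives $f_P^*\om\wedge\gamma\in L^s_{\loc}(U)$. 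Write $g:=|D_Pf|$, which by Remark~\ref{re:g_equal_norm_DPf} is an admissible function in the sense of Definition~\ref{de:sobolev_carnot_new}. The structural observation to record at the outset is that the hypothesis $\tfrac1p\le\tfrac1m+\tfrac1\nu$ is exactly the statement $p^*\ge m$ (with $\tfrac1{p^*}=\tfrac1p-\tfrac1\nu$); in particular $f\in W^{1,p}_{\loc}\subset L^{p^*}_{\loc}\subset L^m_{\loc}$, the mollification machinery of Section~\ref{sec_center_of_mass_mollification} applies, and $L^{p^*}_{\loc}$‑convergence of $G'$‑valued maps implies $L^m_{\loc}$‑convergence. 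I would treat the main case $p<\nu$; the case $p\ge\nu$ (where $f$ is continuous and classically Pansu differentiable a.e.) is a simpler variant along the lines of \cite[Thm.~1.18]{KMX1}.

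For the basic estimate and equi‑integrability, combining the Sobolev--Poincar\'e inequality~\eqref{eq:sobolev_poincare_new}, H\"older's inequality (legitimate since $p^*\ge m$) and the scaling of Haar measure gives, for $x\in U_\rho$,
\begin{equation*}
\osc_m\!\big(f,B(x,\rho)\big)\ \le\ C\,\mu(B(x,\rho))^{\frac1m-\frac1{p^*}}\|g\|_{L^p(B(x,\rho))}\ =\ C\,\rho^{\,1+\nu/m}\Big(\av_{B(x,\rho)}g^p\,d\mu\Big)^{1/p},
\end{equation*}
the exponent $1+\tfrac\nu m$ being forced by $\nu(\tfrac1m-\tfrac1{p^*})+\tfrac\nu p=1+\tfrac\nu m$. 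Now fix $x\in U_\rho$. Since $(f_\rho^*\overline\om)(x)$ depends only on $(Df_\rho)(x)$ and on the single value $\overline\om(f_\rho(x))\in\Lambda^{k,w_\om}$, one may replace $\overline\om$ there by the left‑invariant $k$‑form of weight $w_\om$ taking that value, which has norm $\le\|\om\|_\infty$; applying Lemma~\ref{lem_moll_calc_forms}~\eqref{it:bound_pullback} with $C:=C_P(\av_{B(x,\rho)}g^p\,d\mu)^{1/p}$ (legitimate by the above), using $\rho^{-(\nu+w_\om+w_\ga)}\le1$ for $\rho\le1$ (as $w_\om+w_\ga\le-\nu$) and $-w_\om\,s/p=1$, yields
\begin{equation*}
\big\|(f_\rho^*\overline\om\wedge\gamma)(x)\big\|^{s}\ \le\ C\,\|\om\|_\infty^{s}\,\|\gamma\|^{s}\av_{B(x,\rho)}g^p\,d\mu\ \le\ C\av_{B(x,\rho)}(g^p 1_K)\,d\mu
\end{equation*}
for $x\in K'$ and $0<\rho\le\min(\rho_0,1)$, where $K\Subset U$ is a fixed compact neighbourhood of the closed $\rho_0$‑neighbourhood of $K'$. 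The right‑hand side is a mollification of $g^p 1_K\in L^1(K)$, converging to $g^p 1_K$ in $L^1_{\loc}$ as $\rho\to0$ and uniformly $L^\infty$‑bounded for $\rho$ bounded below, hence the family $\{x\mapsto\av_{B(x,\rho)}(g^p 1_K)\,d\mu\}$, and with it $\{\|f_\rho^*\overline\om\wedge\gamma\|^{s}\}$, is equi‑integrable on $K'$.

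For the pointwise convergence, fix $x$ — a.e.\ $x$ qualifies — with $f(x)\in U'$, $x$ an $L^m$‑Lebesgue point of $f$ (Lemma~\ref{le:lebesgue_point_new}) and a Lebesgue point of $g^p$, and such that the $L^{p^*}$‑Pansu differential $\Phi:=D_Pf(x)$ exists (Theorem~\ref{th:Lp*_pansu_differentiability_new}). Let $\widehat\om_x$ be the left‑invariant $k$‑form with $\widehat\om_x(f(x))=\om(f(x))$ (homogeneous of weight $w_\om$ by Lemma~\ref{lem_weight_facts}), and, for $\rho$ so small that $f_\rho(x)\in U'$ (Lemma~\ref{le:moll_prop_Lm}~\eqref{it:convergence_f_rho}), split $\overline\om(f_\rho(x))=\widehat\om_x(f_\rho(x))+\eta_\rho(f_\rho(x))$, where $\eta_\rho$ is the left‑invariant $k$‑form whose constant coefficient is the difference of those of $\om$ at $f_\rho(x)$ and at $f(x)$ in a fixed left‑invariant coframe; then $\eta_\rho$ is homogeneous of weight $w_\om$ and $\|\eta_\rho\|\to0$ by continuity of $\om$ and $f_\rho(x)\to f(x)$. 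As pullbacks depend only on the values of the forms at $f_\rho(x)$, $(f_\rho^*\overline\om\wedge\gamma)(x)=(f_\rho^*\widehat\om_x\wedge\gamma)(x)+(f_\rho^*\eta_\rho\wedge\gamma)(x)$. For the first term, Lemma~\ref{lem_moll_calc_forms}~\eqref{it:pullback_pre_pansu} gives $(f_\rho^*\widehat\om_x\wedge\gamma)(x)=\rho^{-(\nu+w_\om+w_\ga)}\big((f_{x,\rho})_1^*\widehat\om_x\wedge\gamma\big)(e)$ with $f_{x,\rho}=\de_{\rho^{-1}}\circ\ell_{f(x)^{-1}}\circ f\circ\ell_x\circ\de_\rho$ the Pansu rescaling; since $f_{x,\rho}\to\Phi$ in $L^{p^*}_{\loc}\subset L^m_{\loc}$, the center‑of‑mass mollification is continuous from $L^m_{\loc}$ to $C^1_{\loc}$ (the proof of Lemma~\ref{le:moll_prop_Lm_bis}(2) applies verbatim to $L^m_{\loc}$‑convergent families, using only Proposition~\ref{pr:dominated_equiintegrable} and the uniform bounds of Lemma~\ref{le:moll_prop_Lm}), and $\Phi_1=\Phi$ (Lemma~\ref{le:moll_prop_Lm_bis}(1)), so $(f_{x,\rho})_1\to\Phi$ in $C^1_{\loc}$ and $\big((f_{x,\rho})_1^*\widehat\om_x\big)(e)\to\big(\Phi^*\widehat\om_x\big)(e)$. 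If $w_\om+w_\ga=-\nu$ this yields $(f_\rho^*\widehat\om_x\wedge\gamma)(x)\to(\Phi^*\widehat\om_x\wedge\gamma)(e)=(f_P^*\om\wedge\gamma)(x)$ (using Lemma~\ref{lem_weight_facts}~\eqref{it:weight_facts_pullback} and $\Phi^*\widehat\om_x(f(x))=f_P^*\om(x)$); if $w_\om+w_\ga<-\nu$ the prefactor tends to $0$, the other factor stays bounded, and $f_P^*\om\wedge\gamma\equiv0$ since a top‑degree form has weight $-\nu$. For the second term, Lemma~\ref{lem_moll_calc_forms}~\eqref{it:bound_pullback} with the same $C$ gives $\|(f_\rho^*\eta_\rho\wedge\gamma)(x)\|\le C\big(\av_{B(x,\rho)}g^p\,d\mu\big)^{-w_\om/p}\|\eta_\rho\|\,\|\gamma\|\to g(x)^{-w_\om}\cdot0\cdot\|\gamma\|=0$, because $x$ is a Lebesgue point of $g^p$. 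Hence $(f_\rho^*\overline\om\wedge\gamma)(x)\to(f_P^*\om\wedge\gamma)(x)$ a.e.; with the equi‑integrability above and Proposition~\ref{pr:dominated_equiintegrable} this proves~\eqref{eq:L1_convergence_pullback}. The weight‑component form follows by letting $\gamma$ range over bases of the left‑invariant $(N-k)$‑forms of each weight $-\nu-w$ with $w\ge w_\om$ (Remark~\ref{rem_w_component_convergence}), and~\eqref{eq:L1_convergence_pullback_top_degree} is the case $k=N$, $\gamma\equiv1$.

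The step I expect to be the main obstacle is precisely the one dictating this structure: for $p\le\nu$ the map $f$ need not be continuous or classically Pansu differentiable, so the argument cannot be localized in the target as in \cite[Thm.~1.18]{KMX1}. The pointwise analysis has to be carried out through the $L^{p^*}$‑Pansu differential and the $L^m_{\loc}$‑continuity of the center‑of‑mass mollification rather than through uniform convergence of blow‑ups, and the derivative bound Lemma~\ref{le:moll_prop_Lm}~\eqref{it:bound_derivatives_f1} — hence Lemma~\ref{lem_moll_calc_forms}~\eqref{it:bound_pullback} — is only available with an $x$‑dependent constant, of size $(\av_{B(x,\rho)}g^p\,d\mu)^{1/p}$. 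Arranging the scaling exponents so that this size is the \emph{scale‑$\rho$ average} of $g^p$ — which is equi‑integrable — rather than the \emph{maximal function} of $g^p$ — which is only of weak type $(1,1)$ — is exactly what the hypothesis $\tfrac1p\le\tfrac1m+\tfrac1\nu$ buys; and replacing $\overline\om$, at the point $f_\rho(x)$, by the left‑invariant form with the same value (peeling off a small left‑invariant remainder) is the device that lets one apply Lemma~\ref{lem_moll_calc_forms}, stated for left‑invariant forms, to the merely continuous, bounded $\om$.
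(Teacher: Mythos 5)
Your overall strategy is the same as the paper's: dominated convergence via Proposition~\ref{pr:dominated_equiintegrable}, pointwise a.e.\ convergence through Lemma~\ref{lem_moll_calc_forms}~\eqref{it:pullback_pre_pansu}, the $L^{p^*}$ Pansu differentiability theorem (giving $L^m_{\loc}$ convergence of the rescalings since $p^*\geq m$) and the $C^1_{\loc}$-continuity of the center-of-mass mollification (you correctly note that the continuity hypothesis in Lemma~\ref{le:moll_prop_Lm_bis}(2) is not actually used in its proof), plus equi-integrability from Lemma~\ref{lem_moll_calc_forms}~\eqref{it:bound_pullback} and Poincar\'e--Sobolev. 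Your device of freezing the coefficient of $\om$ at $f(x)$ and peeling off the left-invariant remainder $\eta_\rho$ is equivalent to the paper's reduction $\om=a\al$ with $a$ scalar and $\al$ left-invariant, and your majorant $\av_{B(x,\rho)}g^p\,d\mu$ is a legitimate (slightly cruder) substitute for the paper's $\big(\av_{B(x,\rho)}|D_hf|^q\,d\mu\big)^{p/q}$ with $\tfrac1q=\tfrac1m+\tfrac1\nu$: both are equi-integrable families because they are mollifications of a fixed locally integrable function.

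The genuine defect is your treatment of $p\geq\nu$. Your key oscillation bound is obtained by applying \eqref{eq:sobolev_poincare_new} at the exponent $p$, which is only available for $p<\nu$, and you dispose of $p\geq\nu$ by asserting that $f$ is then continuous and classically Pansu differentiable a.e., so that the argument of \cite[Thm.~1.18]{KMX1} applies. This fails at the critical exponent $p=\nu$: a $W^{1,\nu}_{\loc}$ map need not be continuous nor classically Pansu differentiable a.e.\ (this is exactly the difficulty the theorem is designed to overcome), and the argument of \cite{KMX1} needs a supercritical exponent. Note that $p=\nu$ is squarely within the theorem's scope (e.g.\ $w_\om=-\nu$, $\ga$ constant) and is precisely the case needed for the quasiregular application in Section~\ref{sec_quasiregular_mappings}. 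The repair is the paper's device: apply the Poincar\'e--Sobolev inequality at the exponent $q$ defined by $\tfrac1q=\tfrac1m+\tfrac1\nu$ (so $q<\nu$, $q\leq p$ and $q^*=m$) and then H\"older, which yields your same estimate $\osc_m(f,B(x,\rho))\leq C\rho^{1+\nu/m}\big(\av_{B(x,\rho)}g^p\,d\mu\big)^{1/p}$ for every admissible $p$ with no case split; likewise, for $p\geq\nu$ the $L^m_{\loc}$ convergence of the rescalings (and Remark~\ref{re:g_equal_norm_DPf}) is obtained by applying Theorem~\ref{th:Lp*_pansu_differentiability_new} to $f$ viewed as an element of $W^{1,q}_{\loc}$. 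With this one change your argument goes through uniformly in $p$ and coincides in substance with the paper's proof.
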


\bigskip\bigskip
\begin{remark} The mollifications $f_\rho$ may take values outside $U'$. This is why we need to extend $\omega$ outside $U'$ to define the pull-back by $f_\rho$. 
The proof shows that  convergence in  \eqref{eq:L1_convergence_pullback} does not depend on which extension we choose. More precisely, if $ \widetilde \omega$
is any extension of $\omega$ which is everywhere defined, bounded, measurable and satisfies $\widetilde{\om}(x)\in\La^{k,w_\om}$   at each point, 
then 
\begin{equation} \label{eq:L1_convergence_pullback_other_ext}
 f_\rho^*\widetilde{\omega} \wedge \ga \to f_P^* \omega \wedge \ga   \quad   \text{in $L^s_{\loc}(U)$ with $s = \frac{p}{-w_\om}$.}
 \end{equation}
\end{remark}

\bigskip\bigskip
\begin{remark}
\label{rem_w_component_convergence}
The convergence in \eqref{eq:L1_convergence_pullback} in connection with the condition  \eqref{eq:condition_minimal_weight}  is equivalent to convergence of weight $w$ components  
$$
(f_\rho^*\overline \om)_w\ra (f_P^*\om)_w
$$
for $w\geq w_\om$.  To see this, note that for $\om$ fixed and $w\geq w_\om$,  we may choose a basis $\{\ga_i\}$ of the space of left-invariant forms $\ga\in\Om^{N-k,-\nu-w}(G)$, and this is dual via the wedge product to a basis $\{\al_{w,i}\}$ for the left invariant forms in $\Om^{k,w}(G)$.  Thus \eqref{eq:L1_convergence_pullback} applied to each $\ga_i$ yields convergence 
$$
(f_\rho^*\overline \om)_{w,i}\ra (f_P^*\om)_{w,i}
$$ 
in $L^s_{\loc}$ where the notation $(\be)_{w,i}$ for a form $\be$ is defined by $\be_w=\sum_i(\be)_{w,i}\al_{w,i}$.  In  particular, if $G=G'$ and the weight of $\omega$ is minimal among nonzero forms of degree $k$, 
then all components converge and thus $f_\rho^*\overline \omega \to f_P^*\omega$ in $L^s_{\loc}(U)$.
\end{remark}

\bigskip\bigskip

\begin{remark}    \label{re:natural_exponent_p}
We now comment on the assumptions on the exponent $p$.  The obvious estimate for the pullback is 
$$
|f_P^*\om|(x)\leq C|D_Pf(x)|^{-w_\om}|\om|(f(x))\,.
$$
Therefore, in general, one would expect $p\geq -w_\om$ to be the optimal lower bound on the Sobolev exponent.   However if $w_\om + w_\ga < - \nu$ then some improvement is possible, see Corollary~\ref{co:improved_exponents_approximation} below.   
In the abelian case we have $w_\om = -k$ and it is known that the condition $p \ge k$ is necessary to have $L^1_{\loc}$ convergence of $f_\rho^* \omega$. 
 Typical  counterexamples are given by suitable $0$-homogeneous functions. For example,  if $G=G'= \R^N$ and $\omega =dy_1 \wedge \ldots \wedge dy_N$
 one can take $f = \frac{x}{|x|}$. Then $f^*\omega = 0$, $f \in W^{1,p}(U; \R^N)$ for all $p < N$,  but it is easily seen, e.g. by a degree argument,  that $f_\rho^*\omega$ weak$*$ converges  to the Dirac mass  
 $\mu(B(0,1)) \delta_0$ as $\rho\ra 0$, where we identify top degree forms with measures.    We do not know the optimal exponent $p$ for which 
the conclusion $f_\rho^*\omega \wedge \gamma \to f_P^*\omega \wedge \gamma$ in $L^1_{\loc}(U)$ holds. 
\end{remark}

\bigskip

\begin{proof}[Proof of Theorem~\ref{th:weight_controlled_pullback}]
We will prove the result using the dominated convergence theorem. 
In brief, this is implemented as follows.  Pointwise convergence almost everywhere  follows from the formula
in Lemma~\ref{lem_moll_calc_forms}~\eqref{it:pullback_pre_pansu},
Pansu differentiability a.e. (in an $L^m$ sense) and the fact the 
mollification improves $L^m$-convergence to $C^1$-convergence.
The majorant is obtained from the estimate in  Lemma~\ref{lem_moll_calc_forms}~\eqref{it:bound_pullback}
and the Sobolev-Poincar\'e inequality  which provides
a uniform estimate of the $L^m$ oscillation in terms of the maximal function of  the $p$-th power of   the (horizontal) derivative. 

We begin with some preparations.
Since we only want to prove convergence in $L^s_{\loc}$ we may assume that $f \in W^{1,p}(U;G')$.  By linearity it suffices to verify the theorem for forms $\omega = a \alpha$ where $\alpha$ is a left-invariant form with $w_\al + w_\ga \le - \nu$ and $a$   is a continuous and bounded  function.   We denote by  $\overline a$ the extension of $a$  by zero to $G' \setminus U'$.
Set $w_\alpha = \wt(\alpha)$ and $w_\gamma = \wt(\gamma)$. 
Fix a compact set $  K \subset U$.  
We next show pointwise convergence a.e. in $K$.
Recall that  $U_\rho := \{ x \in U : \dist(x, G \setminus U) > \rho\}.$
For $\rho >0$ small enough we have $K \subset U_\rho$.
By Lemma~\ref{lem_moll_calc_forms}~\eqref{it:pullback_pre_pansu} 
we have for $x \in K$
\begin{align}  \label{eq:pullback_pointwise_convergence}
&(f_\rho^*( \overline \omega)  \wedge \gamma)(x)\\
=&(\overline a\circ f_\rho)(x)    \,    (f_\rho^*\al  \wedge \gamma)(x)    \nonumber \\
=&( \overline a\circ f_\rho)(x)    \, \rho^{-(\nu + w_\al+w_\gamma)}
\big( (\de_{\rho^{-1}}\circ f_x\circ \de_\rho)_1^*\al  \wedge \gamma \big)(e)\,.
\nonumber 
\end{align}
By Theorem~\ref{th:Lp*_pansu_differentiability_new}  and the condition  $\frac1p \le \frac1m + \frac1\nu$
 we have for a.e.\  $x \in K$
the convergence  
$\de_{\rho^{-1}}\circ f_x\circ \de_\rho  \stackrel{L^m_{loc}}{\lra}   
D_P f$.
(Recall that we are using the notation $D_Pf(x)$ to denote a graded Lie algebra homomorphism $\fg\ra \fg'$ and a homomorphism of Carnot groups $G\ra G'$, depending on the context.)   
By Lemma~\ref{le:moll_prop_Lm_bis}  we get
$D(\de_{\rho^{-1}}\circ f_x\circ \de_\rho)_1(e)\ra D_Pf(x)$
as $\rho\ra 0$. 
Moreover by Lemma~\ref{le:moll_prop_Lm}~\eqref{it:convergence_f_rho} we have 
$f_\rho(x)  \to f(x)$ almost everywhere. 

Let $N \subset K$ be a null set  such that for all $x \in K \setminus N$ we have $f_\rho(x) \to f(x)$ and 
$D(\de_{\rho^{-1}}\circ f_x\circ \de_\rho)_1(e)\ra D_Pf(x)$. 
Since $U'$ is open, for each $x\in K\setminus N$  there exist a $\rho_0(x) > 0$ such that $f_\rho(x) \in U'$ for
all $\rho < \rho_0(x)$. Since $\overline a$ is continuous in $U'$ (and agrees there with $a$) it follows that 
$\overline a \circ f_\rho(x) \to a \circ f(x)$ for all $x \in K\setminus N$. Note that this convergence is independent of how we extend
$a$ outside $U'$.

Now if $w_\al+w_\gamma= - \nu $, then
\begin{align}  \label{eq:pointwise_convergence_pull_back}
(f_\rho^* \overline \omega  \wedge \gamma) (x) \ra &  (a\circ f)(x)  \, ((D_Pf(x))^* \al)(x)  \wedge \gamma\\
=&(f_P^*\omega \wedge \gamma)(x)  \nonumber
\end{align}
so we have pointwise convergence in this case.  If $w_\al+w_\gamma< - \nu$, then 
$(f_\rho^*\overline \omega  \wedge \gamma) (x) \ra 0$ as $\rho\ra 0$, while
$$
(f_P^*\omega \wedge \gamma)(x)   =(a\circ f)(x) 
\, \, ((D_P f)(x)^* \alpha)(x) \wedge \gamma\,.
$$
Now by  Lemma~\ref{lem_weight_facts}~\eqref{it:weight_facts_pullback}
we deduce that    $((D_P f)(x)^* \alpha)(x) \wedge \gamma$ is a form of weight strictly
less than $-\nu$ and hence zero. Thus if $w_\alpha + w_\gamma < - \nu$ we have $(f_P^*\omega \wedge \gamma)(x) = 0$.
Hence  we have shown that $(f_\rho^* \overline\omega  \wedge \gamma) (x) \ra (f_P^*\omega \wedge \gamma)(x)$
for a.e. $x \in K$.

By Proposition~\ref{pr:dominated_equiintegrable}, it remains only to show that $|f_\rho^*\overline \omega \wedge \eta|^s$ is equi-integrable for $s = \frac{p}{-\omega_\alpha}$.
If $m \ge 2$ define $q > 1$ by $\frac1q = \frac1m + \frac1\nu$  (if $m=1$, i.e., if $G$ is abelian,  take $q=1$; then 
 \eqref{eq:osc_bound_max_function} below follows directly from the Poincar\'e inequality).
Set   $\psi = | D_h f|^{q}$.
Then $\psi \in L^{\frac{p}{q}}(U)$. 
By the Sobolev-Poincar\'e inequality (\ref{eq:sobolev_poincare_new}) we have for $x \in K$
$$
\rho^{-\frac{\nu}{m}} \osc_m (f,B(x,\rho)) \le  C \rho   \left(  \rho^{-{\nu}} \int_{B(x,\rho)}  \psi  \right)^{\frac1q} = C \rho   \,  \psi_\rho^{\frac1q}(x)
$$
where 
$$ \psi_\rho: = \psi \ast  \rho^{-\nu} 1_{B(0, \rho)}$$
and 
 $$ (f\ast g)(x) := \int_G  f(x y^{-1})  g(y)  \, \mu(dy) = \int_G   f(y) g(y^{-1} x) \, \mu(dy).$$    
Since $\psi \in L^{\frac{p}{q}}_{\loc}(G)$ we have
\begin{equation}
\label{eqn_psi_rho_converges_l_p_q}
 \psi_{\rho} \to \psi \quad \text{in $L^{\frac{p}{q}}(K)$}    
\end{equation}
as $\rho\ra 0$.
Moreover
\begin{equation}  \label{eq:osc_bound_max_function}
 \osc_m (f,B(x,\rho)) \le C \rho^{1 + \frac{\nu}{m}}  \psi^{\frac1q}_\rho(x).
 \end{equation}
 Now let $s =\frac{p}{ -w_\alpha}$. Then by 
Lemma~\ref{lem_moll_calc_forms}~\eqref{it:bound_pullback}
  \begin{align}  \label{eq:pullback_equiintegrability}
|f_\rho^* \overline\omega \wedge \eta|^s(x)   \le  &  \, \|\bar a\|_\infty^s   \, |(f_\rho^*\alpha)(x) \wedge \eta|^s \\
\le &  \, C  \psi_{\rho}^{\frac{- s w_\alpha}{q}}(x) \rho^{-s(\nu+ w_\alpha + w_\gamma)} \,   
\| \alpha\|^s \, \| \gamma\|^s \,  \|\bar a\|_\infty^s     \nonumber \\
\le & \, C    \rho^{-s(\nu+ w_\alpha + w_\gamma)}    \psi_\rho^{\frac{p}{q}}.  \nonumber
\end{align}
In view of \eqref{eqn_psi_rho_converges_l_p_q} the family $\psi_\rho^{\frac{p}{q}}$ is equi-integrable, so \eqref{eq:pullback_equiintegrability} gives the desired equi-integrability of $|f_\rho^* \overline\omega \wedge \eta|^s$.    Note also that the argument used only the fact that the extension $\bar a$ is bounded.  
\end{proof}

\bigskip

The argument above shows easily that we have better convergence results if $\wt(\omega) + \wt(\gamma) < - \nu$.  We summarize these as follows. 

\begin{corollary} \label{co:improved_exponents_approximation}  With the notation and assumptions  of Theorem~\ref{th:weight_controlled_pullback} the following refinements of  \eqref{eq:L1_convergence_pullback} hold.
\ben
\item  If   $\frac1p \le \frac1m + \frac1\nu$, $p \ge -w_\om$ and $w_\om+w_\ga < - \nu$ then
\begin{equation}
\rho^{\nu + w_\om + w_\ga} f_\rho^*\overline \omega \we \ga\to 0  \quad    \text{in $L^s_{\loc}(U)$ with $s = \frac{p}{-w_\om}$.}
\end{equation}
\item 
Set $\beta = -\frac{p}{\nu}(w_\om+w_\gamma+\nu)$. If $\beta < -w_\omega$ then
\begin{equation}
 f_\rho^*\overline\omega\we\ga \to 0    \quad \text{in $L^{\frac{p}{-w_\omega-\beta}}_{\loc}(U)$.}    \end{equation}
 If $\beta \ge -w_\omega$ then
 \begin{equation}
 f_\rho^*\overline\omega\we\ga \to 0    \quad \text{locally uniformly}.   \end{equation}

 \een
\end{corollary}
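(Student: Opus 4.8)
The plan is to extract the relevant estimates from the proof of Theorem~\ref{th:weight_controlled_pullback} and supplement them with one elementary pointwise bound on the rescaled gradient. As there, it suffices to treat $f\in W^{1,p}(U;G')$ and, by linearity, $\om=a\al$ with $\al$ left-invariant and $a$ continuous and bounded; write $b:=-w_\om\ge0$, the case $b=0$ being trivial since then $\ga$ has top degree and $w_\om+w_\ga<-\nu$ forces $\ga=0$. Two things are used from that proof: first, the pointwise bound coming from \eqref{eq:pullback_equiintegrability} (equivalently from Lemma~\ref{lem_moll_calc_forms}~\eqref{it:bound_pullback} and the Sobolev--Poincar\'e estimate \eqref{eq:osc_bound_max_function}),
\[
|f_\rho^*\overline\om\we\ga|(x)\ \le\ C\,\rho^{-(\nu+w_\om+w_\ga)}\,\psi_\rho(x)^{b/q}\qquad\text{for a.e.\ }x\in U_\rho,
\]
where $\psi=|D_hf|^q$, $\tfrac1q=\tfrac1m+\tfrac1\nu$ (with $q=1$ in the abelian case) and $\psi_\rho=\psi\ast\rho^{-\nu}1_{B(e,\rho)}$; and second, the fact that $q\le p$ (because $\tfrac1p\le\tfrac1m+\tfrac1\nu$), so $\psi\in L^{p/q}_{\loc}(U)$, $\psi_\rho\to\psi$ in $L^{p/q}_{\loc}$ by \eqref{eqn_psi_rho_converges_l_p_q}, and $\|\psi_\rho\|_{L^{p/q}(K)}\le M(K)$ uniformly in $\rho\in(0,1]$ for every compact $K\subset U$. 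The supplementary ingredient is that for such $K$ and $\rho$ small,
\[
\psi_\rho(x)\ \le\ C\rho^{-\nu q/p}\Big(\int_{B(x,\rho)}|D_hf|^p\,d\mu\Big)^{q/p}\ \le\ C\rho^{-\nu q/p}\,\eps(\rho)^{q/p},\qquad\eps(\rho):=\sup_{x\in K}\int_{B(x,\rho)}|D_hf|^p\,d\mu,
\]
by H\"older's inequality on $B(x,\rho)$ and $\mu(B(x,\rho))=C\rho^\nu$, where $\eps(\rho)\to0$ as $\rho\to0$ by absolute continuity of the integral of $|D_hf|^p\in L^1_{\loc}(U)$ over sets of small measure.

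For assertion~(1) I would multiply the pointwise bound by $\rho^{\nu+w_\om+w_\ga}$ and raise to the power $s=p/b$, obtaining the majorant $C\psi_\rho^{p/q}$, which is equi-integrable on compact sets by \eqref{eqn_psi_rho_converges_l_p_q}. On the other hand the proof of Theorem~\ref{th:weight_controlled_pullback} shows, via Lemma~\ref{lem_moll_calc_forms}~\eqref{it:pullback_pre_pansu}, $L^{p^*}$-Pansu differentiability (Theorem~\ref{th:Lp*_pansu_differentiability_new}) and the $C^1$-convergence of the mollifications (Lemma~\ref{le:moll_prop_Lm_bis}), that for a.e.\ $x$
\[
\rho^{\nu+w_\al+w_\ga}(f_\rho^*\al\we\ga)(x)=\big((\de_{\rho^{-1}}\circ f_x\circ\de_\rho)_1^*\,\al\we\ga\big)(e)\ \lra\ \big((D_Pf(x))^*\al\we\ga\big)(e)=0,
\]
the limit vanishing as a form of degree $N$ and weight $w_\al+w_\ga<-\nu$ (Lemma~\ref{lem_weight_facts}~\eqref{it:weight_facts_pullback}); multiplying by the bounded factor $\overline a\circ f_\rho(x)\to a\circ f(x)$ yields $\rho^{\nu+w_\om+w_\ga}f_\rho^*\overline\om\we\ga\to0$ a.e., and Proposition~\ref{pr:dominated_equiintegrable} gives convergence in $L^s_{\loc}(U)$.

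For assertion~(2) I would extract two uniform bounds over a compact $K$. Integrating the pointwise bound and using $\|\psi_\rho\|_{L^{p/q}(K)}\le M$ gives $\|f_\rho^*\overline\om\we\ga\|_{L^s(K)}\le C\rho^{-(\nu+w_\om+w_\ga)}$, while combining the pointwise bound with the estimate on $\psi_\rho$ gives $\|f_\rho^*\overline\om\we\ga\|_{L^\infty(K)}\le C\rho^{-(\nu+w_\om+w_\ga)-\nu b/p}\eps(\rho)^{b/p}$. Using the identity $-(\nu+w_\om+w_\ga)=\tfrac\nu p\beta$: if $\beta\ge b$ the $\rho$-exponent in the $L^\infty$-bound equals $\tfrac\nu p(\beta-b)\ge0$, so $\|f_\rho^*\overline\om\we\ga\|_{L^\infty(K)}\le C\eps(\rho)^{b/p}\to0$, which is the asserted locally uniform convergence. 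If $0<\beta<b$, set $t=p/(b-\beta)$ and interpolate with $\theta=s/t=(b-\beta)/b$, so $1-\theta=\beta/b$: then $\|g\|_{L^t(K)}\le\|g\|_{L^s(K)}^\theta\|g\|_{L^\infty(K)}^{1-\theta}$, whose $\rho$-exponent is $\tfrac\nu p\big(\beta\theta+(\beta-b)(1-\theta)\big)=\tfrac\nu p\big(\beta-b(1-\theta)\big)=0$, leaving $\|f_\rho^*\overline\om\we\ga\|_{L^t(K)}\le C\eps(\rho)^{\beta/p}\to0$, which is convergence in $L^{p/(b-\beta)}_{\loc}(U)=L^{p/(-w_\om-\beta)}_{\loc}(U)$.

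The one point needing attention is the bookkeeping of exponents — namely that, with $\beta=-\tfrac p\nu(w_\om+w_\ga+\nu)$, all the powers of $\rho$ cancel, which reduces to the identities $-(\nu+w_\om+w_\ga)=\tfrac\nu p\beta$ and $1-\theta=\beta/b$ — together with the remark that $w_\om+w_\ga<-\nu$ makes $f_P^*\om\we\ga\equiv0$, so the limit is $0$ in every case. Beyond that there is no new idea: this is the argument of Theorem~\ref{th:weight_controlled_pullback} with the smallness $\eps(\rho)\to0$ of the rescaled gradient over small balls tracked explicitly, the abelian case $q=1$ being handled via the ordinary Poincar\'e inequality exactly as there.
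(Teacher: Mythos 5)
Your proposal is correct and follows essentially the same route as the paper: assertion (1) is proved exactly as in the paper (pointwise vanishing of the limit by the weight argument plus the equi-integrable majorant $C\psi_\rho^{p/q}$), and assertion (2) rests on the same two ingredients -- the bound $\osc_m(f,B(x,\rho))\le C\rho^{1+\nu/m}\psi_\rho^{1/q}(x)$ and the small-ball estimate $\rho^{-\nu/p}\|D_hf\|_{L^p(B(x,\rho))}\to 0$ locally uniformly -- with the only cosmetic difference that you interpolate the resulting $L^s(K)$ and $L^\infty(K)$ bounds, whereas the paper interpolates pointwise by raising the two oscillation estimates to the powers $-w_\om-\beta$ and $\beta$ before integrating; the exponent bookkeeping checks out in both versions.
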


\bigskip\bigskip

\begin{proof} The first assertion follows directly from the proof of Theorem~\ref{th:weight_controlled_pullback}.
Indeed,  \eqref{eq:pullback_pointwise_convergence}, Pansu differentiability a.e., and
the estimates for the mollification 
 imply that $\rho^{\nu + w_{\om}+w_{\ga}} f_\rho^*\overline\omega\we\gamma \to (a \circ f) f_P^* \alpha \wedge \gamma$ a.e. 
  Moreover $f_P^* \alpha \wedge \gamma =0$ since forms of weight strictly less than $-\nu$ must vanish. 
Regarding equi-integrability,      \eqref{eq:pullback_equiintegrability} yields
$$| \rho^{\nu + w_{\om}+w_{\ga}
} f_\rho^*\overline\omega|^{\frac{p}{-\wt(\omega)}} 
\le C |\psi_\rho|^{\frac{p}{q}}  \to C  |\psi|^{\frac{p}{q}}$$
in $L^1_{\loc}(U)$. Hence the assertion follows from Proposition~\ref{pr:dominated_equiintegrable}.

To prove the second assertion, we note that 
Lemma~\ref{lem_moll_calc_forms}~\eqref{it:bound_pullback} yields
\begin{align}  \label{eq:pullback_equiintegrability_refined}
  |f_\rho^* \overline\omega \wedge \gamma|(x) 
  \le  &
 \,  C 
\Big(    \rho^{-(1+ \frac{\nu}{m})}   
\osc_m (f,B(x,\rho) )   \Big)^{-w_\om}  
 \rho^{  e}
 \end{align} 
 where  where $C$ is a constant independent of $x$ and $\rho$, and
  $$ e= -(w_\omega + w_\gamma+ \nu).$$  
By  \eqref{eq:osc_bound_max_function} and the Poincar\'e-Sobolev inequality we have
\begin{align}
 \osc_m(f,  B(x,\rho)) \le & \,  C \rho^{1 + \frac{\nu}m} \psi_\rho^{\frac1q}(x),  \label{eq:osc1}\\ 
 \osc_m(f,  B(x,\rho)) \le  &  \, C \rho^{1 + \frac{\nu}m- \frac\nu{p}}  \label{eq:osc2}
\|D_h f\|_{L^p(B(x,\rho)}.    
\end{align}
 
Recall that $\beta = ep/  \nu$.
 If $\beta < -w_\omega$ then we take  \eqref{eq:osc1} to the power $-w_\omega-\beta$ and
  \eqref{eq:osc2} to the power $\beta$ to get
 \begin{align}  \label{eq:pullback_equiintegrability_refined}
  |f_\rho^*\overline\omega \wedge \gamma|(x) 
  \le  &
 \,  C 
 \psi_\rho(x)^{\frac{-w_\omega - \beta}{q}}  \, \|D_h f\|_{L^p(B(x,\rho))}^\beta. \, 
\end{align} 
Since $\|D_h f\|_{L^p(B(x,\rho))} \to 0$ as $\rho \to 0$, locally uniformly in $x$, and
$\psi_\rho^{\frac1q}$ converges in $L^p_{\loc}(U)$
it follows that 
$ |f_\rho^*\overline \omega \wedge \gamma| \to 0$ 
in $L_{\loc}^{    \frac{p}{-w_\omega-\beta}}(U)$.
If $\beta \ge -w_\omega$ we take  \eqref{eq:osc2} to the power $-w_\omega$ and get (for $\rho \le 1$)
the estimate
$  |f_\rho^* \overline\omega \wedge \gamma|(x) 
  \le 
 \,  C 
\|D_h f\|_{L^p(B(x,\rho))}^{-w_\omega}$. The assertion follows since the right hand side converges locally uniformly to zero.

\end{proof}

\bigskip

\bigskip

\bigskip

\bigskip

\bigskip

Next we apply the approximation theorem to show that  for certain components the Pansu pullback of differential  forms commutes with exterior differentiation. 
Note that in general the Pansu pullback does not commute with exterior differentiation (see \cite{KMX1}).

\begin{theorem} \label{th:pansupullback_exterior_d}~
Let $G$ be a Carnot group of topological dimension $N$ and homogeneous dimension $\nu$, let $G'$ be a $m$-step Carnot group, and 
$$
f:G\supset U\ra U'\subset G'
$$ 
be a $W^{1,p}_{\loc}$-mapping between open subsets.
Suppose that $\alpha  \in \Omega^{k,w_\al}(G')$ has  continuous and bounded coefficients  such that the weak exterior differential $d\alpha$ 
also has continuous and bounded coefficients. Let $\beta \in \Omega^{N-k-1,w_\be}(G)$ be a  closed left-invariant form. Assume that
\begin{equation}
w_\al+w_\be  = - \nu + 1.
\end{equation}
Then the following assertions hold.

\ben
\item If  $\alpha$ is weakly closed, $p \ge -w_\al$  and $\frac1p \le \frac1m + \frac1\nu$ 
then $f_P^*(\alpha) \wedge \be$  is weakly closed, i.e.
\begin{equation}
\int_{G} f_P^*(\alpha) \wedge \be \wedge d\varphi = 0 \quad \text{for all $\varphi \in C^\infty_c(U)$.}
\end{equation}
\item Assume $\wt(d\al)<w_\al$, and that  $d\alpha = \sum_{s\leq w< w_\al}\om^{(w)}$ is the weight decomposition of $d\alpha$. Assume that $p\geq -s$   and  $\frac1p \le \frac1m + \frac1\nu$.
Then
\begin{equation}
d( f_P^*\alpha \wedge \be) = f_P^*(d\alpha) \wedge \be   \quad \text{in  the sense of distributions,}
\end{equation}
i.e. 
\begin{equation}  \label{eq:distributional_exterior_derivative}
 (-1)^N 
\int_{U}    f_P^*\alpha \wedge \be \wedge d\varphi =    \int_{U}   f_P^*(d\alpha)  \wedge \be  \, \varphi  \quad 
\forall \varphi \in C_c^\infty(U).
\end{equation}
\een
\end{theorem}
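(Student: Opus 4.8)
The plan is to establish both identities first for the smooth mollifications $f_\rho$, where the ordinary pullback commutes with exterior differentiation, and then to pass to the limit $\rho\to0$ via the Approximation Theorem (Theorem~\ref{th:weight_controlled_pullback}). Since $\alpha$ is only assumed continuous (with continuous weak exterior derivative $d\alpha$), I would first mollify $\alpha$ in the target: convolving on the right with a smooth approximate identity on $G'$ produces smooth forms $\alpha_\eps$ with $\alpha_\eps\to\alpha$ and $d\alpha_\eps\to d\alpha$ locally uniformly, using that right convolution commutes with the left-invariant frame and hence with $d$. For fixed $\rho$, the map $f_\rho$ is smooth on $U_\rho$, so classically $d(f_\rho^*\alpha_\eps)=f_\rho^*(d\alpha_\eps)$, and since $\be$ is left-invariant and closed, $d(f_\rho^*\alpha_\eps\wedge\be)=f_\rho^*(d\alpha_\eps)\wedge\be$. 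For $\varphi\in C_c^\infty(U)$ and $\rho$ small enough that $\spt\varphi\subset U_\rho$, Stokes' theorem (applied to the compactly supported smooth $(N-1)$-form $\varphi\,f_\rho^*\alpha_\eps\wedge\be$) together with the Leibniz rule yields $(-1)^N\int_U f_\rho^*\alpha_\eps\wedge\be\wedge d\varphi=\int_U\varphi\,f_\rho^*(d\alpha_\eps)\wedge\be$; letting $\eps\to0$ with $\rho$ fixed, and using that $f_\rho$ is a fixed continuous map so that $f_\rho^*\alpha_\eps\to f_\rho^*\alpha$ and $f_\rho^*(d\alpha_\eps)\to f_\rho^*(d\alpha)$ locally uniformly, gives the mollified identity
$$(-1)^N\int_U f_\rho^*\alpha\wedge\be\wedge d\varphi=\int_U\varphi\,f_\rho^*(d\alpha)\wedge\be\,.\qquad(\star)$$

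It then remains to pass to the limit $\rho\to0$ in $(\star)$, which requires convergence of $f_\rho^*\alpha\wedge\be$ and $f_\rho^*(d\alpha)\wedge\be$ in $L^1_{\loc}(U)$. These have degrees $N-1$ and $N$, whereas Theorem~\ref{th:weight_controlled_pullback} produces convergence of \emph{top-degree} forms $f_\rho^*\omega\wedge\gamma$; the trick for the $(N-1)$-form is to test against a basis $\theta_1,\dots,\theta_N$ of left-invariant $1$-forms, noting that $\eta\mapsto(\eta\wedge\theta_i)_i$ identifies $(N-1)$-forms with $N$-tuples of top-degree forms. Thus it suffices to show $f_\rho^*\alpha\wedge(\be\wedge\theta_i)\to f_P^*\alpha\wedge(\be\wedge\theta_i)$ in $L^s_{\loc}$ for each $i$, which is Theorem~\ref{th:weight_controlled_pullback} applied with $\omega=\alpha$ and $\gamma=\be\wedge\theta_i\in\Omega^{N-k,\,w_\be+\wt(\theta_i)}(G)$ (using the Remark after that theorem to replace the zero-extension by the everywhere-defined bounded form $\alpha$): the weight condition $w_\al+w_\be+\wt(\theta_i)=-\nu+1+\wt(\theta_i)\le-\nu$ holds since $\wt(\theta_i)\le-1$, the exponent hypotheses $p\ge-w_\al$ and $\tfrac1p\le\tfrac1m+\tfrac1\nu$ are exactly those of the statement, and $s=\tfrac{p}{-w_\al}\ge1$. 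For the right-hand side of $(\star)$ I would decompose $d\alpha=\sum_{s\le w<w_\al}\omega^{(w)}$ into weight components---each still with continuous bounded coefficients, being a fixed linear projection of $d\alpha$---and apply Theorem~\ref{th:weight_controlled_pullback} with $\omega=\omega^{(w)}$, $\gamma=\be$: now $w+w_\be=w-w_\al+1-\nu\le-\nu$ because $w\le w_\al-1$ (weights are integers), and $p\ge-s\ge-w$, so $f_\rho^*\omega^{(w)}\wedge\be\to f_P^*\omega^{(w)}\wedge\be$ in $L^{p/(-w)}_{\loc}$; summing gives $f_\rho^*(d\alpha)\wedge\be\to f_P^*(d\alpha)\wedge\be$ in $L^{p/(-s)}_{\loc}(U)$. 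Passing to the limit in $(\star)$ proves assertion~(2); assertion~(1) is the special case $d\alpha=0$, in which case $d\alpha_\eps=0$, the right-hand side of $(\star)$ vanishes identically, and one concludes that $f_P^*\alpha\wedge\be$ is weakly closed.

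The analytic substance is entirely carried by the Approximation Theorem, so the main work here is organizational: correctly matching, in each invocation, the weight identity $w_\al+w_\be=-\nu+1$ and the hypothesis $p\ge-s$ with the inequalities $w_\om+w_\ga\le-\nu$ and $p\ge-w_\om$ that Theorem~\ref{th:weight_controlled_pullback} requires. I expect the two genuine obstacles to be (a) reducing the $(N-1)$-form statement to the top-degree one by pairing with left-invariant $1$-forms, and (b) the fact that $\alpha$ is merely continuous---so $f_\rho^*\alpha$ need not be smooth---which is what forces the preliminary mollification in the target and hence the two-step limit (first $\eps\to0$, then $\rho\to0$).
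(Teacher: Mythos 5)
Your proof is correct and essentially coincides with the paper's: the paper likewise first records the mollified identity (your $(\star)$, quoting the standard fact that the weak exterior derivative commutes with pullback by the smooth maps $f_\rho$) and then lets $\rho\to 0$ on both sides via Theorem~\ref{th:weight_controlled_pullback}, applied weight-component-wise to $d\al$ on the right and, on the left, after absorbing $d\varphi$ into left-invariant data of weight $\le w_\be-1$, which is exactly your pairing with the coframe $\theta_i$. The only slip is in your construction of $\al_\eps$: with the mollifier on the right one has $X(a*\phi_\eps)=a*(X\phi_\eps)$ for left-invariant $X$, so to obtain $d\al_\eps=(d\al)_\eps$ you should place the mollifier on the left in the left-invariant coframe (or simply mollify Euclideanly in exponential coordinates); this is cosmetic and does not affect the argument.
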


\begin{remark}  \label{re:codegree_coweight_1}
If $G = G'$, $k = N-1$ and $w_\al = - \nu +1$ we have $d\alpha \in \Omega^N(G')$ and hence $\wt(d\alpha) = -\nu$. Thus for $p \ge \nu$
we can take $\be \equiv 1$ and we get
\begin{equation}  \label{eq:pullback_adjoint}
d  f_P^*\alpha = f_P^*(d\alpha) \quad \text{if $G=G'$, $k=N-1$, $\wt(\alpha) = - \nu +1$}
\end{equation}
in the sense of distributions. For $2$-step groups this was first shown by Vodopyanov, see \cite{vodopyanov_foundations}.
\end{remark}

\begin{remark} If we use Corollary~\ref{co:improved_exponents_approximation}  then the condition on the exponent 
in the second assertion can be slightly improved if   $s >  - w_\al + 1$. In that case we can replace the condition 
$ p \ge s$ by
$$ p \ge s-\frac{p}\nu (s + w_\beta + \nu) = s- \frac{p}{\nu}(s+1-w_\alpha),$$
or, equivalently, 
\begin{equation}
\frac{s}{p} \le 1+ \frac1\nu(s+1-w_\alpha).
\end{equation}
\end{remark}

\bigskip\bigskip
\begin{proof}[Proof of Theorem~\ref{th:pansupullback_exterior_d}]  Since $\beta$ is closed we have $d (\varphi \beta) = d\varphi \wedge \beta$, and hence $\wt(d(\phi\be))\leq \wt\be-1$.
 Using that  the (weak) exterior derivative commutes with pullback by smooth functions we get
\begin{equation}
\label{eqn_smooth_pullback}
 \int_G    f_\rho^*\alpha \wedge \beta \wedge d\varphi =  (-1)^{N}   \int_G   f_\rho^*(d\alpha) 
  \wedge \beta   \, \varphi  \quad 
\forall \varphi \in C_c^\infty(U).
\end{equation} 
Hence  both  assertions follow by applying   Theorem~\ref{th:weight_controlled_pullback} to both sides of  \eqref{eqn_smooth_pullback}; on the right hand side the theorem is applied to each component of the weight decomposition of  $d\al$ separately.  Note that the condition $\wt(d\al)<w_\al$ 
 ensures that Theorem~\ref{th:weight_controlled_pullback} can be applied. 
\end{proof}

\section{Quasiregular mappings}
\label{sec_quasiregular_mappings}
In this section we review some results from \cite{vodopyanov_foundations} which were stated only for $2$-step Carnot groups, but which extend immediately to general Carnot groups using the Approximation Theorem \ref{th:weight_controlled_pullback}.

In this section we fix a Carnot group $G$ of homogeneous dimension $\nu$, and an open subset $U\subset G$.

\begin{definition}[\cite{vodopyanov_foundations}]
A mapping $f:G\supset U\ra G$ is {\bf quasiregular} (has {\bf bounded distortion}) if   $f\in W^{1,\nu}_{\loc}$ and there is a constant $C$ such that $|D_hf|^\nu\leq C\det D_Pf$ almost everywhere.
\end{definition}

We now fix a quasiregular mapping $f:G\supset U\ra G$.  

Following Reshetnyak \cite{reshetnyak_space_mappings_bounded_distortion,heinonen_holopainen,vodopyanov_foundations}, we exploit the pullbacks of $\nu$-harmonic functions to control quasiregular mappings.
\begin{theorem}
\label{thm_morphism_property}
If $u:G\ra \R$ is a Lipschitz $\nu$-harmonic function, then the composition $u\circ f$ is $\mathcal{A}$-harmonic.  See \cite[Sec. 2]{heinonen_holopainen}, \cite[Subsec. 4.3]{vodopyanov_foundations} for the definition and basic properties of $\mathcal{A}$-harmonic functions.   
\end{theorem}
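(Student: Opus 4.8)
The plan is to follow Reshetnyak's classical argument, with the Pansu pullback replacing the ordinary pullback and Theorem~\ref{th:pansupullback_exterior_d} supplying the fact that pullback commutes with $d$ in the relevant degree. Let $\nu$ be the homogeneous dimension of $G$, and recall that a Lipschitz function $u:G\to\R$ is $\nu$-harmonic if it is a weak solution of $\Div_H(|D_Hu|^{\nu-2}D_Hu)=0$, i.e.\ $\int_G \langle |D_Hu|^{\nu-2}D_Hu, D_H\varphi\rangle\,d\mu=0$ for all $\varphi\in C_c^\infty$. First I would translate this PDE into the language of differential forms: setting $\alpha := |D_Hu|^{\nu-2}\,\star_H du$, where $\star_H$ is the (horizontal) Hodge-type operator sending the horizontal $1$-form $du$ to an $(N-1)$-form built from the horizontal coframe and the volume form, the $\nu$-harmonicity of $u$ is exactly the statement that $d\alpha=0$ weakly. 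Here $\alpha\in\Omega^{N-1}(G)$ has continuous bounded coefficients (since $u$ is Lipschitz, $D_Hu\in L^\infty$, and $|D_Hu|^{\nu-2}D_Hu$ is continuous wherever $D_Hu$ is), and $\wt(\alpha)=-\nu+1$ because $\alpha$ pairs a weight $-1$ factor ($du$ restricted to the horizontal layer) with a weight $-\nu+1$ complement; $d\alpha$, being the zero $N$-form, has weight $-\nu$ and bounded coefficients.

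Next I would apply Theorem~\ref{th:pansupullback_exterior_d} with $G'=G$, $k=N-1$, $w_\al=-\nu+1$, and $\be\equiv 1$ (as in Remark~\ref{re:codegree_coweight_1}): since $f\in W^{1,\nu}_{\loc}$ and $p=\nu\ge \nu = -w_\al$ and $\frac1\nu\le\frac1m+\frac1\nu$, we conclude $d(f_P^*\alpha)=f_P^*(d\alpha)=0$ in the sense of distributions. So $f_P^*\alpha$ is a weakly closed $(N-1)$-form. The final step is to unwind what $f_P^*\alpha$ is in terms of $v:=u\circ f$. Here I would use Remark~\ref{re:weak_derivative_abel}: the horizontal weak derivative of $v=u\circ f$ satisfies $D_Hv = (D_Pf)^* (D_Hu)\circ f$ almost everywhere (the chain rule for Sobolev compositions, valid because $u$ is Lipschitz and $f$ is Pansu differentiable a.e.\ in the $L^\nu$ sense). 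Combining this with the explicit form of $\star_H$ and the pullback formula $f_P^*\alpha(x)=(D_Pf(x))^*\alpha(f(x))$, a linear-algebra computation shows that $f_P^*\alpha$ equals $\mathcal{A}(x,D_Hv(x))$ contracted into the volume form, for a measurable map $\mathcal{A}:U\times V_1\to V_1$ defined using $D_Pf$; the quasiregularity hypothesis $|D_hf|^\nu\le C\det D_Pf$ is exactly what is needed to verify the structure conditions on $\mathcal{A}$ (the ellipticity/monotonicity bounds $\langle\mathcal{A}(x,\xi),\xi\rangle\ge c|\xi|^\nu$ and $|\mathcal{A}(x,\xi)|\le C|\xi|^{\nu-1}$). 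The weak closedness of $f_P^*\alpha$ then reads $\int_U \langle \mathcal{A}(x,D_Hv),D_H\varphi\rangle\,d\mu=0$ for all $\varphi\in C_c^\infty(U)$, which is precisely the statement that $v=u\circ f$ is $\mathcal{A}$-harmonic.

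The main obstacle, I expect, is the chain-rule identification $D_H(u\circ f)=(D_Pf)^*(D_Hu\circ f)$ together with the verification of the structure conditions for $\mathcal{A}$. The chain rule requires care because $f\in W^{1,\nu}_{\loc}$ need not be continuous and is only Pansu differentiable in the approximate $L^\nu$ sense; the clean way is to first establish it for the abelianized composition $\pi_{G'}\circ f$ via Remark~\ref{re:weak_derivative_abel} and then compose with the Lipschitz function $u$ using Proposition~\ref{pr:composition_by_lip} and an approximation of $u$ by smooth functions with uniformly bounded gradients. Verifying that $\mathcal{A}$ satisfies the two-sided bounds requires controlling $(D_Pf)^*$ from above by $|D_hf|$ (immediate) and from below using that $\det D_Pf = |D_hf|^\nu/(\text{distortion})$; one passes through the adjugate/cofactor matrix, whose norm is comparable to $(\det D_Pf)\cdot\|(D_Pf)^{-1}\|\le (\det D_Pf)\cdot(\|D_Pf\|^{\nu-1}/\det D_Pf)= \|D_Pf\|^{\nu-1}$, and conversely $\ge \det D_Pf/\|D_Pf\|\ge c|D_hf|^{\nu-1}$ by quasiregularity. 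The remaining technical point is measurability of $\mathcal{A}$ in $x$, which follows from measurability of $x\mapsto D_Pf(x)$. These are the standard manipulations from \cite{heinonen_holopainen,vodopyanov_foundations} adapted to the Carnot setting, and no new idea beyond Theorem~\ref{th:pansupullback_exterior_d} is needed.
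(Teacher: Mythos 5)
There is a genuine gap at the very first application of the Pullback Theorem. Theorem~\ref{th:pansupullback_exterior_d} (equivalently Remark~\ref{re:codegree_coweight_1}) requires the form being pulled back to have \emph{continuous} bounded coefficients, and you justify this for $\alpha=|D_Hu|^{\nu-2}\star_H du$ by asserting that $|D_Hu|^{\nu-2}D_Hu$ is continuous because $u$ is Lipschitz. That does not follow: a Lipschitz $\nu$-harmonic function on a general Carnot group has $D_Hu\in L^\infty$, but continuity of the horizontal gradient ($C^{1,\alpha}$-type regularity for the $\nu$-Laplacian) is not available in this generality, and the theorem is stated precisely for the merely Lipschitz case. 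So the coefficients of $\alpha$ are only bounded and measurable, and Theorem~\ref{th:pansupullback_exterior_d} cannot be invoked directly.

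This is exactly where the paper's proof (following Vodopyanov) does extra work that your plan omits: (i) one must first make sense of the Pansu pullback of a merely measurable form, which uses the quasiregularity hypothesis --- for a Borel null set $\Si$ one has $\det D_Pf=0$, hence $D_Pf=0$ by bounded distortion, a.e.\ on $f^{-1}(\Si)$, so setting $f_P^*\om=0$ where $D_Pf=0$ gives a well-defined pullback independent of the representative of $\om$; and (ii) one then extends the distributional identity $d\,f_P^*\om=f_P^*d\om$ from forms with continuous coefficients to forms with $\om,d\om\in L^\infty$ by an approximation argument (the paper cites \cite[Corollaries 2.15, 2.18]{vodopyanov_foundations}). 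Only after these two steps can one conclude that $f_P^*(\star d_hu)=\star_\A d_hv$ is weakly closed. Note that the same null-set mechanism is also what makes your chain-rule identification $d_h(u\circ f)=(D_Pf)^*(d_hu\circ f)$ legitimate on the set where $u$ fails to be differentiable; smoothing $u$ with uniformly bounded gradients alone does not give this pointwise identity. The rest of your outline (the translation of $\nu$-harmonicity into weak closedness of $\star_H$-type forms, the weight count $w_\alpha=-\nu+1$, membership $u\circ f\in W^{1,\nu}_{\loc}$ via Proposition~\ref{pr:composition_by_lip}, and the structure conditions on $\A$ from the distortion inequality) is consistent with the paper's route, but as written the argument does not go through without the measurable-coefficient extension.
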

Note that if $f$ takes values in an open subset $U'\subset G$,  then the theorem holds when $u$ is locally Lipschitz, see below.
\begin{proof}
In the $2$-step case, the proof is  contained in \cite{vodopyanov_foundations}.   This extends to general Carnot groups using the Approximation Theorem.

We give an outline of the steps, to facilitate reading of \cite{vodopyanov_foundations}:
\bit
\item By Remark~\ref{re:codegree_coweight_1},   
if $\om$ is a smooth differential form on $G$ with codegree and coweight $1$, and both $\om$ and $d\om$ are bounded, then 
\begin{equation}
\label{eqn_d_commutes_with_pullback_on_codegree_1_coweight_1}
df_P^*\om=f_P^*d\om
\end{equation} 
distributionally.  
\item If $\Si\subset G$ is a Borel null set, then the (approximate) Pansu differential $D_Pf(x)$ satisfies $\det D_Pf(x) = 0$  for a.e. $x\in f^{-1}(\Si)$ \cite{vodopyanov_P_differentiability}; hence by the bounded distortion assumption in fact $D_Pf(x)=0$ 
  for a.e.\  $x\in f^{-1}(\Si)$.  If $\om$ is a measurable differential form on $G$, and we define $f_P^*\om(x)$ to be zero whenever $D_Pf(x)=0$, then the Pansu pullback $f_P^*\om$ is well-defined almost everywhere. 
\item By an approximation argument (\ref{eqn_d_commutes_with_pullback_on_codegree_1_coweight_1}) remains true if $\om\,,d\om\in L^\infty$, see \cite[Corollaries 2.15, 2.18]{vodopyanov_foundations}.
\item It follows  from Proposition~\ref{pr:composition_by_lip} that the composition $v:=u\circ f$ belongs to $W^{1,\nu}_{\loc}(U)$.
\item To see that $v$ is $\A$-harmonic, it suffices to show that its horizontal differential $d_hv$ satisfies the distributional equation $\de(\A d_hv)=0$ (cf. \cite[(2.11)]{heinonen_holopainen}).  This is equivalent to the vanishing of the distributional exterior derivative of $\star_\A d_hv$, see  \cite[Section 3, Theorem 3.14]{heinonen_holopainen}.  Since $\star_\A d_hv=f_P^*(\star d_hu)$, this follows from (\ref{eqn_d_commutes_with_pullback_on_codegree_1_coweight_1}).
\eit
\end{proof}

\bigskip\bigskip
The composition $u$ of the abelianization map $G\ra G/[G,G]$ with a coordinate function is Lipschitz and $\nu$-harmonic.  Hence by Theorem~\ref{thm_morphism_property} the composition $u\circ f$ is $\A$-harmonic.   Following \cite{reshetnyak_space_mappings_bounded_distortion,BI83,heinonen_holopainen,vodopyanov_foundations},  by applying the Caccioppoli inequality for $\A$-harmonic functions and the Poincare inequality one obtains a number of results, including:
\bit
\item $f\in W^{1,\nu'}$ for some $\nu'>\nu$.
\item $f$ is H\"older continuous, (classically) Pansu differentiable almost everywhere, and maps null sets to null sets.
\item A suitable change of variables formula holds for $f$.
\eit

Since $f$ is continuous, the proof of Theorem~\ref{thm_morphism_property} may be localized in the target:
\begin{corollary}
Suppose the image of $f$ is contained in an open subset $U'\subset G$, and $u:U'\ra\R$ is a locally Lipschitz $\nu$-harmonic function.  Then $u\circ f$ is $\A$-harmonic. 
\end{corollary}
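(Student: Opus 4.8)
The plan is to localize the proof of Theorem~\ref{thm_morphism_property}. The only new ingredient compared with Theorem~\ref{thm_morphism_property} is that $\A$-harmonicity is a local property — the defining condition $\de(\A d_h v)=0$ is tested against functions with compact support — and that $f$ is already known to be continuous (as recorded in the bullet list preceding the corollary). So first I would reduce the claim to showing that $v:=u\circ f$ is $\A$-harmonic in a neighborhood of each point of $U$. Fix $x_0\in U$ and $r>0$ with $\overline{B(x_0,r)}\subset U$. Since $f$ is continuous, $K:=f\big(\overline{B(x_0,r)}\big)$ is a compact subset of the open set $U'$; choose an open set $V$ with $K\subset V\subset\overline V\subset U'$ and $\overline V$ compact. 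Because $u$ is locally Lipschitz, its restriction to the compact set $\overline V$ is Lipschitz, so the codegree-$1$, coweight-$1$ form $\om:=\star d_h u$ has bounded (but in general only $L^\infty$) coefficients on $V$; and since $u$ is $\nu$-harmonic on $U'\supset V$, the form $\om$ is weakly closed there, i.e.\ $d\om=0$ distributionally on $V$.

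Next I would rerun the argument of the proof of Theorem~\ref{thm_morphism_property} with the domain $G$ replaced by the ball $B(x_0,r)$ and the target $G$ replaced by $V$; this substitution is legitimate because Theorem~\ref{th:pansupullback_exterior_d} and Remark~\ref{re:codegree_coweight_1} are already formulated for maps into a proper open subset. Concretely: by Proposition~\ref{pr:composition_by_lip}, $v\in W^{1,\nu}_{\loc}(B(x_0,r))$; by the chain rule for the horizontal derivative together with $L^{p^*}$ Pansu differentiability a.e.\ (Theorem~\ref{th:Lp*_pansu_differentiability_new}, Remark~\ref{re:weak_derivative_abel}) — using crucially that $f$ maps $B(x_0,r)$ into $K\subset V$, where $u$ is Lipschitz — one obtains the pointwise identity $\star_\A d_h v(x)=(D_Pf(x))^*\om(f(x))=f_P^*\om(x)$ for a.e.\ $x\in B(x_0,r)$, with the pullback set to $0$ where $D_Pf(x)=0$. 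Then applying Theorem~\ref{th:pansupullback_exterior_d}(1) in its localized form (with domain $B(x_0,r)$, target $V$, $w_\al=-\nu+1$, $\be\equiv 1$, $p=\nu$, so that $p\ge -w_\al=\nu-1$ and $\tfrac1\nu\le\tfrac1m+\tfrac1\nu$), extended from forms with continuous coefficients to forms with merely $L^\infty$ coefficients exactly as in step~3 of the proof of Theorem~\ref{thm_morphism_property} (cf.\ \cite[Corollaries 2.15, 2.18]{vodopyanov_foundations}), yields $d\big(f_P^*\om\big)=0$ distributionally on $B(x_0,r)$, i.e.\ $d\big(\star_\A d_h v\big)=0$ there. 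By \cite[Section~3, Theorem~3.14]{heinonen_holopainen} this is precisely the statement that $v$ is $\A$-harmonic on $B(x_0,r)$. Since $x_0\in U$ was arbitrary, $v=u\circ f$ is $\A$-harmonic on $U$.

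The main obstacle I anticipate is bookkeeping rather than conceptual: one must check that every ingredient of the proof of Theorem~\ref{thm_morphism_property} is insensitive to the values of $u$ away from $K$ and survives verbatim with $B(x_0,r)$ and $V$ in place of $G$ — in particular the $L^\infty$-approximation upgrading $df_P^*\om=f_P^*d\om$ from continuous to bounded measurable forms, and the a.e.\ identity $\star_\A d_h v=f_P^*(\star d_h u)$. The continuity of $f$, which is what guarantees that $f(\overline{B(x_0,r)})$ is a compact subset of $U'$ on a neighborhood of which $u$ is Lipschitz, is exactly the input that makes this localization go through.
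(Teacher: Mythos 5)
Your proposal is correct and follows essentially the same route as the paper, which proves this corollary precisely by observing that since $f$ is continuous, the proof of Theorem~\ref{thm_morphism_property} can be localized in the target: the image of a compact ball lands in a compact subset of $U'$ on a neighborhood of which $u$ is Lipschitz, and the argument (including the $L^\infty$ extension of $df_P^*\om=f_P^*d\om$ and the identity $\star_\A d_hv=f_P^*(\star d_hu)$) runs unchanged there. You have simply written out the bookkeeping that the paper leaves implicit.
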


If there exists for some $r>0$ a locally Lipschitz $\nu$-harmonic function $u:B(e,r)\setminus\{e\}\ra (0,\infty)$ such that $\lim_{x\ra e}u(x)\ra\infty$, then the method of Reshetnyak could be applied to show that $f$ is open and discrete, which would have a number of further consequences, see \cite{vodopyanov_foundations} (Theorem 4.11 and the ensuing discussion).  Unfortunately, the existence of such $\nu$-harmonic functions remains an open problem.

\section{Product rigidity}

In this section we show how the results in \cite{KMX1} on product rigiditiy can be  improved by using the improved version 
of the Pullback Theorem, Theorem~\ref{th:pansupullback_exterior_d} and a better choice of forms to be pulled back.

\begin{theorem}[Product rigidity]   \label{th:product_rigidity} 
Let $\{G_i\}_{1 \le i \le n}$, $\{G'_j\}_{1 \le j \le n'}$ be collections of Carnot groups where each $G_i, G'_j$ is nonabelian and does not admit
a nontrivial decomposition of Carnot groups. Let $G = \prod_i G_i$, $G' = \prod_j G'_j$. Set 
$$ K_i : = \{ k \in \{1, \ldots, n\} : G_k \simeq G_i \}$$
and  if $|K_i|\geq 2$ for some $i$, assume that 
\begin{equation}  \label{eq:p_rigidity}
 p \ge \max\{ \nu_i - 1:   |K_i| \ge 2 \}
 \end{equation}
 where $\nu_i$ denotes the homogeneous dimension of $G_i$.   
 Suppose that $f: G \supset U \to G'$ is a $W^{1,p}_{\loc}$-mapping, $U = \prod_i U_i$ is a product of open connected sets $U_i \subset G_i$,
 and the  (approximate) Pansu differential $D_P f(x)$ is an   
  isomorphism for a.e.\ $x\in U$. Then $f$ is a product of mappings, i.e. $n=n'$
 and for some permutation $\sigma:\{1, \ldots, n\} \to \{1, \ldots, n\}$ there are mappings
 $\{f_{\sigma(i)}:U_i \to G'_{\sigma(i)}\}_{1 \le i \le n}$
 such that 
 \begin{equation} \label{eq:product_mapping}
 f(x_1, \ldots, x_n) = \left(f_1(x_{\sigma^{-1}(1)}), \ldots, f_n(x_{\sigma^{-1}(n)})    \right)
 \end{equation}
 for a.e. $(x_1,\ldots,x_n)\in\prod_iU_i$.
\end{theorem}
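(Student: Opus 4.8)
The plan is to show that, after relabelling the factors, $D_P f(x)$ respects the product decompositions --- i.e.\ $D_P f(x)(\fg_i)=\fg'_i$ for a.e.\ $x$ --- and then to integrate this infinitesimal splitting. Granting block-diagonality, put $f_i:=\pi_{G'_i}\circ f$; since $\pi_{G'_i}$ is a Lipschitz group homomorphism, $f_i\in W^{1,p}_{\loc}(U,G'_i)$ by Proposition~\ref{pr:composition_by_lip}, with $D_P f_i=D\pi_{G'_i}\circ D_P f$, which annihilates $\fg_k$ for $k\ne i$. A Fubini slicing argument for $G'_i$-valued Sobolev maps (applied to the scalar functions $d'(z,f_i(\cdot))$ of Definition~\ref{de:sobolev_carnot_new}) then shows that for a.e.\ $c\in U_i$ the slice $(x_k)_{k\ne i}\mapsto f_i(c,(x_k)_{k\ne i})$ is a $W^{1,p}_{\loc}$ map on the connected set $\prod_{k\ne i}U_k$ with a.e.\ vanishing Pansu differential; since such a map is a.e.\ constant (which can be read off from the mollification bounds of Lemma~\ref{le:moll_prop_Lm}, or from $L^m$-Pansu differentiability), $f_i$ depends only on $x_i$, giving \eqref{eq:product_mapping}.

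The algebraic ingredient is that a graded Lie algebra isomorphism between two finite products of nonabelian, product-indecomposable Carnot algebras carries the canonical decomposition of the source onto that of the target up to a permutation of the summands (uniqueness of decomposition into indecomposables, together with the stability of direct factors under isomorphisms). Applying this to $D_P f(x)$ for a.e.\ $x$ yields a permutation $\sigma_x$ with $D_P f(x)(\fg_i)=\fg'_{\sigma_x(i)}$; in particular $n=n'$ and the isomorphism types of $\{\fg_i\}$ and $\{\fg'_j\}$ agree with multiplicity. Grouping factors by type, $\sigma_x$ is forced on any type of multiplicity one, so it remains to prove that for each type $\tau$ of multiplicity $\ge2$, with index sets $I_\tau$, $J_\tau$, the induced bijection $I_\tau\to J_\tau$ is a.e.\ constant. (If all multiplicities are $1$, then \eqref{eq:p_rigidity} is vacuous, $\sigma_x$ is automatically constant, and the conclusion holds for every $p\ge1$.) When multiplicities $\ge2$ occur, this is the step where \eqref{eq:p_rigidity} enters, through the Approximation theorem.

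Fix such a $\tau$, a factor $G'_{j_0}$ of type $\tau$, and write $E_i:=\{x:\sigma_x^{-1}(j_0)=i\}$, $i\in I_\tau$, a measurable partition of $U$ up to a null set. The ``better choice of forms'' is to pull back, instead of the volume form of $G'_{j_0}$ (weight $-\nu'_{j_0}$, which would force $p\ge\nu'_{j_0}$), the contracted forms $\alpha_X:=\iota_X\mathrm{vol}_{\fg'_{j_0}}$ for $X$ in the first layer $V'_{j_0,1}$: these are left-invariant, closed (as $\mathrm{vol}_{\fg'_{j_0}}$ is closed and $L_X\mathrm{vol}_{\fg'_{j_0}}=0$ for $X\in\fg'_{j_0}$, $\fg'_{j_0}$ being a nilpotent direct factor), of degree $N'_{j_0}-1$ and weight $-\nu'_{j_0}+1$. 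Viewing $\alpha_X$ as a form on the Carnot group $G'_{j_0}$, pull it back by $g_{j_0}:=\pi_{G'_{j_0}}\circ f\in W^{1,p}_{\loc}(U,G'_{j_0})$ and pair it with $\beta_i:=\mathrm{vol}_{\oplus_{k\ne i}\fg_k}$ (left-invariant, closed, degree $N-N'_{j_0}$, weight $-(\nu-\nu'_{j_0})$), so that $w_{\alpha_X}+w_{\beta_i}=-\nu+1$. The hypotheses of Theorem~\ref{th:pansupullback_exterior_d} hold: $p\ge\nu'_{j_0}-1=-w_{\alpha_X}$ by \eqref{eq:p_rigidity}, and $\tfrac1p\le\tfrac1{m'_{j_0}}+\tfrac1\nu$ because $p\ge\nu'_{j_0}-1\ge\tfrac12 m'_{j_0}(m'_{j_0}+1)\ge m'_{j_0}$ (using $\dim V'_{j_0,1}\ge2$). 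Hence $(g_{j_0})_P^*\alpha_X\wedge\beta_i$ is weakly closed. Unwinding the Pansu pullback with the block structure, on $E_i$ this form equals $\iota_{Y_i(x)}\mathrm{vol}_\fg$ up to a nonvanishing scalar, with $Y_i(x)\in V_{i,1}$, while it vanishes on $E_{i'}$ for $i'\ne i$; weak closedness therefore becomes, for each $i$ and each $X\in V'_{j_0,1}$, a distributional divergence identity along $V_{i,1}$ for the $V_{i,1}$-valued field $\mathbf{1}_{E_i}$ times a cofactor-type expression in $D_P f$. Letting $X$ vary so the $Y_i(x)$ sweep out $V_{i,1}$ and combining these identities, one deduces that each $E_i$ agrees with an open set up to a null set; since $U$ is connected, exactly one $E_i$ is conull, i.e.\ $\sigma_x^{-1}(j_0)$ is a.e.\ constant. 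Ranging over all $j_0$ (and all types) makes $\sigma_x$ a.e.\ equal to a fixed permutation $\sigma$, and relabelling via $\sigma$ reduces to the block-diagonal situation treated above.

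The main obstacle is the passage from ``$(g_{j_0})_P^*\alpha_X\wedge\beta_i$ weakly closed for all $X$'' to ``each $E_i$ is open up to a null set.'' This requires an explicit computation of the Pansu pullback of the contracted forms in terms of the block decomposition of $D_P f$, carefully tracking the merely Sobolev coefficients, and recognizing the resulting family of distributional identities as a no-jump statement for a closed current --- subtler here than for volume forms precisely because one must use contractions of volume forms to reach the sharp exponent. The remaining inputs --- the Fubini slicing for $G'$-valued Sobolev maps, the ``vanishing Pansu differential $\Rightarrow$ a.e.\ constant on a connected domain'' lemma, and the precise form of the algebraic lemma (in particular whether $D_P f(x)(\fg_i)$ is the factor $\fg'_{\sigma_x(i)}$ itself or merely an isomorphic direct factor, the latter costing an extra conjugation) --- are routine but not entirely trivial.
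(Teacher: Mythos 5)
Your overall architecture matches the paper's: reduce via the algebraic splitting lemma (Lemma~\ref{lem_isom_is_product}) and the Fubini/slicing lemma (Lemma~\ref{le:fubini_sobolev}) to showing that the measurable permutation $x\mapsto\sigma_x$ is a.e.\ constant, and reach the exponent $\nu_i-1$ by pulling back the contracted volume forms $i_Y\vol_{G_i}$ of weight $-\nu_i+1$ through Theorem~\ref{th:pansupullback_exterior_d}; your verification of the auxiliary condition $\tfrac1p\le\tfrac1m+\tfrac1\nu$ via $\nu_i-1\ge m_i$ is also the same as the paper's \eqref{eq:bound_m}. But the step you yourself flag as ``the main obstacle'' is exactly where the proof lives, and your sketch of it does not work as stated. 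First, pairing $(g_{j_0})_P^*\alpha_X$ only with $\beta_i=\vol_{\oplus_{k\ne i}\fg_k}$ is too coarse: it yields, for each $i$, a single distributional divergence identity $\sum_k X_{i,k}\bigl(\mathbf{1}_{E_i}a_{i,k}\bigr)=0$ along the first layer of the $i$-th factor, and it is not clear (and you give no argument) that such an identity forces $E_i$ to agree with an open set up to a null set. The paper instead takes finer closed test forms $\beta=\theta_{j',k'}\wedge i_X\vol_{G_l}\wedge\bigwedge_{i'\ne j',l}\vol_{G_{i'}}$, where $\theta_{j',k'}$ is a horizontal one-form dual to a first-layer basis of the $j'$-th factor and $X\in V_1(G_l)$ with $l\ne j'$; this isolates each coefficient and gives $\int_U a_{j',k'}\,X\varphi\,\vol_G=0$, i.e.\ $Za_{j'}=0$ distributionally for every $Z\in\oplus_{l\ne j'}\fg_l$, hence $a_{j'}(x)=a_{j'}(x_{j'})$.

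Second, even granting your claim, the inference ``each $E_i$ is open up to a null set, $U$ is connected, hence exactly one $E_i$ is conull'' is false: finitely many pairwise disjoint open sets can cover a connected set up to a closed null set (e.g.\ $(0,\tfrac12)\cup(\tfrac12,1)$ covers $(0,1)$ up to the point $\tfrac12$), so openness plus connectedness does not force one piece to be conull. The correct endgame, which your stronger directional statement would also give, is that each $\chi_{E_j}$ depends only on the variable $x_j$; since the $\chi_{E_j}$, $j\in K_i$, are $\{0,1\}$-valued, depend on disjoint groups of variables, and sum to $1$ a.e., exactly one of them must be identically $1$ a.e., which is how the paper concludes that $\sigma$ is constant. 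So the proposal is a correct plan at the level of ingredients, but the decisive analytic step (choice of test forms and the passage from the resulting identities to constancy of $\sigma$) is missing, and the shortcut you propose in its place contains a genuine logical gap.
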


For many groups the condition  \eqref{eq:p_rigidity} on $p$ can be improved. 

\begin{corollary} \label{co:product_rigidity}
Let $G_i, G'_j, U_i$  be as in  Theorem~\ref{th:product_rigidity}, assume  that $G_i$ is a group of step $m_i$ and set
$$ \bar m = \max \{ m_i : |K_i| \ge 2\}, \quad \bar \nu = \sum_{i : |K_i| \ge 2} \nu_i.$$
Assume further for all $i$ with  $|K_i| \ge 2$ the Lie algebra $\fg_i/ \oplus_{j=3}^{m_i} V_j$ is not a free Lie algebra. 
Then the conclusions of  Theorem~\ref{th:product_rigidity} hold provided  \eqref{eq:p_rigidity}
is replaced by the weaker conditions
\begin{equation} p \ge 2 \quad \text{and} \quad \frac1p  \le  \frac1{\bar m} + \frac{1}{\bar \nu}.
\end{equation} 
\end{corollary}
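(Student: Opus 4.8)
The plan is to run the argument of Theorem~\ref{th:product_rigidity} essentially verbatim, changing only the family of differential forms that are pulled back by $f$. Recall the mechanism of that proof: one first matches the indecomposable factors of $G$ and $G'$ by a purely algebraic (Krull--Schmidt type) argument, so that $n=n'$ and there is a permutation $\sigma$ with $G'_{\sigma(i)}\simeq G_i$, and the factors $G_i$ with $|K_i|=1$ are disposed of without any condition on $p$; the problem is thereby reduced to understanding, for a.e.\ value of the remaining coordinates, a $W^{1,p}_{\loc}$ map $\bar f$ from an open subset of $\bar G:=\prod_{i:\,|K_i|\ge 2}G_i$ to $\bar G':=\prod_{i:\,|K_i|\ge 2}G'_{\sigma(i)}$ whose Pansu differential is a.e.\ an isomorphism. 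For each such $i$ one then pulls back by $\bar f$ a carefully chosen closed left-invariant form $\al$ on $\bar G'$ that detects the factor $G'_{\sigma(i)}$, wedges with a closed left-invariant form $\be$ on $\bar G$ with $\wt(\al)+\wt(\be)=-\bar\nu+1$, applies Theorem~\ref{th:pansupullback_exterior_d}(1) to conclude that $\bar f_P^*\al\we\be$ is weakly closed, and extracts the product structure \eqref{eq:product_mapping} from the weak closedness of these pullbacks together with the a.e.\ invertibility of $D_P\bar f$. In Theorem~\ref{th:product_rigidity} the forms $\al$ have weight $-(\nu_i-1)$, forcing the hypothesis $p\ge-\wt(\al)=\nu_i-1$; the point of the corollary is to replace them by forms of weight $-2$.

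The new input comes from the non-freeness hypothesis. For $i$ with $|K_i|\ge 2$, writing $V_\ell^{(i)}$ for the $\ell$-th layer of $\fg_i$, the assumption that $\fg_i/\oplus_{j=3}^{m_i}V_j$ is not free says exactly that the bracket map $b\colon\La^2 V_1^{(i)}\ra V_2^{(i)}$ is not injective; dually, $b^*\colon(V_2^{(i)})^*\ra\La^2(V_1^{(i)})^*$ is not surjective. Since a left-invariant form of weight $-2$ has degree at most $2$, since every weight $-2$ two-form is automatically $d$-closed, and since $d$ on weight $-2$ one-forms is exactly $-b^*$, this non-surjectivity is precisely the statement that the weight $-2$, degree $2$ left-invariant cohomology of $G_i$ is nonzero: there is a nonzero closed left-invariant two-form $\eta_i\in\Om^{2,-2}(G_i)$ which is not the differential of a left-invariant one-form. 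Transporting $\eta_i$ through the projection $\bar G'\ra G'_{\sigma(i)}$ and the isomorphism $G'_{\sigma(i)}\simeq G_i$ produces a nonzero closed $\al_i\in\Om^{2,-2}(\bar G')$ adapted to the factor $G'_{\sigma(i)}$, and the $\al_i$ replace the weight $-(\nu_i-1)$ forms of Theorem~\ref{th:product_rigidity}. One also needs, for each such $i$, a closed left-invariant companion form $\be_i\in\Om^{\bar N-3,\,-\bar\nu+3}(\bar G)$, where $\bar N=\dim\bar G$; such a $\be_i$ may be built, e.g., by contracting the bi-invariant volume form of $\bar G$ with three first-layer left-invariant vector fields, and checking closedness is a routine computation using unimodularity and the bracket grading.

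With $\wt(\al_i)=-2$, the hypothesis $p\ge-\wt(\al_i)$ of Theorem~\ref{th:pansupullback_exterior_d} becomes $p\ge 2$. The remaining hypothesis $\frac1p\le\frac1m+\frac1\nu$ there refers to the step of the target and the homogeneous dimension of the domain of the map to which the theorem is applied; after the reduction to $\bar f\colon\bar G\ra\bar G'$ these are $\bar m$ and $\bar\nu$, so the condition becomes $\frac1p\le\frac1{\bar m}+\frac1{\bar\nu}$ --- exactly the hypotheses of the corollary. As in Theorem~\ref{th:product_rigidity} one must also check that the reduction is legitimate, i.e.\ that $\bar f$ is a $W^{1,p}_{\loc}$ map with a.e.\ invertible Pansu differential for a.e.\ value of the coordinates that have been fixed; this is a Fubini-type argument. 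Granting all this, Theorem~\ref{th:pansupullback_exterior_d}(1) yields that each $\bar f_P^*\al_i\we\be_i$ is weakly closed, and one feeds this into the remainder of the proof of Theorem~\ref{th:product_rigidity} unchanged.

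The main obstacle is to verify that the low-weight forms $\al_i$ still carry enough information to force the full product decomposition, whereas in Theorem~\ref{th:product_rigidity} the near-top-degree forms did so by having vanishing kernels that pin down the relevant coset foliations exactly. The structural fact to exploit is that each $G_i$ is Carnot, so $\fg_i$ is generated by its first layer $V_1^{(i)}$: the weakly closed Pansu pullbacks $\bar f_P^*\al_i\we\be_i$ should be used to control the preimage under $D_P\bar f$ of the first-layer data of $G'_{\sigma(i)}$, after which propagation along iterated brackets recovers the preimage of all of $\fg'_{\sigma(i)}$ and hence the $G_i$-coset foliation. Carrying this out --- and, relatedly, making precise the way the non-exactness of $\eta_i$ keeps the resulting constraint from being vacuous, and thereby separates the multiplicity classes --- is the substantive step; the construction of the companion forms $\be_i$ and the reduction to the subproduct $\bar G$ are routine.
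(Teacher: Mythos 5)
Your high-level strategy is the same as the paper's: after the Fubini-type reduction to the factors with $|K_i|\ge 2$, replace the codegree-one, weight $-(\nu_i-1)$ forms of Theorem~\ref{th:product_rigidity} by closed left-invariant two-forms of weight $-2$, whose existence is exactly what the non-freeness of $\fg_i/\oplus_{j\ge 3}V_j$ provides (your identification of this with the non-injectivity of the bracket $\La^2V_1\to V_2$ matches Proposition~\ref{pr:good_two_forms}), so that the Pullback Theorem applies under $p\ge 2$ and $\frac1p\le\frac1{\bar m}+\frac1{\bar\nu}$. But the step you defer at the end is precisely the substantive content of the proof, and it is not a routine adaptation. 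The paper handles it with the Rumin-type ideals: $I^2\fg_j$ and its annihilator $J^{N_j-2}\fg_j$ (Proposition~\ref{pr:IJ}). One chooses horizontal two-forms $\al_{j,k}$ representing a basis of $\La^2\fg_j/I^2\fg_j$ and the \emph{dual} basis $\ga_{j,k'}\in J^{N_j-2}\fg_j$ with $\al_{j,k}\we\ga_{j,k'}=\de_{kk'}\vol_{G_j}$. Two properties of $J^*$ are essential and have no counterpart in your proposal: (i) elements of $J^{N_j-2}\fg_j$ are automatically closed and homogeneous with coweight equal to codegree, so $\be=\ga_{j',k'}\we i_X\vol_{G_l}\we\La_{i'\ne j',l}\vol_{G_{i'}}$ is a legitimate closed test form of the right weight; (ii) they annihilate $I^2\fg_{j'}$, so pairing with $f_P^*\al_{i,1}=\sum_{j,k}(a_{j,k}\al_{j,k}+\be_k)$, $\be_k\in I^2\fg_j$, isolates exactly the coefficient $a_{j',k'}$. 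Combined with the fact that a graded automorphism preserves $I^2$ and hence induces an isomorphism on $\La^2/I^2$ (Proposition~\ref{pr:IJ}, part (4) in the numbering of the statement), one gets $a_j\ne0$ a.e.\ on $E_j=\{\si^{-1}(\cdot)(i)=j\}$ and $a_j=0$ off $E_j$; this is what makes the constraint non-vacuous and lets the distributional equations $Za_{j,k}=0$ force $\si$ to be constant.

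Your concrete suggestion for the companion forms is also incorrect: contracting the bi-invariant volume form of $\bar G$ with three first-layer left-invariant fields does not in general give a closed form, since $d(i_Yi_X\vol)=i_{[Y,X]}\vol$ (only a single contraction $i_X\vol$ is closed by unimodularity), and even a closed form built this way would not kill the $I^2$-component of the pullback, so the pairing would mix the detecting coefficients $a_{j,k}$ with uncontrolled terms. Likewise, your closing sketch of "controlling the preimage of first-layer data and propagating along brackets" is not how the argument concludes; no such propagation is needed once the $I^*/J^*$ duality is in place. So the proposal correctly sets up the framework and the exponent bookkeeping, but the key lemma-level input (Propositions~\ref{pr:IJ} and~\ref{pr:good_two_forms} and their use to produce closed, annihilating test forms and the non-vanishing of the quotient classes) is missing, and the one concrete substitute you offer for it would fail.
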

 For example,  the conclusion of Theorem~\ref{th:product_rigidity} holds for $p=2$ if  all $G_i$ are isomorphic to a higher Heisenberg group $\H_{d_i}$ with $d_i \ge 2$ or to a complex Heisenberg group $\H^\C_{d_j}$ with $d_j \ge 1$. On the other hand,  the assumptions are not satisfied if some $G_i$ is a copy of  the first Heisenberg group $\H_1$ which is a free Carnot group of step $2$. 

\medskip

We  use the  following result from \cite[Prop. 2.5]{Xie_Pacific2013}, see also \cite{KMX1}.
\begin{lemma}
\label{lem_isom_is_product}
Suppose $\fg=\oplus_{i\in I}\fg_i$, $\fg'=\oplus_{j\in I'}\fg_j'$ where every $\fg_i$, and $\fg'_j$ is nonabelian and does not admit a nontrivial decomposition as a direct sum of graded ideals.
Then any graded isomorphism $\phi:\fg\ra \fg'$ is a product of graded isomorphisms, i.e. there is a bijection $\si:I\ra I'$ and for every $i\in I$ there exists a graded isomorphism $\phi_i:\fg_i\ra \fg'_{\si(i)}$ such that for all $i\in I$ we have $\pi_{\si(i)}\circ\phi=\phi_i\circ\pi_i$.
\end{lemma}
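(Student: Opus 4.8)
My plan is to use $\phi$ to pull the second decomposition back onto $\fg$ and then show the two resulting decompositions of $\fg$ into graded ideals coincide as collections of subspaces. Concretely, set $\tilde\fg_j:=\phi^{-1}(\fg'_j)$ for $j\in I'$. Since $\phi$ is a graded Lie algebra isomorphism, $\fg=\bigoplus_{j\in I'}\tilde\fg_j$ is again a decomposition into graded ideals, and each $\tilde\fg_j$ is nonabelian and admits no nontrivial decomposition into graded ideals. If I can produce a bijection $\si\colon I\to I'$ with $\fg_i=\tilde\fg_{\si(i)}$ for every $i$, the lemma follows: put $\phi_i:=\phi|_{\fg_i}\colon\fg_i=\tilde\fg_{\si(i)}\to\fg'_{\si(i)}$, which is a graded isomorphism, and for $x=\sum_k\pi_k x$, using that $\phi$ carries $\tilde\fg_{\si(k)}$ onto $\fg'_{\si(k)}$, read off $\pi_{\si(i)}\circ\phi=\phi_i\circ\pi_i$. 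So the whole statement reduces to the uniqueness assertion: \emph{a (Carnot) graded Lie algebra admits at most one decomposition into nonabelian, directly indecomposable graded ideals.}

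To prove this I would pass to the graded centroid $R:=C_{\mathrm{gr}}(\fg)$, the associative algebra of degree-preserving endomorphisms of $\fg$ commuting with every $\ad x$. A decomposition $\fg=\bigoplus_i\fg_i$ into graded ideals is exactly a complete system of orthogonal idempotents $\{p_i\}\subset R$, namely the projections onto the $\fg_i$ (the identity $p_i[x,y]=[p_i x,p_i y]=[x,p_i y]$ shows that each $p_i$ lies in the centroid), and $\fg_i$ is directly indecomposable iff $p_i$ is a primitive idempotent of $R$. Hence it suffices to show that $R$ has a \emph{unique} complete system of primitive idempotents, and this holds whenever $R$ is a finite product of local rings.

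The heart of the argument — and the one place the nonabelian hypothesis is used — is to show $R=\prod_i C_{\mathrm{gr}}(\fg_i)$, i.e.\ that a graded centroid element has no off-diagonal block between two of the $\fg_i$. Here I would use that $\fg$ is Carnot: if $V_1$ is the first layer then $V_1=\bigoplus_i(\fg_i\cap V_1)$, and since brackets across distinct $\fg_i$ vanish, a dimension count shows each $\fg_i$ is itself a Carnot algebra generated by $W_i:=\fg_i\cap V_1$. Testing the centroid relation $T[u,v]=[Tu,v]$ with $u,v$ taken in $\fg_i$ and in $\fg_{i'}$ shows that the off-diagonal block $C\colon\fg_i\to\fg_{i'}$ kills $[\fg_i,\fg_i]$ and has image inside the center $\mathfrak{z}(\fg_{i'})$. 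As $\fg_i$ is Carnot, $[\fg_i,\fg_i]$ is the span of the layers $\ell\ge2$ of $\fg_i$, so $C$ is supported on $W_i$ and maps into $\mathfrak{z}(\fg_{i'})\cap V_1$; but for a nonabelian, directly indecomposable Carnot algebra one has $\mathfrak{z}(\fg_{i'})\cap V_1=0$ — a central first-layer vector would split off an abelian direct graded-ideal summand — so $C=0$. Therefore $R=\prod_i C_{\mathrm{gr}}(\fg_i)$; each factor is a finite-dimensional $\R$-algebra with no nontrivial idempotents (because $\fg_i$ is indecomposable), hence local; and a finite product of local rings has a unique complete system of primitive idempotents, namely $\{p_1,\dots,p_n\}$. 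Consequently every decomposition of $\fg$ into nonabelian indecomposable graded ideals equals $\{\fg_i\}$, which produces the bijection $\si$ and finishes the reduction.

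Everything outside this crux is routine bookkeeping: the reduction in the first paragraph, the dictionary between ideal decompositions and idempotents of $R$, and the local-ring argument. I expect the main obstacle to be precisely the identification of the right invariant (the graded centroid) together with the off-diagonal vanishing $C=0$, since that is where the Carnot structure and the nonabelian hypothesis are genuinely needed. It is worth recording that the hypotheses are sharp: without ``nonabelian'' a one-dimensional summand occurring with multiplicity makes $R$ contain a matrix algebra, and uniqueness fails — the two decompositions $\R^2=\R e_1\oplus\R e_2=\R(e_1+e_2)\oplus\R(e_1-e_2)$ are the prototype.
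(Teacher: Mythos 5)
There is no in-paper proof to compare against: the paper simply quotes this statement from \cite[Prop.~2.5]{Xie_Pacific2013} and uses it as a black box, so your argument has to be judged on its own terms, and it does check out. The reduction to uniqueness of the decomposition into nonabelian, graded-indecomposable ideals is correct, and the centroid dictionary works: the projections $p_i$ satisfy $p_i[x,y]=[x_i,y_i]=[p_ix,y]$ because cross-brackets between distinct ideal summands vanish, idempotents of the graded centroid cut out graded ideal decompositions, and primitivity matches indecomposability. The crux — vanishing of an off-diagonal block $C:\fg_i\ra\fg_{i'}$ — is also sound: taking $u,v\in\fg_i$ kills $[\fg_i,\fg_i]$, taking $u\in\fg_i$, $v\in\fg_{i'}$ puts the image in $\mathfrak{z}(\fg_{i'})$, each $\fg_i$ is indeed stratified with first layer $\fg_i\cap V_1$ (your generation/dimension argument is fine since $V_1=\oplus_i(\fg_i\cap V_1)$ generates and cross-brackets vanish), so $C$ is supported on $\fg_i\cap V_1$ and lands in $\mathfrak{z}(\fg_{i'})\cap V_1$, which is zero because a central first-layer vector $z$ splits off, $\fg_{i'}=\R z\oplus(W\oplus V_{\ge 2}(\fg_{i'}))$ with $W$ any complement of $\R z$ in the first layer, the second summand being an ideal since $[\fg_{i'},\fg_{i'}]\subset V_{\ge2}$, and nonabelianness makes this splitting nontrivial. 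The final ring-theoretic step (finite-dimensional algebra with only trivial idempotents is local, via lifting idempotents modulo the nilpotent radical; a finite product of local rings has only the obvious idempotents, hence a unique complete system of primitive ones) is standard and correct, and there is no circularity in computing $R=\prod_i C_{\mathrm{gr}}(\fg_i)$ from one decomposition and then applying it to the other. Two small points to make explicit in a write-up: the lemma as stated says ``graded'' isomorphism, but your crux genuinely uses the Carnot (stratified) hypothesis, which is legitimate here since the $\fg_i$, $\fg'_j$ are Lie algebras of Carnot groups; and your centroid convention ($T\circ\ad x=\ad x\circ T$) gives $T[u,v]=[Tu,v]$ only after invoking antisymmetry, a one-line remark worth including. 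Compared with citing Xie, your route buys a self-contained Krull--Schmidt-type proof whose only structural inputs are $[\fg_i,\fg_i]=\oplus_{\ell\ge2}V_\ell(\fg_i)$ and $\mathfrak{z}(\fg_i)\cap V_1=0$, and your closing example correctly locates why the nonabelian hypothesis cannot be dropped.
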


We also use the following Fubini-type property of Sobolev maps.

\begin{lemma}   \label{le:fubini_sobolev}
Let $G_1$, $G_2$, and $G'$ be Carnot groups,  let $U_i \subset G_i$ be open sets,  let $\imath_i$ be the injections
$G_i \to G_1 \times G_2$ and let $\pi_i: G_1 \times G_2 \to G_i$ be the corresponding projections.
Let $f$ be the representative of a map in $W^{1,p}(U_1 \times U_2; G')$, let $D_P f$ be a representative
of the 
 (approximate) Pansu differential.
Then the following assertions hold.
\ben
\item For a.e.\  $a \in U_1$ the map $f_a: U_2  \to G'$ defined by $f_a(y) = f(a,y)$ is in the Sobolev space $W^{1,p}(U_2;G')$
and the 
Pansu differential  $D_P f_a$ satisfies 
\begin{equation} \label{eq:fubini_chain}
 D_P f_a  = D_P f(a, \cdot)  \circ \imath_2    \qquad \text{$\mu_{G_2}$-a.e.\ in $U_2$.}
\end{equation} 
\item If, in addition, $U_2$ is connected and  $D_P f \circ \imath_2 = 0$ a.e.\ then there exists a function $\bar f: U_1 \to G'$
such that  $f = \bar f \circ \pi_1$ a.e.
\item   If $G'=G_1\times G_2$, $U_i$ is connected, and for a.e. $x\in U$ we have 
$$
\pi_1\circ D_Pf(x)\circ\imath_2=0\,,\quad\text{and} \quad \pi_2\circ D_Pf(x)\circ\imath_1=0\,,
$$ 
then there exist mappings $\bar f_i:U_i\ra G_i$ such that $f(x_1,x_2)=(\bar f_1(x_1),\bar f_2(x_2))$ for a.e. $(x_1,x_2)\in U_1\times U_2$.
\een
\end{lemma}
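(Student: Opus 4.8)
The plan is to establish (1) by a Fubini-type slicing argument for Sobolev maps, and then deduce (2) and (3) essentially formally. For (1), choose the orthonormal basis $X_1,\dots,X_K$ of the horizontal layer $V_1$ of $\fg=\fg_1\oplus\fg_2$ adapted to the product, so that $X_1,\dots,X_{K_1}$ span the first layer of $\fg_1$ (via $\imath_1$) and $X_{K_1+1},\dots,X_K$ span the first layer of $\fg_2$ (via $\imath_2$). Since the group law on $G_1\times G_2$ is componentwise, each $X_i$ with $i>K_1$ is tangent to the slices $\{a\}\times G_2$ and restricts there to the corresponding left-invariant horizontal field of $G_2$. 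Fix a countable dense set $D\subset G'$, and for each $z\in D$ apply the density of $C^\infty$ in $W^{1,p}$ (Proposition~\ref{pr:sobolev_basic}) together with the ordinary Fubini theorem to the scalar Sobolev function $x\mapsto d'(f(x),z)$: passing to a subsequence of smooth approximations, for a.e.\ $a\in U_1$ the slice $y\mapsto d'(f(a,y),z)$ lies in $W^{1,p}(U_2)$, with its $X_i$-derivatives ($i>K_1$) equal to the restrictions to $\{a\}\times U_2$ of the corresponding derivatives on $U_1\times U_2$. Intersecting the resulting full-measure sets of $a$ over $z\in D$, and using that by Remark~\ref{re:g_equal_norm_DPf} the function $g=|D_Pf|$ is a common upper gradient for $f$ (hence, by Fubini, $g(a,\cdot)\in L^p(U_2)$ and dominates the horizontal gradients of the slices $d'(f_a(\cdot),z)$ for a.e.\ $a$), we conclude $f_a\in W^{1,p}(U_2;G')$ for a.e.\ $a\in U_1$.

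For the chain rule \eqref{eq:fubini_chain}, run the same slicing on the finitely many scalar components of the abelianization $\pi_{G'}\circ f\in W^{1,p}(U_1\times U_2;V_1')$. By Remark~\ref{re:weak_derivative_abel}, for $i>K_1$ and a.e.\ $a$, the $X_i$-weak derivative of $\pi_{G'}\circ f_a$ equals both $D_Pf_a(\cdot)\,X_i$ and $\bigl(X_i(\pi_{G'}\circ f)\bigr)(a,\cdot)=D_Pf(a,\cdot)\,\imath_2(X_i)$. Thus $D_Pf_a$ and $D_Pf(a,\cdot)\circ\imath_2$ agree on the first layer of $\fg_2$; since both are graded group homomorphisms and the first layer generates $\fg_2$, they coincide, which is \eqref{eq:fubini_chain}.

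For (2), assume $U_2$ connected and $D_Pf\circ\imath_2=0$ a.e. By (1), for a.e.\ $a\in U_1$ we have $D_Pf_a=0$ a.e.\ on $U_2$, so by Remark~\ref{re:g_equal_norm_DPf} each scalar function $u_z:=d'(f_a(\cdot),z)$ has $|D_hu_z|=0$ a.e. The Poincar\'e inequality \eqref{eq:p_poincare_new} then forces $\osc_p(u_z,B(x,r))=0$ for every ball $B(x,r)\subset U_2$, so $u_z$ is a.e.\ constant on each ball and, by connectedness of $U_2$, a.e.\ constant on $U_2$. Running $z$ over $D$ and intersecting the associated full-measure sets yields a full-measure $E\subset U_2$ and constants $c_z$ with $d'(f_a(y),z)=c_z$ for all $y\in E$, $z\in D$; density of $D$ forces $f_a$ to be constant on $E$, say $f_a\equiv\bar f(a)$. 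Hence $f(a,y)=\bar f(a)$ for a.e.\ $(a,y)$, i.e.\ $f=\bar f\circ\pi_1$ a.e.; measurability of $\bar f$ follows since, by Fubini, it agrees a.e.\ with the measurable slice $f(\cdot,y_0)$ for a.e.\ fixed $y_0\in U_2$.

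For (3), first record that $D_P(\Phi\circ f)=D\Phi\circ D_Pf$ a.e.\ for any graded group homomorphism $\Phi:G'\to G''$ and $f\in W^{1,p}_{\loc}(U;G')$: since $\Phi$ is Lipschitz and commutes with dilations and left translations, the rescalings satisfy $(\Phi\circ f)_{x,r}=\Phi\circ f_{x,r}$, and one passes to the limit $r\to0$ in $L^{p^*}_{\loc}$ using Theorem~\ref{th:Lp*_pansu_differentiability_new} and the Lipschitz bound. Apply this with $\Phi=\pi_j:G_1\times G_2\to G_j$ to get $f^{(j)}:=\pi_j\circ f\in W^{1,p}_{\loc}(U;G_j)$ with $D_Pf^{(j)}=\pi_j\circ D_Pf$; the hypotheses become $D_Pf^{(1)}\circ\imath_2=0$ and $D_Pf^{(2)}\circ\imath_1=0$ a.e. Applying (2) to $f^{(1)}$ (with $U_2$ connected) and the version of (2) with the roles of $U_1,U_2$ interchanged to $f^{(2)}$ (with $U_1$ connected) gives $f^{(1)}=\bar f_1\circ\pi_1$ and $f^{(2)}=\bar f_2\circ\pi_2$ a.e., i.e.\ $f(x_1,x_2)=(\bar f_1(x_1),\bar f_2(x_2))$ a.e. I expect the main obstacle to be the slicing step in (1): one must check carefully that the restriction of a metric-space-valued Sobolev map to a.e.\ coordinate slice of the product group is again Sobolev with the expected horizontal derivatives and with $|D_Pf|$ surviving as a common upper gradient. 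This is precisely where the product structure of $G_1\times G_2$ enters — the $G_2$-horizontal fields act tangentially along the slices — and once it is in place, (2) and (3) are routine.
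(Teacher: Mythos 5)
Your proposal is correct and follows essentially the same route as the paper: slice the scalar compositions $d'(f(\cdot),z)$ and the abelianization $\pi_{G'}\circ f$ via Fubini, use Remark~\ref{re:weak_derivative_abel} together with the fact that a graded homomorphism is determined by its action on the first layer to get \eqref{eq:fubini_chain}, and then deduce (2) from the Poincar\'e inequality and connectedness and (3) by applying (2) to $\pi_j\circ f$. The only differences are cosmetic (you use $g=|D_Pf|$ instead of the $g$ from Definition~\ref{de:sobolev_carnot_new}, and you spell out details, such as the chain rule $D_P(\pi_j\circ f)=\pi_j\circ D_Pf$, that the paper leaves implicit).
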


\begin{proof} 

To prove (1), first note that for $G' = \R^N$ the Pansu differential (viewed as map on the Lie algebra) is given by $D_P f = D_h f \circ \Pi_1$
a.e., where $D_h f$ is the weak horizontal differential and $\Pi_1$ is the projection to the horizontal subspace. Thus, for $G' = \R^N$, assertion (1)
follows directly from the definition of the weak horizontal derivative and Fubini's theorem. 

For a general Carnot group $G'$ there exists a $p$-integrable function $g:U_1 \times U_2 \to \R$ such that
$|D_h d(z, f(\cdot))| \le g$ a.e. 
 Applying  the result for real-valued maps to the maps $(x,y) \mapsto d(z, f(x,y))$ for all $z$ in a countable
dense subset $D$ of $G'$ we easily conclude that  $|D_h d(z, f_a(\cdot)| \le g_a$ and hence  $f_a \in W^{1,p}(U_2, G')$ for a.e. $a \in U_1$. 

Let  $\Pi_{G'}$  denote the abelianization map. Then we can apply the result for $\R^N$-valued maps to $\Pi_{G'} \circ f$
and we get, for a.e $a \in U_1$, 
$$  \Pi_{G'} \circ D_P f_a = D_P (\Pi_{G'} \circ f_a) = \big(D_P (\Pi_{G'} \circ f)(a, \cdot) \big) \circ \imath_2 = \Pi_{G'} \circ (D_P f \circ \imath_2)(a, \cdot) 
$$   
$\mu_{G_2}$-a.e.\ in $U_2$.
Now if $\Phi, \Psi: G_2 \to G'$  are graded group homomorphism such that $\Pi_{G'} \circ \Phi = \Pi_{G'} \circ \Psi$ then $\Phi = \Psi$.
Hence  \eqref{eq:fubini_chain} holds.

 Assertion (2) is an immediate consequence of assertion (1).

Assertion (3) follows by applying assertion (2) to the compositions $\pi_i\circ f$. 

\end{proof}

\begin{proof}[Proof of  Theorem~\ref{th:product_rigidity}]
The result was established in \cite{KMX1} under the stronger condition $p > \nu(\Pi_i G_i) = \sum_i \nu_i$. We first briefly
recall the argument in \cite{KMX1} and indicate the strategy to obtain the improved condition
 \eqref{eq:p_rigidity}. 
 
 First, Lemma~\ref{lem_isom_is_product} implies that we may assume without loss of generality
 that 
 $n=n'$ and $\fg_i=\fg_i'$ for all $i\in I$, and so there
    is a measurable function $\si:U\ra \perm(\{1, \ldots, n\})$ such that $D_Pf(x)(\fg_i)=\fg_{\si(x)(i)}$ for a.e. $x\in U$. 
  Moreover $\sigma(i) = i$ if $|K_i| = 1$.  Hence it follows by applying  Lemma~\ref{le:fubini_sobolev}(3) repeatedly   that we may assume without loss of generality that  
  \begin{equation}  \label{eq:no_trivial_factors}
  |K_i| \ge 2 \quad \forall i=1, \ldots, n.
  \end{equation}
The main point is to show that there exists a constant permutation $\bar \sigma$ such that $\si = \bar \si$ a.e.
Then, using again Lemma~\ref{le:fubini_sobolev}, we see  $f$ has the desired product structure.

To prove that $\si$ is constant
a.e.\ we argue as follows. We choose closed left-invariant forms $\alpha$ 
such that the pullback $f_P^*\alpha$ can 'detect' the permutation $\sigma$. Then we use the Pullback Theorem to deduce that
for suitable left-invariant forms $\be$,   we have 
$d f_P^*\alpha \wedge \be = 0$ in the sense of distributions 
and conclude  that $\sigma$ is constant. In \cite{KMX1} we use for $\alpha$ the 
volume forms $\omega_i$ of the factors $G_i$. Then  Lemma~\ref{lem_isom_is_product}  implies that
$f_P^*\omega_i = \sum_{j \in K_i} a_j \omega_j$  
where the $a_j$ are integrable functions. Moreover, for each
$x$, exactly one of the functions $a_j$ is different from zero, namely $a_{\sigma^{-1}(i)}$ 
Let $j' \in \{1, \ldots, n\}$ and $l \ne j'$. For $X \in V_1 \cap \fg_{l}$  we apply the Pullback theorem with  the closed codegree $N_{j'} + 1$  
test  forms  
$\beta =i_X \omega_l \wedge   \bigwedge_{i' \notin\{j',l\}} \omega_{i'}$ and easily
conclude  that $a_{j'}(x_1, \ldots, x_n)$ depends only on $x_{j'}$. Then  one  easily deduces  that $\sigma = \bar \sigma$ almost everywhere.

The requirement  $p > \nu(\Pi_i G_i)$ in \cite{KMX1} comes from the hypotheses of the Pullback Theorem \cite[Theorem 4.5]{KMX1}. Using the improved Pullback Theorem, 
Theorem~\ref{th:pansupullback_exterior_d},  we see that this argument works
if $p \ge \max_i \nu_i$. 
The second condition in Theorem~\ref{th:pansupullback_exterior_d}, namely $\frac1p \le  \frac1m + \frac1\nu$ is then automatically
satisfied since 
\begin{equation} \label{eq:bound_m}
m \le \max_i \nu_i - 1.
\end{equation}
To get the improved condition 
 \eqref{eq:p_rigidity} we will apply the pullback theorem not to the volume forms of each factor, but to a codegree one form on each factor,
 with weight $-\nu_i +1$.

To set the notation,  we assign to  a form  $ \alpha \in \Omega^*(G_i)$ the form $\pi_i^* \alpha  \in \Omega^*(G)$ where  $\pi_i: G \to G_i$ is the projection map.
Note that $\pi_i^* \alpha$ is closed if and only if $\alpha$ is closed. We  will usually write  $\alpha$ also for the form  $\pi_i^* \alpha$ if no confusion can occur. 
Similar we identify a vector field $X \in TG_i$ with a vectorfield in $TG$ through the push-forward by the canonical injection $G_i \to G$.

For $i\in \{1, \ldots, n\}$ let  $\vol_{G_i}$ denote the volume form in $G_i$ and  let $Y \in V_1(G_i) \setminus \{0\}$. 
By Cartan's formula and the biinvariance of $\vol_{G_i}$ the $N_i-1$ form
$i_Y \vol_{G_i}$ is closed.  Let  $\alpha_i = i_Y \vol_{G_i}$.  
Then the $N_i-1$ form  $\alpha_i$ is left-invariant,  closed and has weight $-\nu_i +1$.

In view of   \eqref{eq:no_trivial_factors} we have  $p \ge \nu_{i}-1$. 
For $j \in K_i$ let  $X_{j,k}$, $k=1, \ldots \dim V_1(G_{i})$  be  a  basis of $V_1(G_{j})$. 
Then $ i_{X_{j,k}}\vol_{G_j}$, $k=1, \ldots \dim V_1(G_{i})$ is a basis of the left-invariant  forms
on $G_{j}$ with degree $N_i-1$ and weight $-\nu_i +1$. 
 Since pullback by a graded isomorphism preserves degree and weight we have
$$f_P^*\alpha_{i} =  \sum_{j \in K_i} \sum_{k=1}^{\dim V_1 (G_{i})}  a_{j,k}   \,  i_{X_{j,k}} \vol_{G_j}$$
with $a_{j,k} \in L^1_{\loc}(U)$. 
Set 
$$ a_j :=(a_{j,1},   \ldots, a_{j,  \dim V_1(G_{i})})$$
and  
 $$ E_j = \{ x \in U : \sigma^{-1}(x) (i)  = j \}.$$   
Then $U \setminus \bigcup_{j  \in  K_i} E_j$ is a null set. Since $D_P f(x)$ is a graded automorphism for a.e.\ $x \in U$,  we have, 
for all $j \in K_i$, 
\begin{equation}   \label{eq:disjoint_pullback}
a_j \ne 0 \quad \text{a.e.\ in $E_j$}, \qquad a_j = 0 \quad \text{a.e.\ in $U \setminus E_j$}.
\end{equation}

We next show that
\begin{equation}  \label{eq:product_distributional_derivative}
Z a_{j'} = 0 \quad \text{in  distributions for all $ j' \in K_i, \,  Z \in \oplus_{l \ne j'} \fg_{l}$.}
\end{equation}
To prove \eqref{eq:product_distributional_derivative}, 
let $\theta_{j',k'}$ be basis of left-invariant one-forms which vanish on  
   $\oplus_{l = 2}^s V_l(G_{j'})$ 
which is dual to the basis $X_{j',k}$ of $V_{1}(\fg_{j'})$ 
, i.e. $\theta_{j',k'}(X_{j',k}) = \delta_{k k'}$.
Note that the forms $\theta_{j',k'}$ are closed.
For  $l  \in \{1, \ldots, n\}  \setminus \{j'\}$, and  $X \in V_1(G_{l})$ consider the closed form
$$ \beta =  \theta_{j',k'}   \wedge i_{X} \vol_{G_{l}} \wedge (\La_{i'\neq j',l} \vol_{G_{i'}}).  
$$
Then, for a.e.\ $x \in U$, 
$$ (D_Pf)^*(x) \alpha_i \wedge \beta =  \pm a_{j',k'}  \,  i_X \vol_G.$$

In view of   \eqref{eq:bound_m}
and the assumption    $p \ge \nu_{i} - 1$
(recall that we may assume  \eqref{eq:no_trivial_factors})
 we get from
the Pullback Theorem, 
Theorem~\ref{th:pansupullback_exterior_d},
$$
0 = \int_U f_P^*\alpha \wedge \beta \wedge d\varphi =\pm  \int_U a_{j',k'}   \, \,  X\varphi \,  \,   \vol_G
$$
for all $\varphi \in C_c^\infty(U)$. 
Since $V_1\cap \fg_{l}$ generates $\fg_{l}$ as a Lie algebra, we
see that \eqref{eq:product_distributional_derivative} holds.

It follows from \eqref{eq:product_distributional_derivative} that $a_j(x) = a_j(x_j)$. Thus  \eqref{eq:disjoint_pullback} implies that
  $\chi_{E_j}(x)  = \chi_{E_j}(x_j)$ for all $j \in K_i$.
  Since $\sum_{j \in K_i} \chi_{E_j} = 1$ a.e.\ there exists one $j_0$ such that 
  $\chi_{E_{j_0}} = 1$ almost everywhere. Thus $\sigma^{-1}(i) = j_0$ almost everywhere. 
  Summarizing, we have shown that for all $i$  the function $\sigma^{-1}(x) (i)$ is constant almost everywhere.
  Hence $\sigma$ is constant almost everywhere.
 \end{proof}

 \bigskip
 
 The proof of   Corollary~\ref{co:product_rigidity} is very similar to the proof of Theorem~\ref{th:product_rigidity}.
 The main change is that instead of the closed  codegree one forms $i_{Y} \vol_{G_i}$ of weight $-\nu_i + 1$ we pull back certain closed
 two-forms of weight $-2$ in $G_i$. To identify suitable two-forms we use the setting in \cite{kmx_rumin}.
 For a Carnot  algebra $\fg = \oplus_{i=1}^s V_i$,  let $\Lambda^{1}_v\fg$ denote the space of one-forms which vanish on
 the first layer $V_1$  
 and let
 $I^*\fg \subset \Lambda^*\fg$ be the differential ideal generated  by $\Lambda^{1}_v\fg$.
 Thus 
 \begin{equation}
 I^*\fg = \Span \{   \alpha \wedge \tau + \beta \wedge d\eta : \alpha, \beta \in \Lambda^*\fg, \tau, \eta \in \Lambda^{1}_v\fg      \}. 
 \end{equation}
 The set of $k$-forms in $I^*\fg$            is denoted by    $I^k\fg = I^*\fg \cap \Lambda^k\fg$.
 We define $J^*\fg$ to be the annihilator $\ann(I^*\fg)$ of $I^*\fg$, i.e.,
 \begin{equation}
 J^*\fg = \{ \alpha \in \Lambda^*\fg : \alpha \wedge \beta = 0 \quad  \forall \beta \in I^*\fg \}.
 \end{equation}

We will use the following facts about $I^*\fg$ and $J^*\fg$ which easily follow from  exterior algebra  and the
formula for $d\alpha$. For the convenience of the reader we  include a  proof after the proof of 
Corollary~\ref{co:product_rigidity}.

\begin{proposition}  \label{pr:IJ} Let $\fg = \oplus_{i=1}^s V_i$ be a Carnot algebra of dimension $N$, homogeneous dimension $\nu$
and step $s \ge 2$. Let $G$ be the corresponding Carnot group.
Then the following assertions hold.
\ben 
\item \label{it:jnmk} 
For all $0\leq k\leq N$ we have 
$$
J^{N-k}\fg=\ann(I^k\fg)\cap \La^{N-k}\fg
=\{\al\in \La^{N-k}\fg\mid \al\we\be=0\,,\; \forall\be\in I^k\fg\}.
$$
\item  \label{it:IJ1}  If $I^k\fg = \Lambda^k\fg$ then $J^k\fg = \{0\}$. If $I^k\fg \ne \Lambda^k\fg$ then the wedge product induces a nondegenerate pairing
$$ \Lambda^k\fg/ I^k\fg \times J^{N-k}\fg \overset{\wedge}{\longrightarrow} \Lambda^N \fg \simeq \R.$$
In particular,  $\dim \Lambda^k\fg/ I^k\fg = \dim J^{N-k}\fg$ 
and for each basis $\{ \tilde \alpha_i\} $  of $ \Lambda^k\fg/ I^k\fg$ there exists a dual basis $ \{ \gamma_j\}$
of $J^{N-k}\fg$ such that $\tilde \alpha_i \wedge \gamma_j = \delta_{ij} \vol_G$.
\item  \label{it:IJ2} $J^*\fg$ is a differential ideal, i.e. $\alpha \in J^k\fg \Longrightarrow d\alpha \in J^{k+1}\fg$.
\item \label{it:IJ3} If $\fg'$ is another Carnot algebra and $\Phi: \fg \to \fg'$ is a graded isomorphism then $\Phi^*(I^k\fg') = I^k\fg$
and $\Phi^*$ induces an isomorphism from $\Lambda^k \fg'/ I^k\fg'$ to $\Lambda^k\fg/ I^k\fg$. 
\item \label{it:IJ4} If $\gamma  \in J^{k}\fg \setminus \{0\}$ then $\gamma$ is homogeneous  with coweight equal to its codegree, i.e. $\wt(\al)=N-k-\nu$. 
\item  \label{it:IJ5} If $\gamma \in J^k\fg$ then $d\gamma = 0$.
\een
\end{proposition}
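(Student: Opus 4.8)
The plan is to prove the six items in the order (1), (2), (4), (3), (5), (6), since each uses the earlier ones. Fix a graded basis $X_1,\dots,X_N$ of $\fg$ adapted to $\fg=\oplus_i V_i$, with dual basis $\theta_1,\dots,\theta_N$; then $\La^{1}_v\fg$ is spanned by those $\theta_a$ of weight $\le -2$, a complement $V_1^*$ is spanned by those of weight $-1$, and I write $I_0^k\fg$ for the span of the basis wedges $\theta_{a_1}\we\cdots\we\theta_{a_k}$ containing a vertical factor, so that $I_0^k\fg=\oplus_{w<-k}\La^{k,w}\fg$ and $I_0^k\fg\subset I^k\fg$. For (1), I would use that the wedge pairing $\La^{N-k}\fg\times\La^k\fg\to\La^N\fg\cong\R$ is perfect: given $\gamma\in\La^{N-k}\fg$ annihilating $I^k\fg$ and any $\beta\in I^j\fg$, the product $\gamma\we\beta$ vanishes for degree reasons if $j>k$, and if $j\le k$ and $\gamma\we\beta\ne0$ one picks $\xi\in\La^{k-j}\fg$ with $\gamma\we\beta\we\xi\ne0$ while $\beta\we\xi\in I^k\fg$ (ideal property), a contradiction; hence $\gamma\in J^{N-k}\fg$, and the reverse inclusion is trivial. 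For (2), by (1) the wedge descends to a pairing $\La^k\fg/I^k\fg\times J^{N-k}\fg\to\R$, nondegenerate on the right since $\La^{N-k}\fg\times\La^k\fg\to\La^N\fg$ is perfect, and on the left since $\ann_{\La^k\fg}(J^{N-k}\fg)=\ann(\ann(I^k\fg))=I^k\fg$ by (1) and perfectness; when $I^k\fg=\La^k\fg$ both sides vanish, and the dimension count and the dual bases are then routine linear algebra.

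For (4), a graded isomorphism $\Phi$ restricts to an isomorphism $V_1\to V_1'$, hence $\Phi^*$ maps $\La^{1}_v\fg'$ isomorphically onto $\La^{1}_v\fg$; being an algebra isomorphism that commutes with $d$ (the exterior derivative of left-invariant forms is algebraic in the bracket and $\Phi$ is a Lie algebra map), $\Phi^*$ carries the differential ideal generated by $\La^{1}_v\fg'$ onto the one generated by $\La^{1}_v\fg$, so $\Phi^*(I^k\fg')=I^k\fg$ and the induced map on quotients is an isomorphism. For (3), $J^*\fg=\ann(I^*\fg)$ is an ideal because $I^*\fg$ is one, and if $\gamma\in J^k\fg$ and $\beta\in I^*\fg$ then $d\beta\in I^*\fg$, so differentiating $0=\gamma\we\beta$ gives $0=d\gamma\we\beta+(-1)^k\gamma\we d\beta=d\gamma\we\beta$; hence $d\gamma\in J^{k+1}\fg$.

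For (5), I would first observe that applying (4) with $\Phi=\delta_r$ shows $J^*\fg$, hence each $J^k\fg$, is invariant under the dilation action and therefore a direct sum of weight spaces. Second, any basis $k$-form of weight $<-k$ contains a vertical factor, so $\oplus_{w<-k}\La^{k,w}\fg\subset I_0^k\fg\subset I^k\fg$, and consequently $\La^k\fg/I^k\fg$ is a quotient of $\La^{k,-k}\fg$, i.e. concentrated in the single weight $-k$. Now take $0\ne\gamma\in J^{N-k}\fg$ with a nonzero homogeneous component $\gamma_w$; by the nondegeneracy in (2) there is $\beta\in\La^k\fg$ with $\gamma_w\we\beta\ne0$, and since the weight components of $\beta$ other than the weight-$(-k)$ one lie in $I^k\fg$ and are killed by $\gamma_w$, the product is computed from the weight-$(-k)$ part of $\beta$ and so has weight $w-k$; being a nonzero multiple of $\vol$ of weight $-\nu$, this forces $w=k-\nu$. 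Hence $J^{N-k}\fg$ sits in that single weight, which after re-indexing $N-k\mapsto k$ reads $\wt(\gamma)=N-k-\nu$ for $0\ne\gamma\in J^k\fg$, i.e. coweight equals codegree.

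For (6): if $0\ne\gamma\in J^k\fg$ then $\wt(\gamma)=N-k-\nu$ by (5), and since $d$ preserves weights, $d\gamma$---were it nonzero---would lie in $J^{k+1}\fg$ by (3) with weight $N-k-\nu$, whereas (5) demands weight $N-(k+1)-\nu$; so $d\gamma=0$. I expect (5) to be the only genuine step: it requires combining the dilation-invariance of $J$ with the fact that the obstruction space $\La^k\fg/I^k\fg$ lives in the extremal weight $-k$, after which the perfect pairing of (2) pins down the weight of $J$ and (6) follows formally. Everything else is exterior-algebra bookkeeping together with Poincar\'e duality for $\La^*\fg$.
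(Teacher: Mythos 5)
Your proof is correct, and on items (1), (2), (3), (4) and (6) it is essentially the paper's argument: (1) by degree counting plus the ideal property and perfectness of the wedge pairing $\La^{k-j}\fg\times\La^{N-(k-j)}\fg\to\La^N\fg$; (2) by standard duality (your double-annihilator identity is equivalent to the paper's induced pairing $(E/W)\times W^\perp$); (3) by the graded Leibniz rule together with $dI^*\fg\subset I^*\fg$; (4) from $\Phi^*\La^{1}_v\fg'=\La^{1}_v\fg$ and naturality of $d$; and (6) by the same weight bookkeeping. The genuine divergence is item (5). The paper proves homogeneity by identifying $J^k\fg$ explicitly: with $\tau$ the wedge of all vertical basis covectors, every element of $J^k\fg$ has the form $\al\we\tau$ with $\al$ built from horizontal covectors (and $J^k\fg=\{0\}$ for $k<N-\dim V_1$), which makes the weight $-\nu+(N-k)$ visible at a glance. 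You instead combine dilation invariance of $J^*\fg$ (item (4) applied to $\Phi=\de_r$, passed to annihilators), the observation that $\oplus_{w<-k}\La^{k,w}\fg\subset I^k\fg$ because any basis $k$-form of weight $<-k$ contains a vertical factor, and the pairing into $\La^N\fg$ of weight $-\nu$, which pins every homogeneous component of an element of $J^{N-k}\fg$ to weight $k-\nu$. This buys a proof that avoids the paper's ``it is easy to see'' structural description of $J^k\fg$; the price is that you get slightly less information (no explicit form $\al\we\tau$, which is not needed for the proposition). Two small points worth making explicit: in (6) you assert that $d$ preserves weights --- the paper derives this from the Chevalley--Eilenberg formula, whereas in your setup it follows in one line from the fact, already used in your treatment of (4), that $d$ commutes with $\de_r^*$; and you silently read the first clause of (2) as ``$I^k\fg=\La^k\fg$ implies $J^{N-k}\fg=\{0\}$'', which is indeed what your (and the paper's) argument delivers --- the printed index $J^k\fg$ there is an index slip, as one sees already in $\H_1$, where $I^2\fg=\La^2\fg$ but $J^2\fg\neq\{0\}$.
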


 The main new ingredient in the proof of Corollary~\ref{co:product_rigidity} is the following simple observation.
 For a Carnot algebra $\fg = \oplus_{i=1}^s V_i$ with $s \ge 3$ let $\pi_{1,2}$ denote the projection
 $\fg \to V_1 \oplus V_2$.  We define $ \tilde \fg := \fg/ \oplus_{j=3}^s V_j$ as the algebra $V_1 \oplus V_2$ with
 bracket $[X,Y]_\sim = \pi_{1,2}[X,Y]$. Then $\tilde \fg$ is a Carnot algebra. If $s=2$, we set $\tilde \fg = \fg$.
 
 \begin{proposition} \label{pr:good_two_forms}
 If $\fg/ \oplus_{j=3}^s V_j$ is not a free Carnot algebra then 
 $I^2(\fg) \ne \Lambda^2(\fg)$. Moreover
 \begin{equation}  \label{eq:I2_horizontal}
 \Lambda^2\fg/ I^2\fg \simeq  (\Lambda^1_h \fg \wedge \Lambda^1_h \fg )/ (I^2\fg \cap (\Lambda^1_h \fg \wedge \Lambda^1_h \fg ))
 \end{equation}
 where $\Lambda^1_h\fg$ denotes the space of horizontal one-forms, i.e. one-forms which vanish on $\oplus_{i=2}^s V_i$. 
\end{proposition}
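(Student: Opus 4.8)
The plan is to compute the degree-two part $I^2\fg$ explicitly with respect to the splitting $\fg^*=\Lambda^1_h\fg\oplus\Lambda^1_v\fg$, where $\Lambda^1_h\fg=V_1^*$ and $\Lambda^1_v\fg=\oplus_{i\ge 2}V_i^*$, and then to read off both assertions by elementary linear algebra.

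First I would record that $\Lambda^2\fg=(\Lambda^1_h\fg\wedge\Lambda^1_h\fg)\oplus(\Lambda^1_v\fg\wedge\Lambda^1\fg)$, where $\Lambda^1_v\fg\wedge\Lambda^1\fg$ denotes the span of the $\tau\wedge\theta$ with $\tau\in\Lambda^1_v\fg$, $\theta\in\Lambda^1\fg$; the second summand lies in $I^2\fg$ by the very definition of $I^*\fg$. Since $I^*\fg$ is the differential ideal generated by $\Lambda^1_v\fg$, extracting degree two from the defining formula gives $I^2\fg=(\Lambda^1_v\fg\wedge\Lambda^1\fg)+d(\Lambda^1_v\fg)$. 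Next I would analyze $d$ on $\Lambda^1_v\fg$ via the Chevalley--Eilenberg formula $d\xi(X\wedge Y)=-\xi([X,Y])$ together with the grading $[V_a,V_b]\subseteq V_{a+b}$: for $\xi\in V_i^*$ with $i\ge 3$ every bivector component of $d\xi$ lies in some $V_a^*\wedge V_b^*$ with $a+b=i$, hence $\max(a,b)\ge 2$, so $d\xi\in\Lambda^1_v\fg\wedge\Lambda^1\fg$; for $\xi\in V_2^*$ only the $(1,1)$-component survives, so $d\xi\in\Lambda^1_h\fg\wedge\Lambda^1_h\fg$. Since $\Lambda^1_h\fg\wedge\Lambda^1_h\fg$ meets $\Lambda^1_v\fg\wedge\Lambda^1\fg$ trivially, this yields
\[ I^2\fg=(\Lambda^1_v\fg\wedge\Lambda^1\fg)\oplus d(V_2^*),\qquad d(V_2^*)\subseteq\Lambda^1_h\fg\wedge\Lambda^1_h\fg. \]

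With this in hand both claims are immediate. Writing $A=\Lambda^1_h\fg\wedge\Lambda^1_h\fg$ and $B=\Lambda^1_v\fg\wedge\Lambda^1\fg$, so that $\Lambda^2\fg=A\oplus B$ with $B\subseteq I^2\fg$ and $d(V_2^*)\subseteq A$, I get $A\cap I^2\fg=d(V_2^*)$, hence both $\Lambda^2\fg/I^2\fg\cong A/d(V_2^*)$ and $A/(A\cap I^2\fg)=A/d(V_2^*)$; comparing these gives exactly \eqref{eq:I2_horizontal} (with no hypothesis on freeness). For the non-triviality assertion, $I^2\fg=\Lambda^2\fg$ holds iff $d(V_2^*)=A=\Lambda^2 V_1^*$, i.e. iff $d\colon V_2^*\to\Lambda^2 V_1^*$ is onto. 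This map is, up to sign, the transpose of the antisymmetrized bracket $\mathrm{b}\colon\Lambda^2 V_1\to V_2$, $X\wedge Y\mapsto[X,Y]$, which is surjective because $V_2=[V_1,V_1]$; thus $d$ is injective, and it is surjective iff $\mathrm b$ is injective, i.e. iff $\mathrm b$ is an isomorphism. Since the bracket of $\tilde\fg=\fg/\oplus_{j\ge3}V_j$ on its first layer is precisely $\mathrm b$ and $\tilde\fg$ has step $\le 2$, this is exactly the condition that $\tilde\fg$ be a free Carnot algebra; so if $\tilde\fg$ is not free, then $d(V_2^*)\subsetneq\Lambda^2 V_1^*$ and $I^2\fg\subsetneq\Lambda^2\fg$.

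The whole argument is routine exterior algebra; the one step needing care is the last — identifying non-surjectivity of $d\colon V_2^*\to\Lambda^2 V_1^*$ with non-freeness of the step-two truncation $\tilde\fg$. Concretely, I would make sure that "$\mathrm b$ an isomorphism" really is the correct algebraic characterization of $\tilde\fg$ being free (equivalently, that $V_1$ maps to a free generating set of a step-two nilpotent Lie algebra), and I would note that the degenerate possibility $V_2=0$ does not occur here, since in the intended application $\fg$, and hence $\tilde\fg$, is nonabelian, so $\tilde\fg$ has step exactly two.
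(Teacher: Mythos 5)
Your proof is correct. For \eqref{eq:I2_horizontal} you and the paper argue identically: split $\Lambda^1\fg=\Lambda^1_h\fg\oplus\Lambda^1_v\fg$, decompose $\Lambda^2\fg$ accordingly, and observe that every summand containing a vertical factor lies in $I^2\fg$. For the nontriviality claim the two arguments are dual formulations of the same fact, but the execution differs: the paper uses non-freeness to produce a nontrivial relation $\sum_{i<j}a_{ij}[X_i,X_j]_\sim=0$, picks a horizontal $2$-form $\gamma$ with $\gamma\bigl(\sum_{i<j}a_{ij}X_i\wedge X_j\bigr)\ne 0$, and checks by direct evaluation that every degree-two element $\alpha\wedge\tau+d\eta$ of $I^*\fg$ annihilates that bivector, so $\gamma\notin I^2\fg$; you instead compute $I^2\fg=(\Lambda^1_v\fg\wedge\Lambda^1\fg)\oplus d(V_2^*)$ exactly, deduce $I^2\fg\cap(\Lambda^1_h\fg\wedge\Lambda^1_h\fg)=d(V_2^*)$, and use that $d|_{V_2^*}$ is (minus) the transpose of the antisymmetrized bracket $\Lambda^2V_1\to V_2$, so $I^2\fg=\Lambda^2\fg$ exactly when that bracket is injective, i.e.\ when the step-two truncation is free. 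Your route buys a little more than the paper's: the explicit description of $I^2\fg$ in degree two and the full equivalence rather than the single implication needed, at the cost of a slightly longer setup; the paper's evaluation argument is the same duality read on the other side (a kernel bivector of the bracket is precisely a functional detecting non-surjectivity of $d|_{V_2^*}$). The point you flagged as needing care is fine: for a Carnot algebra of step at most two generated by $V_1$ (and $V_2=[V_1,V_1]\neq 0$ here since the step is at least $2$), freeness is equivalent to bijectivity of the antisymmetrized bracket, which is exactly the form in which the paper invokes non-freeness, namely the existence of a nontrivial linear relation among the $[X_i,X_j]_\sim$.
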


\begin{proof} Since $\Lambda^1\fg = \Lambda^1_h \fg \oplus \Lambda^1_v\fg$ we have
$$ \Lambda^2 \fg = (\Lambda^1_h \fg \wedge \Lambda^1_h \fg)  \oplus (\Lambda^1_v \fg \wedge \Lambda^1_h \fg)  \oplus 
(\Lambda^1_v \fg \wedge \Lambda^1_v \fg ).$$
Since the second and third summand on the right hand side are contained in $I^2\fg$ we get  \eqref{eq:I2_horizontal}.

Now assume that $\fg/ \oplus_{j=3}^s V_j = V_1 \oplus V_2$ is not a free Carnot algebra. Let $\{X_i\}_{1 \le i \le \dim V_1}$
be a basis of $V_1$.  Then there exist  coefficients
$\{a_{i,j}\}_{1 \le i < j \le \dim V_1}$ which do not all vanish such that 
$$ \sum_{1 \le i < j \le \dim V_1} a_{i,j} [X_i, X_j]_\sim = 0.$$
Since the $a_{i,j}$ do not all vanish there exists $\gamma \in \Lambda^1_h \wedge \Lambda^1_h$ such that
$$  \gamma(\sum_{i < j} a_{i,j} X_i \wedge X_j) \ne 0.$$
We claim that $\gamma \notin I^2\fg$. Otherwise there exist $\alpha \in \Lambda^1$ and $\tau, \eta \in \Lambda^1_v$ such that
\begin{align*}
0 \ne  \,( \alpha \wedge \tau + d\eta)\left(\sum_{i < j} a_{i,j} X_i \wedge X_j\right) 
=  - \eta\left( \sum_{i < j} a_{i,j} [X_i, X_j]_\sim \right) = 0.
\end{align*}
This contradiction concludes the proof.
\end{proof}

\bigskip
\begin{proof}[Proof of Corollary~\ref{co:product_rigidity}]
  Again we may assume without loss of   generality that $|K_i| \ge 2$ for all $i$, that $n=n'$ and that $\fg'_i = \fg_i$.
  Then $G'$ is a step $\bar m$ group.

  By Propositions~\ref{pr:IJ} and~\ref{pr:good_two_forms} there exist horizontal two-forms  $\alpha_{j,k} \in \Lambda_h^1 \wedge \Lambda_h^1$ 
  such that $\alpha_{j,k} + I^2\fg_j$ is a basis  of $\Lambda^2\fg_j/ I^2\fg_j$ and there exist  dual bases
  $\gamma_{j,k'}$ of $J^{N_j-2}\fg_j$ such that
  \begin{equation}   \alpha_{j,k} \wedge \gamma_{j,k'} = \delta_{kk'} \vol_{G_j}.
  \end{equation}
  Note also that forms in  $ \Lambda_h^1 \wedge \Lambda_h^1$ are closed since forms in $\Lambda_h^1$ are closed.
 
 Now we can proceed as in the proof of Theorem~\ref{th:product_rigidity}.
 Let $i \in \{1, \ldots, n\}$. Then, for a.e.\ $x \in U$ we have  
 $(D_P f)^*(x) \alpha_{i,1} \in \Lambda^1_h(\fg_{\sigma^{-1}(i)}) \wedge \Lambda^1_h(\fg_{\sigma^{-1}(i)})$.
 Thus
$$
f_P^*\alpha_{i,1}
 = \sum_{j \in K_i}  \sum_k  (a_{j,k}   \alpha_{j,k} + \beta_k) \,  $$
with $a_{j,k} \in L^1_{\loc}(U)$ and  $\beta_k \in L^1_{\loc}(U; I^2\fg_j)$. 
Set
$$ a_j :=(a_{j,1},   \ldots, a_{j,  \dim( \Lambda^2(\fg_j)/ I^2\fg_j)})$$
and
 $$ E_j = \{ x \in U : (\sigma(x))^{-1} (i)  = j \}.$$
Then $U \setminus \bigcup_{j  \in  K_i} E_j$ is a null set. 
By Proposition~\ref{pr:IJ}~\eqref{it:IJ3} we have, for a.e. $x \in U$,  $(D_P f)^*(x) \alpha_{i,1} \notin I^2\fg_{\sigma^{-1}(i)}.$    
Thus,
for all $j \in K_i$, 
\begin{equation}   \label{eq:disjoint_pullback_bis}
a_j \ne 0 \quad \text{a.e.\ in $E_j$}, \qquad a_j = 0 \quad \text{a.e.\ in $U \setminus E_j$}.
\end{equation}

We next show that  
\begin{equation}  \label{eq:product_distributional_derivative_bis}
Z a_j = 0 \quad \text{in  distributions for all  $j\in K_i,  \, Z \in \oplus_{j' \ne j} \fg_{j'}$.}
\end{equation}

To prove \eqref{eq:product_distributional_derivative_bis}, let 
 $j' \in K_i$, $l  \in \{1, \ldots, n\}  \setminus \{j'\}$,  $X \in V_1(G_{l})$, and  consider the closed form
$$ \beta =   \gamma_{j',k'} \wedge i_{X} \vol_{G_{l}}      \wedge \La_{i'\neq j',l} \vol_{G_{i'}}.
$$
Then $\beta$ is a closed form of degree $N-3$ and weight $-\nu + 3$, where $N$ and $\nu$ are the topological
and homogeneous dimension of $G$, respectively.
Moreover
$$ \alpha_{j,k} \wedge \beta =  \pm \delta_{jj'}  \, \delta_{kk'}  \,  i_X  \vol_G.$$
Finally, 
$\alpha_{i,1}$ is a closed left-invariant two-form of weight $-2$.   Thus the conditions on $p$ in Corollary~\ref{co:product_rigidity}
allow us to apply the 
 Pullback Theorem, 
Theorem~\ref{th:pansupullback_exterior_d},  and we get
$$
0 = \int_U f_P^*\alpha_{i,1} \wedge \beta \wedge d\varphi =\pm  \int_U a_{j',k'}   \, \,  X\varphi\,  \,   \vol_{G}.
$$
for all $\varphi \in C_c^\infty(U)$, all $j' \in K_i$, and all  $k'$. 
As in the proof  Theorem~\ref{th:product_rigidity}  we conclude that $a_{j,k}(x_1, \ldots, x_n)$ depends only on $x_j$
and that $\sigma$ is constant almost everywhere.
\end{proof}

\medskip

Looking back, we see that the arguments in the proofs of Theorem~\ref{th:product_rigidity} and 
Corollary~\ref{co:product_rigidity} are exactly analogous. The only difference is that we use different
forms in the ideals $I^*$ and $J^*$ as forms to be pulled back and as test forms.
 Indeed, note that $I^1\fg_i = \Lambda^1_v\fg_i$  
and thus $\Lambda^1\fg_i/I^1\fg_i \simeq \Lambda^1_h\fg_i$ and $J^1\fg_i = \{ i_X \vol_{G_i} : X \in \Lambda^1_h\fg_i\}$.
Thus in the proof of  Theorem~\ref{th:product_rigidity} we pull back a form in $J^1\fg_i$ and use test forms of the type 
  $\Lambda^1\fg_{j'}/I^1\fg_{j'}    \wedge  i_X \vol_{G_l} \wedge  \La_{i' \ne \{j',l\}} \vol_{G_{i'}}$, 
  while for the proof 
  of Corollary~\ref{co:product_rigidity} we pull back forms in $\Lambda^2\fg_i/I^2\fg_i$ and use test forms of the type
  $J^2\fg_{j'}  \wedge i_X \vol_{G_l} \wedge  \La_{i' \ne \{j',l\}} \vol_{G_{i'}}$.

\bigskip

We finally provide a proof of Proposition~\ref{pr:IJ} for the convenience of the reader.

\begin{proof}[Proof of Proposition~\ref{pr:IJ}]

\eqref{it:jnmk} 
Since $I^k\fg\subset I^*\fg$ we have $\ann(I^k\fg)\cap \La^{N-k}\fg\supseteq \ann(I^*\fg)\cap \La^{N-k}\fg$.  To establish the opposite inclusion, choose $\al\in \ann(I^k\fg)\cap \La^{N-k}\fg$, and $\be\in I^j\fg$ for some $0\leq j\leq N$.  
If $k<j\leq N$ we have $\al\we\be=0$ since $\deg\al+\deg\be>N$.   If $0\leq j\leq k$ and  $\ga\in \La^{k-j}\fg$ we have $
\be\we\ga\in I^k\fg$ since $I^*\fg$ is an ideal, so 
\begin{equation}
\label{eqn_al_be_ga}
(\al\we\be)\we\ga=\al\we(\be\we\ga)=0
\end{equation}
because $\al\in\ann(I^k\fg)$ by assumption.   The pairing  $\La^{N-(k-j)}\fg\times\La^{k-j}\fg\ra \La^N\fg$ is nondegenerate and $\ga\in \La^{k-j}\fg$ was arbitrary, so \eqref{eqn_al_be_ga} implies that $\al\we\be=0$.  Since $\be \in \La^j\fg$ was arbitrary, we have $\al\in \ann(I^j\fg)$.  Since $j$ was arbitrary we get $\al\in \cap_{0\leq j\leq N}\ann(I^j\fg)=\ann(I^*\fg)$.

\eqref{it:IJ1}  
Since the wedge product $\La^k\fg\times\La^{N-k}\fg\ra \La^N\fg\simeq \R$ is a nondegenerate pairing,  \eqref{it:IJ1} follows from \eqref{it:jnmk} and the fact that for any nondegenerate pairing $E\times E'\stackrel{b}{\ra}\R$ of finite dimensional vector spaces and any subspace $W\subset E$, there is a  nondegenerate pairing
$(E/W)\times W^\perp\ra \R$ induced by $b$, where $W^\perp:=\{e'\in E'\mid b(e,e')=0\,,\;\forall e\in W\}$.

\eqref{it:IJ2} This follows from the fact that $I$ is a differential ideal and the graded Leibniz rule.

\eqref{it:IJ3} This follows from the facts that $\Phi^*(\Lambda^1_v\fg') = \Lambda^1_v\fg$ and that $d$ commutes
with pullback by $\Phi$. 

\eqref{it:IJ4} Let $\{X_{i,j}\}$, $i = 1, \ldots, s$, $j = 1, \ldots, \dim V_i$  be a graded basis of $\fg$, i.e., $X_{i,j} \in V_i$.
Let $\theta_{i,j}$ be the dual basis of one-forms. Then  the forms $\theta_{i,j}$ are homogeneous with  $\wt(\theta_{i,j}) = -i$.
Moreover
\begin{align*} \Lambda^1_h\fg = & \,  \Span \{ \theta_{1,j} : 1 \le j \le \dim V_1\}, \\
 \Lambda^1_v\fg =  & \, \Span \{ \theta_{i,j} : i \ge 2, 1 \le j \le \dim V_i \}.
 \end{align*}
 Set $\tau = \La_{i \ge 2, 1 \le j \le \dim V_i} \th_{i,j}$.   
  It is easy to see that for $k \ge N - \dim V_1$
 every $\gamma \in J^k\fg$ is of the form
 $$ \gamma = \alpha \wedge \tau, \quad \text{with $\alpha \in \La^{k - (N- \dim V_1)} (\Lambda_h^1\fg)$,}$$
 and $J^k\fg = \{0\}$ if $k < N- \dim V_1$. Thus every non-zero element of $J^k\fg$ is a homogeneous form
 with weight $-\nu + (N-k)$.
 
 \eqref{it:IJ5} Let $\gamma \in J^k\fg$ and assume that $d\gamma \ne 0$. By properties \eqref{it:IJ2} and \eqref{it:IJ4} the form $\gamma$ is homogeneous and 
 \begin{equation} \label{eq:weights_dJk} \wt(d\gamma) = \wt(\gamma) - 1.
 \end{equation}
  On the other hand for $\gamma \in \Lambda^k\fg$ we have
 $$ d\gamma(X_0,  \ldots, X_k) =  \sum_{0 \le i < j \le k} \gamma([X_i, X_j], X_0, \ldots, \hat X_i, \ldots, \hat X_j, \ldots, X_k) $$   
where $\hat X_i$ denotes that $X_i$  is omitted (see, for example, \cite[Lemma 14.14]{Michor}).
 If $\gamma$ is homogeneous  and $d\gamma \ne 0$ it follows that $d\gamma$ is homogeneous and
 $\wt(d\gamma) = \wt(\gamma)$. This contradicts   \eqref{eq:weights_dJk}. Hence $d\gamma = 0$. 
 \end{proof}

\section{Complexified Carnot algebras}

In \cite{KMX1} it was shown that under suitable conditions nondegenerate Sobolev maps of a complexified Carnot group are
automatically  holomorphic or antiholomorphic. In this section show these results with improved conditions on the Sobolev exponent.

We first recall the setting in \cite{KMX1} to which we refer for further details.
Let $H$ be a Carnot group of topological dimension $N$ and homogeneous dimension $\nu$. Let $\fh$ be the  corresponding  Carnot algebra. 
Let $\fg$ denote  the complexified Carnot algebra, i.e. $\fg = \fh^\C$
 equipped with the grading $\fg=\oplus_jV_j^\C$.
  The corresponding Carnot group $G$ has topological dimension 
  $2N$ and homogeneous dimension $2\nu$. 
   We now denote by $J$ the almost complex structure on $G$ 
   coming from $\fg$;  it follows from the Baker-Campbell-Hausdorff formula that $J$ is integrable, i.e.\ $(G,J)$ is a complex manifold, and the group operations are holomorphic.  
   Also, complex conjugation $\fg\ra \fg$ is induced by a unique graded automorphism $G\ra G$, since $G$ is simply-connected.

\begin{theorem}  \label{th:holomorphic} Let $U \subset G$ be a connected open subset, let $p \ge \nu$, let $f \in W^{1,p}_{\loc}(U, G)$ and assume that 
(approximate) Pansu differential $D_P f (x)$  is either  a $J$-linear 
 isomorphism or a $J$-antilinear graded isomorphism for a.e.\ 
$x \in U$. Then $f$ is holomorphic or antiholomorphic (with respect to the complex structure $J$).
\end{theorem}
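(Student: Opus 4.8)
The plan is to mimic the product-rigidity argument: extract from the pointwise condition on $D_Pf$ a first-order (distributional) PDE for $f$ by pulling back a cleverly chosen left-invariant form, then use the Pullback Theorem (Theorem~\ref{th:pansupullback_exterior_d}) to conclude that the pullback is closed, which forces the ``holomorphic vs.\ antiholomorphic'' dichotomy to be locally constant. First I would set up the linear algebra. On the complexified algebra $\fg=\fh^\C$, the space of complex-valued left-invariant $1$-forms on the horizontal layer $V_1^\C$ splits as $\La^{1,0}_h\oplus\La^{0,1}_h$ under $J$, where $\La^{1,0}_h$ consists of the $\C$-linear functionals and $\La^{0,1}_h$ of the $\C$-antilinear ones. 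Pick a $\C$-basis $\th_1,\dots,\th_N$ of $\La^{1,0}_h$ and its conjugates $\bar\th_1,\dots,\bar\th_N$ spanning $\La^{0,1}_h$. The key observation is: a graded automorphism $\Phi:\fg\to\fg$ is $\C$-linear iff $\Phi^*\th_i\in\La^{1,0}_h$ for all $i$ (equivalently $\Phi^*\bar\th_i\in\La^{0,1}_h$), and $\C$-antilinear iff $\Phi^*\th_i\in\La^{0,1}_h$ for all $i$. So if I let $\om$ be a suitable left-invariant $N$-form built as a wedge of the $\th_i$'s (a ``holomorphic volume form'' of top horizontal degree, weight $-\nu$), then at points where $D_Pf(x)$ is $J$-linear, $f_P^*\om(x)$ is a nonzero multiple of $\om$, while at points where $D_Pf(x)$ is $J$-antilinear, $f_P^*\om(x)$ is a nonzero multiple of $\bar\om$, and these two subspaces of $\La^{N}\fg$ meet only in $0$. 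More precisely I expect to need, for each index $i$, the decomposition $f_P^*\th_i=\sum_j(a_{ij}\th_j+b_{ij}\bar\th_j)$ with $L^1_{\loc}$ coefficients, where for a.e.\ $x$ either all $b_{ij}(x)=0$ (the matrix $(a_{ij})$ being invertible) or all $a_{ij}(x)=0$.

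Next I would run the Pullback Theorem to show these coefficient functions are ``locally constant'' in the appropriate sense. The relevant forms $\th_i$ are horizontal $1$-forms, hence closed and left-invariant, with weight $w_{\th_i}=-1$, so $\wt(\om)=-N$. That is not minimal in general, so as in the proof of Theorem~\ref{th:product_rigidity} I would instead pull back forms of weight $-\nu+1$ — a closed codegree-one form $\al_i$ obtained as $i_{X}\vol$ style wedge involving one $\th_i$, or better, directly adapt the two-form/$J^*$-ideal machinery if needed. Actually, since here $G=G'$ and we want top-degree information, I expect the cleanest route is: pull back a closed left-invariant $(N{+}?)$-form $\al$ of coweight $1$ built from the $\th_i$'s and test against closed left-invariant forms $\be$ with $\wt(\al)+\wt(\be)=-2\nu+1$, so that Theorem~\ref{th:pansupullback_exterior_d}(1) gives $\int f_P^*\al\we\be\we d\varphi=0$ for all $\varphi\in C_c^\infty(U)$; choosing $\be$ to isolate each coefficient $a_{ij}$ or $b_{ij}$ shows each is distributionally annihilated by a spanning set of horizontal vector fields, hence (since $V_1$ generates $\fg$) by all left-invariant vector fields, hence constant on the connected set $U$. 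The hypothesis $p\ge\nu$ is exactly what is needed so that $p\ge-\wt(\al)$ and $\tfrac1p\le\tfrac1m+\tfrac1\nu$ (here $m$ = step of $H$, and $m\le\nu-1$ so the second condition is automatic) to invoke Theorem~\ref{th:pansupullback_exterior_d}.

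Then I would conclude as follows. By the dichotomy, the sets $E_{\mathrm{hol}}=\{x: D_Pf(x)\text{ is }J\text{-linear}\}$ and $E_{\mathrm{anti}}$ partition $U$ up to a null set, and on $E_{\mathrm{hol}}$ all the $b_{ij}$ vanish a.e.\ while $\det(a_{ij})\ne0$ a.e., and symmetrically on $E_{\mathrm{anti}}$. Having shown each $a_{ij},b_{ij}$ is a.e.\ constant on $U$, the partition $\{E_{\mathrm{hol}},E_{\mathrm{anti}}\}$ is (up to null sets) a partition of the connected open set $U$ into two measurable sets each of which is a.e.\ open — formally, $E_{\mathrm{hol}}$ agrees a.e.\ with the set where the constant matrix $(a_{ij})$ is invertible, which is either all of $U$ or null. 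Hence one of $E_{\mathrm{hol}}$, $E_{\mathrm{anti}}$ is co-null in $U$. In the first case $f_P^*\bar\th_i=0$ a.e.\ for all $i$, equivalently the weak horizontal derivative of $f$ is $J$-linear a.e.; combined with Remark~\ref{re:weak_derivative_abel} this says the abelianized map $\pi_G\circ f$ has $\C$-linear horizontal weak derivative a.e., and a standard argument (as in Appendix~\ref{se:W1p_differentiability}: a map whose abelianization is ``affine enough'' and whose horizontal derivative is $J$-linear is holomorphic, since the Cauchy–Riemann equations for $\pi_G\circ f$ hold weakly and elliptic regularity upgrades $f$ to smooth, then Pansu differentiability shows $D_Pf$ is everywhere $J$-linear, forcing $f$ holomorphic) gives that $f$ is holomorphic. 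The antiholomorphic case follows by composing with complex conjugation $G\to G$ and reducing to the first case.

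The main obstacle I anticipate is the last step — passing from ``$f_P^*\bar\th_i=0$ a.e.'' to ``$f$ is holomorphic'' — because a priori $f$ need not be continuous or classically Pansu differentiable when $p\le\nu$, so one cannot immediately invoke the Pansu-differential characterization of holomorphy pointwise. I expect one must argue at the level of the abelianization: $\pi_G\circ f\in W^{1,p}_{\loc}(U,\C^N)$ satisfies the (weak, first-order, constant-coefficient, elliptic) $\bar\partial$-system componentwise, so by Weyl's lemma / elliptic regularity it is smooth and holomorphic on $U$; feeding this back through the structure of $G$ as an iterated central extension (each layer $V_j^\C$ built by brackets, with holomorphic BCH) shows $f$ itself is smooth and its Pansu differential is everywhere the $J$-linear graded homomorphism determined by $d(\pi_G\circ f)$, whence $f$ is holomorphic. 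Keeping track of why the weak $\bar\partial$-equation for the abelianization actually propagates to all of $f$ (rather than just its projection) — i.e.\ that a contact-type map with holomorphic horizontal part is holomorphic — is the delicate point, and I would either cite the relevant lemma from \cite{KMX1} or reprove it using that $V_1^\C$ generates $\fg$ together with the holomorphy of the group operations.
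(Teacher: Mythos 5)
Your overall skeleton matches the paper's: use the Pullback Theorem to rule out oscillation between the $J$-linear and $J$-antilinear alternatives, then reduce to the $J$-linear case and exploit the abelianization. But there are two genuine problems. First, your weight bookkeeping in the dichotomy step is off. You propose pulling back a form ``of coweight $1$'' built from the $\th_i$'s; on $G=H^\C$ the homogeneous dimension is $2\nu$, so a coweight-one form has weight $-2\nu+1$ and Theorem~\ref{th:pansupullback_exterior_d} would then demand $p\ge 2\nu-1$, destroying the improvement you are after. The form actually used is the top-degree \emph{holomorphic} form, which has weight $-\nu$ (coweight $\nu$), and this is precisely why the hypothesis $p\ge\nu$ suffices (the condition $\tfrac1p\le\tfrac1m+\tfrac1{2\nu}$ being automatic since $m<\nu$). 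Your ``$p\ge -\wt(\al)$ is exactly $p\ge\nu$'' claim is only consistent with this choice, not with the coweight-one forms you describe, so as written the middle step does not close under the stated exponent.

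Second, and more seriously, the step you yourself flag as delicate --- passing from ``$\pi_G\circ f$ is holomorphic'' to ``$f$ is holomorphic'' for a map that is only $W^{1,p}_{\loc}$ with $p$ possibly below the homogeneous dimension $2\nu$ --- is left unresolved. Your suggestion to ``propagate holomorphy through the layers'' via BCH/central extensions is exactly the kind of statement that is only available (in \cite{KMX1}) for maps of sufficiently high regularity, so invoking it here is circular or at least unjustified; elliptic regularity upgrades only the abelianization, not $f$. The paper closes this gap with a specific bootstrap you are missing: since $D_Pf(x)$ is a graded homomorphism determined on the first layer, and $X(\pi_G\circ f)(x)=D_Pf(x)X$ a.e.\ (Remark~\ref{re:weak_derivative_abel}), smoothness of the holomorphic map $\pi_G\circ f$ makes $|D_Pf|$ locally bounded; by \eqref{eq:optimal_g} the function $g=|D_Pf|$ serves as the gradient bound in Definition~\ref{de:sobolev_carnot_new}, so $f\in W^{1,\infty}_{\loc}(U;G)$, i.e.\ $f$ is locally Lipschitz, and only then is the original result of \cite{KMX1} (valid for $p>2\nu$) applied to conclude that $f$ is holomorphic. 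Without this (or an equivalent) regularity upgrade, your argument does not reach the conclusion.
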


\begin{corollary}    \label{co:holomorphic} Let $U \subset G$ be open, let $p \ge \nu$, let $f \in W^{1,p}_{\loc}(U, G)$.
  Suppose that any graded isomorphism $\fg \to \fg$ is either $J$-linear or $J$-antilinear and that $D_P f(x)$ is an isomorphism
  for a.e.\ $x \in U$.  Then $f$ is holomorphic or antiholomorphic (with respect to the complex structure $J$).
\end{corollary}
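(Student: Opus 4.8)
The plan is to deduce Corollary~\ref{co:holomorphic} directly from Theorem~\ref{th:holomorphic} together with the classification of graded automorphisms of $\fg$. The observation driving the proof is that the hypotheses of Corollary~\ref{co:holomorphic} already force the Pansu differential to satisfy the pointwise hypothesis of Theorem~\ref{th:holomorphic}; the only subtlety is to rule out a measurable "mixing" of the holomorphic and antiholomorphic cases, and then to promote the global dichotomy from Theorem~\ref{th:holomorphic}.

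First I would recall that by Theorem~\ref{th:Lp*_pansu_differentiability_new} the map $f$ is Pansu differentiable a.e.\ (in the $L^{p^*}$ sense), so $D_Pf(x):\fg\to\fg$ is a graded homomorphism for a.e.\ $x\in U$, and by hypothesis it is an isomorphism for a.e.\ such $x$. By the standing assumption of the corollary, every graded isomorphism $\fg\to\fg$ is either $J$-linear or $J$-antilinear; applying this pointwise, for a.e.\ $x\in U$ the differential $D_Pf(x)$ is either a $J$-linear graded isomorphism or a $J$-antilinear graded isomorphism. This is precisely the pointwise hypothesis of Theorem~\ref{th:holomorphic}, so that theorem applies on any connected open subset of $U$ and yields that $f$ restricted to each connected component of $U$ is holomorphic or antiholomorphic. (If $U$ is connected this is already the full statement; in general one simply notes that the conclusion is stated componentwise, or one restricts attention to a connected component at the outset, as the corollary is naturally a local statement.)

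The one point that needs a word of care is that a priori the set where $D_Pf(x)$ is $J$-linear and the set where it is $J$-antilinear could both have positive measure, so one cannot immediately invoke Theorem~\ref{th:holomorphic} with a single choice of "holomorphic" versus "antiholomorphic". This is handled inside the proof of Theorem~\ref{th:holomorphic} itself — the dichotomy there is established by a connectedness argument after showing that each of the two conditions is (up to null sets) open — so nothing extra is required here; I would simply remark that Theorem~\ref{th:holomorphic} is formulated exactly to absorb this measurable ambiguity. Thus the entire content of the corollary is the elementary algebraic remark that "isomorphism'' plus "every graded automorphism is $J$-linear or $J$-antilinear'' implies "$J$-linear isomorphism or $J$-antilinear isomorphism a.e.''.

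I do not expect any real obstacle: the proof is a one-line reduction to Theorem~\ref{th:holomorphic}. The only thing to be careful about is not to claim a global holomorphic/antiholomorphic dichotomy on a disconnected $U$ — one gets it on each component — and to make explicit that the pointwise hypothesis of Theorem~\ref{th:holomorphic} is verified measurably, using that the property of a linear map being $J$-linear (resp.\ $J$-antilinear) is a closed condition and hence measurable, so that the decomposition of $U$ into the two sets is by measurable sets and Theorem~\ref{th:holomorphic} can be invoked verbatim.
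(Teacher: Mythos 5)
Your reduction is correct and is exactly the paper's argument: the paper proves the corollary in one line by observing, as you do, that the hypotheses force $D_Pf(x)$ to be a $J$-linear or $J$-antilinear graded isomorphism a.e., so Theorem~\ref{th:holomorphic} applies directly (the possible measurable mixing of the two cases being absorbed into that theorem's statement). Your additional remarks on measurability and on treating components of a disconnected $U$ separately are sensible but not needed beyond what the theorem already provides.
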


The condition that any graded isomorphism $\fg \to \fg$ is either $J$-linear or $J$-antilinear is in particular satisfied 
for the complexified Heisenberg algebras $\fh^\C_m$, see \cite[Section 6]{reimann_ricci}.

\begin{proof}[Proof of Theorem~\ref{th:holomorphic}] The result is proved in \cite{KMX1} under the stron\-ger condition
$p >  \text{homogeneous dimension of $G$} = 2 \nu$.
 The key step in the proof is to show that $D_P f$ cannot switch 
between a $J$-linear and a $J$-antilinear map. 
To show this we use the Pullback Theorem to prove  that the the pullback of
the top degree holomorphic form cannot oscillate between a holomorphic and an anti-holomorphic form.
Since the top degree holomorphic form has weight $-\nu$,  
 the improved version of the Pullback Theorem,  Theorem~\ref{th:pansupullback_exterior_d},
 gives this result already under the weaker condition $p \ge \nu$.
Note that $G$ is a group of step $m$ with $m < \nu$ so the condition $\frac1p \le \frac1m + \frac1{\bar \nu}$, where $\bar \nu = 2 \nu$ is the 
homogeneous dimension of $G = H^\C$,  is automatically satisfied.

Thus, under the assumption $p \ge \nu$, we still may asssume that $D_P f$ is $J$-linear a.e.\  (the case
that $D_P f$ is $J$-antilinear a.e.\  being analogous). Let $\pi_{G} : G \to G/[G,G]$ denote the abelianization map.
By Remark~\ref{re:weak_derivative_abel},  the map $\pi_{G} \circ f$ belongs to $W^{1,p}_{\loc}$ and 
for each horizontal vectorfield  $X$ the weak derivative satisfies  $X (\pi_{G} \circ f)(x) = D_P f(x) X$
for a.e.\ $x \in U$. Since $D_P f(x)$ is $J$-linear,  the horizontal anticonformal derivatives
$\bar Z (\pi_{G} \circ f)$ vanish. It follows easily that $\pi_{G} \circ f$ is holomorphic (see, e.g., \cite{KMX1}).
In particular $\pi_{G} \circ f$ is smooth and hence $|D_P f(x)| := \max \{ |D_P f(x) X|_{\fg} : |X|_{\fg} \le1, X \in V_1\}$
is locally bounded. By  \eqref{eq:optimal_g},  it follows that $f \in W^{1, \infty}_{\loc}(U;G)$. 
Thus the assertion follows from the result in  \cite{KMX1} for $p > 2 \nu$.

\end{proof}

\bigskip
\begin{proof}[Proof of Corollary~\ref{co:holomorphic}] This follows immediately from Theorem~\ref{th:holomorphic}.
\end{proof}

\appendix

\section{$L^{p*}$-Pansu-differentiability}
\label{se:W1p_differentiability}

Here we give another  proof of the  the following result by Vodopyanov.

\begin{theorem}[$L^{p*}$ Pansu differentiability a.e.,  \cite{vodopyanov_differentiability_2003}, Corollary 2]
\label{th:pansu_diff_Lpstar}
 Let $ U \subset G$ be open, let $1 \le p < \nu$ 
and define $p^*$ by $\frac1{p^*} = \frac1p - \frac1\nu$.
Let $f \in W^{1,p}(U;G')$. For $x \in U$ consider the rescaled maps
$$ f_{x,r} = \delta_{r^{-1}} \circ \ell_{f(x)^{-1}} \circ f \circ \ell_x \circ \delta_r.$$
Then, for a.e. $x \in U$,  there exists  a graded group homomorphism $\Phi: G \to G'$ such that
\begin{equation}  \label{eq:Lpstart_pansu_differentiability}
 f_{x,r} \to \Phi  \quad \text{in $L^{p^*}_{loc}(G;G')$ as $r \to 0$.} 
 \end{equation}
 \end{theorem}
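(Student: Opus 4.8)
The plan is to prove this by a blow-up argument, following the strategy indicated above. Since the statement is local, fix $1\le p<\nu$ and $f\in W^{1,p}(U;G')$, and let $g\in L^p(U)$ be a common upper bound for the horizontal derivatives of the functions $d'(z,f(\cdot))$, $z\in G'$, as in Definition~\ref{de:sobolev_carnot_new}. By Proposition~\ref{pr:composition_by_lip}, the abelianization $v:=\pi_{G'}\circ f$ is in $W^{1,p}(U;V'_1)$, where $V'_1\simeq G'/[G',G']$ is regarded as a vector space; write $D_h v$ for its weak horizontal derivative. First I would fix a \emph{good point} $x\in U$, by which I mean a Lebesgue point of $g^p$ and of $D_h v$ (and of finitely many auxiliary $L^1$ functions arising below); a.e.\ point is good. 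After composing with left translations we may assume $x=e$ and $f(e)=e$, so that $f_{x,r}(e)=e$ for all $r>0$. A direct calculation from the definition $f_{x,r}=\de_{r^{-1}}\circ \ell_{f(x)^{-1}}\circ f\circ\ell_x\circ\de_r$ shows that for every $R>0$ and all small $r$ the map $f_{x,r}$ lies in $W^{1,p}(B(e,R);G')$ with $g_{x,r}(y):=g(\de_r y)$ again serving as the $L^p$ bound of Definition~\ref{de:sobolev_carnot_new}, and
$$\|g_{x,r}\|_{L^p(B(e,R))}^p\;=\;r^{-\nu}\int_{B(e,rR)}g^p\;\xrightarrow[\,r\to0\,]{}\;g(x)^p\,\mu\big(B(e,R)\big);$$
in particular $\sup_{0<r\le 1}\|g_{x,r}\|_{L^p(B(e,R))}<\infty$ for every $R$.

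Next I would extract a blow-up limit and identify its abelianization. The uniform derivative bound together with $f_{x,r}(e)=e$ and the metric-space-valued compact Sobolev embedding (Appendix~\ref{se:app_compact}) makes the family $\{f_{x,r}\}_{0<r\le1}$ precompact in $L^q_\loc(G;G')$ for every $q<p^*$; choose $r_j\to0$ with $f_{x,r_j}\to F$ in $L^q_\loc$. The rescalings $\pi_{G'}\circ f_{x,r}$ are the Pansu rescalings of $v$ at $x$, since $\pi_{G'}$ is a graded homomorphism; applying the Poincar\'e--Sobolev inequality \eqref{eq:sobolev_poincare_new} to the scalar components of the linear-target map $v$ at the Lebesgue point $x$ of $D_h v$, and using $\tfrac1p-\tfrac1{p^*}=\tfrac1\nu$, one checks that these converge in $L^{p^*}_\loc$ to the affine (in fact linear) map $\ell\colon y\mapsto D_h v(x)\,\pi_{V_1}(\log y)$. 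Hence $\pi_{G'}\circ F=\ell$ is affine, and $F(e)=e$.

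The crux — and the step I expect to be the main obstacle — is to show that the blow-up limit $F$ is Lipschitz, with $\Lip(F)\le C\,g(x)$. The idea is that the rescaled bounds $g_{x,r}$ have $L^p$-averages on balls converging to the constant $g(x)$, so that in the limit $F$ should be ``$g(x)$-Lipschitz'' in the Carnot--Carath\'eodory metric; the technical difficulty is that this requires passing the (weak upper-gradient) Poincar\'e estimate through the $L^q_\loc$ limit, which I would do by bounding $\osc_{p^*}(f_{x,r},B(y,s))$ uniformly on \emph{all} sub-balls $B(y,s)\subset B(e,R)$ via \eqref{eq:sobolev_poincare_new}, letting $r\to0$, and then $s\to0$. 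Granting this, $F$ is a Lipschitz map with $F(e)=e$ whose abelianization is affine, so by the rigidity fact announced above — a Lipschitz map $G\to G'$ fixing $e$ with affine abelianization is a graded group homomorphism, whose proof I would also include in this appendix — the limit $F$ is a graded group homomorphism $G\to G'$.

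Finally I would promote subsequential convergence to full convergence. Since $\fg$ is generated by $V_1$, a graded group homomorphism $G\to G'$ is uniquely determined by its abelianization; as $\pi_{G'}\circ F=\ell$ does not depend on the subsequence, every subsequential $L^q_\loc$-limit of $\{f_{x,r}\}$ is the \emph{same} graded homomorphism $\Phi$, characterized by $\pi_{G'}\circ\Phi=\ell$. Precompactness then forces $f_{x,r}\to\Phi$ in $L^q_\loc(G;G')$ as $r\to0$, for every $q<p^*$, and a standard equi-integrability argument using the uniform Poincar\'e--Sobolev bound for $\{f_{x,r}\}$ and Proposition~\ref{pr:dominated_equiintegrable} upgrades this to convergence in $L^{p^*}_\loc(G;G')$, which is \eqref{eq:Lpstart_pansu_differentiability}.
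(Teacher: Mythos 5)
Your outline coincides with the paper's own proof (blow-up at a good point, precompactness from the Poincar\'e--Sobolev inequality via the metric-valued compact embedding, identification of the abelianization of the limit as a linear map, the rigidity lemma that a Lipschitz map fixing $e$ with affine abelianization is a graded homomorphism and is determined by its abelianization, then uniqueness of the limit plus equi-integrability to upgrade to $L^{p^*}_{\loc}$). The step you flag as the crux -- Lipschitz continuity of the blow-up limit -- is handled in the paper exactly along the lines you sketch: the rescaled bounds $g\circ\ell_x\circ\de_r$ converge in $L^p$ to the constant $g(x)$, so every $d'(z,F(\cdot))$ has horizontal weak derivative bounded by $g(x)$, and a Poincar\'e/telescoping argument on sub-balls (Proposition~\ref{pr:quantitative_lebesgue_point}) yields a $Cg(x)$-Lipschitz representative; so that part of your plan is sound.

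The genuine gap is where you invoke ``$f_{x,r}(e)=e$'' to get precompactness and the normalization $F(e)=e$. The compact embedding (Theorem~\ref{th:compact_sobolev_app}) needs, besides the uniform bound on the derivative-control functions, a uniform bound on $\|d'(f_{x,r}(\cdot),e)\|_{L^p(B(e,R))}$ over $0<r\le 1$; the pointwise identity $f_{x,r}(e)=e$ gives no such $L^p$ bound, and it also does not survive the passage to an $L^q_{\loc}$ limit, so by itself it does not yield $F(e)=e$ either. To close this you must include in your ``good points'' that $x$ is a $p$-Lebesgue point of $f$ itself (equivalently of $d'(z,f(\cdot))$ for $z$ in a countable dense set), not only of $g^p$ and $D_hv$. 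Then the Poincar\'e inequality \eqref{eq:p_poincare_new} combined with the Lebesgue-point property gives the quantitative estimate $\av_{B(e,\rho)}\,d'^{\,p}(f_{x,r}(y),e)\,d\mu(y)\le C_x\,\rho^p$ for all $\rho\le R$ and all small $r$ (Proposition~\ref{pr:quantitative_lebesgue_point}\,(1)); this is what supplies the uniform $L^p$ bound needed for compactness, and, after passing to the limit and using that the limit has a Lipschitz representative, it forces $F(e)=e$. With that amendment your argument matches the paper's proof; the only remaining cosmetic difference is that the paper applies the equi-integrable version of the compact embedding to get $L^{p^*}$ convergence directly, whereas you get $L^q_{\loc}$, $q<p^*$, and upgrade at the end, which also works.
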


 We write $$D_P f(x) = \Phi$$
 for the $L^{p*}$ Pansu derivative
 and we use the same notation to denote the corresponding graded  Lie algebra homomorphism given by $D\Phi(e)$.

 \bigskip
 \begin{remark}  \label{re:g_equal_norm_DPf_app}
  It follows easily from  Theorem~\ref{th:pansu_diff_Lpstar}  that for all $z \in G'$ 
 the functions $u_z:= d'(z, f(\cdot))$ satisfy 
 \begin{equation}  \label{eq:optimal_g_app}
 \text{$|D_h u_z|  \le |D_P f|$  a.e., where} 
  \end{equation}
 \begin{equation}
 |D_P f(x)|  = \max \{ |D_P f(x) X|_{V'_1} : \, X \in V_1, \, |X|_{V_1} \le 1 \}.
 \end{equation}
Here  $| \cdot |_{V_1}$ and $| \cdot |_{V'_1}$ denote the norms induced by the scalar product on the first layer of $\fg$ and $\fg'$,
respectively. Thus the condition   \eqref{eq:bound_D_h_metric}  
in Definition~\ref{de:sobolev_carnot_new} holds with $g = |D_P f|$. 
We provide a proof after the proof of Theorem~\ref{th:pansu_diff_Lpstar}.
 \end{remark}

\bigskip
\begin{remark}
\label{rem_step_2_of_proof}
It follows from Step 2 of the proof of Theorem~\ref{th:pansu_diff_Lpstar} that $f_{x,r}$ converges to $\Phi$ in the stronger sense that the horizontal distributional 
derivatives of the composition $\pi_{G'}\circ f_{x,r}$ converge in $L^p_{\loc}$ to a constant function, where $\pi_{G'}:G'\ra G'/[G',G']$ is the abelianization map; more precisely, for a.e. $x\in U$, 
and  a basis  $X_i$ of the space of left-invariant  horizontal vectorfields
\begin{equation}
X_i(\pi_{G'}\circ f_{x,r})\lra g_i(x) \quad \text{in $L^p_{loc}$} \quad \text{where $g_i =  X_i(\pi_{G'}\circ f)$}
\end{equation}
is the weak horizontal derivative of $\pi_{G'} \circ  f$  in direction $X_i$. It follows from  \eqref{eq:Lpstart_pansu_differentiability} that  $\pi_{G'}\circ f_{x,r} \to \pi_{G'} \circ \Phi$ in 
$L^{p}_{loc}$ and thus 
$$ g_i(x) = X_i (\pi_{G'} \circ \Phi) \quad \text{on $G$}  $$
Evaluating the right hand side at $e \in G$ and viewing $D_P f(x)$ as a map from $\fg$ to $\fg'$ given by $D\Phi(e)$
we get 
$$ g_i(x) =  D_P f(x) X_i.$$
Here we used the fact that $D_P f(x)$ maps the first layer of $\fg$ to the first layer of $\fg'$ and 
that $D \pi_{G'}(e)$ is the identity
map on the first layer of $\fg'$. 
It follows that for every  horizontal vectorfield $X$ on $G$ the
weak derivative of $ (\pi_{G'} \circ f)$ in direction $X$ satisfies
\begin{equation}  \label{weak_derivative_abel_pansu}
 X(\pi_{G'}\circ f)(x) =  D_P f(x) X \quad \text{for a.e.\  $x$.}
\end{equation}
\end{remark}

\bigskip\bigskip

We give the proof using the distributional definition of Sobolev spaces,  see Definitions~\ref{de:sobolev_scalar_new} 
and~\ref{de:sobolev_carnot_new}.
The  proof uses  only the Poincar\'e-Sobolev inequality (which easily implies compactness of the Sobolev
 embedding, see Appendix~\ref{se:app_compact}) and a characterization of group homomorphism by their abelianization 
 (see Lemma~\ref{le:lift_of_L_maps} below).  Thus 
  the proof applies verbatim if one instead uses the definition of Sobolev spaces by upper gradients, if one replaces the function 
  $g$ in Definition~\ref{de:sobolev_carnot_new} by an upper gradient;  see \cite[Theorem 9.1.15]{HKST} for the Poincar\'e-Sobolev inequality
  for Sobolev spaces defined by upper gradients. For the convenience of the reader a short discussion of Sobolev spaces defined
  by upper gradient is given in Appendix~\ref{se:sobolev}.

 The strategy of the proof is as follows. Denote by $\pi_{G'} : G' \to G'/[G', G'] \simeq V_1$ the abelianization map.
 By Proposition~\ref{pr:composition_by_lip} we have $\pi_{G'}   \circ f \in W^{1,p}(U;G'/[G', G'])$. 
 Let $x$ be a Lebesgue point of $f$,  the function  $g$ in Definition~\ref{de:sobolev_carnot_new} and of  the weak derivatives $g_i = X_i (\pi_{G'} \circ f)$. 
 
 \ben
 \item By the compact Sobolev embedding, Theorem \ref{th:compact_sobolev_app}~\eqref{it:compact_sobolev_app2},
   a subsequence $f_{x,r_j}$ converges to 
 a Lipschitz map
 $\hat f$ in $L^{p^*}_{loc}$ with $\hat f(e) = e$.
 \item The whole sequence $\pi_{G'} \circ f_{x,r}$ converges to a linear map $\hat u$, 
 i.e., the weak horizontal derivatives of $\hat u$ are constant.
 \item If $F: G \to G'$ is Lipschitz, $F(e) = e$  and $\pi_{G'} \circ  F$ is a linear map, then 
 $F$ is  a group homorphism and $F$ is uniquely determined by $\pi_{G'} \circ F$; see Lemma~\ref{le:lift_of_L_maps} below.
 \item Uniqueness of the limit implies that the full sequence $f_{x,r}$ converges in $L^{p*}_{loc}$. 
 \een

 We begin by recalling some properties  of  the abelianization map.  
 
 \begin{proposition}   \label{pr:abelianization}
 Let $G$ be a Carnot group,  equipped with the Carnot-Caratheodory distance,  with graded Lie algebra $\fg = \oplus_{i=1}^m \fg_i$.
  Then the abelianization  
     homomorphism $\pi=\pi_G : G \to G/[G,G]$ has the following properties.
  \ben 
  \item  \label{it:abelianization1}  The map $\pi$ is graded, i.e., $\pi(\delta_r g) = r \pi(g)$, and the restriction of $d\pi(g)$
  to the horizontal subspace of $T_g G$ is an isomorphism onto $G/ [G,G]$. Thus for every 
  $Y \in G/[G,G]$ there exists a unique horizontal vectorfield $Z$ on $G$ with $d \pi(g) Z(g) = Y$.
 Moreover $Z$ is left-invariant.
  \item   \label{it:abelianization2}  The map $\pi$ is $1$-Lipschitz if $G/[G,G]$ is equipped with the induced metric 
  $d_\pi(\pi(a), \pi(b)) := \min_{g, g' \in [G,G]} d_{CC, G}(ag,bg')$. Moreover, the induced metric
  comes from a  norm: $d_{\pi}(a',b') = |a'-b'|$.
  \een
  \end{proposition}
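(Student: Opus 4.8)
The plan is to push everything down to the Lie algebra and use that a homomorphism between connected, simply connected nilpotent groups is determined by its differential at $e$. First I would record the algebraic input: in a Carnot algebra $[\fg,\fg]=\oplus_{j\ge 2}V_j$, since $[V_i,V_j]\subset\fg_{i+j}$ by \eqref{eq:bracket_grading} (giving $\subseteq$) and $V_{j+1}=[V_1,V_j]$ (giving $\supseteq$). Hence the grading projection $\pi_1$ identifies $\fg/[\fg,\fg]$ with $V_1$, and under this identification $d\pi_G(e)=\pi_1$ while $G/[G,G]=(V_1,+)$ with $\exp$ the identity. Gradedness of $\pi_G$ then follows because $d\pi_G(e)\circ\delta_r=r\,\pi_1=r\,d\pi_G(e)$, and $\pi_G$, $\delta_r$ on $G$, and multiplication by $r$ on $G/[G,G]$ are the unique homomorphisms of simply connected groups integrating these maps. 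For the horizontal statement I would differentiate $\pi_G\circ\ell_a=\ell_{\pi_G(a)}\circ\pi_G$ at $e$: since $d\ell_a(e)$ restricts to an isometry $V_1=\mathcal H_e\to\mathcal H_a$ (the metric on $G$ being the left translate of the scalar product on $\fg$) and $d\ell_v=\mathrm{id}$ on $(V_1,+)$, one gets $d\pi_G(a)|_{\mathcal H_a}=d\ell_{\pi_G(a)}(e)\circ\pi_1\circ(d\ell_a(e)|_{V_1})^{-1}$, a fibrewise isometry $\mathcal H_a\to V_1$, in particular an isomorphism. The desired left-invariant field $Z$ is then the left translate of $Y\in V_1$; it is horizontal, satisfies $d\pi_G(g)Z(g)=Y$ for all $g$, and uniqueness at each point follows from injectivity of $d\pi_G(g)|_{\mathcal H_g}$.

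For part (2), writing $|\cdot|$ for the Euclidean norm on $V_1\cong G/[G,G]$, I would prove $d_\pi(\pi_G(a),\pi_G(b))=|\pi_G(a)-\pi_G(b)|$; this makes $d_\pi$ the corresponding norm metric, and taking $g=g'=e$ yields the $1$-Lipschitz bound $|\pi_G(a)-\pi_G(b)|\le d_{CC}(a,b)$ as a byproduct. The inequality ``$\le$'' holds because for a horizontal curve $\gamma$ from $ag$ to $bg'$ with $g,g'\in[G,G]$ one has $\pi_G(ag)=\pi_G(a)$, $\pi_G(bg')=\pi_G(b)$, and the fibrewise isometry from part (1) gives $\mathrm{length}(\pi_G\circ\gamma)=\mathrm{length}(\gamma)$, whence $|\pi_G(a)-\pi_G(b)|\le\mathrm{length}(\gamma)$; infimizing over $\gamma,g,g'$ finishes it. For ``$\ge$'' I would take $Y=\pi_G(b)-\pi_G(a)\in V_1$ and the curve $\gamma(t)=a\exp(tY)$, $t\in[0,1]$: since $Y\in V_1$ this is horizontal of constant speed $|Y|$ (its velocity is the left translate of the left-invariant field of value $Y$), and $\pi_G(\gamma(1))=\pi_G(a)+Y=\pi_G(b)$, so $\gamma(1)=bg'$ for some $g'\in[G,G]$; hence $d_\pi(\pi_G(a),\pi_G(b))\le d_{CC}(a,bg')\le|Y|$.

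No step presents a genuine obstacle; the only point requiring care is keeping the three identifications $\fg/[\fg,\fg]\cong V_1\cong G/[G,G]$ consistent, and checking that $d\pi_G(g)$ restricts to an actual \emph{isometry} (not merely an isomorphism) on the horizontal bundle, since that is precisely what makes the lengths of a horizontal curve and of its projection coincide.
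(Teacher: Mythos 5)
Your proposal is correct. For part (1) it is essentially the paper's own argument: identify $\ker d\pi(e)$ with $[\fg,\fg]=\oplus_{j\geq 2}V_j$, so that $d\pi(e)$ restricted to $V_1$ is an isomorphism and $\pi$ is graded, and then transport this to an arbitrary base point via the homomorphism identity $\pi\circ\ell_a=\ell_{\pi(a)}\circ\pi$; your extra remarks (uniqueness of the integrating homomorphism for gradedness, left translation of $Y\in V_1$ as the unique horizontal lift) only make explicit what the paper compresses into one line. For part (2), however, you take a genuinely different route: the paper never identifies the quotient metric, arguing softly that $1$-Lipschitzness is immediate from the definition of $d_\pi$ (take $g=g'=e$), that left-invariance of $d_{CC}$ forces $d_\pi(a',b')=N(a'-b')$ on the abelian quotient, and that gradedness makes $N$ $1$-homogeneous, hence a norm. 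You instead compute $d_\pi$ exactly: projection of horizontal curves preserves length because $d\pi(g)$ is a fibrewise isometry on the horizontal bundle (giving the lower bound), and the horizontal segment $t\mapsto a\exp\bigl(t(\pi(b)-\pi(a))\bigr)$ realizes it (giving the upper bound), so $d_\pi$ is the Euclidean norm metric on $V_1$ coming from the scalar product, with $1$-Lipschitzness as a byproduct. Your version is longer and needs the (standard, and implicitly used elsewhere in the paper) fact that rectifiable horizontal curves are a.e.\ differentiable after arclength parametrization so that lengths can be compared under $\pi$, but it buys a sharper conclusion — the explicit identification of the norm and the fact that $\pi$ is a submetry — whereas the paper's argument is shorter and avoids any length comparison. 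Both establish everything the proposition claims.
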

  
  \begin{proof}  \eqref{it:abelianization1}  \quad By definition, the commutator subgroup $[G,G]$ is  the closure of  the set of finite products
 of commutators $[xy] = x^{-1} y^{-1} xy$. In particular $[G,G]$  is a normal subgroup. If $G$ is a Carnot group 
 then $[G,G]$ is a Lie subgroup with $T_e [G,G] = [\fg, \fg] = \oplus_{j=2}^m \fg_j$. 
 Thus $\ker d\pi(e) = \oplus_{j=2}^m \fg_j$ and hence 
  the restriction of $d\pi(e)$
  to the horizontal subspace $\fg_1$  is an isomorphism onto $G/ [G,G]$ (note that we can identifty
  $T_g G/ [G,G]$ and $G/ [G,G]$ since $G/ [G,G]$ is abelian). In particular $d\pi(e)$  is graded and hence $\pi$ is graded.
 Since $\pi$ is a group homomorphism we have  $d\pi(g) \circ (\ell_g)_* = d\pi(e)$ and the remaining
 assertions easily follow from this identity. 
 
 \eqref{it:abelianization2}   \quad The fact that $\varphi$ is $1$-Lipschitz follows directly from the definition of the 
 induced metric. Since the Carnot-Caratheodory distance is left-invariant, so is the induced metric $d_\pi$.
 Thus $d_\pi(a,b) = N(a-b)$ since $G/ [G,G]$ is abelian. The fact that $\pi$ is graded implies the $N$ is $1$-homogeneous
 and hence a norm. 
  \end{proof}

 To show that the limit map $\hat f$ is Lipschitz and to ensure that the normalisation $f_{x,r}(e) =e$ implies
 that $\hat f(e) = e$ we use the following facts, which are a  simple consequence of the Poincar\'e inequality.
 
 \begin{proposition} \label{pr:quantitative_lebesgue_point} Let $X'$ be a complete, separable metric space.
Let $U \subset G$ be open, let $f \in W^{1,p}(U; X')$ and let  
$g: U \to \R$ be the common bound for the weak derivatives of  the maps $x \mapsto d(z, f(x))$  in Definition~\ref{de:sobolev_carnot_new}.  
\ben
\item 
Suppose that $x$ is $p$-Lebesgue point of $f$ and $g$.
Then there exists a constant $C_x < \infty$ such that, for all $0 < s < \dist(x, U)$, 
\begin{equation}  \label{eq:pointwise_oscillation}
  \av_{B(x,s)} (d')^p(f(y), f(x))  d \mu \le C_x s^p
\end{equation}
\item  \label{it: quantitative_lebesgue_point2}  If $g$ is bounded by $L$ a.e\  on $B(x,R)$ then $f$ has a representative
which is $CL$-Lipschitz in $B(x,R/5)$. 
\een
\end{proposition}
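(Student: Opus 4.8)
The plan for assertion (1) is the standard telescoping argument based on the $L^p$ Poincar\'e inequality \eqref{eq:p_poincare_new}. Write $B_j := B(x,2^{-j}s)$ for $j \ge 0$, and for each $j$ choose $a_j \in X'$ nearly realizing the infimum defining $\osc_p(f,B_j)$, so that $\int_{B_j}(d')^p(f(y),a_j)\,d\mu \le C^p (2^{-j}s)^p \int_{B_j} g^p\,d\mu$ up to a summable error. Since $x$ is a $p$-Lebesgue point of $g$, the averages $\av_{B(x,r)} g^p\,d\mu$ converge to $g(x)^p$ as $r \to 0$, and are in particular bounded by some $M_x < \infty$ uniformly over $0 < r < \dist(x,U)$ (for $r$ bounded away from $0$ one uses $g \in L^p(U)$ together with the exact scaling $\mu(B(x,r)) = r^\nu \mu(B(e,1))$). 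Using this scaling and the triangle inequality for $d'$ on the smaller ball $B_{j+1} \subset B_j$, one obtains $d'(a_j,a_{j+1}) \le C(2^{-j}s)(\av_{B_j} g^p\,d\mu)^{1/p} \le C' M_x^{1/p}\,2^{-j}s$. Hence $(a_j)$ is Cauchy in the complete space $X'$, converging to some $a_\infty$ with $d'(a_0,a_\infty) \le C'' M_x^{1/p}\,s$.

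Next I would identify $a_\infty$ with $f(x)$. One has $\av_{B_j}(d')^p(f(y),a_j)\,d\mu \le C^p(2^{-j}s)^p \av_{B_j} g^p\,d\mu \to 0$, and, since $x$ is a $p$-Lebesgue point of $f$, also $\av_{B_j}(d')^p(f(y),f(x))\,d\mu \to 0$; averaging the inequality $(d')^p(a_j,f(x)) \le 2^{p-1}\bigl((d')^p(a_j,f(y)) + (d')^p(f(y),f(x))\bigr)$ over $y \in B_j$ then gives $a_j \to f(x)$, so $a_\infty = f(x)$ and $d'(a_0,f(x)) \le C'' M_x^{1/p}\,s$. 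Combining this with $\av_{B(x,s)}(d')^p(f(y),a_0)\,d\mu = \osc_p(f,B(x,s))^p/\mu(B(x,s)) \le C^p s^p M_x$ and one more application of the triangle inequality yields \eqref{eq:pointwise_oscillation} with $C_x$ depending only on $M_x$ and $p$.

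For assertion (2), I would run the same telescoping in the situation $g \le L$ a.e.\ on $B(x,R)$, where now every $\av_{B_j} g^p$ is bounded by $L^p$, so that for any Lebesgue point $y$ and any $t < \dist(y, G\setminus U)$ the (nearly optimal) center $a_{B(y,t)}$ of $B(y,t)$ satisfies $d'(f(y), a_{B(y,t)}) \le C L t$. Given two Lebesgue points $y,z \in B(x,R/5)$, set $r := d_{CC}(y,z)$; then $B(y,2r)$ and $B(z,2r)$ are both contained in $B(x,R) \subset U$ (since $2r < 4R/5$), and both contain $B(y,r/2)$. Applying the above with $t = 2r$ at $y$ and at $z$ gives $d'(f(y),a_{B(y,2r)}) \le 2CLr$ and $d'(f(z),a_{B(z,2r)}) \le 2CLr$, while comparing the two centers on the common sub-ball $B(y,r/2)$ — via $(d')^p(a_{B(y,2r)},a_{B(z,2r)}) \le 2^{p-1}\bigl((d')^p(a_{B(y,2r)},f(w)) + (d')^p(f(w),a_{B(z,2r)})\bigr)$ averaged over $w \in B(y,r/2)$, together with the ball-volume scaling and $g \le L$ — gives $d'(a_{B(y,2r)},a_{B(z,2r)}) \le C'Lr$. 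The triangle inequality then yields $d'(f(y),f(z)) \le C''L\,d_{CC}(y,z)$ for all Lebesgue points $y,z \in B(x,R/5)$; since a.e.\ point is a Lebesgue point (Lemma~\ref{le:lebesgue_point_new}) and $X'$ is complete, this uniformly continuous map extends to a $C''L$-Lipschitz representative of $f$ on $B(x,R/5)$.

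The only real work is the bookkeeping of the constants arising when passing between $B_j$ and $B_{j+1}$ (respectively between $B(y,2r),B(z,2r)$ and $B(y,r/2)$); this is routine, being the metric-space-valued analogue of the classical Morrey/Campanato estimate. One should also check at the outset that $B(x,R) \subset U$ is implicit in the hypotheses of (2) so that the relevant balls lie in $U$ and \eqref{eq:p_poincare_new} applies; if not, one simply shrinks $R$, which only affects the domain on which the Lipschitz representative is produced.
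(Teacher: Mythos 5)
Your argument is correct and is essentially the paper's own proof: part (1) is the same Campanato-type dyadic telescoping using the $L^p$ Poincar\'e inequality together with a uniform bound $M_x$ on the averages of $g^p$ (the local maximal function), with the limit of the ball centers identified as $f(x)$ via the Lebesgue point property; part (2) is the same chaining of two Lebesgue points $y,z$ through overlapping balls of radius comparable to $d_{CC}(y,z)$ inside $B(x,R)$, followed by density of Lebesgue points to produce the $CL$-Lipschitz representative. The only cosmetic difference is that in (2) you compare the nearly optimal Poincar\'e centers on a common sub-ball, whereas the paper applies the oscillation bound of (1) around $f(y)$ and $f(z)$ directly and integrates the triangle inequality over the intersection; the two are interchangeable.
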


\begin{proof}
(1) Consider the local maximal function of $g^p$ at $x$:
$$ M_x = M_{x,R} := \sup_{0 < r < R}  
 \av_{B(x,r)}  g^p(y) \, \mu(dy).$$
Since $x$ is a $p$-Lebesgue point  of $g$ we have $M_x < \infty$.
By the Poincar\'e inequality,  for every $r \in (0,R]$ there exists a  point $f_r \in X'$ such that
\begin{equation}  \label{eq:poincare_campanato}
  \av_{B(x,r)}  (d')^p(f(y), f_r)  \, \mu(dy) \le C r^p M_x.
  \end{equation}
By the triangle inequality we have $d'(f_r, f_{r/2}) \le d'(f(y), f_r) + d'(f(y), f_{r/2})$.
Integrating this estimate over $B(x,r/2)$ and using that $\mu(B(x,r)) = 2^\nu \mu(B(x,r/2))$ we deduce that
$$ d'(f_r, f_{r/2}) \le C r M_x^{1/p}.$$
Thus  the sequence $i \mapsto f_{2^{-i} r}$ is a Cauchy sequence and hence has a limit $f_0$
and  $d'(f_r, f_0) \le C r M_x^{1/p}$. Now $x$ is a $p$-Lebesgue point of $f$ and it follows from
\eqref{eq:poincare_campanato} that $f_0 = f(x)$. This gives the desired estimate with $C_x = C M_x$. 

(2) Since $g$ is bounded we have $M_{x,s}  \le  L^p$ for all $x \in B(x,R/5)$ and all $s \le \frac45 R$.
Let $x_1, x_2 \in B(x,R/5)$ be $p$-Lebesgue points of $f$
and set $r = d'(x_1,x_2)$. Then $r \le \frac25 R$. 
Application of assertion (1) with $C_x = C  M_x$ 
    gives for $i=1,2$
$$    
 \av_{B(x_i,2r)} (d')^p(f(y), f(x_i))  d \mu \le C L^p r^p.$$
Now one can integrate the inequality $d'(f(x_1), f(x_2)) \le d'(f(y), f(x_1)) + d'(f(y), f(x_2))$
over $B(x_1,r) \subset B(x_1, 2r) \cap B(x_2,2r)$ and use that   the measure of the ball $B(x_i, s)$  is
$\mu(B(x_i, s)) = s^\nu$ to deduce  that    
$$ d'(f(x_1), f(x_2)) \le C L r$$
Since the Lebesgue points are dense in $B(x,R/5)$ there exists a unique $CL$-Lipschitz function $\bar f$
on $B(x,R/5)$ which agrees with $f$ at all Lebesgue points. 
\end{proof}

 \begin{proof}[Proof of Theorem~\ref{th:pansu_diff_Lpstar}] Since the assertion is
 local we may assume that $U$ is bounded.
 Let $g: U \to \R$ be the common bound for the weak derivatives 
 of  the maps $x \mapsto d'(z, f(x))$   as  in Definition~\ref{de:sobolev_carnot_new}. 
Let  $u = \pi_{G'} \circ f$.
Since $G'/ [G', G']$ is a linear space,  $\pi_{G'}$ is $1$-Lipschitz and $U$ is bounded, it follows from Proposition~\ref{pr:composition_by_lip} that $u \in W^{1,p}(U; G'/ [G', G'])$. For $j=1, \ldots, K$ let  $g_j = X_j u$ denotes  the weak derivatives.  Let $x$ be a Lebesgue point for $f$, $g$ and $g_1, \ldots g_K$.  Fix a ball $B(e,R)$.
 
 Step 1:  There exists a subsequence $r_j \to 0$ such that $f_{x,r_j} \to \hat f$ in $L^{p*}(B(e,R), G')$.
 Moreover $ \hat f \in W^{1,p}(B(e,R);G')$   and $\hat f$ has a representative which is Lipschitz in $B(e,R/5)$ and satisfies $\hat f(e) = e$.

 Set 
 $$ G_{x,r}  := g \circ \ell_x \circ \delta_r$$
 and let  $z \in G'$. It follows directly from the behaviour of the Carnot-Caratheodory metric
 on $G'$ under left-translation and dilation that  $|D_h  d'(z, f_{x,r}(\cdot)| \le G_{x,r}$
 in $B(e,R)$  (as long as  $R r < \dist(x, G\setminus U)$).

 Since $x$ is a Lebesgue point of $g$ the sequence $G_{x,r}$ converges to a constant:
 \begin{equation}  \label{eq:convergence_upper_gradient_f}
  G_{x,r}  \to g(x)  \quad \text{in $L^p(B(e,R))$}.
  \end{equation}
 
 Applying   \eqref{eq:pointwise_oscillation} to $f$ and $B(x, \rho r)$  and unwinding definitions we see 
 that  for all $0 < \rho \le R$: 
 \begin{equation}    \label{eq:lebesgue_f_quantitative}
  \av_{B(e, \rho)}  (d')^p(f_{x,r}(y), e) \, d\mu \le C_x  \rho^p.
  \end{equation}
 Taking $\rho = R$ and using in addition   \eqref{eq:convergence_upper_gradient_f}
 we can apply the compact Sobolev embedding, 
 Theorem \ref{th:compact_sobolev_app}~\eqref{it:compact_sobolev_app2}.
    Thus there exists a subsequence $r_j \to 0$ and a map $\hat f \in W^{1,p}(B(e,R);G')$  
    such that 
    $$ f_{x,r_j} \to \hat f   \quad \text{in $L^{p*}(B(e,R);G')$ as $j  \to \infty$.} $$
    
    In particular $d'(z, f_{x,r_j}(\cdot)) \to d'(z, \hat f(\cdot))$, for all $z \in G'$,  and it follows from 
    the $L^p$ convergence of $G_{x,r}$ that $|D_h d'(z, \hat f(\cdot))| \le g(x)$.

    Thus by Proposition~\ref{pr:quantitative_lebesgue_point}~\eqref{it: quantitative_lebesgue_point2}  the map
       $\hat f$ has a representative which  is $C g(x)$-Lipschitz in 
   $B(e, R/5)$. 
   Passing to the limit in  \eqref{eq:lebesgue_f_quantitative} we see that   
    \begin{equation}    \label{eq:lebesgue_f_quantitative_limit}
  \av_{B(e, \rho)}  (d')^p(\hat f(y), e) \, d\mu \le C_x  \rho^p.
  \end{equation}
  Thus a   representative of $\hat f$  which   is $C g(x)$-Lipschitz in 
   $B(e, R/5)$ satisfies $\hat f(e) = e$.

 Step 2: The functions  $\pi_{G'}  \circ f_{x,r}$  converge  in $L^{p^*}(B(e,R); G'/[G', G']))$ to 
 a linear map $\hat u$, i.e. $\hat u(e) = 0$ and the (weak) horizontal derivatives of $\hat u$
 are constant. 
 
 Set $u = \pi_{G'} \circ f$ and define $u_{x,r}$ like $f_{x,r}$,
 i.e.,  $u_{x,r} = \delta_r^{-1} \circ \ell_{{u(x)}^{-1}} \circ u \circ \ell_{x} \circ \delta_r$.
Since the target is abelian this can actually be written in the more conventional form:
 $$  u_{x,r}(y) = \frac{u(x \delta_r y) - u(x)}{r}.$$
 
 Since $\pi_{G'}$ is a graded group homomorphism 
 it follows that $\pi_{G'} \circ f_{x,r} = u_{x,r}$.
 Since $\pi_{G'}$ is a Lipschitz map we get
     $$ u_{x,r_j} \to \hat u  = \pi_{G'} \circ \hat f  \quad \text{in $L^{p*}(B(e,R);G'/[G',G'])$    as $j  \to \infty$.}$$
In particular   $\hat u$ has a  representative which is $C g(x)$-Lipschitz in $B(0, R/5) $ and  satisfies 
 $ \hat  u(e) = 0$.
 
 In addition, we have 
 $$ X_j u_{x,r} = g_j \circ \ell_x \circ \delta_r$$
 and thus $X_j u_{x,r}$ converges to a constant:
 $$ X_j u_{x,r} \to   g_j(x)  \quad \text{in $L^p(B(e,R))$.}$$
 It follows directly from the definition of weak derivatives that $\hat u$  has  constant weak horizontal
derivatives which are given by $X_j \hat u = g_j(x)$. 
Since $\hat u(e) = 0$
it follows (e.g., from the Poincar\'e inequality)  $\hat u$ is uniquely determined by $g_j(x)$. 
Thus the whole sequence $u_{x,r}$ converges to $\hat u$.

 Step 3: Conclusion.\\
 Apply Steps 1 and 2 on balls $B(e,R)$ with $R = 1, 2, \ldots$. For each $R$ choose a further subsequence
 in Step 1 and finally choose a diagonal sequence. Thus we find a single sequence $r_k \to 0$ such that
 $$ f_{x,r_k} \to \hat f  \quad \text{in $L^{p*}_{loc}(G;G')$}.  $$
 Moreover $|D_h d'(z, \hat f(\cdot))|$ is bounded by the constant $g(x)$ for all $z \in G'$.

 Thus  $\hat f$ has a Lipschitz representative and we have already shown that
 this representative satisfies  $\hat f(e) = e$. 
 In combination with Step 2 we see that
 $$ \pi_{G'} \circ \hat f = \hat u$$
 where $\hat u$ has constant horizontal derivatives.
 Now Lemma~\ref{le:lift_of_L_maps} below implies that  $\hat f$ is a graded group homomorphism
 and $\hat f$ is uniquely determined by $\hat u$. 
 Uniqueness implies that the full sequence $f_{x,r}$ converges to $\hat f$ in $L^{p^*}_{loc}(G)$.
  \end{proof}

\bigskip\bigskip  
  \begin{proof}[Proof of   \eqref{eq:optimal_g_app}]
  Fix $z \in G'$ and write $u = u_z$. Theorem~\ref{th:pansu_diff_Lpstar} and Remark~\ref{rem_step_2_of_proof} also apply to the map $u: G \to \R$ and thus for a.e.\ $x \in U$ there
   exists a linear map $L_x: G \to \R$  (i.e. $L_x(e) = 0$ and the horizontal derivatives of $L$ are constant) such that 
  the maps $ u_{x,r}(y) := r^{-1}(u(x \delta_r y) - u(x))$ satisfy      
  $$ u_{x,r} \to L_x \quad \text{in $L^{p^*}_{\rm loc}(G)$}   \quad \text{and} \quad 
   X_i L_x \equiv (X_i u)(x).$$
   Here $X_i u$ denotes the weak  horizontal derivatives. 
  On the other hand the triangle inequality for $d'$ and the behaviour of $d'$ under left-translation and dilation
  imply that 
  $$ |u_{x,r}(y)| = \frac{|d'(z, f(x) \delta_r f_{x,r}(y)) - d'(z, f(x))|}{r} \le d(e,  f_{x,r}(y)).$$
  Passing to the limit $r_j \to 0$ we see that $|L_x(y)| \le d(e, D_P f(x)(y))$ for a.e.\  $x\in U$. 
  Taking $y = \exp tX$ and letting $t \to 0$ we  obtain  \eqref{eq:optimal_g_app}.
\end{proof}

\bigskip\bigskip
To show that  the map  $\hat f$   constructed in the proof of Theorem~\ref{th:pansu_diff_Lpstar}  is a graded group homomorphism,
we introduce some notation.
Let $X'$ be a finite-dimensional, normed, linear space. We say that a map $u:  G \to X'$ is affine if for every 
left-invariant horizontal vector 
 field $X$ the weak derivative $X u$ is constant. We say that a map $f:  G \to G'$ 
is an $L$-map if $\pi_{G'} \circ f = L$ and $L$ is affine. 

\begin{lemma} \label{le:lift_of_L_maps} Let $f$ and $f'$ be Lipschitz $L$-maps.
\ben
\item If $f$ and $f'$ agree at one point than $f \equiv f'$.
\item If $f(e)=e$ then $f$ is a graded group homomorphism.
\een
\end{lemma}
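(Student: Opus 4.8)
The plan is to prove both assertions simultaneously by exploiting the structure of $G'$ as a graded nilpotent Lie group, working in exponential coordinates. Write $\fg'=\oplus_{j=1}^{m}V_j'$ and identify $G'$ with $\fg'$ via $\log_{G'}$. For a Lipschitz $L$-map $f$, the hypothesis $\pi_{G'}\circ f=L$ with $L$ affine means precisely that the $V_1'$-component $\pi_1'\circ\log_{G'}\circ f$ is an affine function of the horizontal variables; since $f$ is Lipschitz hence a.e.\ Pansu differentiable by Pansu's theorem (or the $L^p$ version), and its abelianization is affine, the natural first step is to show $f$ is \emph{horizontally affine in every layer} in a suitable sense.

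First I would prove assertion (1) assuming (2): if $f,f'$ are Lipschitz $L$-maps agreeing at a point $z_0$, then $g:=\ell_{f(z_0)^{-1}}\circ f\circ\ell_{z_0}$ and $g':=\ell_{f'(z_0)^{-1}}\circ f'\circ\ell_{z_0}$ are again Lipschitz $L$-maps (with possibly different affine $L$'s, but still $L$-maps) fixing $e$; by (2) both are graded group homomorphisms, so they are determined by their differentials $Dg(e),Dg'(e):\fg\to\fg'$, which in turn are determined by their restrictions to $V_1$ (a graded homomorphism is generated by what it does on the first layer). But $\pi_1'\circ Dg(e)|_{V_1}$ equals the constant horizontal derivative of $\pi_{G'}\circ g$, i.e.\ is read off from $L$, and similarly for $g'$; since $\pi_{G'}\circ f=\pi_{G'}\circ f'$ forces these horizontal derivatives to coincide, and $D\pi_{G'}(e)$ restricted to $V_1'$ is an isomorphism onto $G'/[G',G']$ (Proposition~\ref{pr:abelianization}), we get $Dg(e)|_{V_1}=Dg'(e)|_{V_1}$, hence $g=g'$, hence $f=f'$.

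The heart of the matter is assertion (2). Here I would argue by induction on the step $m$ of $G'$, or equivalently on the layers, using the tower of quotients $G'\to G'/\exp(\fg'_{j+1})$. Let $F:G\to G'$ be Lipschitz with $F(e)=e$ and $\pi_{G'}\circ F$ affine. The base case is the abelianization itself: $\pi_{G'}\circ F$ is affine and vanishes at $e$, hence linear, hence a group homomorphism into the abelian group $G'/[G',G']$. For the inductive step, suppose we know that $F$ composed with the projection $q_j:G'\to G'/\exp(\fg'_{j+1})$ is a graded group homomorphism $\Phi_j$. I want to upgrade this to the next level. The key point is that $F$ is a.e.\ Pansu differentiable with Pansu differential $D_PF(x)$ a graded homomorphism $\fg\to\fg'$, and by Remark~\ref{re:weak_derivative_abel}/Remark~\ref{rem_step_2_of_proof} the horizontal weak derivative of $\pi_{G'}\circ F$ equals $D_PF(x)|_{V_1}$ followed by $\pi_1'$; since $\pi_{G'}\circ F$ is affine, $\pi_1'\circ D_PF(x)|_{V_1}$ is constant in $x$. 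Because a graded homomorphism is determined by its restriction to $V_1$, and that restriction is determined (via the isomorphism $\pi_1'|_{V_1'}$) by $\pi_1'\circ D_PF(x)|_{V_1}$, we conclude $D_PF(x)=:\Phi$ is a single fixed graded homomorphism for a.e.\ $x$. Now I would show $F=\Phi$: the map $H:=\ell_{\Phi(\cdot)^{-1}}$ — more precisely consider $x\mapsto \Phi(x)^{-1}F(x)$ — this is Lipschitz, equals $e$ a.e.\ in the first-layer sense because its abelianization has zero horizontal derivative and vanishes at $e$; one runs the quotient-tower induction on $G'/\exp(\fg'_{j+1})$ to conclude $\Phi(x)^{-1}F(x)\in\exp(\fg'_{j+1})$ with the $\fg'_{j+1}/\fg'_{j+2}$ part having vanishing horizontal derivative and vanishing at $e$, hence vanishing, closing the induction. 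Alternatively, and perhaps more cleanly: integrate along horizontal curves — since $F$ is Lipschitz and Pansu differentiable a.e.\ with $D_PF=\Phi$ constant, for any horizontal curve $\gamma$ from $e$ the curve $F\circ\gamma$ and $\Phi\circ\gamma$ satisfy the same ODE (their horizontal velocities agree a.e.) with the same initial condition, so they coincide; since horizontal curves from $e$ reach every point of $G$ (Chow), $F=\Phi$ on all of $G$.

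The main obstacle I anticipate is making the "same horizontal ODE hence same curve" argument rigorous for merely Lipschitz $F$ whose Pansu differential is only defined almost everywhere: one must control $F$ along a.e.\ horizontal curve (a Fubini-type argument, as in the absolute continuity on lines characterization of Sobolev maps), show the Pansu differential restricted to such a curve computes the actual horizontal derivative of $F\circ\gamma$, and then invoke uniqueness of solutions to the resulting ODE on $G'$ (which is licit since the right-hand side, being $\Phi$ applied to the fixed horizontal velocity field, is smooth in the $G'$ variable). Establishing that $\pi_1'\circ D_PF|_{V_1}$ being constant forces $D_PF$ itself to be constant — i.e.\ that a graded homomorphism is rigidly determined by its first-layer horizontal-to-horizontal part together with the structure — is a short exterior-algebra/Lie-algebra fact (a graded homomorphism is a Lie algebra homomorphism, and $V_1$ generates $\fg$, so it is determined by $D_PF|_{V_1}$; and $D_PF|_{V_1}$ maps into $V_1'$, on which $\pi_1'$ is injective), but it needs to be stated carefully. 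Once these two technical points are in place, the inductive skeleton closes without further difficulty.
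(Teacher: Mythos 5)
Your reversal of the logical order (proving (1) from (2), then attacking (2) directly) is legitimate, and your reduction of (2) to the statement that the a.e.\ Pansu differential $D_PF(x)$ is a single fixed graded homomorphism $\Phi$ is correct: $D_PF(x)|_{V_1}$ is recovered from the constant horizontal derivative of $\pi_{G'}\circ F$ via the isomorphism of Proposition~\ref{pr:abelianization}, and $V_1$ generates $\fg$. The genuine gap is in the integration step ``$F=\Phi$''. Pansu differentiability of a Lipschitz map holds only at a.e.\ point of $G$, so a Fubini argument gives the identity ``horizontal velocity of $F\circ\gamma$ equals the left-invariant lift of $\Phi X$ along $F\circ\gamma$'' only along a.e.\ line of each horizontal foliation $t\mapsto x_0\exp tX$ -- not along the particular curves issuing from $e$, which form a null family in that foliation. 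But $e$ is the only point where you know the initial condition $F(e)=\Phi(e)$. Along a generic line, ODE uniqueness only tells you $F(x_0\exp tX)=F(x_0)\exp(t\,\Phi X)$ and $\Phi(x_0\exp tX)=\Phi(x_0)\exp(t\,\Phi X)$, i.e.\ that $F$ and $\Phi$ differ along that line by a fixed left translation; without a further argument this does not propagate agreement from $e$, so the sentence ``for any horizontal curve from $e$ \dots same initial condition, so they coincide'' is not justified as stated. The gap is fillable, e.g.\ by observing that $x\mapsto \Phi(x)^{-1}F(x)$ is continuous and constant along a.e.\ line in each direction $X\in V_1$, hence by continuity constant along every such line, hence constant $\equiv e$; but some such bridge must be supplied.

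For comparison, the paper avoids the Pansu differential of $F$ altogether and thereby avoids the a.e.\ issue. It proves (1) directly: for any $x_0$ with $f(x_0)=f'(x_0)$ and any $X\in\fg_1$, the curves $\gamma(t)=f(x_0\exp tX)$ and $\eta(t)=f'(x_0\exp tX)$ are Lipschitz, hence rectifiable and a.e.\ differentiable with horizontal velocity; since $\pi_{G'}\circ f=\pi_{G'}\circ f'=L$ holds \emph{everywhere}, the $d\pi_{G'}$-projection of these velocities is exactly $Y=XL$ at every point of differentiability, and injectivity of $d\pi_{G'}$ on horizontal subspaces forces both curves to be integral curves of the unique left-invariant horizontal lift $Z$ of $Y$. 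Smoothness of $Z$ and ODE uniqueness then give $\gamma\equiv\eta$, and this works along \emph{every} horizontal segment, so agreement propagates from any single point; (2) then follows from (1) by noting that $\ell_{f(a)^{-1}}\circ f\circ\ell_a$ and $\de_{r^{-1}}\circ f\circ\de_r$ are again $L$-maps fixing $e$. If you adopt this pointwise use of the exact identity $\pi_{G'}\circ F=L$ in place of the a.e.\ Pansu differential, your outline closes.
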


\begin{proof}  
We first show that the second assertion is an immediate consequence of the first. Let $f$ be an $L$-map with $f(e) =e$.  Since $\pi_{G'}$ is a graded group homomorphism the maps
$F_a:= \ell_{ (f(a))^{-1}} \circ f \circ \ell_a$ and $F^r:= \delta_{r^{-1}} \circ f \circ \delta_r$ are also 
$L$-maps and $F_a(e) = F^r(e) =e$. Thus $F_a = F^r =f$ and hence $f$ is a graded group homomorphism. 

Note that the closure of the group generated by $\exp \fg_1$ is $G$. Thus to prove the 
first assertion it suffices to show the following implication:
\begin{equation}  \label{eq:lift_unique_along_curves}
f(x_0) = f'(x_0)  \quad \Longrightarrow   f(x_0 \exp X) = f'(x_0 \exp X)  \quad \forall X \in \fg_1.
\end{equation}
Let $Y = X L$ be the constant horizontal derivative of $L = \pi \circ f = \pi \circ f'$.
Consider the curves 
$$ \gamma(t) = f( x_0 \exp tX), \quad \eta(t) = f'(x_0 \exp tX).$$
Since $t \mapsto x_0 \exp tX$ is a horizontal  Lipschitz curve in $G$ and since  $f$ and $f'$ are
Lipschitz, the curves  $\gamma$ and $\eta$ are rectifiable curves in $G'$   
(where $G'$ is equipped with the Carnot-Carath\'eodory metric)
and hence differentiable a.e. 
with horizontal derivative, see  \cite[Proposition 4.1]{pansu}.      

 Moreover
$$ X (\pi_{G'} \circ \gamma) =  X (\pi_{G'} \circ \eta) =  \frac{d}{dt} L(x_0 \exp tX)  = Y.$$
By  Proposition~\ref{pr:abelianization}~\eqref{it:abelianization1} there exists a unique horizontal vectorfield $Z$ on $G'$ such that
$d \pi'(g') Z(g') = Y$. Thus both $\gamma$ and $\eta$ are integral curves of $Z$. 
Moreover $Z$ is left-invariant and hence smooth. Since $\gamma(0) = \eta(0)$ it follows that
$\gamma \equiv \eta$. Taking $t=1$ we get  \eqref{eq:lift_unique_along_curves}.
\end{proof}

\section{Compact Sobolev embeddings}  \label{se:app_compact}

Here we give a proof of the compact Sobolev embedding. 
For scalar-valued maps it is observed in  \cite{garofalo_nhieu_1996} that the compactness of the  Sobolev embedding is
an immediate consequence of the Poincar\'e-Sobolev inequality. The same reasoning applies to Sobolev maps with values in metric spaces and we provide the details for the convenience of the reader.

\begin{theorem}  \label{th:compact_sobolev_app}
Let $U \subset G$ be open, let $X'$ be a metric space.  Suppose that every closed ball in $X'$ is compact. 
 Let $1 \le p < \nu$ and define $p^*$ by
$$\frac{1}{p*}  = \frac1p - \frac1\nu.$$
Let $f_k \in W_{loc}^{1,p}(B(x,r),X')$ and let
$g_k \in L^p(B(x,r))$  be a common bound for
 the weak derivatives of  the maps $x \mapsto d(z, f_k(x))$  as  in Definition~\ref{de:sobolev_carnot_new}.
Assume that, for some $a \in X'$
\begin{equation}
\sup_k   \| d'(f_k(\cdot), a) \|_{L^p(B(x,r)} + \| g_k\|_{L^p(B(x,r))} < \infty.
\end{equation}
Then:
\begin{enumerate}
\item   \label{it:compact_sobolev_app1}
there exists a subsequence $f_{k_j}$ and  a map $f_\infty$ such that 
\begin{equation} f_{k_j} \to f_\infty  \quad \text{in $L^q(B(x,r);X')$ for all $q<p^*$,}
\end{equation}
i.e.,
\begin{equation}
d'( f_{k_j}(\cdot), f_\infty(\cdot))  \to 0  \quad \text{in $L^q(B(x,r))$ for all $q<p^*$.}
\end{equation}
\item     \label{it:compact_sobolev_app2}
If, in addition,  the sequence $|g_k|^p$ is equiintegrable 
then the convergence also holds  in $L^{p^*}_{loc}(B(x,r))$.
\end{enumerate}
\end{theorem}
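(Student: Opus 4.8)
The plan is to deduce Theorem~\ref{th:compact_sobolev_app} almost mechanically from the Poincaré–Sobolev inequality \eqref{eq:sobolev_poincare_new} and a covering argument, following the scalar strategy of \cite{garofalo_nhieu_1996}. First I would reduce to the unit ball $B = B(x,r)$ with $r=1$ by scaling, and normalize the Haar measure so $\mu(B)=1$. Fix a countable dense set $D\subset X'$ and, for each $z\in D$, consider the scalar functions $u_{k,z}(\cdot):=d'(f_k(\cdot),z) - d'(a,z)$. By Definition~\ref{de:sobolev_carnot_new} these lie in $W^{1,p}(B)$ with $|D_h u_{k,z}|\le g_k$ a.e., and by hypothesis $\sup_k(\|u_{k,z}\|_{L^p(B)} + \|g_k\|_{L^p(B)})<\infty$ for every $z\in D$ (use the triangle inequality $|d'(f_k,z)-d'(a,z)|\le d'(f_k,a)$). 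Thus each sequence $\{u_{k,z}\}_k$ is bounded in $W^{1,p}(B)$.

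Next I would invoke the scalar compact embedding $W^{1,p}(B)\hookrightarrow\hookrightarrow L^q(B)$ for $q<p^*$ — which is the standard consequence of \eqref{eq:sobolev_poincare_new} together with the scalar Rellich–Kondrachov argument on a doubling space, obtained by chaining the Poincaré inequality over a dyadic cover of $B$ and applying Arzelà–Ascoli-type/total-boundedness reasoning in $L^q$ (this is exactly the argument indicated in \cite{garofalo_nhieu_1996}; I would either reproduce it or cite it). Applying this to each $z\in D$ and extracting a diagonal subsequence $k_j$, I get functions $v_z\in L^{p^*}(B)$ with $u_{k_j,z}\to v_z$ in $L^q(B)$ for all $q<p^*$ and all $z\in D$, and (after a further subsequence) a.e.\ pointwise. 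Then I would reconstruct the limit map: setting $w_z:=v_z + d'(a,z)$, the functions $w_z$ satisfy, for a.e.\ $x$, the relations $|w_z(x)-w_{z'}(x)|\le d'(z,z')$ (from the corresponding inequality for each $f_{k_j}$) and $w_z(x)\ge 0$, with $\inf_{z\in D} w_z(x)=0$ attained in the limit along a minimizing sequence. Using that every closed ball in $X'$ is compact, for a.e.\ $x$ there is a unique $f_\infty(x)\in X'$ with $d'(f_\infty(x),z)=w_z(x)$ for all $z\in D$ (and then all $z\in X'$); measurability of $f_\infty$ follows since each $w_z$ is measurable and $D$ is countable. By the triangle inequality $d'(f_{k_j}(x),f_\infty(x))\le 2\,d'(f_{k_j}(x),z)$ can be made small by choosing $z$ near $f_\infty(x)$, and a short estimate (picking, for each $x$, a $z\in D$ with $d'(f_\infty(x),z)$ small and controlling $d'(f_{k_j}(x),z)=u_{k_j,z}(x)+d'(a,z)$ via the $L^q$-convergent $u_{k_j,z}$) gives $d'(f_{k_j}(\cdot),f_\infty(\cdot))\to 0$ in $L^q(B)$ for $q<p^*$; this proves \eqref{it:compact_sobolev_app1}.

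For \eqref{it:compact_sobolev_app2}, the point is to upgrade $L^q$ convergence to $L^{p^*}_{loc}$ convergence under the extra hypothesis that $\{|g_k|^p\}$ is equi-integrable. The natural route is to show $\{d'(f_k(\cdot),f_\infty(\cdot))^{p^*}\}$ is equi-integrable on each ball compactly contained in $B$ and then quote Proposition~\ref{pr:dominated_equiintegrable}. To get this, I would apply the Sobolev–Poincaré inequality \eqref{eq:sobolev_poincare_new} locally on small balls: for any ball $B(y,t)\subset B$, $\osc_{p^*}(f_k,B(y,t))\le C\|g_k\|_{L^p(B(y,t))}$, and since $\{|g_k|^p\}$ is equi-integrable the right-hand side is small uniformly in $k$ when $\mu(B(y,t))$ is small; combined with the $L^q$-convergence (hence $L^1$-boundedness of $d'(f_k,f_\infty)^{p^*}$ after noting $f_\infty$ also inherits the oscillation bound by lower semicontinuity), a covering argument yields the desired equi-integrability of $d'(f_k(\cdot),f_\infty(\cdot))^{p^*}$ on compact subsets.

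I expect the main obstacle to be the bookkeeping in \eqref{it:compact_sobolev_app2}: passing from the scalar, local oscillation control $\osc_{p^*}(f_k,B(y,t))\le C\|g_k\|_{L^p(B(y,t))}$ to genuine equi-integrability of $d'(f_k,f_\infty)^{p^*}$ requires comparing the ``center'' chosen by the oscillation infimum with $f_\infty$ and then summing over a Vitali-type cover — the kind of argument that is routine but where the uniformity in $k$ must be tracked carefully. The rest (diagonal extraction, reconstruction of $f_\infty$ from the distance functions, and the $L^q$ conclusion in \eqref{it:compact_sobolev_app1}) is standard once the scalar compact embedding is in hand.
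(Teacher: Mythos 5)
Your proposal is correct in outline, but it takes a genuinely different route from the paper's proof. The paper never passes through the scalar compact embedding: it proves total boundedness of $\{f_k\}$ directly in $L^p(B(x,r');X')$ (and, for assertion (2), in $L^{p^*}(B(x,r'))$) by approximating each $f_k$ by maps that are piecewise constant on a partition coming from the $5r$-covering result (Proposition~\ref{pr:covering}), the Poincar\'e--Sobolev inequality \eqref{eq:sobolev_poincare_new} giving the uniform closeness on each piece; compactness of closed balls in $X'$ enters only to make the sets $K_j$ of piecewise-constant maps with bounded values compact, and in assertion (2) the equi-integrability of $|g_k|^p$ is used through the elementary bound $\|g_k\|_{L^p(B_i)}^{p^*}\le \omega^{\frac{p^*}{p}-1}\bigl(\mu(B_i)\bigr)\,\|g_k\|_{L^p(B_i)}^{p}$ summed over the cover, so that assertion (2) comes out of the same argument with no extra work. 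You instead reduce to the countable family of scalar distance functions $d'(f_k(\cdot),z)$, quote the scalar compact embedding from \cite{garofalo_nhieu_1996}, and rebuild the limit map from the limiting distance functions using properness of $X'$ --- a reconstruction technique the paper itself uses in the proof of Proposition~\ref{pr:upper_gradient_Xprime}. Your route buys a fast reduction to known scalar results; the paper's is more self-contained and handles the metric target and the $L^{p^*}$ upgrade in one stroke.

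Two steps in your write-up need repair, both fixable. First, the inequality $d'(f_{k_j}(x),f_\infty(x))\le 2\,d'(f_{k_j}(x),z)$ is not a consequence of the triangle inequality; the clean argument is that the reconstruction already yields $f_{k_j}(x)\to f_\infty(x)$ for a.e.\ $x$ (since $d'(f_{k_j}(x),a)$ is bounded, properness gives subsequential limits, and the dense family of limiting distance functions forces uniqueness of the limit), after which $L^q$ convergence for $q<p^*$ follows from Proposition~\ref{pr:dominated_equiintegrable} because $d'(f_k(\cdot),a)$ is bounded in $L^{p^*}(B)$ by \eqref{eq:sobolev_poincare_new}. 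Second, in assertion (2) the equi-integrability of $d'(f_k,f_\infty)^{p^*}$ is true but does not follow only from smallness of $\osc_{p^*}(f_k,B(y,t))$ on small balls, since equi-integrability concerns arbitrary measurable sets of small measure: reduce via the triangle inequality to equi-integrability of $d'(f_k(\cdot),a)^{p^*}$, then argue in two steps --- first fix the covering radius so that $\sum_i \osc_{p^*}^{p^*}(f_k,B_i)$ is uniformly small (this is where equi-integrability of $|g_k|^p$ enters, exactly as in the paper), and then, the cover being finite and the centers' distances to $a$ being bounded in terms of $M$ and the radius, the remaining contribution over a set $A$ is at most $C(t,M)\,\mu(A)$.
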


Recall that  a sequence of  $L^1$ functions $h_k: U \to \R$ is equiintegrable if there exists a function $\omega :[0, \infty) \to [0, \infty)$ with $\lim_{t \to 0} \omega(t) = 0$
such that for all measurable sets $A \subset U$ one has
$\int_A |h_k| \, d\mu \le  \omega(\mu(A))$. Note that if $g_k \to g$ in $L^p(U)$ then $|g_k|^p$ is equiintegrable.

For the proof we use the following simple covering result.
\begin{proposition} \label{pr:covering} Let $X$ be a metric space with a doubling measure. 
Then there exists a constant $C(X)$ with the following property.
Let $\mathcal{F}$ be a family of balls of fixed radius $s> 0$ in $X$. Then there exists
a disjointed subfamily $\mathcal G$ such that
\begin{equation}   \label{eq:cover_basic}
\bigcup_{B \in \mathcal F}   B \subset  \bigcup_{B \in \mathcal G} 5 B
\end{equation}
where $5 B$ denotes the concentric ball of five  times the radius.
 Moreover each point is contained in  at most $C(X)$ of the balls $5B$ with $B \in \mathcal G$:
 \begin{equation}  \label{eq:finite_overlap}
  \sum_{B \in \mathcal G} 1_{5B} \le C(X).
  \end{equation}
\end{proposition}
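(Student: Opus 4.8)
The statement is the standard Vitali-type $5r$-covering lemma together with the bounded-overlap property, both of which follow from the doubling property of the measure. The plan is to run the usual greedy selection argument, then verify the two conclusions.

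First I would reduce to the case that the radius $s$ is fixed (which is already part of the hypothesis) and that $\mathcal{F}$ is nonempty. To select $\mathcal G$, I would use a maximal disjointed subfamily: by Zorn's lemma (applied to the partially ordered set of disjointed subfamilies of $\mathcal{F}$, ordered by inclusion) there is a maximal disjointed subfamily $\mathcal G \subset \mathcal F$. To verify \eqref{eq:cover_basic}, take any $B = B(x,s) \in \mathcal F$. If $B \in \mathcal G$ then trivially $B \subset 5B \subset \bigcup_{B'\in\mathcal G} 5B'$. Otherwise, by maximality, $B$ must intersect some $B' = B(x',s) \in \mathcal G$; picking $y \in B \cap B'$ we get $d(x,x') \le d(x,y) + d(y,x') < 2s$, so for any $z \in B$, $d(z,x') \le d(z,x) + d(x,x') < 3s < 5s$, hence $B \subset 5B'$. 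This gives \eqref{eq:cover_basic}.

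For the bounded-overlap estimate \eqref{eq:finite_overlap}, fix a point $x_0 \in X$ and let $B(x_1,s), \ldots, B(x_k,s)$ be the balls of $\mathcal G$ whose $5$-fold dilates contain $x_0$, i.e. $d(x_0,x_i) < 5s$ for $1 \le i \le k$. These balls are pairwise disjoint and all contained in the ball $B(x_0, 6s)$. I would then compare measures: each $B(x_i, s)$ contains $B(x_0, 6s)$ suitably — more precisely, $B(x_0,6s) \subset B(x_i, 12s)$ since $d(x_0,x_i) < 5s < 6s$, so by the doubling property $\mu(B(x_0,6s)) \le \mu(B(x_i,12s)) \le C_D^{\lceil \log_2 12 \rceil} \mu(B(x_i,s))$, where $C_D$ is the doubling constant. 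Summing over the disjoint balls $B(x_i,s) \subset B(x_0,6s)$ gives
\[
k \cdot C_D^{-\lceil\log_2 12\rceil} \mu(B(x_0,6s)) \le \sum_{i=1}^k \mu(B(x_i,s)) \le \mu(B(x_0,6s)),
\]
and since $\mu(B(x_0,6s)) > 0$ by the doubling property (doubling measures give positive, finite mass to balls), we conclude $k \le C_D^{\lceil\log_2 12\rceil} =: C(X)$. Since $x_0$ was arbitrary, this is exactly \eqref{eq:finite_overlap}.

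The only mild technical point — not really an obstacle — is making sure the doubling property is invoked correctly to compare $\mu(B(x_i,s))$ with $\mu(B(x_0,6s))$: one needs the elementary observation that doubling at scale $r$ iterates to control ratios $\mu(B(x,R))/\mu(B(x,r))$ by a fixed power of $C_D$ depending only on $R/r$, and that concentric-ball doubling upgrades to the statement $\mu(B(x,R)) \le C_D^{\lceil \log_2(R/r)\rceil} \mu(B(y,r))$ whenever $B(y,r) \subset B(x,R)$, via $B(x,R) \subset B(y,2R)$. Everything else is the routine greedy-covering argument. I expect the whole proof to be a few lines.
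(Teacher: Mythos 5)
Your proof is correct and follows essentially the same route as the paper: the bounded-overlap estimate is obtained exactly as in the paper, by noting the selected balls meeting the point lie in $B(x_0,6s)$, which is in turn contained in a fixed dilate of each of them, and then comparing measures via doubling (the paper uses $B(a,11s)$ where you use $B(x_i,12s)$ — immaterial). The only difference is that the paper simply cites the classical $5r$-covering lemma for the existence of the disjointed subfamily, whereas you supply the short maximality/triangle-inequality argument, which indeed works painlessly here because all balls have the same radius.
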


\begin{proof}
 The existence of a subfamily $\mathcal G$ is classical, see e.g. \cite{HKST_2001}, Theorem 1.2.
To show  \eqref{eq:finite_overlap} consider $x \in X$ and  let $\mathcal G_x = \{ B \in \mathcal G : x \in 5 B\}$.
Let $B(a,s)$ be a ball in $\mathcal G_x$. Then $d(x,a) \le 5 s$ and hence $B(a,s)  \subset B(x, 6 s) \subset B(a,11 s)$.
Since $\mu$ is doubling there exist a $C(X)> 0$ such that $\mu(B(x,6s) \le  \mu(B(a,11 s) \le C(X) B(a,s)$.
Since the balls in $\mathcal G_x$ are disjoint it follows that the number of ball in $\mathcal G_x$ is bounded by $C(X)$. 
\end{proof}

\begin{proof}[Proof of Theorem~\ref{th:compact_sobolev_app}]
  We first show (1).   The argument is essentially the same as in the scalar-valued case, see for example
  the proof of Theorem 1.28 in \cite{garofalo_nhieu_1996}.
  By the Poincar\'e inequality the functions $d(f_k(\cdot),a)$ are uniformly  bounded in $L^{p*}(B(x,r))$. 
It thus suffices to show convergence in $L^p(B(x,r);X')$.
Actually it suffices to show $L^p$ convergence for all $B(x,r')$ with $r' <r$ since the $L^p$ norm in $B(x,r) \setminus B(x,r')$
is small, uniformly in $k$, if $r'$ is close to $r$. Indeed,
\begin{align*} & \,  \int_{B(x,r) \setminus B(x,r')} {d'}^p(f_k(y), a) \, \mu(dy)\\
 \le   & \,  \big( \mu( B(x,r) \setminus B(x,r'))  \big)^{1- \frac{p}{p^*}}   \, \, 
 \| d'(f_k(\cdot, a)\|_{L^{p*}(B(x,r)}^p
\end{align*}
and $ \mu( B(x,r) \setminus B(x,r')) = r^\nu - (r')^{\nu}$.

Let $r' < r$. To show  convergence of a subsequence in $L^p(B(x,r');X')$ 
it suffices to show that for every $\eps >0$ there exist a compact subset $K$  of $L^p(B(x,r');X')$
such that $\sup_k \dist(f_k, K) \le \eps$.   We will take $K$ as a set of functions which are piecewise
constant on a fixed partition of $B(x,r')$.   If the partition is taken sufficiently fine 
then the Poincar\'e inequality will guarantee that all members of the sequence are $\eps$-close to $K$. 

Set $$M := \sup_k   \|d(f_k(\cdot), a) \|_{L^p(B(x,r)} + \| g_k\|_{L^p(B(x,r))}.$$
Let $j$ be an integer with $j^{-1} < \frac{1}{10}( r - r')$. 
By Proposition~\ref{pr:covering} there exist disjoint balls $B(x_i, j^{-1})$ such that the balls
$B(x_i, 5 j^{-1})$ cover $B(x,r')$, $B(x_i, 5 j^{-1}) \subset B(x,r)$ and 
each point is contained in at most $C(G)$ of the  balls $B(x_i, 5 j^{-1})$. 
Since the balls $B(x_i, j^{-1})$ are disjoint, the collection of balls is finite. 
Define a partition of $B(x,r')$ recursively by
$$ A_1 = B(x_1,5 j^{-1}) \cap B(x,r'),  \quad A_{i+1} = B(x_{i+1}, 5j^{-1}) \cap B(x,r') \setminus \bigcup_{k=1}^i A_i.$$

Let 
$$ \tilde K_j = \{ f:  B(x,r') \to X' : \text{for all $i$ the map  $f$ is constant on $A_i$}, \forall i \} .$$
Now  the Poincar\'e inequality implies that there exist $h_k \in \tilde  K_j$   such that
\begin{align*}
& \,  \int_{A_i} {d'}^p(f_k(y), h_k(y))\,  \mu(dy) \le    \int_{B(x_i, 5 j^{-1})} {d'}^p(f_k(y), h_k(y))\,  \mu(dy) \\
 \le & \, C j^{-p}   \int_{B(x_i, 5 j^{-1})} g_k^p \,  \mu(dy).
 \end{align*}
Summing over $i$ we see that
$$ \dist_{L^p}(f_k,  \tilde K_j) \le  C(G)^{1/p}  C  j^{-1} M.$$
Now set $K_j = \{ h \in \tilde K_j : \| d'(h(\cdot), a) \|_{L^p(B(x,r'))} \le M +1 \}$
 and choose $j$ so large that
$  C(G)^{1/p}  C  j^{-1} M \le \eps < 1$.
Then $K_j$ is compact  in $L^p(B(x',r); X')$  (since the functions in $K_j$ take only finitely many values and the values stay in a bounded set) and
$$ \dist_{L^p}(f_k,   K_j) \le \dist_{L^p}(f_k,  \tilde K_j) \le \eps.$$
This finishes the proof of the first assertion.

To prove the second assertion,  let $A_i \subset U$ and  $ \tilde K_j$ be as before. Let $\omega: [0, \infty) \to [0, \infty)$ be
the function in the definition of equi-integrability. The Sobolev-Poincar\'e inequality yields   
\begin{align*}
& \,  \osc_{p^*}^{p^*}( f_k,A_i)    \le C \| g_k\|_{L^p(B(x_i, 5 j^{-1}))}^{p^*} \\
\le & \, C \omega^{\frac{p^*}{p}  -1}\big(\mu(B(x_i, 5 j^{-1}))\big) \, \,  \| g_k\|_{L^p(B(x_i, 5 j^{-1}))}^{p} 
\end{align*}
Summation over $i$ yields
$$ \dist^{p^*}_{L^{p*}(B(x,r')} (f_k, \tilde  K_j) \le   \omega^{\frac{p^*}{p}  -1}(5 ^\nu j^{-\nu})  C(G) C  M^p  \quad \forall  k \in \N$$
Thus given $\eps \in (0,1]$ there exists a $j$ such that 
$$  \dist_{L^{p*}(B(x,r')} (f_k,  \tilde K_j)  \le \eps \quad \forall k \in \N.$$
By the Sobolev-Poincar\'e inequality we have $$M' := \sup_k \| d(f_k(\cdot), a)\|_{L^{p*}(B(x,r))} < \infty.$$
Thus setting
$$ K_j = \{ h \in \tilde K_j : \| h \|_{L^{p^*}(B(x,r'))} \le M' +1 \}$$
we see that  $K_j$ is compact and 
$ \dist_{L^{p*}}(f_k,   K_j) = \dist_{L^{p*}}(f_k,  \tilde K_j) \le \eps$.
\end{proof}

\section{Sobolev spaces defined by upper gradients}~  \label{se:sobolev}

In this section we  first recall the definition of a (weak) upper gradient, the Poincar\'e-Sobolev inequality for 
maps which possess a  $p$-integrable $p$-weak upper gradient, and the stability of $p$-weak upper gradients under $L^p$ convergence. 

 We then show that if the domain $U$  is an open subset of a Carnot group,
 then  a map is in  $W^{1,p}(U;X')$ in the sense of Definition~\ref{de:sobolev_carnot_new} if 
 and only if it has a representative which has $p$-integrable $p$-weak upper gradient.
While this can be shown by combining standard arguments in the field,
we are not aware of a specific reference for this result. The corresponding result for scalar-valued functions defined
on open subsets of Euclidean space can be found, for example, in \cite[Theorem 7.4.5]{HKST}.

\bigskip

\bigskip

\bigskip

A comprehensive introduction to the  Sobolev spaces defined via (weak) upper gradients is given in the book \cite{HKST} by Heinonen, Koskela, 
Shanmugalingam and Tyson and we closely follow their exposition.

\subsection{Weak upper gradients and the Poincar\'e-Sobolev inequality}
Let $X = (X,d, \mu)$ be a metric measure space, i.e.  a separable metric space $(X,d)$  with a nontrivial locally finite Borel regular (outer) measure $\mu$.
A curve $\gamma: I \to X$ is a continuous map from an interval $I \subset \R$ to $X$. We say that $\gamma$ is compact or open 
if $I$ is compact or open. We define the length of a compact curve $\gamma:[a,b] \to X$  as the supremum of the numbers
$\sum_{i=1}^k d(\gamma(t_i), \gamma(t_{i-1}))$ where  the supremum is taken over all choices $t_0, \ldots t_k$ with 
$a = t_0 < t_1 < \ldots < t_k = b$ and all $k \in \N$. For a noncompact curve the length is defined as the supremum of the length of all compact subcurves.
 A curve if rectifiable if it has finite length and locally rectifiable if all compact subcurves have finite length. For a rectifiable curve $\gamma$
 we denote by $\gamma_s$ its arc-length parametrization. 
 We say that a rectifiable curve $\gamma:[a,b] \to X$  is absolutely continuous  if for every $\eps > 0$ we can find a $\delta >0$
 such that $\sum_{i=1}^k d(\gamma(b_i), \gamma(a_i)) < \eps$ whenever $(a_i, b_i) \subset [a,b]$ are non-overlapping intervals
 with $\sum_{i=1}^k (b_i - a_i) < \delta$. 
 For a rectifiable curve $\gamma: I \to X$ and a Borel function $\rho:X\ra [0, \infty]$ we define the integral
$\int_\gamma \rho\, ds$  by $\int_0^{\mathrm{length}(\gamma)}\rho \circ \gamma_s(t) \, dt$.  
If $\gamma$ is locally rectifiable $\int_\gamma \rho\, ds$ is defined as the supremum of the integrals over all compact subcurves.

 For $p \ge 1$ the  $p$-modulus of a  family $\Gamma$ of   curves
 is defined by  
\begin{align*} \modp(\Gamma)
 := \inf \Bigg\{        & \,    \int_X \rho^p \, d\mu :   \rho:  X \to [0, \infty] \text{ Borel},     
      \\ \int_\gamma \rho\, ds \ge 1  
           &     \text{ for all  locally rectifiable $\gamma \in \Gamma$.}  \Bigg\}
 \end{align*}
 We call the  Borel functions $\rho$ with $ \int_\gamma \rho\, ds \ge 1$ for all $\gamma \in \Gamma$ admissible densities. 
 Every family of non-locally rectifiable  curves has modulus zero and every family which contains a constant curve has modulus $\infty$. 
 Moreover the modulus is countably subadditive. 
 
  We say that a  family of curves is $p$-exceptional if it has $p$-modulus zero. 
We say that a property holds for $p$-a.e.  curve if there exists a curve family $N$ of zero $p$-modulus such that the property holds for all 
which do not belong to $N$. 
 A set $E$ is $p$-exceptional if
the $p$-modulus of the family of all  nonconstant (rectifiable)  curves which meet $E$ is zero.     
We denote the family of all nonconstant compact rectifiable curves by $\Gamma_{\rm rec}$.

We give the definition of a $p$-weak upper gradient directly in the setting of metric-space-valued maps.

\begin{definition}[\cite{HKST}, Section 6.2, p.\  152]     \label{de:upper_gradient} Let $U \subset G$ be open,  let $1 \le p < \infty$ and 
let $(X', d')$ be a metric space.   Let $g: U \to [0, \infty]$  be  a  Borel function. We say that  $g$ is $p$-weak upper gradient of a map
$f: U \to X'$ if for $p$-a.e. rectifiable curve $\gamma:[a,b] \to U$ 
\begin{equation}  \label{eq:upper_gradient_scalar}
d'\big( f(\gamma(b)),   f(\gamma(a)) \big) \le \int_{\gamma}  g \, ds
\end{equation}
\end{definition}

If $g$ is $p$-integrable then one easily sees that  $\int_\ga g\,ds<\infty$ on $p$-a.e. compact curve. 
This yields the following result.

\begin{proposition}[\cite{HKST}, Proposition 6.3.2]  \label{pr:upper_gradient}
Suppose that the Borel function $g: U \to [0, \infty]$ is a $p$-integrable $p$-weak upper gradient of $f: U \to X'$.
Then $p$-a.e.\  every compact rectifiable curve $\gamma$ in $U$ has the following property: $g$ is integrable 
on $\gamma$ and the pair $(f,g)$ satifies the upper gradient inequality 
 \eqref{eq:upper_gradient_scalar} on $\gamma$ and each of its compact subcurves. In particular every map
 $f: U \to X'$ that has a $p$-integrable $p$-weak upper gradient is absolutely continuous on $p$-a.e. compact curve
 in $X$. 
\end{proposition}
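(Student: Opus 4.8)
The plan is to exhibit a single exceptional curve family $\Gamma\subset\Gamma_{\rm rec}$ with $\modp(\Gamma)=0$, outside of which all three conclusions hold simultaneously; I would build $\Gamma$ out of three pieces: the exceptional family coming from the $p$-weak upper gradient hypothesis, its ``subcurve closure'', and the family of curves along which $g$ has infinite integral.

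First, by hypothesis there is $\Gamma_0\subset\Gamma_{\rm rec}$ with $\modp(\Gamma_0)=0$ such that \eqref{eq:upper_gradient_scalar} holds for every compact rectifiable $\gamma\notin\Gamma_0$. Since the inequality could fail on a subcurve of a good curve, I would pass to
\[
\Gamma_1:=\{\gamma\in\Gamma_{\rm rec}:\text{some compact subcurve of }\gamma\text{ lies in }\Gamma_0\}
\]
and check $\modp(\Gamma_1)=0$: if $\rho\ge 0$ is Borel and admissible for $\Gamma_0$, then for $\gamma\in\Gamma_1$ with compact subcurve $\gamma'\in\Gamma_0$ one has $\int_\gamma\rho\,ds\ge\int_{\gamma'}\rho\,ds\ge 1$, using monotonicity of the line integral of a nonnegative Borel function under passing to subcurves (which follows from the definition of $\int_\gamma\rho\,ds$ via the arc-length reparametrization); hence $\rho$ is admissible for $\Gamma_1$ as well, so $\modp(\Gamma_1)\le\modp(\Gamma_0)=0$. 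Next I would set $\Gamma_2:=\{\gamma\in\Gamma_{\rm rec}:\int_\gamma g\,ds=\infty\}$; since $g\in L^p(U)$, for each $\eps>0$ the density $\eps g$ is admissible for $\Gamma_2$, so $\modp(\Gamma_2)\le\eps^p\|g\|_{L^p(U)}^p$, and letting $\eps\to0$ gives $\modp(\Gamma_2)=0$. Moreover, because $g\ge 0$, if $\int_\gamma g\,ds<\infty$ then $\int_{\gamma'}g\,ds<\infty$ on every compact subcurve $\gamma'$ of $\gamma$.

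Now put $\Gamma:=\Gamma_1\cup\Gamma_2$, so $\modp(\Gamma)=0$ by subadditivity of the modulus. For any compact rectifiable $\gamma\notin\Gamma$: since $\gamma\notin\Gamma_2$ and $g\ge0$, $g$ is integrable on $\gamma$ and on each of its compact subcurves; and since $\gamma\notin\Gamma_1$, no compact subcurve of $\gamma$ belongs to $\Gamma_0$, so \eqref{eq:upper_gradient_scalar} holds for $\gamma$ and for each of its compact subcurves. This is the first assertion. For the ``in particular'' statement I would reparametrize such a $\gamma$ by arc length, $\gamma_s\colon[0,L]\to U$, observe that $g\circ\gamma_s$ is Borel (hence Lebesgue measurable) and lies in $L^1([0,L])$ with $\int_0^L g\circ\gamma_s\,dt=\int_\gamma g\,ds<\infty$, and for any finite collection of non-overlapping intervals $(a_i,b_i)\subset[0,L]$ apply \eqref{eq:upper_gradient_scalar} to the subcurves $\gamma_s|_{[a_i,b_i]}$ to obtain
\[
\sum_i d'\!\big(f(\gamma_s(b_i)),f(\gamma_s(a_i))\big)\le\sum_i\int_{a_i}^{b_i}g\circ\gamma_s\,dt=\int_{\bigcup_i(a_i,b_i)}g\circ\gamma_s\,dt ,
\]
and the right-hand side tends to $0$ as $\sum_i(b_i-a_i)\to0$ by absolute continuity of the Lebesgue integral of the $L^1$ function $g\circ\gamma_s$. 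Hence $f\circ\gamma_s$ is absolutely continuous, i.e.\ $f$ is absolutely continuous on $\gamma$.

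I do not anticipate a serious obstacle: this is the standard modulus-theoretic bookkeeping. The only points requiring a little care are the two modulus-zero claims — that the subcurve closure of a $p$-exceptional family is $p$-exceptional, and that $\{\gamma:\int_\gamma g\,ds=\infty\}$ is $p$-exceptional when $g\in L^p$ — together with the routine check that $g\circ\gamma_s$ is measurable so that the final absolute-continuity estimate makes sense.
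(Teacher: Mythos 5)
Your argument is correct, and it is exactly the standard modulus bookkeeping behind this result: the paper does not prove Proposition~\ref{pr:upper_gradient} itself but cites \cite{HKST}, Proposition 6.3.2 (offering only the one-line remark that $\int_\gamma g\,ds<\infty$ for $p$-a.e.\ curve), and the proof there proceeds just as you do --- zero modulus of the subcurve closure of the exceptional family, the admissible density $\eps g$ to dispose of the curves with $\int_\gamma g\,ds=\infty$, and absolute continuity of the Lebesgue integral along the arclength parametrization. The only cosmetic point is to make sure that $\gamma$ counts as one of its own compact subcurves (or simply include $\Gamma_0$ in your union $\Gamma$), so that the upper gradient inequality on $\gamma$ itself is covered.
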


To state the Poincar\'e-Sobolev inequality we recall that for a measurable set $A \subset G$ and a map $f: A \to X'$ we defined the $L^p$ oscillation by 
\begin{equation}
 \osc_p(f,  A)  := \inf_{a \in X'}  \left(   \int_{A} d^p(f(x), a)  \mu(dx)   \right)^{1/p}.
 \end{equation}

\begin{theorem}[\cite{HKST}, Thm. 9.1.15]  Let $U \subset G$ be open, let $X'$ be a metric space. Let $1 \le p < \nu$ and define $p^*$ by
$$\frac{1}{p*}  = \frac1p - \frac1\nu.$$
Let $f \in W^{1,p}(U,X')$ and let $g \in L^p(U)$ be a $p$-weak upper gradient. 
Then for every ball $B(x,r) \subset U$  
\begin{equation}  \label{eq:sobolev_poincare}
 \osc_{p^*}(f,   (B(x,r)) \leq C \|g\|_{L^p(B(x,r))}
\end{equation}
and
\begin{equation} \label{eq:p_poincare}
 \osc_{p}(  f, (B(x,r)) \le C r  \|g\|_{L^p(B(x,r))}.
\end{equation}
\end{theorem}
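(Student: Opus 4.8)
The plan is to reduce to the scalar Poincar\'e--Sobolev inequality and then run the standard PI-space argument, with one modification to accommodate a target metric space $X'$ that is not assumed to be doubling. As in the proof of the analogous statement for the distributional definition in Subsection~\ref{subsec_sobolev}, the second inequality \eqref{eq:p_poincare} follows from the first \eqref{eq:sobolev_poincare} by H\"older's inequality together with $\mu(B(x,r))=r^\nu\mu(B(e,1))$, so I would concentrate on \eqref{eq:sobolev_poincare}. Using left-invariance of $d_{CC}$, the scaling $d_{CC}(\delta_r\cdot,\delta_r\cdot)=r\,d_{CC}$, and the evident behaviour of $p$-weak upper gradients under $\ell_x$ and $\delta_r$, it suffices to treat $B=B(e,1)$, and (renormalizing Haar measure, which only affects the constant by a $G$-dependent factor) to assume $\mu(B)=1$; by a further standard chaining reduction it is enough to prove the estimate on $B(e,\tfrac12)$ using only balls contained in $B(e,1)$ (balls in Carnot groups are Boman chain domains).

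For the reduction to scalars, observe that for any $z\in X'$ the function $u_z:=d'(f(\cdot),z)$ has $g$ as a $p$-weak upper gradient, since along $p$-a.e.\ rectifiable curve $|u_z(\gamma(b))-u_z(\gamma(a))|\le d'(f(\gamma(b)),f(\gamma(a)))\le\int_\gamma g\,ds$ by Proposition~\ref{pr:upper_gradient}. Hence the scalar Sobolev--Poincar\'e inequality for functions possessing a $p$-integrable $p$-weak upper gradient applies to each $u_z$; on Carnot groups this is classical, following from the $(1,1)$-Poincar\'e inequality (\cite{garofalo_nhieu_1996,lu_poincare_1994} and the proof in Subsection~\ref{subsec_sobolev}) and, in the upper-gradient formulation, also from the equivalence of the two notions of gradient established in Appendix~\ref{se:sobolev}. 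It yields $u_z\in L^{p^*}(B(e,\tfrac12))$ together with $\|u_z-(u_z)_B\|_{L^{p^*}(B(e,1/2))}\le C\|g\|_{L^p(B(e,1))}$, where $(u_z)_B$ denotes the average of $u_z$ over $B(e,\tfrac12)$.

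The remaining point is to choose $z$ well. Pick a Lebesgue point $x_0\in B(e,\tfrac14)$ of $g$ at which the restricted maximal function $Mg(x_0):=\sup_{0<s\le 1}\frac1{\mu(B(x_0,s))}\int_{B(x_0,s)}g\,d\mu$ satisfies $Mg(x_0)\le C\|g\|_{L^p(B(e,1))}$; such points form a set of positive measure, since $M$ is of weak type $(1,1)$ on $B(e,1)$ and $\|g\|_{L^1(B(e,1))}\le C\|g\|_{L^p(B(e,1))}$. Put $z_0:=f(x_0)\in X'$, so that $u_{z_0}(x_0)=0$. Telescoping the averages of $u_{z_0}$ over the balls $B(x_0,2^{-j})$, $j\ge 2$, down to the Lebesgue value $u_{z_0}(x_0)=0$ and estimating each increment by the $(1,1)$-Poincar\'e inequality gives $|(u_{z_0})_{B(x_0,1/4)}|\le C\,Mg(x_0)$, and comparing the comparable balls $B(x_0,\tfrac14)$ and $B(e,\tfrac12)$ (both contained in $B(e,1)$) gives $|(u_{z_0})_B-(u_{z_0})_{B(x_0,1/4)}|\le C\|g\|_{L^p(B(e,1))}$ as well; hence $(u_{z_0})_B\le C\|g\|_{L^p(B(e,1))}$. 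Applying the inequality of the previous paragraph with $z=z_0$ and using $\mu(B(e,\tfrac12))\le 1$,
\[
 \osc_{p^*}(f,B(e,\tfrac12))\le\|u_{z_0}\|_{L^{p^*}(B(e,1/2))}\le\|u_{z_0}-(u_{z_0})_B\|_{L^{p^*}(B(e,1/2))}+(u_{z_0})_B\le C\|g\|_{L^p(B(e,1))},
\]
which, after the chaining upgrade to the full ball and undoing the scaling, is \eqref{eq:sobolev_poincare}; in particular $\osc_{p^*}(f,B)<\infty$. (The case $p=1$ is handled by the same scheme throughout.)

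I do not expect a serious obstacle here, since the argument is entirely within standard PI-space technology. The one point requiring a genuinely different idea from the earlier proof in Subsection~\ref{subsec_sobolev} — which covered a ball in the target $X'$ by finitely many smaller balls, using that $X'$ is doubling — is that here $X'$ is an arbitrary metric space, so no such covering is available; the substitute is the freedom to take the base point $z_0=f(x_0)$ for a point $x_0$ whose maximal function is controlled, which is the content of the third paragraph. One can of course also simply invoke \cite[Thm.\ 9.1.15]{HKST}, of which the stated result is a special case.
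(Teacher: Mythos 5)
Your argument is correct in substance, but it is a genuinely different route from the one the paper takes. The paper's proof of this statement is a short reduction to the Banach--space--valued Poincar\'e inequality of \cite[Thm.\ 9.1.15]{HKST}: one first notes that $G$ supports a $p$-Poincar\'e inequality and that a $p$-weak upper gradient can be approximated in $L^p$ by genuine upper gradients, then embeds $X'$ isometrically into $\ell^\infty(X')$ via $i(x)=d'(\cdot,x)-d'(\cdot,a)$, applies the cited theorem to $i\circ f$ to get an element $h\in\ell^\infty(X')$ with $\|i\circ f-h\|_{L^{p^*}(B)}\le C\|g\|_{L^p(B)}$, and finally uses a Chebyshev-type selection of a point $\bar y\in B$ with $|i\circ f(\bar y)-h|\,\mu(B)^{1/p^*}\le C\|g\|_{L^p(B)}$ to replace $h$ by $i(f(\bar y))$, which gives \eqref{eq:sobolev_poincare} with $a=f(\bar y)$; \eqref{eq:p_poincare} then follows by H\"older, exactly as you say. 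Your proposal instead stays entirely scalar: you observe that $g$ is a $p$-weak upper gradient of each distance function $u_z=d'(f(\cdot),z)$, invoke the scalar Sobolev--Poincar\'e inequality on the group (via Proposition~\ref{pr:weak_scalar_upper_gradient} and the classical scalar estimates), and then pin down the additive constant by choosing the base point $z_0=f(x_0)$ at a good point $x_0$, using the maximal function of $g$ and a telescoping of averages. What your route buys is self-containedness (no Banach-valued Sobolev theory, no approximation of weak upper gradients by upper gradients) and, as you note, it shows that the doubling hypothesis on $X'$ in the Subsection~\ref{subsec_sobolev} version is removable; what the paper's route buys is brevity, since the good base point comes for free from the HKST conclusion by Chebyshev rather than from a maximal-function argument. (One quibble: the statement is not literally a special case of \cite[Thm.\ 9.1.15]{HKST}, which is for Banach targets; the embedding step is what makes the citation work.)

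Two small points to tighten. First, for the telescoping step you need $x_0$ to be not only a point where the restricted maximal function of $g$ is controlled but also a Lebesgue point of $f$ in the metric sense (or of the countable family $u_z$, $z$ in a dense set), so that $(u_{z_0})_{B(x_0,2^{-j})}\to u_{z_0}(x_0)=0$; this holds for a.e.\ $x_0$ by Lemma~\ref{le:lebesgue_point_new}, and such points still form a set of positive measure in $B(e,\tfrac14)$, so the fix is immediate, but as written ``Lebesgue point of $g$'' does not suffice. Second, the chaining/Boman reduction you mention is only needed at the scalar level (to pass from a possibly dilated-ball scalar Poincar\'e inequality to one on the ball itself); once that scalar inequality is in hand on $B(e,1)$, the metric-valued estimate follows directly from your choice of $z_0$ without any further upgrade from the half ball.
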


\begin{proof} First note that $G$ supports a $p$-Poincar\'e inequality (for scalar-valued functions)  in the sense of 
(1.3) in \cite{HKST}. A short proof is due to Varapoulos \cite{varopoulos}, see also 
  \cite[Proposition 11.17]{hajlasz_koskela_sobolev_met_poincare},  \cite[p.\ 461]{saloff_coste} or \cite[Corollary 1.6.]{garofalo_nhieu_1996}.
Note also that a $p$-weak upper gradient  $g$ can be approximated in $L^p$ by upper gradients $g_k$
(see Lemma 6.2.2. in \cite{HKST}). 

Now \eqref{eq:sobolev_poincare} follows from  Theorem 9.1.15 in  \cite{HKST} and 
the isometric embedding of $X'$ into the Banach space $\ell^{\infty}(X')$ of bounded functions on $X'$
 given by
$i(x) = d'(\cdot, x) - d'(\cdot, a)$ where $a$ is an arbitrary point in $X'$. 
Indeed,  Theorem 9.1.15
in  \cite{HKST} implies that there exists an element $h$ of $\ell^\infty(X')$ such that

$$ \left(  \int_{B(x,r)}  | (i \circ f)(y) - h|^{p^*}  \, \mu(dy)  \right)^{1/p^*} 
\le C \|g\|_{L^p(B(x,r))}.$$
Hence there exists a $\bar y \in B(x,r)$ 
s.t.  $|i \circ f(\bar y) - h|^{p^*}   \mu( B(x,r))  \le  C \|g\|^{p^*}_{L^p(B(x,r))}$.
Thus by the triangle inequality
$$ \left(  \int_{B(x,r)}  | (i \circ f)(y) - (i \circ f)(\bar y)|^{p^*}|  \, \mu(dy)  \right)^{1/p^*} 
\le 2 C \|g\|_{L^p(B(x,r))}.$$
This implies  \eqref{eq:sobolev_poincare} since $i$ is an isometric embedding.

Finally,  \eqref{eq:p_poincare} follows from   \eqref{eq:sobolev_poincare} 
 and H\"older's inequality since $\mu(B(x,r)) =   c  r^\nu$. 
\end{proof}

A key feature of the $p$-weak upper gradient is that  it  is stable under  $L^p$ convergence
in the following sense.
\begin{lemma}   \label{le:stability_p_weak_upper_gradient} Let $(X',d')$ be a complete  metric space.
Let $f_k: U \to X'$ be maps  and let  $g_k: U \to [0, \infty]$ be Borel functions and assume that $g_k$ is a $p$-weak upper gradient of $f_k$.
Assume further that  there exist a map
$f: U \to X'$ and a Borel function  $g: U \to [0, \infty]$ such that $d'(f_k, f) \to 0$ in $L^p(U)$ and $g_k \to g$ in $L^p(U)$. 
Then there exists a subsequence $f_{k_j}$ with the following property. The set $E$ where $f_{k_j}$ does not converge
is a  $\mu$-null set, the set of curves $\ga$ such that $f_{k_j}\circ\ga$ does not converge pointwise is a $p$-exceptional set  and if we define
\begin{equation}
\bar f(x) =
 \lim_{j \to \infty} f_{k_j}(x)  \quad \text{if $x \in U \setminus E$,}
\end{equation}
and extend $\bar f$ arbitrarily in $E$
 then $g$ is a $p$-weak upper gradient of $\bar f$ and
$\bar f = f$ a.e.
\end{lemma}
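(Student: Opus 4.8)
\textbf{Proof plan for Lemma~\ref{le:stability_p_weak_upper_gradient}.}

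The plan is to extract, from the two $L^p$-convergent sequences, a single subsequence along which we have pointwise control both $\mu$-almost everywhere and along $p$-almost every curve, and then to verify the upper gradient inequality by passing to the limit in the integrated inequality for the pair $(f_{k_j},g_{k_j})$. First I would choose the subsequence. Since $d'(f_k,f)\to 0$ in $L^p(U)$ and $g_k\to g$ in $L^p(U)$, pass to a subsequence (not relabeled) so that $\sum_j \|d'(f_{k_j},f)\|_{L^p(U)} < \infty$ and $\sum_j \|g_{k_j}-g\|_{L^p(U)}<\infty$. By the Borel--Cantelli/monotone convergence argument this forces $f_{k_j}(x)\to f(x)$ for $\mu$-a.e.\ $x$, so the exceptional set $E$ (where $f_{k_j}$ fails to converge) is $\mu$-null; define $\bar f$ as in the statement on $U\setminus E$ and arbitrarily on $E$. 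Note $\bar f=f$ a.e.\ by construction. The completeness of $X'$ is what lets us say the pointwise limit exists off $E$.

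Next I would control the bad curves. The key classical fact (see \cite[Sec.~6.2--6.3]{HKST}, Fuglede's lemma) is that if $h_j\to 0$ in $L^p(U)$ with $h_j\ge 0$ Borel, then after passing to a further subsequence one has $\int_\gamma h_j\,ds\to 0$ for $p$-a.e.\ rectifiable curve $\gamma$, and in particular $\int_\gamma h_j\,ds<\infty$ for $p$-a.e.\ curve for all $j$. Applying this with $h_j = |g_{k_j}-g|$ and also using that $E$ is $p$-exceptional (a $\mu$-null set need not be $p$-exceptional in general, but here one uses that $\sum_j\|d'(f_{k_j},f)\|_{L^p}<\infty$ together with a second application of Fuglede's lemma to $d'(f_{k_j},\bar f)$, which is in $L^p$ and $\to 0$, so that $f_{k_j}\circ\gamma\to\bar f\circ\gamma$ pointwise for $p$-a.e.\ $\gamma$), I obtain a single $p$-exceptional family $N$ outside of which: $g$ is integrable on $\gamma$; $\int_\gamma|g_{k_j}-g|\,ds\to 0$; each $g_{k_j}$ is a genuine upper gradient of $f_{k_j}$ along $\gamma$ and its subcurves (Proposition~\ref{pr:upper_gradient}); and $f_{k_j}\circ\gamma\to\bar f\circ\gamma$ pointwise. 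The curves on which $f_{k_j}\circ\gamma$ fails to converge pointwise thus form a $p$-exceptional set, as asserted.

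Finally I would pass to the limit. Fix a rectifiable curve $\gamma:[a,b]\to U$ with $\gamma\notin N$. For each $j$,
\begin{equation*}
d'\big(f_{k_j}(\gamma(b)),f_{k_j}(\gamma(a))\big)\le \int_\gamma g_{k_j}\,ds \le \int_\gamma g\,ds + \int_\gamma |g_{k_j}-g|\,ds.
\end{equation*}
Letting $j\to\infty$, the left side converges to $d'(\bar f(\gamma(b)),\bar f(\gamma(a)))$ by pointwise convergence along $\gamma$, while the last term tends to $0$; hence $d'(\bar f(\gamma(b)),\bar f(\gamma(a)))\le\int_\gamma g\,ds$. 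Since this holds for every $\gamma\notin N$ and $N$ is $p$-exceptional, $g$ is a $p$-weak upper gradient of $\bar f$. This completes the proof modulo the standard subsequence extractions.

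\textbf{Main obstacle.} The step I expect to require the most care is the simultaneous handling of the $\mu$-null exceptional set $E$ and the $p$-exceptional curve family: one must ensure that, along $p$-a.e.\ curve, the set of parameters landing in $E$ is negligible enough that pointwise convergence of $f_{k_j}\circ\gamma$ at the endpoints still holds. The clean way around this is to run Fuglede's lemma a second time on $d'(f_{k_j},\bar f)$ (rather than trying to argue that the $\mu$-null set $E$ is automatically $p$-exceptional, which is false in general), so that endpoint convergence is obtained directly for $p$-a.e.\ curve; everything else is routine bookkeeping with absolutely summable tails.
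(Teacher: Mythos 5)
Your overall skeleton (subsequence extraction with $\mu$-a.e.\ convergence, Fuglede's lemma for $|g_{k_j}-g|$, and a limit passage in the upper gradient inequality along $p$-a.e.\ curve) matches the intended argument, but the middle step has a genuine gap. Fuglede's lemma applied to $d'(f_{k_j},\bar f)$ only yields, for $p$-a.e.\ rectifiable curve $\gamma$, that $\int_\gamma d'(f_{k_j},\bar f)^p\,ds\to 0$, i.e.\ convergence in $L^p$ of the arclength parameter along $\gamma$; it does \emph{not} yield pointwise convergence of $f_{k_j}\circ\gamma$ at every point of the curve, and in particular not at the endpoints $\gamma(a),\gamma(b)$. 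For a fixed curve, $L^p(ds)$-convergence along the curve is perfectly compatible with non-convergence at the two endpoints, and endpoint convergence is exactly what your final step $d'\big(f_{k_j}(\gamma(b)),f_{k_j}(\gamma(a))\big)\to d'\big(\bar f(\gamma(b)),\bar f(\gamma(a))\big)$ requires. Consequently neither the claim that the family of curves on which $f_{k_j}\circ\gamma$ fails to converge is $p$-exceptional, nor the limit passage, is justified as written.

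The missing idea is the curve-wise equicontinuity coming from the upper gradients, which is precisely the content of Proposition~\ref{pr:improve_to_pae}~\eqref{it:improve_to_pae_convergence} in the paper. For $p$-a.e.\ curve $\gamma$ you already have all the ingredients on your list: each $g_{k_j}$ satisfies the upper gradient inequality for $f_{k_j}$ on $\gamma$ and all its subcurves (Proposition~\ref{pr:upper_gradient}), $\int_\gamma|g_{k_j}-g|\,ds\to 0$, and $\int_\gamma g\,ds<\infty$. These give uniform absolute continuity (hence equicontinuity) of the maps $f_{k_j}\circ\gamma_s$ in the arclength parameter. Combining this with convergence at $\mathcal{L}^1$-a.e.\ parameter value (which follows from $\mu$-a.e.\ convergence together with Proposition~\ref{pr:improve_to_pae}~\eqref{it:improve_to_pae_set}, or from your second Fuglede application) and completeness of $X'$ upgrades the convergence to uniform convergence along the whole curve, in particular at the endpoints; it also shows that the $\mu$-null non-convergence set $E$ is in fact $p$-exceptional. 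Note that your parenthetical dismissal (``a $\mu$-null set need not be $p$-exceptional in general'') attacks a claim nobody makes: the point is not that null sets are automatically $p$-exceptional, but that this particular $E$ is, by the equicontinuity argument above. Once this ingredient is inserted, the rest of your limit passage is correct and coincides with the proof the paper intends via its two stated tools.
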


The proof of this result uses   two standard arguments. The first is Fuglede's lemma which can be seen as a counterpart of Fubini's theorem in metric measure spaces.

\begin{lemma}[Fuglede's lemma] 
 \label{th:convergence_ae_path} Let $X$ be a metric measure space and suppose that $f_k: X \to \R$ is a sequence 
 of Borel functions which converges in $L^p(X)$ to a Borel function $f$. 
Then there exists a subsequence $f_{k_j}$ such that 
$$\lim_{j \to \infty}  \int_\gamma |f_{k_j}- f|^p   = 0$$
for $p$-a.e. all rectifiable curves $\gamma$ in $X$.   
\end{lemma}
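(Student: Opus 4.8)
The plan is to follow the standard modulus argument. First I would reduce to the case $f = 0$ by replacing $f_k$ with $f_k - f$; this is legitimate since the difference still converges to $0$ in $L^p(X)$ and the conclusion is about $\int_\gamma |f_{k}-f|^p\,ds$. So assume $h_k := f_k - f \to 0$ in $L^p(X)$, i.e. $\int_X |h_k|^p\,d\mu \to 0$. Passing to a subsequence (not relabeled) I may assume $\int_X |h_k|^p\,d\mu \le 2^{-kp}$, so that $\sum_k \bigl(\int_X |h_k|^p\,d\mu\bigr)^{1/p} < \infty$. The goal is then to show that for $p$-a.e.\ rectifiable curve $\gamma$ one has $\int_\gamma |h_k|^p\,ds \to 0$.

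The key step is to estimate the $p$-modulus of the exceptional family. For $j \in \N$ let
\[
\Gamma_j := \Bigl\{ \gamma \ \text{rectifiable in } X : \limsup_{k\to\infty} \int_\gamma |h_k|^p\, ds > 1/j \Bigr\},
\]
so the full exceptional family is $\bigcup_j \Gamma_j$, and by countable subadditivity of the modulus it suffices to show $\operatorname{mod}_p(\Gamma_j) = 0$ for each $j$. Fix $j$. For each $N$, the function $\rho_N := \bigl( j \sum_{k\ge N} |h_k|^p \bigr)^{1/p}$ is a Borel function on $X$; I claim it is an admissible density for $\Gamma_j$ when $N$ is large — indeed if $\gamma \in \Gamma_j$ then $\int_\gamma |h_k|^p\,ds > 1/j$ for infinitely many $k$, hence $\int_\gamma \sum_{k\ge N}|h_k|^p\,ds = \infty$ (monotone convergence, swapping sum and curve integral), so $\int_\gamma \rho_N\,ds = \infty \ge 1$. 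Therefore
\[
\operatorname{mod}_p(\Gamma_j) \le \int_X \rho_N^p\, d\mu = j \sum_{k\ge N} \int_X |h_k|^p\, d\mu \le j \sum_{k\ge N} 2^{-kp} \xrightarrow[N\to\infty]{} 0,
\]
which gives $\operatorname{mod}_p(\Gamma_j) = 0$. Since $j$ was arbitrary and a countable union of $p$-exceptional families is $p$-exceptional, the family of curves on which $\int_\gamma |h_k|^p\,ds \not\to 0$ has $p$-modulus zero, as desired. Translating back via $h_k = f_k - f$ finishes the proof.

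I do not expect a serious obstacle here: the only points requiring care are (i) justifying the interchange of the infinite sum with the curve integral $\int_\gamma(\cdot)\,ds$, which is legitimate by monotone convergence since all $|h_k|^p$ are nonnegative Borel functions, and (ii) checking that $\rho_N$ is Borel — it is, as a pointwise limit of finite Borel sums raised to the power $1/p$. The argument uses only countable subadditivity of $\operatorname{mod}_p$, its definition via admissible densities, and the summability extracted from $L^p$ convergence along a subsequence; no properties of $X$ beyond being a metric measure space are needed.
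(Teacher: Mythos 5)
There is a genuine gap, and it sits exactly at the admissibility step. In the definition of $\modp$, a density $\rho$ is admissible for a family $\Gamma$ only if $\int_\gamma \rho\, ds \ge 1$ for (locally rectifiable) $\gamma\in\Gamma$, i.e.\ you need a lower bound on the \emph{first-power} line integral of $\rho_N$. What you actually establish for $\gamma\in\Gamma_j$ is $\int_\gamma \rho_N^p\, ds=\infty$ (since $\rho_N^p=j\sum_{k\ge N}|h_k|^p$), and from this you conclude $\int_\gamma \rho_N\, ds=\infty$. For $p>1$ this inference is false on a rectifiable (finite-length) curve: e.g.\ a density behaving like $t^{-1/p}$ in the arclength parameter of a curve of length $1$ has finite first-power integral but infinite $p$-th power integral. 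So $\rho_N$ has not been shown admissible, and the estimate $\modp(\Gamma_j)\le \int_X\rho_N^p\,d\mu$ is unjustified. Your argument is correct only in the case $p=1$; for $p>1$ the step fails, and this is precisely the point where the ``$p$-th power along the curve'' formulation is genuinely harder than the classical statement — it cannot be obtained by this one-line modulus estimate.

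The paper's own proof (following V\"ais\"al\"a and HKST, with the choice $\int_X|f_{k_j}-f|^p\,d\mu<2^{-pj-j}$) avoids the issue by defining the bad families through first-power curve integrals: one sets $\Gamma_j=\{\gamma:\int_\gamma|f_{k_j}-f|\,ds>2^{-j}\}$, notes that $2^j|f_{k_j}-f|$ is admissible for $\Gamma_j$, so $\modp(\Gamma_j)\le 2^{jp}\,\|f_{k_j}-f\|_{L^p}^p<2^{-j}$, and concludes that the $\limsup$ family is $p$-exceptional; off it, $\int_\gamma|f_{k_j}-f|\,ds\to 0$. Note this yields convergence of $\int_\gamma|f_{k_j}-f|\,ds$ (equivalently, $L^1$ convergence along $p$-a.e.\ curve), which is exactly what is used later in Proposition~\ref{pr:improve_to_pae}; it does not deliver the literal ``$\int_\gamma|f_{k_j}-f|^p\,ds\to 0$'' by your route, and if you insist on that stronger formulation you would need a different idea, not just a repair of the admissibility computation. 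The rest of your write-up (reduction to $f=0$, extraction of a rapidly convergent subsequence, monotone convergence to interchange sum and curve integral, countable subadditivity of the modulus) is fine.
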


\begin{proof} This  follows directly from the definition of the modulus of a curve family
if we choose the subsequence such that
$$ \int_E |f_{k_j} - f|^p \, dx < 2^{-pj - j},$$
see, for example,  \cite{V71}, Thm. 28.1 or \cite{HKST}, Chapter 5.2.
\end{proof}

\bigskip\bigskip
The second  standard tool  is an improvement of  a.e.\ properties to properties which hold away from 
a $p$-exceptional set once we have a $p$-integrable $p$-weak upper gradient
(see  \cite[Lemma 6.3.5, Corollary 6.3.6]{HKST} for closely related results and arguments).
We state the result  for a metric measure space $X$.  It applies equally  to open subsets  $U \subset X$
considered as metric measure spaces with the induced metric and measure.

\begin{proposition} \label{pr:improve_to_pae}
Let $X$ be a metric measure space. Then the following assertions hold.
\ben
\item  \label{it:improve_to_pae_set} If $E \subset X$ is  a $\mu$-nullset then $p$-a.e.\  curve has zero length in $E$, 
i.e. $\mathcal L^1( \{ t \in [0, \mathrm{length}(\gamma)] : \gamma_s(t) \in E\}) = 0$ where $\gamma_s$ denotes
the arclength parametrization.
\item  \label{it:improve_to_pae_equality} Suppose $f: X \to X'$ has a $p$-integrable $p$-weak upper gradient. Assume that  there 
exists  $c \in X'$ such that
 $f = c$ $\mu$-a.e. (or assume  $X' = \R$  and $f \ge a$ $\mu$-a.e.).
Then there exists a $p$-exceptional set  $E$
such that  $f = c$ in $X \setminus E$ (or $ f \ge a$ in $X\setminus E$). 
\item \label{it:improve_to_pae_convergence} Suppose that the maps $f_j : X \to X'$ have $p$-integrable $p$-weak upper gradients $g_j$. Assume that $f_j \to f$ 
$\mu$-a.e. and that 
exists a Borel function $g$ such that $g_j \to g$ in $L^p(X)$. Then there exists a $p$-exceptional set  $E$ such that $f_j$ 
converges in $X \setminus E$. 
\een
\end{proposition}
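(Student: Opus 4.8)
The plan is to prove the three assertions in turn: \eqref{it:improve_to_pae_set} is a one-line modulus estimate, \eqref{it:improve_to_pae_equality} combines it with the absolute continuity of $f$ along $p$-a.e.\ curve from Proposition~\ref{pr:upper_gradient}, and \eqref{it:improve_to_pae_convergence} adds Fuglede's lemma and an Arzel\`a--Ascoli-type argument along curves. For \eqref{it:improve_to_pae_set}, after replacing $E$ by a Borel $\mu$-null superset (which only enlarges the sets $\gamma_s^{-1}(E)$ and so is harmless), I would test the family $\Gamma_E$ of all nonconstant locally rectifiable curves $\gamma$ with $\mathcal L^1(\{t:\gamma_s(t)\in E\})>0$ against the Borel density $\rho:=\infty\cdot 1_E$. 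Since $\mu(E)=0$ we have $\int_X\rho^p\,d\mu=0$, while for $\gamma\in\Gamma_E$ we have $\int_\gamma\rho\,ds=\int_0^{\length(\gamma)}\rho\circ\gamma_s\,dt=+\infty\ge1$, so $\rho$ is admissible for $\Gamma_E$ and $\modp(\Gamma_E)=0$.

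For \eqref{it:improve_to_pae_equality}, set $E:=\{x:f(x)\ne c\}$ (resp.\ $E:=\{x:f(x)<a\}$ when $X'=\R$), which is $\mu$-null by hypothesis. Let $\Gamma_0$ be the union of the $\modp$-null family provided by Proposition~\ref{pr:upper_gradient}, outside of which $f$ is absolutely continuous on $\gamma$ and all its compact subcurves, $g$ is integrable on $\gamma$, and the upper gradient inequality holds, together with the family $\Gamma_E$ of \eqref{it:improve_to_pae_set} for a Borel hull of $E$; then $\modp(\Gamma_0)=0$. I claim every nonconstant compact rectifiable curve $\gamma$ meeting $E$ lies in $\Gamma_0$: otherwise $f\circ\gamma_s$ is continuous on $[0,\length(\gamma)]$ and, by the definition of $E$ together with \eqref{it:improve_to_pae_set}, equals $c$ (resp.\ is $\ge a$) for $\mathcal L^1$-a.e.\ $t$; continuity then forces $f\circ\gamma_s\equiv c$ (resp.\ $\ge a$) everywhere, and composing with the continuous surjection $[a,b]\to[0,\length(\gamma)]$ given by the length function yields $f\circ\gamma\equiv c$ (resp.\ $\ge a$), contradicting that $\gamma$ meets $E$. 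Hence the family of nonconstant rectifiable curves meeting $E$ is $\modp$-null, i.e.\ $E$ is $p$-exceptional, and $f=c$ (resp.\ $f\ge a$) on $X\setminus E$.

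For \eqref{it:improve_to_pae_convergence} (assuming, as in the applications, that $X'$ is complete), let $A_0:=\{x:(f_j(x))_j\text{ does not converge in }X'\}$, a $\mu$-null set. I would assemble a single $\modp$-null family $\Gamma_\ast$ containing: the family $\Gamma_{A_0}$ of \eqref{it:improve_to_pae_set} for a Borel hull of $A_0$; the countable union over $j$ of the bad families of Proposition~\ref{pr:upper_gradient} for the pairs $(f_j,g_j)$; the curves with $\int_\gamma g^p\,ds=\infty$; and, via Fuglede's Lemma~\ref{th:convergence_ae_path} applied to $g_j\to g$ in $L^p(X)$, the $\modp$-null family outside of which a fixed subsequence $(g_{j_k})$ satisfies $\int_\gamma|g_{j_k}-g|^p\,ds\to0$. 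For $\gamma\notin\Gamma_\ast$: each $f_j\circ\gamma_s$ is absolutely continuous and satisfies $d'(f_j(\gamma_s(s)),f_j(\gamma_s(s')))\le\int_s^{s'}g_j\circ\gamma_s\,dt$; moreover $g_{j_k}\circ\gamma_s\to g\circ\gamma_s$ in $L^p([0,\length(\gamma)])$, so $\{g_{j_k}\circ\gamma_s\}_k$ is equi-integrable in $L^1$, whence the maps $(f_{j_k}\circ\gamma_s)_k$ are equicontinuous with a common modulus of continuity; and they converge at every $t$ with $\gamma_s(t)\notin A_0$, a dense set. Equicontinuity plus pointwise convergence on a dense set gives uniform convergence of $(f_{j_k}\circ\gamma_s)_k$ on $[0,\length(\gamma)]$. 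Consequently, letting $E$ be the set of points at which $(f_{j_k})$ does not converge, any nonconstant rectifiable curve meeting $E$ must lie in $\Gamma_\ast$ (else $f_{j_k}$ would converge at the meeting point), so $E$ is $p$-exceptional; and $f_j\to f$ $\mu$-a.e.\ identifies the limit off $E$ with $f$ up to a $\mu$-null set. To obtain convergence of the full sequence off $E$ one then observes that running the argument with an arbitrary subsequence of $(f_j)$ produces, along each $\gamma\notin\Gamma_\ast$, uniform convergence to the \emph{same} continuous limit — the continuous extension of $f\circ\gamma_s$, which is subsequence-independent — so $f_j\circ\gamma_s$ itself converges uniformly for $\gamma\notin\Gamma_\ast$, giving convergence of $f_j$ off $E$.

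The main obstacle is precisely this last passage in \eqref{it:improve_to_pae_convergence}. Fuglede's lemma only yields $\int_\gamma|g_{j_k}-g|^p\,ds\to0$ along a subsequence, since $\|g_j-g\|_{L^p(X)}^p$ need not be summable; hence the equicontinuity along $p$-a.e.\ curve — and with it the curvewise Arzel\`a--Ascoli conclusion — is immediate only for a subsequence. The upgrade to the full sequence rests on the uniform curvewise limit being forced to equal $f$ (independent of the chosen subsequence) and on having a single $\modp$-null family $\Gamma_\ast$ serve for all subsequences; verifying the latter carefully is the delicate step. If one is content with the weaker (and, for the stability Lemma~\ref{le:stability_p_weak_upper_gradient}, sufficient) conclusion that a subsequence of $(f_j)$ converges off a $p$-exceptional set, this difficulty does not arise and the proof reduces to the three points above.
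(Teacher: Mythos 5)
Your proofs of \eqref{it:improve_to_pae_set} and \eqref{it:improve_to_pae_equality} are correct and are exactly the paper's arguments: the admissible density $\infty\cdot 1_{E'}$ on a Borel hull of the null set, and absolute continuity along $p$-a.e.\ curve (Proposition~\ref{pr:upper_gradient}) combined with \eqref{it:improve_to_pae_set}. For \eqref{it:improve_to_pae_convergence} you also follow the paper's route (Fuglede plus equicontinuity of $f_j\circ\gamma_s$ along $p$-a.e.\ curve, then conclude as in \eqref{it:improve_to_pae_equality}); the only point of divergence is the full-sequence issue you flag, and there your instinct is right on both counts. The ``same continuous limit along every subsequence'' upgrade does not close the gap, precisely because Fuglede's Lemma~\ref{th:convergence_ae_path} produces an exceptional curve family that depends on the chosen subsequence, so no single family $\Gamma_\ast$ serves all subsequences; moreover, on a fixed good curve the full sequence can genuinely fail to converge at a point (travelling bumps along the curve with $\|g_j-g\|_{L^p(X)}\to0$ but $\int_\gamma|g_j-g|\,ds\not\to0$), so the curvewise argument cannot by itself give the full-sequence assertion. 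Note, however, that the paper's own two-line proof of \eqref{it:improve_to_pae_convergence} is no more careful: it invokes Fuglede for the full sequence, which Lemma~\ref{th:convergence_ae_path} only yields after passing to a subsequence. So you have established exactly what the paper's argument establishes, namely the statement for a subsequence (equivalently, for the full sequence after relabelling so that, say, $\|g_j-g\|_{L^p}\le 2^{-j}$), and, as you correctly observe, this is all that is needed: the only application, Lemma~\ref{le:stability_p_weak_upper_gradient}, extracts a subsequence anyway. Your parenthetical completeness assumption on $X'$ is likewise a fair reading (without it one only gets that $(f_j(x))$ is Cauchy off a $p$-exceptional set) and is harmless, since the stability lemma assumes $X'$ complete.
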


\begin{proof} \eqref{it:improve_to_pae_set}.  Let $E' \supset E$ be a Borel null set. Then the assertion follows from the definition of the 
$p$-modulus if we consider the admissible function $\rho$ which is $\infty$ on $E'$ and zero elsewhere.

 \eqref{it:improve_to_pae_equality}. Assume $f = c$ $\mu$-a.e. and let $E$ be the set where $f \ne c$.  
 By assertion \eqref{it:improve_to_pae_set} we have $f \circ \gamma_s = c$ 
 $\mathcal L^1$ a.e. for $p$-a.e. curve. Since $f$ is absolutely continuous on $p$-a.e. curve, it follows that $p$-a.e.
 curve does not meet $E$. The same reasoning applies if $X' = \R$ and $f \ge a$ a.e.
 
  \eqref{it:improve_to_pae_convergence}. Let $E$ be set where the sequence  $f_j$ does not converge. It follows from Fuglede's lemma 
  that,
  for $p$-a.e.\ rectifiable curve $\gamma$,  we have $g_j \circ \gamma_s \to g \circ \gamma_s$ in $L^1([0, \mathrm{length}(\gamma)])$. 
  Thus the functions $f_j \circ \gamma_s$ are equicontinuous on $p$-a.e. curve  and one concludes as for assertion  \eqref{it:improve_to_pae_equality}.
\end{proof}

\subsection{Upper gradients and weak derivatives}
In this subsection we provide a proof  that a map is in the Sobolev space $W^{1,p}(U;X')$  defined by weak derivatives if and only if 
it has a represenative which possesses a $p$-integrable $p$-weak upper gradient. 
Specifically we show the following results.

\begin{proposition}    \label{pr:weak_scalar_upper_gradient} Let $U \subset G$ be open and let $1 \le p < \infty$. 
 Let  $u: U \to \R$ be in $L^p(U)$.
Then the following two assertions are equivalent:
\ben
\item $u \in W^{1,p}(U)$;
\item  $u$ has a  representative $\bar u$ which has a $p$-integrable  $p$-weak upper gradient $g$.
\een
Moroever, if $u \in W^{1,p}(U)$   then every Borel respresentative of $|D_h u|$ is a $p$-weak upper gradient. 
Conversely if $g$ is a $p$-weak upper gradient then $|D_h u| \le g$ a.e.
\end{proposition}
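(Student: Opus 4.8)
The statement is the equivalence of the distributional Sobolev space $W^{1,p}(U)$ with the upper gradient Sobolev space, together with the sharp comparison $|D_hu|\le g$ a.e.\ for any $p$-weak upper gradient $g$, and conversely that $|D_hu|$ itself is a $p$-weak upper gradient. The plan is to prove the two implications separately, using the horizontal curves adapted to the group structure as test curves.

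\textbf{From weak derivatives to upper gradients.} First I would assume $u\in W^{1,p}(U)$. By Proposition~\ref{pr:sobolev_basic}~\eqref{it:basic_density}, approximate $u$ in $W^{1,p}$ by smooth functions $u_k\in C^\infty(U)$. For a smooth function, the fundamental theorem of calculus along any absolutely continuous horizontal curve $\gamma$ gives
$$
|u_k(\gamma(b))-u_k(\gamma(a))|\le\int_\gamma|D_hu_k|\,ds\,,
$$
since $\frac{d}{dt}(u_k\circ\gamma)=\langle\nabla_hu_k,\gamma'\rangle$ and $|\gamma'|$ is the metric speed; thus $|D_hu_k|$ is a genuine upper gradient of $u_k$. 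Now $|D_hu_k|\to|D_hu|$ in $L^p(U)$ (after passing to Borel representatives), and $u_k\to u$ in $L^p(U)$. Apply Lemma~\ref{le:stability_p_weak_upper_gradient}: a subsequence $u_{k_j}$ converges off a $\mu$-null set and off a $p$-exceptional curve family to a representative $\bar u$ of $u$ having $|D_hu|$ as a $p$-weak upper gradient. This simultaneously produces a representative $\bar u$ with a $p$-integrable $p$-weak upper gradient and shows that $|D_hu|$ is $p$-weak upper gradient for that representative (and hence for any representative, since changing $f$ on a null set does not affect the $p$-weak upper gradient property by Proposition~\ref{pr:improve_to_pae}~\eqref{it:improve_to_pae_set}).

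\textbf{From upper gradients to weak derivatives, and $|D_hu|\le g$.} Conversely, suppose $\bar u$ has a $p$-integrable $p$-weak upper gradient $g$. I would test against $\varphi\in C_c^\infty(U)$ and a fixed horizontal basis vectorfield $X_i$. The standard device is to integrate along the integral curves of $X_i$: by Fubini (disintegrating Haar measure along the left-invariant flow of $X_i$, which preserves $\mu$), for a.e.\ flow line the restriction of $\bar u$ is absolutely continuous with classical derivative bounded by $g$ in absolute value (here one uses Proposition~\ref{pr:upper_gradient} to know $\bar u$ is absolutely continuous on $p$-a.e.\ curve, and Proposition~\ref{pr:improve_to_pae} to upgrade the $p$-a.e.\ statement to a.e.\ flow line). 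Integration by parts along each flow line then yields
$$
\int_U\bar u\,X_i\varphi\,d\mu=-\int_U g_i\,\varphi\,d\mu
$$
for a measurable function $g_i$ with $|g_i|\le g$ a.e.; so $u\in W^{1,p}(U)$ with $X_iu=g_i$ and $|X_iu|\le g$. To get the full vector bound $|D_hu|\le g$ a.e.\ (not just $|X_iu|\le g$ for each $i$), I would argue pointwise at Lebesgue points: fix a unit horizontal direction $Y=\sum c_iX_i$ and repeat the flow-line argument for the left-invariant field $Y$, obtaining $|Yu|\le g$ a.e.; optimizing over $Y$ (using separability, a countable dense set of directions suffices) gives $|D_hu(x)|=\sup_{|Y|\le1}|Yu(x)|\le g(x)$ at a.e.\ $x$. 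This last optimization step, reconciling the directional bounds into the Euclidean-norm bound on the horizontal gradient while controlling null sets uniformly, is the part that needs the most care; it is exactly parallel to the scalar Euclidean case \cite[Theorem 7.4.5]{HKST}, so I would invoke that template, substituting the left-invariant horizontal flows of $G$ for coordinate directions and citing Propositions~\ref{pr:upper_gradient} and~\ref{pr:improve_to_pae} where the Euclidean proof uses Fubini and absolute continuity on lines. The main obstacle is purely bookkeeping: ensuring the exceptional curve families and null sets from the various applications of Fuglede's lemma and Proposition~\ref{pr:improve_to_pae} are handled so that the final a.e.\ inequality holds simultaneously in all directions.
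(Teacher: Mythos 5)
Your proposal follows essentially the same route as the paper: for (1) $\Rightarrow$ (2) you use Friedrichs density of smooth functions together with the stability of $p$-weak upper gradients under $L^p$ convergence (Lemma~\ref{le:stability_p_weak_upper_gradient}), and for (2) $\Rightarrow$ (1) you integrate along flow lines of a left-invariant horizontal field, disintegrate Haar measure along the flow, and finish with a countable dense family of horizontal directions to get $|D_hu|\le g$ a.e.; the paper's only real variation is that it extracts the weak derivative from difference quotients and weak $L^p$ compactness (Dunford--Pettis when $p=1$) rather than from line-wise integration by parts, and both variants work, yours needing only a word on joint measurability of the line-wise derivatives.

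Two points do need correction. First, the parenthetical claim that $|D_hu|$ is then a $p$-weak upper gradient of \emph{every} representative, ``since changing $f$ on a null set does not affect the $p$-weak upper gradient property,'' is false: Proposition~\ref{pr:improve_to_pae}~\eqref{it:improve_to_pae_set} only says that $p$-a.e.\ curve has zero \emph{length} in a $\mu$-null set $E$, not that $p$-a.e.\ curve avoids $E$, and the upper gradient inequality involves the values of the function at the curve's endpoints. The family of curves with an endpoint in a null set can have positive modulus (already in the abelian case, take $E$ a piece of a hyperplane and the transversal segments ending on it), so redefining $u$ on $E$ can destroy the property; this is exactly why the proposition asserts only the existence of a representative $\bar u$. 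Your main argument does produce that representative, so the error is not load-bearing, but the parenthetical should be deleted. Second, the upgrade from ``$p$-a.e.\ rectifiable curve'' to ``a.e.\ flow line of $X_i$'' is not supplied by Proposition~\ref{pr:improve_to_pae}; what is needed, and what the paper proves directly, is the Fubini-type implication that if $\Gamma_E$ is the family of flow segments $t\mapsto\Psi(x')\exp(tX)$ with $x'\in E$ and $\modp(\Gamma_E)=0$, then $\mathcal L^{N-1}(E)=0$, which one checks by testing an admissible density against the product decomposition $\Phi^*\mu=dx_1\otimes\mu'$ and applying H\"older and Fubini. With that substitution your argument is complete and matches the paper's.
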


\begin{proposition}  \label{pr:upper_gradient_Xprime}
Let $U \subset G$ be open and let $1 \le p < \infty$. Let  $X'$ be a complete separable metric space and let
 $f: U \to X'$ be a measurable  function such that $d(f(\cdot), a) \in L^p(U)$ for some $a \in X'$.
Then the following three assertions are equivalent:
\ben
\item   \label{it:upper_distributional_X1}
There exists a representative $\bar f$ of $f$ which  has a  $p$-integrable $p$-weak subgradient $g$;
\item  \label{it:upper_distributional_X2}  for every Lipschitz function $\varphi: X' \to \R$ the function  $ \varphi \circ f  -  \varphi(a)$ is in $W^{1,p}(U)$;
\item  \label{it:upper_distributional_X3} $f \in W^{1,p}(U;X')$;
\een
Moreover if the above assertions hold, $\bar f$ is as in (\ref{it:upper_distributional_X1}), and $\bar g  \in L^p(U)$ is a Borel representative of the function $g$ in 
Definition~\ref{de:sobolev_carnot_new}
then $\bar g$ is  a $p$-weak upper gradient for $\bar f$. Conversely if $\bar g$ is a $p$-weak upper gradient of $\bar f$ then
we can take  $g = \bar g$ in  Definition~\ref{de:sobolev_carnot_new}.

The same conclusions holds if one replaces  $L^p(U;X')$ and $W^{1,p}(U;X')$ by 
$L^p_{loc}(U;X')$ and $W^{1,p}_{loc}(U;X')$, respectively.
\end{proposition}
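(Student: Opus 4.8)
The plan is to prove Proposition~\ref{pr:upper_gradient_Xprime} by first establishing Proposition~\ref{pr:weak_scalar_upper_gradient} for scalar functions, then bootstrapping to metric-space targets via postcomposition with Lipschitz functions. For the scalar case, the implication $(1)\Rightarrow(2)$ (i.e.\ $p$-weak upper gradient $\Rightarrow$ distributional derivatives) is the standard argument: if $\bar u$ has $p$-integrable $p$-weak upper gradient $g$, then $\bar u$ is absolutely continuous on $p$-a.e.\ line segment parallel to a horizontal left-invariant vectorfield $X_i$ (using that $G$ has a nice foliation by integral curves of $X_i$, and that the family of exceptional segments has $p$-modulus zero hence is negligible for the integral-geometric Fubini decomposition of Lebesgue measure), and integration by parts along these segments yields that the classical a.e.-defined derivative $X_i\bar u$ is the distributional derivative, with $|X_i\bar u|\le g$ a.e.; this is exactly as in \cite[Theorem 7.4.5]{HKST} or \cite[\S7.4]{gilbarg_trudinger} adapted to the Carnot setting. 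For $(2)\Rightarrow(1)$: given $u\in W^{1,p}(U)$, use density of smooth functions (Proposition~\ref{pr:sobolev_basic}~\eqref{it:basic_density}) to pick $u_k\in C^\infty(U)$ with $u_k\to u$ and $|D_hu_k|\to |D_hu|$ in $L^p_{\loc}$; for smooth $u_k$ the function $|D_hu_k|$ is a genuine upper gradient (fundamental theorem of calculus along horizontal curves, using that horizontal curves have velocity in $\mathcal H$); then Lemma~\ref{le:stability_p_weak_upper_gradient} produces a representative $\bar u$ of $u$ with $p$-weak upper gradient $|D_hu|$. The minimality statement ($|D_hu|\le g$ for any $p$-weak upper gradient $g$) follows by running the $(1)\Rightarrow(2)$ direction with $g$ in place of $|D_hu|$.

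Next I would prove Proposition~\ref{pr:upper_gradient_Xprime}. The equivalence $(2)\Leftrightarrow(3)$ is essentially built into Definition~\ref{de:sobolev_carnot_new} together with Proposition~\ref{pr:composition_by_lip}: $(3)\Rightarrow(2)$ is Proposition~\ref{pr:composition_by_lip}, and $(2)\Rightarrow(3)$ is immediate since the distance functions $z\mapsto d'(\cdot,z)$ are $1$-Lipschitz, once one checks that the common bound $g$ in Definition~\ref{de:sobolev_carnot_new} can be taken to be $\sup_{z\in D}|D_h u_z|$ over a countable dense $D\subset X'$, which lies in $L^p$ because each $|D_hu_z|\le$ (the $W^{1,p}$ bound) and one uses Proposition~\ref{pr:sobolev_basic}~\eqref{it:basic_inf}-type measurable-supremum reasoning. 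For $(1)\Rightarrow(3)$: if $\bar f$ has $p$-integrable $p$-weak upper gradient $g$, then for each fixed $z$, $d'(\bar f(\cdot),z)$ has the same $g$ as a $p$-weak upper gradient (by the reverse triangle inequality $|d'(x,z)-d'(y,z)|\le d'(x,y)$), so by Proposition~\ref{pr:weak_scalar_upper_gradient} each $u_z\in W^{1,p}(U)$ with $|D_hu_z|\le g$, whence $f\in W^{1,p}(U;X')$ with common bound $g$. For $(3)\Rightarrow(1)$: pick a countable dense $D=\{z_k\}\subset X'$; the embedding $\iota:X'\to\ell^\infty$, $\iota(x)=(d'(x,z_k)-d'(a,z_k))_k$, is isometric; each coordinate $u_{z_k}\in W^{1,p}(U)$ has, by Proposition~\ref{pr:weak_scalar_upper_gradient}, a representative with $p$-weak upper gradient $g$ (the common bound). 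One then needs to produce a single representative $\bar f$ of $f$ that is simultaneously "good" for all coordinates and has $g$ as $p$-weak upper gradient; this is done by applying Lemma~\ref{le:stability_p_weak_upper_gradient} (or Proposition~\ref{pr:improve_to_pae}) to a suitable approximating sequence, e.g.\ mollifications of $\iota\circ f$, and using completeness of $X'$ (with "every closed ball compact" not needed here, only separability and completeness) to conclude the $\ell^\infty$-valued limit actually takes values in $\iota(X')$ off a $\mu$-null set.

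The local versions follow by exhausting $U$ by an increasing sequence of open sets $V_j$ with compact closure in $U$, applying the global result on each $V_j$, and using a diagonal/uniqueness argument to glue the representatives and upper gradients (the $p$-weak upper gradient property is local and compatible with such gluing, since a curve contained in $U$ is contained in some $V_j$ up to a $p$-exceptional family once one is slightly careful, using countable subadditivity of the $p$-modulus).

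The main obstacle I expect is the bookkeeping in $(3)\Rightarrow(1)$: passing from "each coordinate function $u_{z_k}$ has a representative with $p$-weak upper gradient $g$" to "$f$ itself has a representative with $p$-weak upper gradient $g$". The subtlety is that the null sets and $p$-exceptional sets produced coordinate-by-coordinate must be combined into a single negligible set, and one must verify the limit map lands back in $X'$ rather than merely in $\ell^\infty$. The clean route is to mollify $F:=\iota\circ f:U\to\ell^\infty$ to get Lipschitz $F_k$ with $|D_h F_k|$ controlled by $g\ast\varphi_k$ (so $|D_hF_k|\to g$ in $L^p_{\loc}$), observe $F_k$ has $|D_hF_k|$ as an honest upper gradient, and apply Lemma~\ref{le:stability_p_weak_upper_gradient} to extract $\bar F$ with $p$-weak upper gradient $g$; then $\bar F=F$ a.e., and since $F$ takes values in the closed set $\iota(X')$ and $\bar F$ is a pointwise a.e.\ limit, one uses Proposition~\ref{pr:improve_to_pae}~\eqref{it:improve_to_pae_set} together with absolute continuity along $p$-a.e.\ curve to see $\bar F$ takes values in $\iota(X')$ off a $p$-exceptional set, and redefining $\bar F$ on that exceptional set (which is $\mu$-null, hence does not affect the $p$-weak upper gradient property after a further harmless modification) yields the desired representative $\bar f:=\iota^{-1}\circ\bar F$.
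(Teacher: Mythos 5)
Your overall architecture (scalar equivalence first, then the metric-valued case, with Proposition~\ref{pr:composition_by_lip} giving \eqref{it:upper_distributional_X3}$\Rightarrow$\eqref{it:upper_distributional_X2} and the reverse triangle inequality plus Proposition~\ref{pr:weak_scalar_upper_gradient} giving \eqref{it:upper_distributional_X1}$\Rightarrow$\eqref{it:upper_distributional_X3} together with the ``moreover'' statements) matches the paper. Where you genuinely diverge is the key implication \eqref{it:upper_distributional_X3}$\Rightarrow$\eqref{it:upper_distributional_X1}: you propose the Kuratowski embedding $\iota:X'\to\ell^\infty$, mollification of $\iota\circ f$, and Lemma~\ref{le:stability_p_weak_upper_gradient}, i.e.\ essentially the Banach-valued route of \cite[Thm.~7.1.20]{HKST}, which the paper mentions and deliberately avoids. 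The paper instead works directly in $X'$: it takes the representatives $\bar u_z$, $z$ in a countable dense set $D$, each with $p$-weak upper gradient a fixed Borel version $\bar g$ of the common bound, upgrades the a.e.\ relations $\inf_{z\in D}u_z=0$ and $d'(z,z')-u_{z'}\le u_z\le d'(z,z')+u_{z'}$ to hold off a single $p$-exceptional set via Proposition~\ref{pr:improve_to_pae}~\eqref{it:improve_to_pae_equality}, and then reconstructs $\bar f(x)$ pointwise as the limit of a Cauchy sequence in the complete space $X'$; the upper-gradient inequality for $\bar f$ along $p$-a.e.\ curve then follows from the one for the $\bar u_z$ by density of $D$. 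This avoids mollifying a vector-valued map and avoids having to argue that an $\ell^\infty$-valued limit lands back in $\iota(X')$.

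Your route has two genuine gaps. First, the step ``mollify $F=\iota\circ f$ to get Lipschitz $F_k$ with $|D_hF_k|$ controlled by $g\ast\varphi_k$'' is precisely the crux and is not justified. In a Carnot group, group convolution commutes with left-invariant horizontal derivatives only when the derivative falls on the kernel, so $X_i(u\ast\varphi_\rho)\neq (X_iu)\ast\varphi_\rho$ in general; to dominate the horizontal derivative of the mollification by a mollification of $g$ \emph{uniformly over the countably many coordinates} $u_{z_k}$ you must either convolve on the other side (which then does not produce a smooth or Lipschitz map, so the mollified coordinates again have only distributional derivatives and you are back to choosing good representatives of infinitely many functions simultaneously --- the very problem you were trying to solve), or run a Friedrichs-type commutator estimate uniformly in $k$, which you do not supply. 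Coordinatewise statements are free; the whole difficulty of \eqref{it:upper_distributional_X3}$\Rightarrow$\eqref{it:upper_distributional_X1} is this uniformity in $z$, and the paper's Proposition~\ref{pr:improve_to_pae}~\eqref{it:improve_to_pae_equality} is exactly the tool that replaces it. Second, in \eqref{it:upper_distributional_X2}$\Rightarrow$\eqref{it:upper_distributional_X3} your justification of the common majorant --- that $\sup_{z\in D}|D_hu_z|\in L^p$ ``because each $|D_hu_z|\le$ the $W^{1,p}$ bound'', invoking Proposition~\ref{pr:sobolev_basic}~\eqref{it:basic_inf}-type reasoning --- is circular: hypothesis \eqref{it:upper_distributional_X2} gives each $u_z\in W^{1,p}(U)$ individually with no uniform bound, a countable supremum of $L^p$ functions need not lie in $L^p$, and Proposition~\ref{pr:sobolev_basic}~\eqref{it:basic_inf} itself presupposes a common majorant. (The paper also treats this implication very briskly, but it does not rest on your supremum claim; as written, your argument for the existence of the function $g$ in Definition~\ref{de:sobolev_carnot_new} is a gap.) The remaining pieces --- the scalar proposition, the verification that $\|F_\rho-F\|_{\ell^\infty}\to0$ in $L^p_{\loc}$, the return from $\ell^\infty$ to $X'$ off an exceptional set, and the localization --- are plausible and consistent with the paper's toolkit.
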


 Similar equivalences using absolute continuity along a.e.\ horizontal curve $t \mapsto
a \exp(t X_j)$ rather than absolute continuity for $p$-a.e. rectifiable curve have been studied by 
Vodopyanov  \cite[Proposition 3, p.\ 674] {vodopyanov_bounded_distortion}.

\bigskip
\begin{proof}[Proof of  Proposition~\ref{pr:weak_scalar_upper_gradient}]  
To show the  implication 
 (1) $\Longrightarrow$ (2)  we first note that smooth functions are dense in $W^{1,p}(U)$.
 This was proved by  Friedrichs \cite{friedrichs_1944}  (in local coordinates)
  who observed   that for a $C^1$ vectorfield $X$ 
  the  commutator $ \tilde J_\eps = [X, J_\eps]$, where $J_\eps$ is the usual (Euclidean) mollification, satisfies $\tilde J_\eps u \to 0$ in 
  $L^p_{loc}$ for $u \in L^p$;  see
   also Thm.\  1.13 and Thm.\  A.2 in 
   \cite{garofalo_nhieu_1996}).

   Now let $u_k$ be a sequence of smooth functions such that $u_k \to u$ and $X_i u_k \to h_i$ in $L^p(U)$ where $h_i$ are the weak
    horizontal derivatives of $u$. Then $g_k := |D_h u_k|$ is  an upper gradient of $u_k$ and $g_k \to  (\sum h_i^2)^{1/2} = |D_h u|$. 
    Let $g$ be a Borel representative of $|D_h u|$.
   Then  it follows from Lemma~\ref{le:stability_p_weak_upper_gradient}  that $u$ has a representative  $\bar u$ such that $g$ is a $p$-weak upper gradient of $\bar u$.

For the converse implication one uses essentially absolute continuity on a.e.\  curve  $t \mapsto a \exp(tX_j)$ and Fubini's theorem.
For the convenience of the reader we sketch some details.
By a partition of unity it suffices to show that the weak derivatives exists in a small neighbourhood of any point in $U$.
Let $X$ be a left-invariant vectorfield.
Let $B' \subset \R^{N-1}$ be a (small) ball around $0$ and consider a smooth surface $\Psi: B' \to G$ which is transversal to $X$. 
Then $\Phi(t, x') = \Psi(x') \exp tX$ defines a smooth diffeomorphism of $(-\delta, \delta) \times B'$ to its image if $\delta > 0$ is small enough.
Moreover  $\partial_1 \Phi = X \circ \Phi$. Since the Haar measure $\mu$ is biinvariant,  the pull-back measure $\Phi^* \mu$ is invariant under 
translation in $x_1$ direction, i.e.
$ \Phi^* \mu = dx_1 \otimes \mu'$.  Consider the curves $\gamma_{x'}(t) = \Phi(t, x') = \Psi(x') \exp tX$ and the family
$\Gamma_E = \{ \gamma_{x'} : x' \in E\}$.  
Using Fubini's theorem one easily checks that $\modp( \Gamma_E) = 0$ implies $\mathcal L^{N-1}(E) = 0$
(or, equivalently,  $\mu'(E) =0$). 

Set $\tilde u = u \circ \Phi$. 
Then $t \mapsto \tilde u(t, x')$ is absolutely continuous for $\mathcal L^{N-1}$-a.e.\  $x'$ and 
$| \tilde u(b, x')  - \tilde u(a,x')| \le \int_a^b \tilde g(t, x') \, dt$ where $\tilde g = g \circ \Phi$.  Set $\tilde u = u \circ \Phi$. It is then easy to show that  
the difference
quotients $\Delta^s \tilde u ;= s^{-1} ( u(t+s,x') -  u(t, x'))$ are controlled by a family of one-dimensional convolutions of $g$
and hence  a subsequence $s_j \downarrow 0$  converges weakly in $L^p_{loc}$ (for $p=1$ use the Dunford-Pettis theorem) to a function $h \in L^p$ with
$|h| \le g$ a.e. Then $h$ is a weak derivative of $\tilde u$, i.e. $\int \tilde u \, \partial_1 \varphi  \,  \, \Phi^* \mu = - \int h \,  \varphi  \,  \, \Phi^* \mu$. 
Unwinding definitions,  we see that $h = \tilde h \circ \Phi^{-1}$ is the desired weak derivative $X u$. Moreover $|h| \le g$  a.e. from which we deduce
$|D_h u| \le g$ by considering a countable dense family of left-invariant unit vector fields. 
\end{proof}

\begin{proof}[Proof of    Proposition~\ref{pr:upper_gradient_Xprime}].  The assertion  essentially follows from  Theorem 7.1.20 in \cite{HKST} upon using the
isometric embedding of $(X',d')$ into the Banach space $V = \ell^\infty(X')$ of bounded functions on $X'$. We give a self-contained proof for the convenience of
the reader.

 We only give the argument for $W^{1,p}(U;X')$. The version for $W^{1,p}_{loc}$ is then deduced easily.

 \eqref{it:upper_distributional_X1} $\Longrightarrow$  \eqref{it:upper_distributional_X2}.  \quad Note that
  $(\Lip \varphi) \, \, g$ is a $p$-weak upper gradient of   $\varphi \circ \bar f$ and that  $\varphi \circ \bar f = \varphi \circ f$ almost everywhere.
 Thus the implication follows from 
 Proposition~\ref{pr:weak_scalar_upper_gradient}.
 
  \eqref{it:upper_distributional_X2} $\Longrightarrow$  \eqref{it:upper_distributional_X3}.  \quad This is clear since the map
  $y \mapsto  d'(y, z)$  is $1$-Lipschitz.

  \eqref{it:upper_distributional_X3} $\Longrightarrow$  \eqref{it:upper_distributional_X1}. \quad 
Set $u_z(x) = d'(z, f(x))$. Let $D \subset Z$ be a countable dense subset. 
By the definition of $W^{1,p}(U; X')$ there exists a Borel function $\bar g$ 
  such that $|D_h u_z|  \le \bar g$ almost everywhere.  
  By Proposition~\ref{pr:weak_scalar_upper_gradient} 
  for each $z \in D$ there exists a   representative $\bar u_z$ such that $\bar g$ is a $p$-weak upper gradient
  of $\bar u_z$. The main point is to show that there exist
  a $p$-exceptional set $E$ and a map $\bar f: U \setminus E \to X'$ such that
  \begin{equation}  \label{eq:property_barf}
 d'(z, \bar f(x)) = \bar u_z(x) \quad \forall x \in U \setminus E,  \quad \forall z \in D
  \end{equation}
 Then the upper gradient inequality for the function $\bar u_z$ implies that
  $$ | d'(z, \bar f (\gamma_s(t))) - d'(z, \bar f(\gamma_s(s)))| \le \int_s^t  \bar g \circ \gamma_s \, d\mathcal L^1 $$
   for all $z \in D$ and $p$-a.e. curve $\gamma$. 
   If $z_k \to z$ in $Z$ then the functions $d(z_k, \cdot)$ converge uniformly  to $d(z, \cdot)$. Thus the inequality holds for all
   $z \in Z$. Taking $z = \bar f(\gamma_s(t))$ we see that $\bar g$ is $p$-weak upper gradient of $f$. 
   
   To construct $\bar f$, note that the definition of $u_z$ and the triangle inequality imply that
 \begin{equation*}   
 \inf_{z \in D} u_z = 0,  \quad   
 \forall z,z' \in D \quad  d'(z,z') -  u_{z'} \le u_z \le d'(z,z') +  u_{z'}.
  \end{equation*}
  Since $\bar u_z$ agrees with $u_z$ a.e., it follows from 
   and Proposition~\ref{pr:improve_to_pae}~\eqref{it:improve_to_pae_equality} 
   that there exists
   a $p$-exceptional set $E$ such that 
  \begin{eqnarray}   \inf_{z \in D} u_z &=&  0 \quad  \text{in $U \setminus E$}    \nonumber \\
  d'(z,z') - \bar u_{z'} &\le& \bar u_z \le d'(z,z') + \bar u_{z'}  \quad  \text{in $U \setminus E$} 
 \label{eq:uz_zprime_pae}
   \end{eqnarray}
  for all $z, z' \in D$. 
   We claim that for all $x \in  U \setminus E$ there exists a unique $\bar z = \bar z(x) \in Z$ such that
   \begin{equation}  \label{eq:property_barz}
   d'(z, \bar z) = \bar u_z(x)  \quad \forall z \in D
   \end{equation}
   Fix $x \in U\setminus E$. By definition of $\underline u$ 
   there exist $z_k \in  D$ such that $\bar u_{z_k}(x) \to 0$. Moreover $d'(z_k, z_l) \le \bar u_{z_k}(x) + \bar u_{z_l}(x)$.
   Hence $z_k$ is a Cauchy sequence. Thus $z_k \to \bar z$ and taking $z' = z_k$ 
   in  \eqref{eq:uz_zprime_pae}   we get \eqref{eq:property_barz}.
   If also $d'(z,  \tilde z) = \bar u_z(x)$ for all $z \in D$ then we get $d'(z, \bar z) = d(z, \tilde z)$ for $z \in D$. Thus $\tilde z = \bar z$. 
   
   We now define $\bar f(x) = \bar z(x)$ for $x \in U \setminus E$. Since $\bar u_z(x) = u_z(x) = d'(z, f(x))$ for a.e.\ $x \in U$ it follows
   from the uniqueness for $\bar z$ that $\bar f = f$ a.e.  
\end{proof}

\bibliography{product_quotient}
\bibliographystyle{amsalpha}
\end{document}